\newtheorem{theorem}{Theorem}[section]
\theoremstyle{plain}
\newtheorem{corollary}[theorem]{Corollary}
\newtheorem{example}[theorem]{Example}
\newtheorem{lemma}[theorem]{Lemma}
\newtheorem{proposition}[theorem]{Proposition}
\newtheorem{remark}[theorem]{Remark}
\theoremstyle{definition} 
\newtheorem{definition}[theorem]{Definition}
\newtheorem*{tata}{Generalization}
  {\begin{mdframed}[backgroundcolor=lightgray]\begin{tata}}%
  {\end{tata}\end{mdframed}}
\newcommand{\N}{\mathbb{N}}
\newcommand{\Q}{\mathbb{Q}}
\newcommand{\id}{\operatorname{id}}
\newcommand{\mute}[1]{}
\let\todon\todo
\renewcommand{\todo}[1]{\todon{\color{red}#1}}
\newcommand{\evaluatedAt}[1]{\,\raisebox{-.5em}{$\vert_{#1}$}}
\newcommand{\quotationmarks}[1]{``#1''}\usepackage{tikz}
\tikzset{every tikzmarknode/.style={%
        draw=blue, thick, inner sep=4pt,shift={(10pt,10pt)}}
        }
\newsavebox{\@brx} 
\newcommand{\llangle}[1][]{\savebox{\@brx}{\(\m@th{#1\langle}\)}%
  \mathopen{\copy\@brx\kern-0.5\wd\@brx\usebox{\@brx}}}
\newcommand{\rrangle}[1][]{\savebox{\@brx}{\(\m@th{#1\rangle}\)}%
  \mathclose{\copy\@brx\kern-0.5\wd\@brx\usebox{\@brx}}}
\newcommand{\dummytikzmarknode}[2][]{%
  \@ifempty{#1}
    {...}
    {...}%
}
  \let\tikzmarknode\dummytikzmarknode
\renewcommand{\todo}[1]{\todon[inline,color=green!40]{\color{NavyBlue}{\normalsize\textbf{#1}}}}
\newcommand\e{\mathsf{e}}
\definecolor{lightgray}{rgb}{0.8, 0.8, 0.8}
\newcommand\SetIntervalPartitions{\mathbf{IP}}
\newcommand\SetStandardizedIntervalPartitions{\mathbf{StIP}}
\newcommand\NumBlocks{\mathbf{NB}}
\newcommand\cliques{\mathbf{cliques}}
\newcommand\std{\mathbf{std}}
\newcommand\st{\mathbf{st}}
\newcommand\gluepartitions{\cdot_{\textbf{\textup{glue}}}}
\newcommand\bigsigma{\mathbf{S}}
\newcommand\stda{\resizebox{!}{1\height}{$\mathfrak{a}$}}
\newcommand\stdb{\resizebox{!}{1\height}{$\mathfrak{b}$}}
\newcommand\stds{\resizebox{!}{1\height}{$\mathfrak{s}$}}
\newcommand\stdt{\resizebox{!}{1\height}{$\mathfrak{t}$}}
\newcommand\stdg{\resizebox{!}{1\height}{$\mathfrak{g}$}}
\newcommand\stdh{\resizebox{!}{1\height}{$\mathfrak{h}$}}
\newcommand\stdL{\resizebox{!}{1\height}{$\mathfrak{L}$}}
\newcommand\stdM{\resizebox{!}{1\height}{$\mathfrak{M}$}}
\newcommand\unstdI{\mathcal{I}}
\newcommand\unstdJ{\mathcal{J}}
\newcommand\unstdW{\mathcal{W}}
\newcommand\unstdZ{\mathcal{Z}}
\newcommand\superinfiltration{ \raisebox{0.5ex}{\rotatebox[origin=c]{90}{$\Rightarrow$}}}
\newcommand\concvargas{\square}
\newcommand\PC{\mathsf{PC}}
\newcommand\IPC{\mathsf{IPC}}
\newcommand{\adjacentSquares}[1]{%
    \foreach \x in {1,...,#1} {%
        \ifnum\x=#1
            \Box
        \else
            \Box\kern-1.1pt%
        \fi
    }%
}
\newcommand\qspart{\mathlarger{{\text{\color{Salmon}{\ding{171}}}}}}
\newcommand\coqspart{\Delta_{\qspart}}
\newcommand\qswrd{\underset{\tiny\textup{wrd}}{\qspart}}
\newcommand\qsgen{{\text{\color{SpringGreen}\ding{170}}}}
\newcommand\coqsgen{\Delta_{\qsgen}}
\newcommand\conctens{\text{\ding{169}}}
\newcommand\conc{\bullet}
\newcommand\deconc{\Delta_{\conc}}
\newcommand\genconc{\resizebox{!}{0.6\height}{$\blacksquare$}}
\newcommand\deconcgen{\Delta_{\resizebox{!}{0.4\height}{$\blacksquare$}}}
\newcommand\GPC{\mathsf{GPC}}
\newcommand\singleblockm{\mathbf{m}}
\newcommand\singleblockn{\mathbf{n}}
\newcommand\singleblockN{\mathbf{N}}
\newcommand\FreeIntPart{\mathcal{H}_{\tiny\textbf{int}}}
\newcommand\FreePer{\mathcal{H}_{\tiny\textbf{per}}}
\newcommand\FreeVincPer{\mathcal{H}_{\tiny\textbf{vinc}}}
\newcommand\punion{\bigcup}
\newcommand{\mybinom}[2]{%
  \Biggl(\mkern-1mu\begin{matrix}#1\\#2\end{matrix}\mkern-1mu\Biggr.\kern-3pt%
  \Biggl.\vphantom{\begin{matrix}#1\\#2\end{matrix}}\mkern-1mu\Biggr)%
}
\title{Hopf Algebra on Vincular Permutation Patterns}
\author{Joscha Diehl$^\dagger$, Emanuele Verri$^\dagger$}
\date{ \small
    $^\dagger$\textit{Institute of Mathematics and Computer Science, University of Greifswald}\\[2ex]%
 %   \large \today
}
\begin{document}

\maketitle

\begin{abstract}

We introduce a new Hopf algebra that operates on pairs of finite interval partitions and permutations of equal length. This algebra captures \textit{vincular patterns}, which involve specifying both the permutation patterns and the consecutive occurrence of values.
Our motivation stems
from linear functionals that
encode the number of occurrences of these patterns,
and we show that they behave well with respect to the operations of this Hopf algebra.

\end{abstract}

{
  \hypersetup{linkcolor=black}
  \tableofcontents
}

\section{Introduction}
\textit{Permutation patterns} are ubiquitous in discrete mathematics. Much effort is devoted to developing algorithms that count these patterns efficiently, see for example \cite{even2020independence} and \cite{even2021counting}.

They have also been successfully used in \textit{time series analysis} in the popular work from \cite{bandt2002permutation}, where the authors introduced the concept of \textit{permutation entropy}. For a discrete time series, consider only the order of the values. As an example, the time series
\begin{table}[H]
    \centering
\begin{tabular}{c}
\includegraphics[width=2cm]{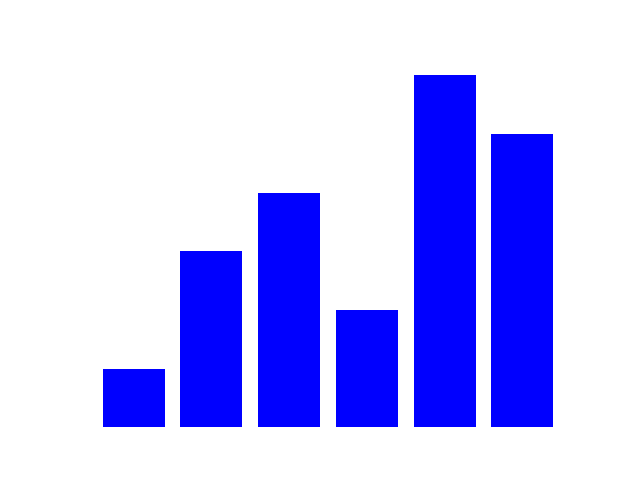}\\
1\,2\,3\,4\,5\,6
\end{tabular}
\end{table}
can be \quotationmarks{reduced} to the permutation $134265 \in \bigsigma_{6}$. Permutation entropy is based on specific permutation patterns, namely consecutive patterns. If we fix an order for the patterns, say 2, we observe
\begin{table}[H]
\centering
\begin{tabular}{cccccc}
\includegraphics[width=2cm]{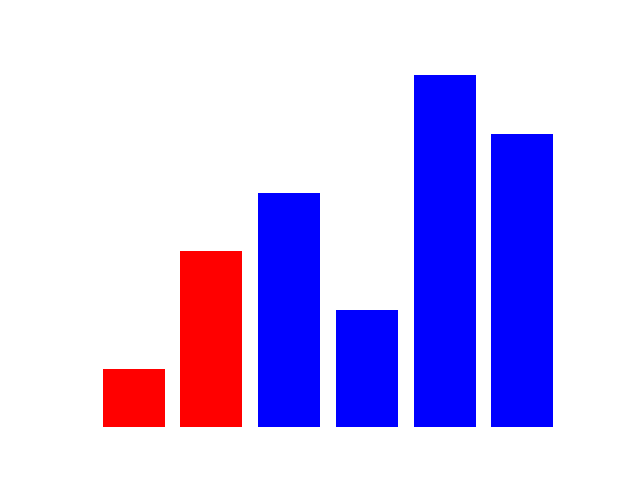} & 
\includegraphics[width=2cm]{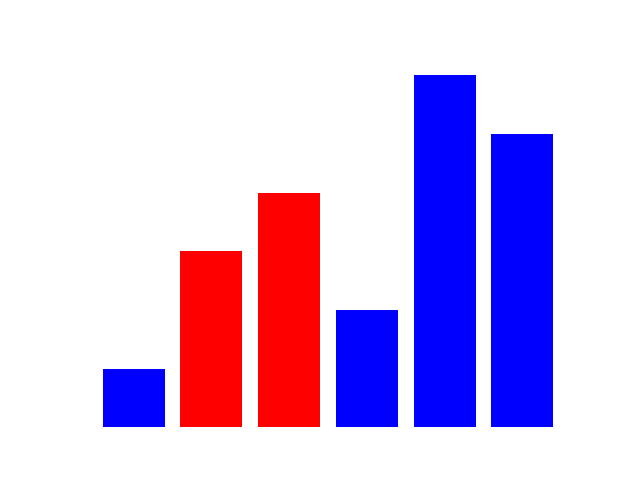} & 
\includegraphics[width=2cm]{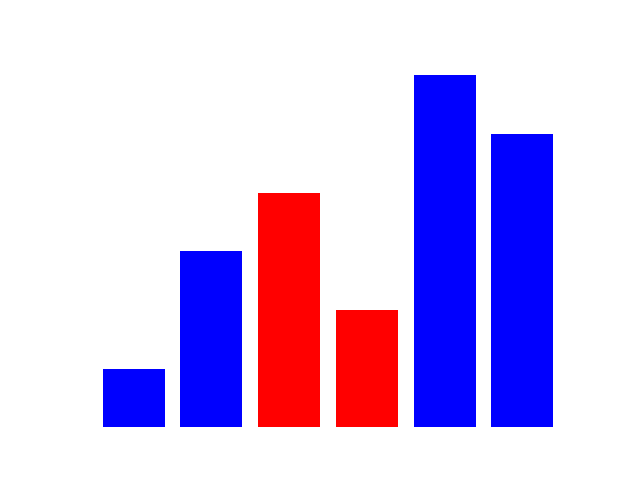} & 
\includegraphics[width=2cm]{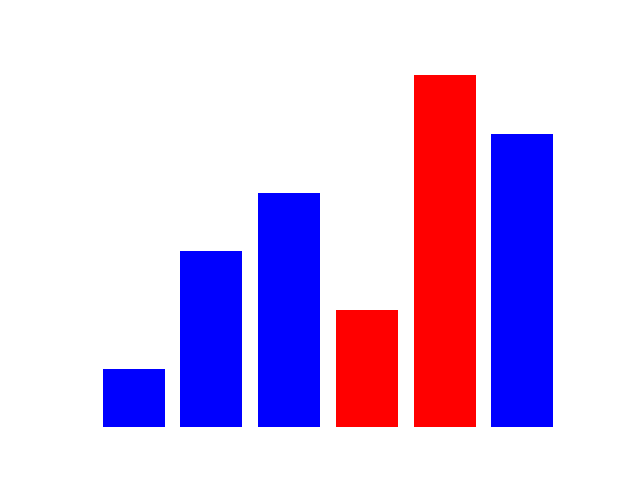} & 
\includegraphics[width=2cm]{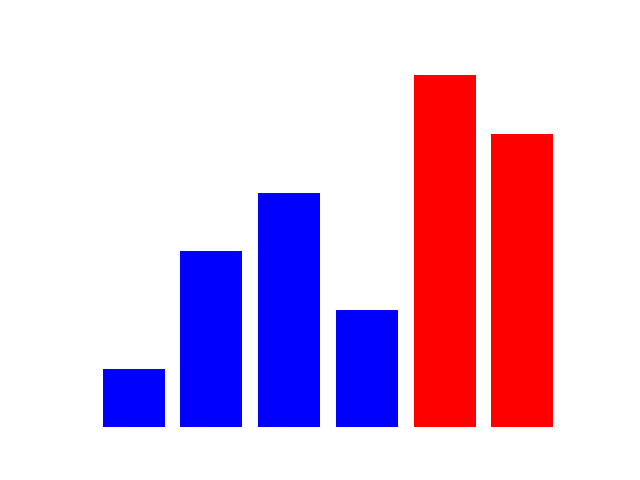}\\
12 &12 & 21 & 12 & 21\\
\end{tabular}
\end{table}
The frequencies of $12,21 \in \bigsigma_{2}$ are needed to compute the permutation entropy of order 2. In general, for  the permutation entropy of order $n$, we would compute the frequencies of the elements of $\bigsigma_{n}$ of consecutive patterns of length $n$.

On the other hand, many notable constructions on permutations have been introduced in \textit{algebraic combinatorics}: the Malvenuto-Reutenauer Hopf algebra on permutations is the most celebrated example, \cite{malvenuto1995duality}. 

 In \cite{vargas2014hopf}, the author introduces a Hopf algebra on permutations. Let $\sigma \in \bigsigma_{m}$ and $\tau \in \bigsigma_{n}$, then
\begin{align*}
&\Delta_{\,\scriptsize\superinfiltration}(\sigma) :=\sum_{\substack{A,B \subset [m]:\\\\ A \cup B = [m]}}\st \left(\sigma\evaluatedAt{A}\right) \otimes \st \left(\sigma\evaluatedAt{B}\right),\\
&\sigma \concvargas \tau:=\sigma_{1} \cdots \sigma_{m} (\tau_{1}+m)\cdots (\tau_{n}+m).
\end{align*}
where, for example
\begin{align*}
&\Delta_{\,\scriptsize\superinfiltration}(21) =\e\otimes21 +\textcolor{red}{2} \cdot 1\otimes1 +\textcolor{red}{2} \cdot  1\otimes21 + 21\otimes \e +\textcolor{red}{2} \cdot 21\otimes1 + 21\otimes21,
\\&21 \concvargas  12=2134.
\end{align*}
and it is there shown that $\FreePer:=(\bigoplus_{n \in \N}\Q[\bigsigma_{n}], \raisebox{0.5ex}{\rotatebox[origin=c]{90}{$\Rightarrow$}}, \Delta_{\scriptsize \concvargas})$ is a \textit{filtered} Hopf algebra. This construction relates to occurrences of permutation patterns. Denote with $\bigsigma:=\bigcup_{n}\bigsigma_{n}$, the set of all permutations, and consider the family of linear functionals $(\textbf{pattern}(\sigma))_{\sigma \in \bigsigma}$, defined on basis elements, $\Lambda \in \bigsigma$ as 
\begin{align*}
\Big\langle \textbf{pattern}(\sigma),\Lambda \Big\rangle := \#\{A \subset [|\Lambda|]|\;\st(\Lambda\evaluatedAt{A})=\sigma\},
\end{align*} i.e. the occurrences of $\sigma$ as a \textit{pattern} on $\Lambda$. We can endow $\Q^{\bigsigma}$, the set of all functions from $\bigsigma$ to $\Q$, with a $\Q$-algebra structure using the pointwise product
\begin{align*}
 \forall f,g \in \Q^{\bigsigma}:\forall \sigma \in \bigsigma:\;\;(f + g)(\sigma)&:=f(\sigma) + g(\sigma),\,
(f\cdot g)(\sigma):=f(\sigma)\cdot g(\sigma).
\end{align*}
Then Vargas showed that
\begin{align*}
 \left(\bigoplus_{n}\Q[\bigsigma_{n}], \raisebox{0.5ex}{\rotatebox[origin=c]{90}{$\Rightarrow$}}\right) &\to    \left(\Q^{\bigsigma},\cdot\right),\\
 \sigma &\mapsto \textbf{pattern}(\sigma)
\end{align*}
is an (injective) algebra homomorpshism, i.e., for all $\Lambda \in \bigsigma$
\begin{align*}
 \Big\langle \textbf{pattern}(\sigma \; \superinfiltration \; \tau),\Lambda \Big\rangle &= \Big\langle \textbf{pattern}(\sigma),\Lambda \Big\rangle   \Big\langle \textbf{pattern}(\tau),\Lambda \Big\rangle.
\end{align*}
Taking on a slightly different approach, we fix $\Lambda \in \bigsigma$, vary the pattern $\sigma$ and define the family of functionals
\begin{align*}
\Big\langle \PC(\Lambda), \sigma \Big\rangle := \#\{A \subset [|\Lambda|]|\;\st(\Lambda\evaluatedAt{A})=\sigma\},
\end{align*}
and call $\PC(\Lambda)$ the \textit{signature} of the permutation $\Lambda$. The term \quotationmarks{signature} is motivated by the following identity 
\begin{align}
\label{eq:signature_permutation}
\forall \sigma,\tau\in \bigsigma:\; \Big\langle \PC(\Lambda),\sigma \Big\rangle \Big\langle \PC(\Lambda),\tau \Big\rangle  &=  \Big\langle \PC(\Lambda),\sigma \;  \raisebox{0.5ex}{\rotatebox[origin=c]{90}{$\Rightarrow$}} \; \tau \Big\rangle 
\end{align}
which is reminiscent of the shuffle identity for the signature of a path
(\cite{hambly2010uniqueness,ree1958lie,chen1957integration}).
%see \cite{diehl2020time}[Theorem 3.4. (1)], for a discrete analogue. 
For example
\allowdisplaybreaks
\begin{align*}
&\Big \langle \PC (132), 1 \Big \rangle \cdot \Big \langle \PC (132), 1 \Big \rangle
\\&=\Big \langle \PC \Big(\raisebox{-1ex}{ \includegraphics[scale=0.08]{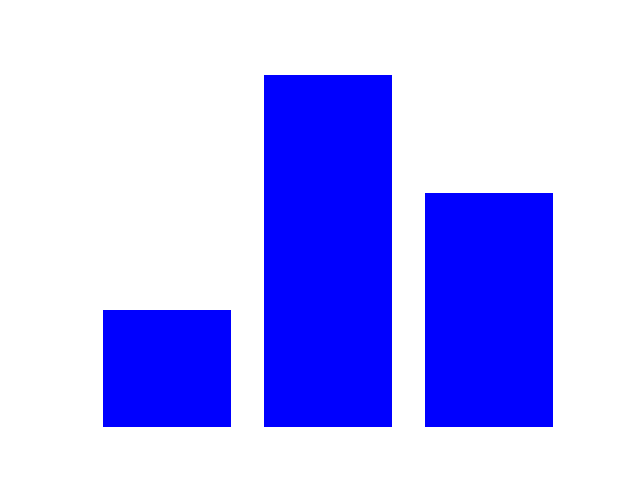}} \Big), \raisebox{-0.5ex}{\includegraphics[width=0.4cm,height=0.4cm]{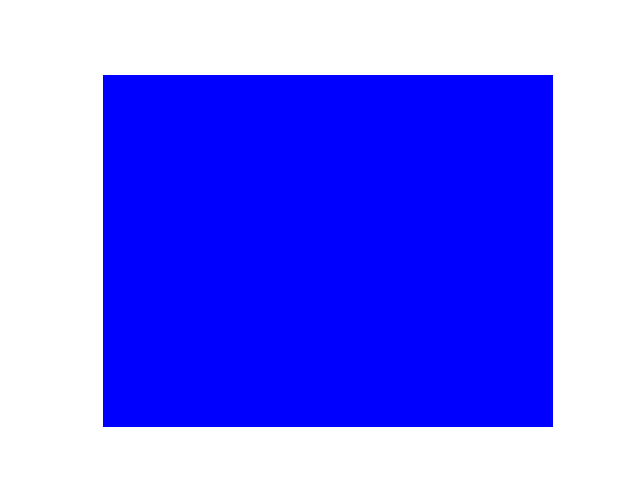}} \Big \rangle \cdot \Big \langle \PC \Big(\raisebox{-1ex}{ \includegraphics[scale=0.08]{pictures/vargas_signature_example/Figure_perm_132.png}} \Big), \raisebox{-0.5ex}{\includegraphics[width=0.4cm,height=0.4cm]{pictures/vargas_signature_example/Figure_perm_1.png}} \Big \rangle\\&=
|\{\{1\},\{2\},\{3\}\} \times \{\{1\},\{2\},\{3\}\}|\\
&=|\{\raisebox{-1ex}{ \includegraphics[scale=0.08]{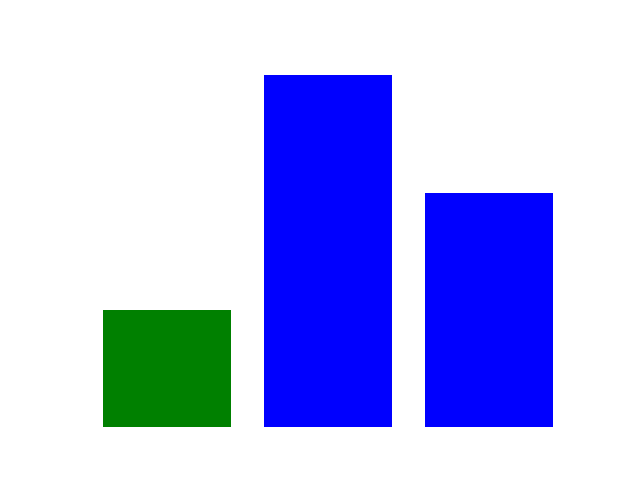}},\raisebox{-1ex}{ \includegraphics[scale=0.08]{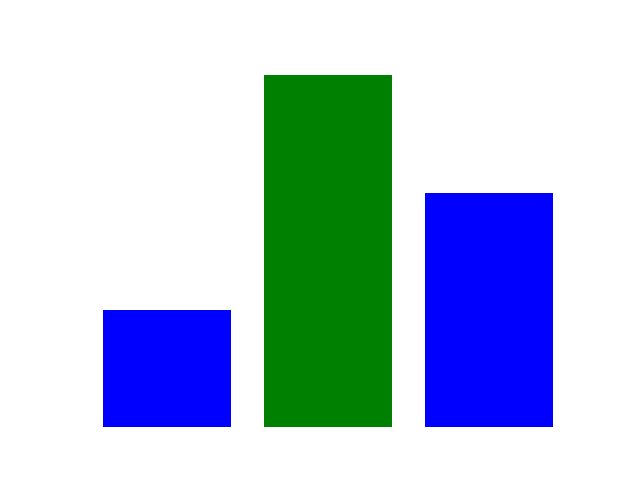}},\raisebox{-1ex}{ \includegraphics[scale=0.08]{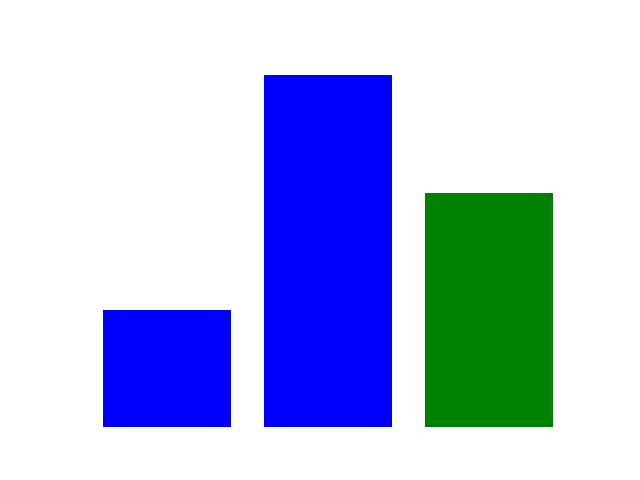}}\} \times \{\raisebox{-1ex}{ \includegraphics[scale=0.08]{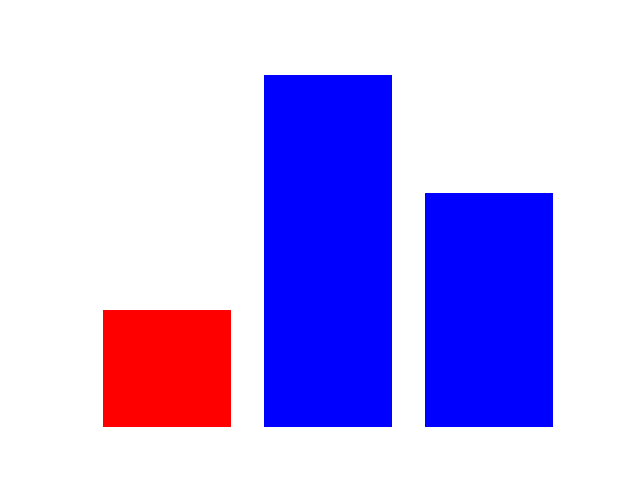}},\raisebox{-1ex}{ \includegraphics[scale=0.08]{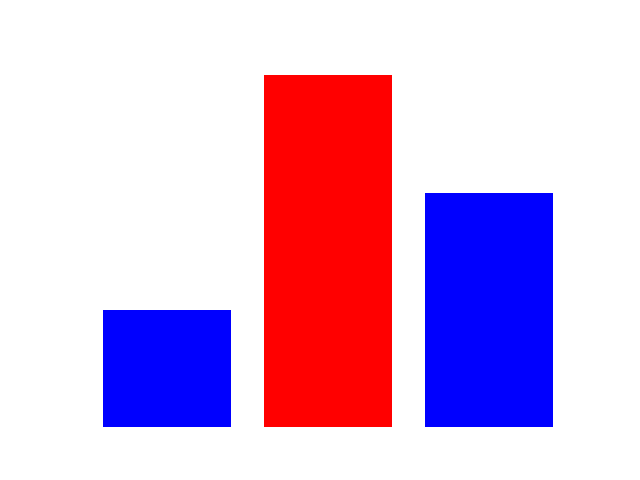}},\raisebox{-1ex}{ \includegraphics[scale=0.08]{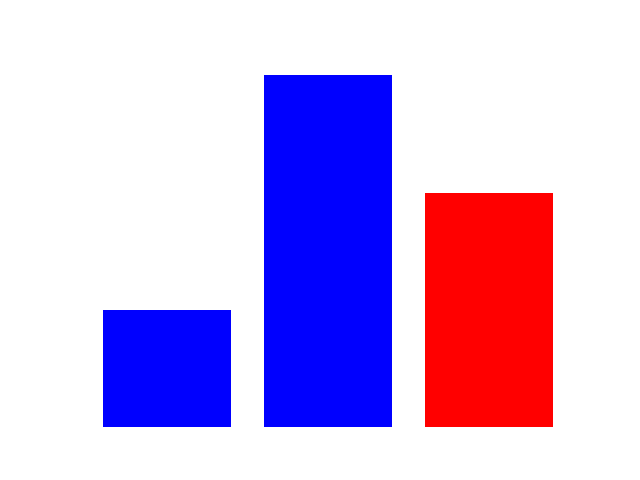}}\}|\\&=|\{(\{1\},\{1\}), (\{1\},\{2\}),(\{1\},\{3\}),(\{2\},\{1\}),(\{2\},\{2\}),(\{2\},\{3\}),\\&(\{3\},\{1\}),(\{3\},\{2\}),(\{3\},\{3\})\}|\\&=|\{\raisebox{-1ex}{ \includegraphics[scale=0.08]{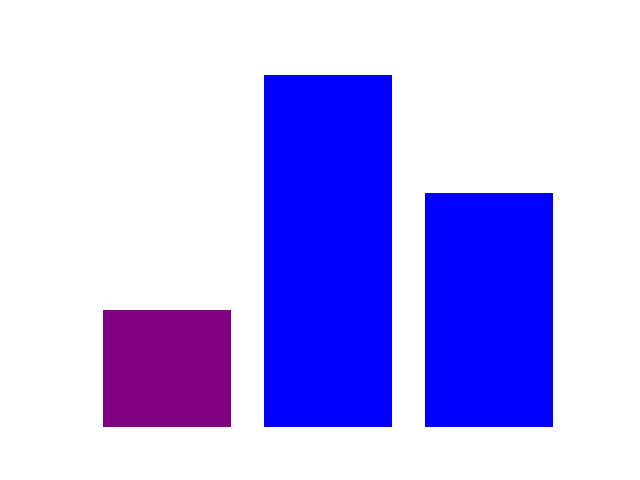}},\raisebox{-1ex}{ \includegraphics[scale=0.08]{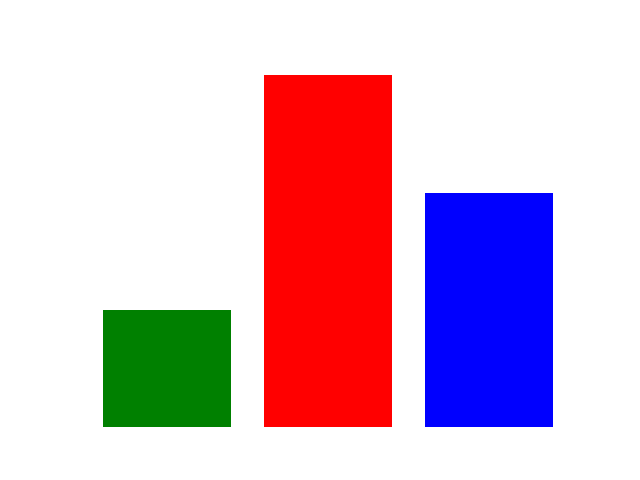}},\raisebox{-1ex}{ \includegraphics[scale=0.08]{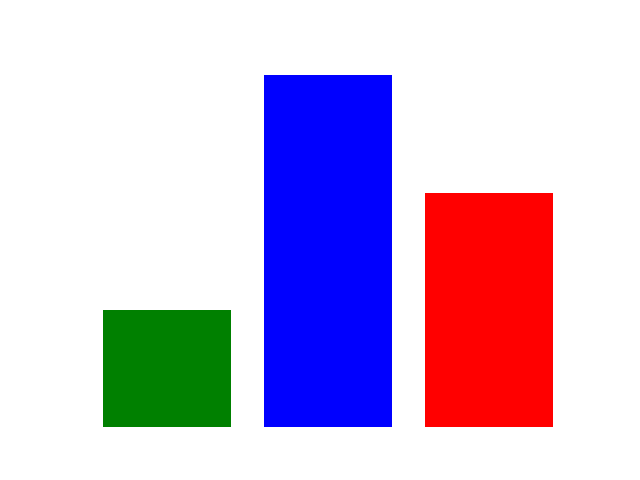}},\raisebox{-1ex}{ \includegraphics[scale=0.08]{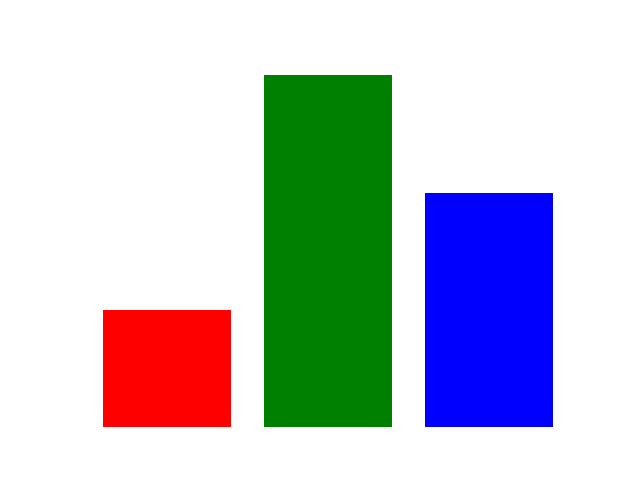}},\raisebox{-1ex}{ \includegraphics[scale=0.08]{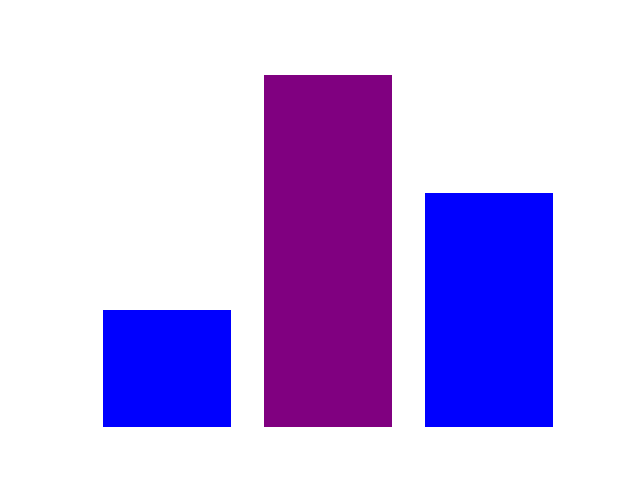}},\raisebox{-1ex}{ \includegraphics[scale=0.08]{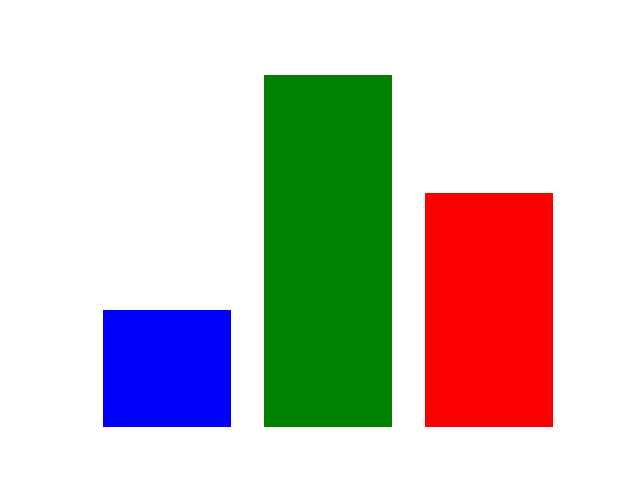}}\\&\raisebox{-1ex}{ \includegraphics[scale=0.08]{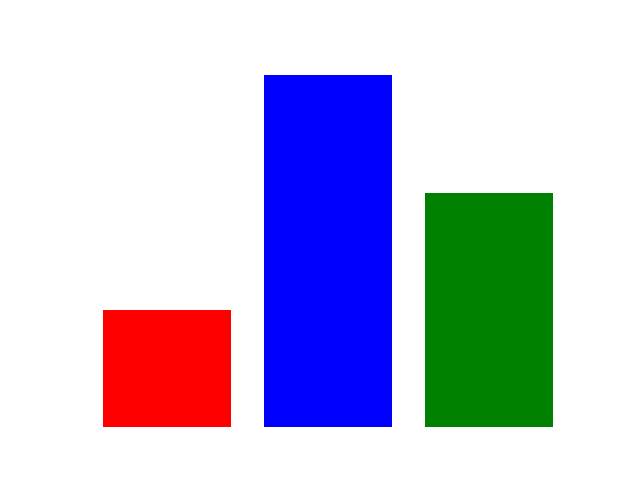}},\raisebox{-1ex}{ \includegraphics[scale=0.08]{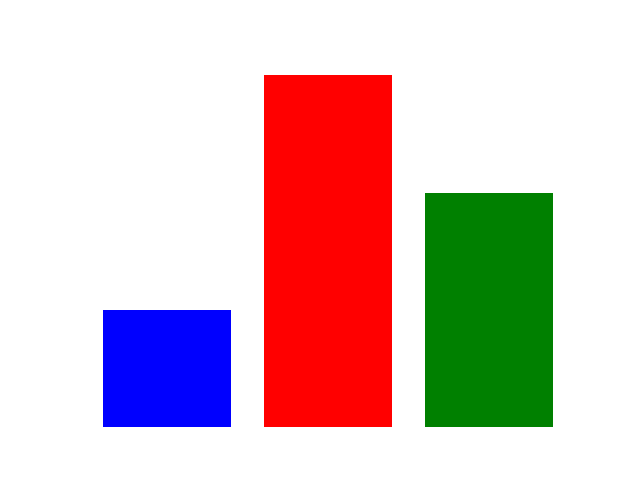}},\raisebox{-1ex}{ \includegraphics[scale=0.08]{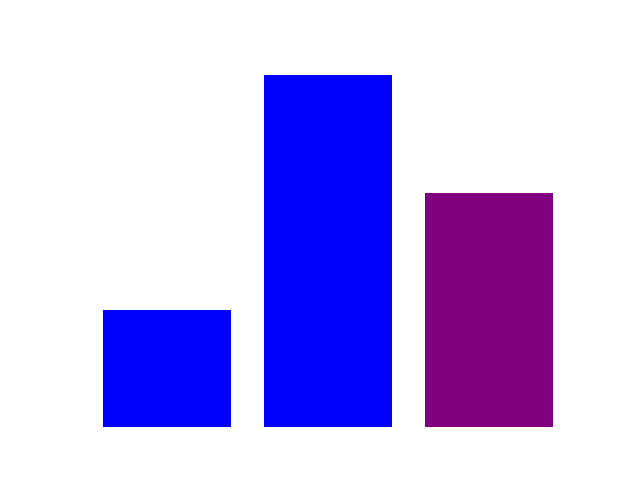}}\}|\\&=|\{(\{1\},\{1\}), (\{2\},\{2\}),(\{3\},\{3\})\}| \\&+ |\{(\{1\},\{2\}), (\{2\},\{1\}),(\{1\},\{3\}),(\{3\},\{1\})\}| \\&+ |\{(\{2\},\{3\}), (\{3\},\{2\})\}|\\\\&=|\{\overbrace{\raisebox{-1ex}{\includegraphics[scale=0.08]{pictures/vargas_signature_example/Figure_1_1.png}}}^{\langle 1 \otimes 1, \Delta_{\scriptsize\,\superinfiltration}(1)\rangle = 1 },\raisebox{-1ex}{ \includegraphics[scale=0.08]{pictures/vargas_signature_example/Figure_2_2.png}},\raisebox{-1ex}{ \includegraphics[scale=0.08]{pictures/vargas_signature_example/Figure_3_3.png}}\}| \\\\&+ |\overbrace{\{\raisebox{-1ex}{ \includegraphics[scale=0.08]{pictures/vargas_signature_example/Figure_1_2.png}},\raisebox{-1ex}{ \includegraphics[scale=0.08]{pictures/vargas_signature_example/Figure_2_1.png}}}^{\langle 1 \otimes 1, \Delta_{\scriptsize\,\superinfiltration}(12)\rangle = 2},\raisebox{-1ex}{ \includegraphics[scale=0.08]{pictures/vargas_signature_example/Figure_1_3.png}},\raisebox{-1ex}{ \includegraphics[scale=0.08]{pictures/vargas_signature_example/Figure_3_1.png}}\}| \\\\&+ |\overbrace{\{\raisebox{-1ex}{ \includegraphics[scale=0.08]{pictures/vargas_signature_example/Figure_2_3.png}},\raisebox{-1ex}{ \includegraphics[scale=0.08]{pictures/vargas_signature_example/Figure_3_2.png}}\}}^{\langle 1 \otimes 1, \Delta_{\scriptsize\,\superinfiltration}(21)\rangle = 2}|\\\\&= \Big \langle \PC (132), 1 \Big \rangle + 2 \Big \langle \PC (132), 12 \Big \rangle + 2 \Big \langle \PC (132), 21 \Big \rangle \\&= \Big \langle \PC (132), 1 + \textcolor{red}{2} \,12 + \textcolor{red}{2} \,21 \Big \rangle\\&= \Big \langle \PC (132), 1 \,\superinfiltration\, 1 \Big \rangle.
\end{align*}

This terminology is also motivated by an identity
similar to \emph{Chen's identity} (\cite[p.10]{lyons2002system})
\begin{align}
\label{eq:perm_chen}
\forall \Lambda, \Upsilon \in \bigsigma:\;\PC(\Lambda \square \Upsilon)  =   \PC(\Lambda) \square \PC(\Upsilon).
\end{align}
%See, for example, \cite{diehl2020time}[Theorem 3.4. (2)].
Inspired by the work of \cite{bandt2002permutation} and \cite{vargas2014hopf}, aim of this work is to generalize the $\raisebox{0.5ex}{\rotatebox[origin=c]{90}{$\Rightarrow$}}$-Hopf algebra to encode more general patterns. Given a permutation $\Lambda$, we search for permutation patterns $\sigma \in \bigsigma_{n_{k}}$ \begin{figure}[H]
    \centering\includegraphics[scale=0.28]{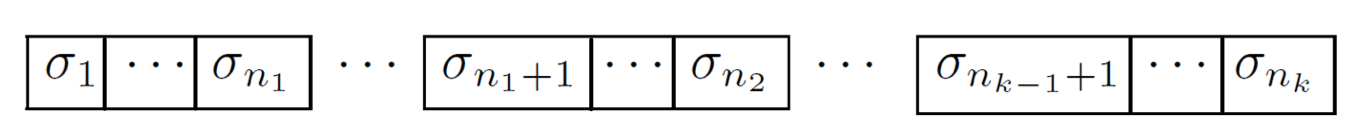}
    
\end{figure}
where for all $j = 1,\dots,k$, $n_{0}:=0$, the values from $n_{j-1}+1$ to $n_{j}$ need to occur consecutively in time. These patterns are known in the literature as \textit{vincular permutation patterns}. They seem to have been first introduced in \cite{babson2000generalized};
see also \cite{branden2011mesh} using the more recent term ``vincular pattern''.

As an example,  Let $\tikzmarknode[rectangle,black]{}{\textup{2}}\tikzmarknode[rectangle,black]{}{\textup{1}}\;\;\tikzmarknode[rectangle,black]{}{\textup{3}}$, be the pattern we search on $134265$. Then
\begin{table}[H]
\centering
\begin{tabular}{ccc}
\includegraphics[width=2cm]{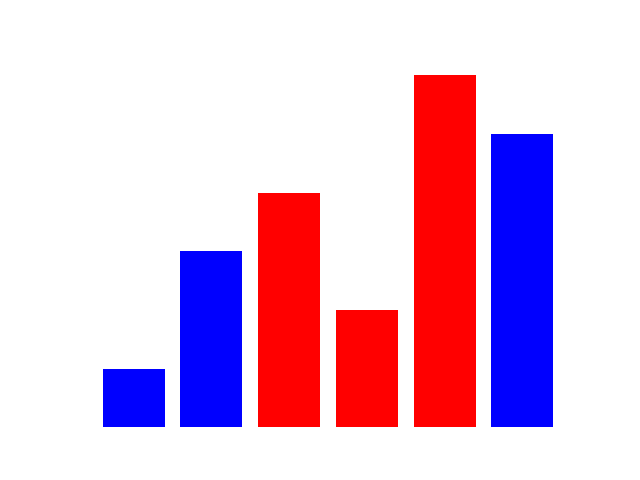} & 
\includegraphics[width=2cm]{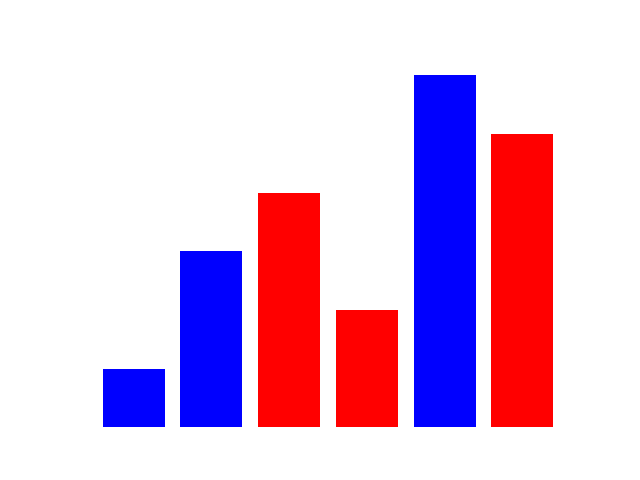} & 
\includegraphics[width=2cm]{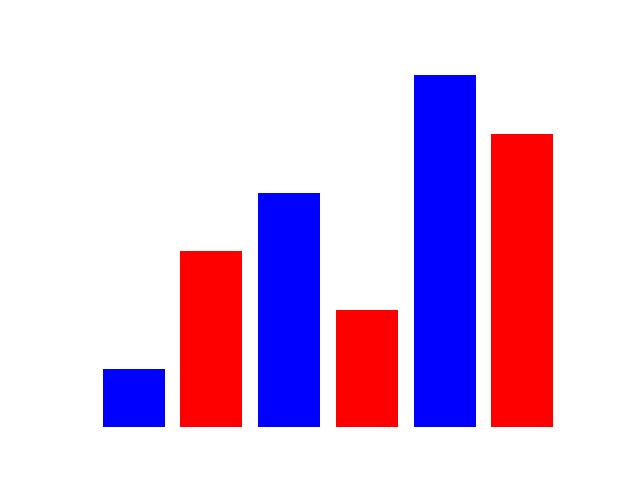} \\
426 & 425 & 325 
\end{tabular}
\end{table}
$426$ and $425$ are compatible with $$\tikzmarknode[rectangle,black]{}{\textup{2}}\tikzmarknode[rectangle,black]{}{\textup{1}}\;\;\tikzmarknode[rectangle,black]{}{\textup{3}}$$ while $325$  is not. We encode this information as a \textit{pair}: $(\stds,\sigma)$, where $\stds$ is an \textit{interval partition} of $[n]$, and $\sigma \in \bigsigma_{n}$, a permutation of the same \quotationmarks{size}. In this example, we have: $\tikzmarknode[rectangle,black]{}{\textup{2}}\tikzmarknode[rectangle,black]{}{\textup{1}}\;\;\tikzmarknode[rectangle,black]{}{\textup{3}}:=(\{\{1,2\},\{3\}\},213)$.   

As an intermediate step, we introduce a new Hopf algebra on interval partitions
\begin{align*}
\FreeIntPart&:=\left(\bigoplus_{n \in \N}\Q[\SetStandardizedIntervalPartitions_{n}],\qspart,\deconc\right)
\end{align*}
where
\begin{align*}
  \SetStandardizedIntervalPartitions_{n}&:=\{\stds\;\text{is an interval partition of}\;[n]\}.
\end{align*}
We define a corresponding family of ``signatures''
\begin{align*}
\left(\IPC\left(\stdL\right)\right)_{\stdL \in \SetStandardizedIntervalPartitions}.
\end{align*} 
 The terminology is again motivated by identities which hold for these functionals, analogous to \Cref{eq:signature_permutation} and \Cref{eq:perm_chen}. Furthermore, we show that this Hopf algebra is isomorphic to a Hopf algebra on words, where the letters correspond to blocks of a partition. Finally, we \quotationmarks{combine} our Hopf algebra on interval partitions with the superinfiltration $\raisebox{0.5ex}{\rotatebox[origin=c]{90}{$\Rightarrow$}}$-Hopf algebra and define a new Hopf algebra on vincular permutation patterns, 
\begin{align*}
\FreeVincPer:=(\bigoplus_{n\in \N} \Q[\SetStandardizedIntervalPartitions_{n} \times \bigsigma_{n}], \qsgen,\deconcgen).    
\end{align*} The $\qsgen$ product originates from our $\qspart$ product on partitions and the $\superinfiltration$ product on permutations, see \cite{vargas2014hopf}. The coproduct is also a \quotationmarks{combination} of the other two. The Hopf algebra $\FreeVincPer$ generalizes  $\FreePer$ and $\FreeIntPart$ as well. A correspondent family of signatures is also introduced.

\section{Notation}

\begin{itemize}

\item[--]  
    We denote the natural numbers, including $0$,
    with $\N$
    and the strictly positive natural numbers
    with $\N_{\ge 1}$.
\item[--]  
    Let $A,B$ be sets. If $A \cap B = \emptyset$, then we write $A \uplus B:= A \cup B$. This notation comes in handy when we assume that the sets are disjoint.
    With this notation,
    \begin{align*}
        |\biguplus_{i\in I}A_{i}|=\sum_{i\in I}|A_{i}|.
    \end{align*}
    if only finitely many sets $A_{i}$ are non-empty and $\forall i\in I: |A_{i}|< \infty$.

    \item[--]
    For $n \in \N_{\ge 1}$ we denote
    \begin{align*}
        [n] := \{1,\dots,n\}. 
    \end{align*}
    We set
    \begin{align*}
        [0] := \emptyset.
    \end{align*}
    
    \item[--]
    We say that $\unstdI$ is a \textit{(finite) partition} $\unstdI = \{ \unstdI_1, ..., \unstdI_I\}$
    of a set $A$
    if the (finite) family $\unstdI_i, i \in I$, consists of non-empty, pairwise disjoint sets, and their union is $A$,
    $\bigcup_{i}\unstdI_{i} = A$.
    All partitions considered in this work are \textit{finite interval partitions} of \textit{finite subsets} of the positive integers. Let $A \subset \N_{\ge 1}$, with $|A|<\infty$.
    We say that $\unstdI$ is an \textbf{interval partition} of $A$
    if $\unstdI$ is a partition of $A$, whose blocks $\unstdI_{j}$ are intervals on $\N_{\ge 1}$. As an example, if $A=\{2,4,5\}$, then $\{\{4,5\},\{2\}\}$ is an interval partition of $A$, while $\{\{2,4\},\{5\}\}$ is not.
    For a partition $\unstdI$ of a set $A$ we also write
    \begin{align*}
        \punion \unstdI := \bigcup_i \unstdI_i = A,
    \end{align*}
    for its ground-set.

    \item[--]
    The set
    \begin{align*}
        \SetIntervalPartitions := \{X| X\;\text{is an interval partition of some finite subset of}\; \N_{\ge 1} \}
    \end{align*}
    contains ``labeled'' finite interval partition. 
    \item[--] We also fix $A \subset \N_{\ge 1}, A < \infty$ and write 
    \begin{align*}
        \SetIntervalPartitions(A) := \{X| X\;\text{is an interval partition of}\;A\}
    \end{align*}
    
    \item[--] The subset
\begin{align*}
    \SetStandardizedIntervalPartitions :=  \biguplus_{n \in \N} \SetStandardizedIntervalPartitions_{n} \subset \SetIntervalPartitions
\end{align*}
where $\SetStandardizedIntervalPartitions_{n}:=\{A| A\;\text{is an interval partition of}\;[n]\}$,
contains ``unlabeled'' or \textit{standardized} interval partitions. Notice that $\SetIntervalPartitions(\emptyset) = \{\emptyset\}$, i.e., the unique partition for the empty set $\emptyset$ is the empty set itself, which means
\begin{align*}
    \SetStandardizedIntervalPartitions_{0}:=\{\emptyset\}.
\end{align*} Also, we have:
\begin{align*}
    \SetStandardizedIntervalPartitions =  \biguplus_{\ell \in \N} \NumBlocks_{\ell}
\end{align*}
where 
\begin{align*}
    \NumBlocks_{\ell}: = \{\stds \in \SetStandardizedIntervalPartitions|\stds = \{\stds_{1},...,\stds_{\ell}\} \}
\end{align*}
i.e., all standardized partitions having $\ell$ blocks.
\item[--] We use gothic letters to denote elements of $\SetStandardizedIntervalPartitions$, for example $\stds \in \SetStandardizedIntervalPartitions$. We use capital calligraphical letters when we consider any element of $\SetIntervalPartitions$, for example $\unstdI \in \SetIntervalPartitions$.

\item[--] We define a partial order relation on $\SetIntervalPartitions$, as follows
\begin{align*}
\unstdI\le \unstdJ \iff \punion \unstdI = \punion \unstdJ\;\land\; \forall j \in J: \exists i \in I: \unstdI_{i} \subset \unstdJ_{j}. 
\end{align*}
If $I$ and $J$ are not comparable, we write $I \perp J$.
As an example: $\{\{2\},\{3\},\{1\}\} \le \{\{2,3\},\{1\}\}$, while $\{\{2\},\{3\},\{1\}\} \perp \{\{5\},\{3\},\{4\}\}$ and $\{\{1,2\},\{3\}\} \perp \{\{1\},\{2,3\}\}$.

\item[--] We denote with $\bigsigma_{n}$ the symmetric group of order $n$,
\begin{align*}
\bigsigma_{n}:=\{f|\;f:[n] \to [n]\;\text{is a bijection} \}.
\end{align*} Let $\bigsigma:= \bigcup_{n}\bigsigma_{n}$ denote the set of all permutations. Notice that $\bigsigma_{0}:=\{f|f:\, \emptyset \to \emptyset\}$ contains the unique empty permutation. We denote elements of $\bigsigma$, with Greek letters, for example, $\sigma \in \bigsigma$. We use one-line notation, for example,
\begin{align*}
    \bigsigma_{2} = \{12,21\}.
\end{align*}

\item[--] \label{def:standardization_word} Let $A$ denote a set which we call \textit{alphabet}. We call its elements \textit{letters}. Denote with $A^{*}$ the set of words on the alphabet, i.e. finite sequences of elements of $A$. If $A$ is a totally ordered set, define the \textit{standardization} of a word $w \in A^{*}$ as the relative order of the letters which appear in $w$. If a letter appears more than once, we order them from left to right. For example, if  $A := \N$ with the usual order
\begin{align*}
  \st\left(234121\right) = 356142. 
\end{align*}
Permutations written in one-line notation can be seen as (particular) words on positive integers. When we restrict $\sigma \in \bigsigma_{n}$ to a subset $A \subset [n]$, with $|A|=k$, we can standardize the subword to get an element of $\bigsigma_{k}$. As an example
\begin{align*}
    \st\left(645123\evaluatedAt{\{2,4,5\}}\right) &= \st\left(423\right) =  312.
\end{align*}
\item[--] If $\sigma \in \bigsigma_{n}$, then $|\sigma|:=n$. Analogously, if $\stds \in \SetStandardizedIntervalPartitions_{n}$, then $|\stds|:=n$.
\end{itemize}

\section{Finite interval partitions}
\label{sec:Finite_interval_partitions}

\subsection{Auxiliary operations}

Before introducing the algebraic operations, we need some auxiliary operations. Let $A$ be a finite subset of $\N_{\ge 1}$. The following operation yields the coarsest interval partition of $A$.
\newcommand\Succ{\mathsf{Succ}}
\begin{definition}[Cliques]
\label{def:cliques}
\begin{align*}
      \cliques: \{A|A \subset \N_{\ge 1}, |A| < \infty\} &\to    \SetIntervalPartitions\\
      A&\mapsto \{[x]_{\sim \langle \Succ_A \rangle}| x \in A\}
\end{align*}
where $(x,y) \in  \Succ_A \subset A \times A \iff y = x+1$ and $\langle  \Succ_A\rangle $ is the equivalence relation generated by $ \Succ_A$. 
    \end{definition}
\begin{example}
\begin{align*}
      \cliques(\{2,4,5,6\}) &= \{\{2\},\{4,5,6\}\}\\\\
      \cliques(\emptyset) &= \emptyset.
\end{align*}
\end{example}
\begin{remark}
\label{rem:IfinercliquesA}
Let $\unstdI$ be any partition of $A$, then 
\begin{align*}
\unstdI \text{ is an interval partition of $A$ }
\Leftrightarrow
\unstdI \le \cliques(A).
\end{align*}

\end{remark}

Now fix an interval partition $\unstdI \in \SetIntervalPartitions$ and a finite subset of positive integers, $A \subset \N_{\ge 1}$. We define an interval partition of $A \cap \punion\unstdI$ as follows.

\begin{definition}[Cliques of $A$ through $\unstdI$]
\label{def:cliques_A_through_I}
Let $\unstdI \in \SetIntervalPartitions$ and $A \subset \N_{\ge 1}, A < \infty$.
\begin{align*}
\unstdI(A)&:=\{\unstdI_{j} \cap c|\; c \in \cliques(A), j \in J\}\setminus \{\emptyset\}.
\end{align*}
 \end{definition}
\begin{remark}
 Since the intersection of two intervals is again an interval, $\unstdI(A)$ is a set of intervals. These intervals are pairwise disjoint since $\unstdI$ and $\cliques(A)$ are partitions and this implies that $\unstdI_{i}\cap c$ and $\unstdI_{j}\cap c^{\prime}$ are also pairwise disjoint. We clearly have $\punion\unstdI(A)=A \cap\punion\unstdI$. Therefore, by \Cref{rem:IfinercliquesA}, one has
\begin{align*}
    \unstdI(A) &\le \cliques(A \cap \punion\unstdI).
\end{align*}
\end{remark}

\begin{example}
For $\unstdI:= \{\{2, 3, 4\}\}$ and $A := \{2,4,5\}$
we have
\begin{align*}
    \unstdI(A) =  \{\{2\},\{4\}\}.
\end{align*}
If $\unstdI:= \{\{2, 3\},\{4,5\},\{6,7\},\{8\}\}$ and $A := \{2,4,5,6\}$
we have
\begin{align*}
    \unstdI(A) =  \{\{2\},\{4,5\},\{6\}\} <  \{\{2\},\{4,5,6\}\} = \cliques(A \cap \punion\unstdI).
\end{align*}
In case $A =\emptyset$
\begin{align*}
    \unstdI(A)=\emptyset.
\end{align*}
\end{example}

Finally, we can convert elements of $\SetIntervalPartitions$, into elements of $\SetStandardizedIntervalPartitions$ as follows.
\begin{definition}[Standardization of a partition] 
\label{def:std_partition}
\begin{align*}
      \std:
      \SetIntervalPartitions
      \to  \SetStandardizedIntervalPartitions
\end{align*}
Let $i_{1},\dots,i_{\ell_{1}},\dots,i_{\ell_{2}},\dots,i_{\ell_{k}} \in \N_{\ge 1},$ where $k \in \N$, such that 
\begin{align*}
    i_{1} \prec \cdots \prec i_{\ell_{1}} < i_{\ell_{1}+1}\prec \cdots \prec i_{\ell_{2}} < \cdots < i_{\ell_{k-1}+1} \prec \cdots \prec i_{\ell_{k}}
\end{align*} where $i \prec j :\iff j-i = 1$. In other words, we consider an interval partition with $k$ blocks. Then define
\begin{align*}
      &\std\left(\left\{\{i_{1},...,i_{\ell_{1}}\},\{i_{\ell_{1}+1},...,i_{\ell_{2}}\},...,\{i_{\ell_{k-1}+1},...,i_{\ell_{k}}\}\right\}\right)\\ &:=  \{\{1,...,\ell_{1}\},\{\ell_{1}+1,...,\ell_{2} \},...,\{\ell_{k-1}+1,...,\ell_{k}\}\}
\end{align*}
with
\begin{align*}
   1 \prec \cdots \prec \ell_{1} \prec \ell_{1}+1 \prec \cdots \prec \ell_{2} \prec \cdots \prec \ell_{k-1}+1 \prec \cdots \prec \ell_{k}.
\end{align*}
\end{definition}
\begin{example}
\begin{align*}
      \std(\{\{2\},\{4,5\},\{6\}\} ) = \{\{1\},\{2,3\},\{4\}\} 
\end{align*}
\end{example}
\begin{remark}
Let $\unstdI \in \SetIntervalPartitions$. We can turn $A \subset \punion\unstdI$, into a standardized interval partition, namely
\begin{align*}
   \std(\unstdI(A)). 
\end{align*}
\begin{example}
If $\unstdI:= \{\{2, 3\},\{4,5\},\{6,7\},\{8\}\}$ and $A := \{2,4,5,6\}$, we have
\begin{align*}
    \std(\unstdI(A)) &= \{\{1\},\{2,3\},\{4\}\}.
\end{align*}
\end{example}
\end{remark}

\subsubsection{Gluing partitions}

We now introduce a binary operation on interval partitions.

\begin{definition}[Gluing partitions]
\label{def:gluing_partitions}
Let $\unstdI, \unstdJ \in \SetIntervalPartitions$.
First define the relation
\begin{align*}
    R_{\unstdI,\unstdJ} \subset (\unstdI \cup \unstdJ) \times (\unstdI \cup \unstdJ)\\ 
     (E,F) \in R_{\unstdI,\unstdJ} \iff E \cap F \neq \emptyset.
\end{align*}

Then define the binary operation as 
\begin{align*}
\unstdI \gluepartitions \unstdJ:= \left\{ \bigcup_{X \in [T]_{\sim\langle R_{\unstdI,\unstdJ} \rangle}} X \Bigg|\; T \in \unstdI \cup \unstdJ\right\},
\end{align*}
where $\sim\langle R_{\unstdI,\unstdJ} \rangle$ is the smallest equivalence relation which contains $R_{\unstdI,\unstdJ}$.
\end{definition}
\begin{example}
Let $\unstdI=\{\{1\},\{2\},\{3,4\},\{5,6,7\}\}$ and $\unstdJ=\{\{2,3\},\{4,5\}\}$. Then $$\unstdI \gluepartitions \unstdJ=\{\{1\},\{2,3,4,5,6,7\}\}.$$ We visualize $R_{\unstdI,\unstdJ}$ in the following picture
\begin{figure}[H]
  \centering
 \begin{tikzpicture}[scale=0.8, rotate=180]
  \tikzset{vertex/.style={rectangle,draw,minimum size=1cm}}
  \tikzset{left/.style={yshift=1.25cm}}
  \tikzset{right/.style={yshift=-1.25cm}}
  \node[vertex,left,draw=none] (A0) at (6.5,0.65) {$\mathcal{I}=$};
  \node[vertex,left] (A1) at (5,0.65) {$\{1\}$};
  \node[vertex,left] (A2) at (3,0.65) {$\{2\}$};
  \node[vertex,left] (A3) at (1,0.65) {$\{3,4\}$};
   \node[vertex,left] (A4) at (-1,0.65) {$\{5,6,7\}$};
  \node[vertex,right] (B1) at (1,0) {$\{4,5\}$};
  \node[vertex,right] (B2) at (3,0) {$\{2,3\}$};
   \node[vertex,right,draw=none] (B0) at (6.5,0){$\mathcal{J}=$};
  \draw [->, thick] (A0);
  \draw [->, thick] (B0);
  \draw [-, thick] (A2) -- (B2);
  \draw [-, thick] (B2) -- (A3);
  \draw [-, thick] (A3) -- (B1);
  \draw [-, thick] (B1) -- (A4);
\end{tikzpicture}
\end{figure}
\end{example}

We now state several lemmas that clarify the behavior of the gluing operation.

\begin{lemma}
\label{lemma:gluing_welldefined}
Let $\unstdI, \unstdJ \in \SetIntervalPartitions$. Then
\begin{align*}
\unstdI \gluepartitions \unstdJ \in \SetIntervalPartitions\left(\punion\unstdI \cup \punion\unstdJ\right).
\end{align*}
In particular $\forall \unstdI, \unstdJ \in \SetIntervalPartitions,\;\; \unstdI \gluepartitions \unstdJ \in \SetIntervalPartitions.$
\end{lemma}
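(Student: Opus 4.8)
The plan is to verify that $\unstdI \gluepartitions \unstdJ$ satisfies the four defining conditions of an element of $\SetIntervalPartitions(\punion\unstdI \cup \punion\unstdJ)$: its blocks are nonempty, pairwise disjoint, cover exactly $\punion\unstdI \cup \punion\unstdJ$, and are each intervals of $\N_{\ge 1}$. Throughout I would use the observation that the blocks of $\unstdI \gluepartitions \unstdJ$ are indexed by the equivalence classes of $\sim\langle R_{\unstdI,\unstdJ} \rangle$ on the finite set $\unstdI \cup \unstdJ$, the block attached to a class $[T]$ being $\bigcup_{X \in [T]} X$. Since $R_{\unstdI,\unstdJ}$ is symmetric and reflexive (every nonempty block meets itself), its generated equivalence relation is exactly path-connectivity in the graph on $\unstdI \cup \unstdJ$ whose edges are the overlapping pairs of blocks.

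The three partition conditions are quick. Nonemptiness is inherited: every $T \in \unstdI \cup \unstdJ$ is nonempty, so any union of such blocks is nonempty. For the ground set, $\bigcup_{[T]} \bigl(\bigcup_{X \in [T]} X\bigr) = \bigcup_{T \in \unstdI \cup \unstdJ} T = \punion\unstdI \cup \punion\unstdJ$. For disjointness I argue by contradiction: if two blocks coming from classes $[S]$ and $[T]$ shared a point $a$, then $a$ would lie in some $X \in [S]$ and some $Y \in [T]$, forcing $X \cap Y \neq \emptyset$, hence $(X,Y) \in R_{\unstdI,\unstdJ}$ and thus $X \sim Y$; this would give $[S] = [T]$, so distinct classes yield disjoint blocks.

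The interval property is the crux, and I expect it to be the main obstacle. I would isolate it as the combinatorial fact that a finite family of intervals which is connected under the ``nonempty intersection'' relation has a union that is again an interval; applying it to the member intervals of a single class $[T]$ --- which form a connected family by the remark above --- shows that $\bigcup_{X \in [T]} X$ is an interval. I would prove this fact by a no-gap argument: writing $U$ for the union, $m := \min U$ and $M := \max U$, suppose some integer $x$ with $m < x < M$ were absent from $U$. Each interval in the family avoids $x$, so being an interval it lies entirely in $\{y < x\}$ or entirely in $\{y > x\}$; both parts are nonempty (they contain $m$ and $M$ respectively) and no interval of one part can overlap an interval of the other, contradicting connectedness. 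Hence $U = [m,M] \cap \N_{\ge 1}$ is an interval. The final ``in particular'' clause is then immediate, since $\SetIntervalPartitions(A) \subseteq \SetIntervalPartitions$ for every finite $A \subset \N_{\ge 1}$.
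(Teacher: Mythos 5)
Your proof is correct, and its partition-axiom portion (nonemptiness, covering, disjointness of the blocks attached to equivalence classes) coincides with the paper's argument essentially verbatim. Where you genuinely diverge is the interval property, which is indeed the crux. The paper enumerates each equivalence class as $X_{1},\dots,X_{n}$ with $X_{i}\cap X_{i+1}\neq\emptyset$ for every consecutive pair and then inducts on the fact that the union of two intersecting intervals is an interval. Your route replaces this with a global ``no-gap'' separation argument: a missing point $x$ strictly between $\min U$ and $\max U$ would split the class into two nonempty subfamilies (intervals below $x$ and intervals above $x$) with no edge of the intersection graph between them, contradicting connectedness of the class. The comparison is instructive: the paper's literal claim is actually too strong, since it asserts a Hamiltonian-path-like ordering of the class, which a connected intersection graph need not admit --- take $\unstdI=\{\{1,\dots,10\}\}$ and $\unstdJ=\{\{1,2\},\{4,5\},\{7,8\}\}$, whose single class is a star $K_{1,3}$ with no such ordering. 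The paper's induction is easily repaired by the weaker, true statement that the vertices of a connected graph can be ordered so that each new vertex is adjacent to some earlier one (then one inducts with ``an interval meeting an interval union which is itself an interval yields an interval''), but your separation argument needs no ordering of the class at all: it is both more elementary and immune to this subtlety, at the modest cost of invoking min/max of the finite union. The final ``in particular'' clause is immediate in both treatments.
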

\begin{proof}
Let $T \in \unstdI \cup  \unstdJ$ and  consider its equivalence class
\begin{align*}
    [T]_{\sim\langle R_{\unstdI,\unstdJ} \rangle}.
\end{align*}
From the definition of $\langle R_{\unstdI,\unstdJ} \rangle$ it follows that we can write 
\begin{align*}
    [T]_{\sim\langle R_{\unstdI,\unstdJ} \rangle} = \{X_{1},...,X_{n}\}\;\text{where}\;X_{i} \in \unstdI \lor X_{i} \in \unstdJ
\end{align*}
for some positive integer $n$, where $\forall i \in \{1,...,n-1\}: X_{i} \cap X_{i+1} \neq \emptyset$.
Obviously, the union of two intersecting intervals is again an interval.
It then follows by induction that 
\begin{align*}
    \bigcup_{i=1}^{n}X_{i}
\end{align*}
is an interval. It remains to show that 
\begin{align*}
\unstdI \gluepartitions \unstdJ= \left\{ \bigcup_{X \in [T]_{\sim\langle R_{\unstdI,\unstdJ} \rangle}} X \Bigg|\; T \in \unstdI \cup \unstdJ\right\}
\end{align*}
is a partition of $\punion\unstdI \cup \punion\unstdJ$. Clearly
\begin{align*}
\bigcup_{T \in \unstdI \cup \unstdJ} \bigcup_{X \in [T]_{\sim\langle R_{\unstdI,\unstdJ} \rangle}} X = \punion\unstdI \cup \punion\unstdJ.
\end{align*}
Since
\begin{align*}
\left\{[T]_{\sim\langle R_{\unstdI,\unstdJ} \rangle} \Bigg|\; T \in \unstdI \cup \unstdJ\right\}
\end{align*}
is a partition of the blocks of $\unstdI \cup \unstdJ$, we have that
\begin{align*}
   \forall  T \in \unstdI \cup \unstdJ:  [T]_{\sim\langle R_{\unstdI,\unstdJ} \rangle} \neq \emptyset.
\end{align*}
and therefore $\bigcup_{X \in [T]_{\sim\langle R_{\unstdI,\unstdJ} \rangle}} X \neq \emptyset$. Now let
\begin{align*}
\bigcup_{X \in [T]_{\sim\langle R_{\unstdI,\unstdJ} \rangle}} X \cap \bigcup_{X \in [S]_{\sim\langle R_{\unstdI,\unstdJ} \rangle}} X \neq \emptyset.
\end{align*}
This means $\exists X \in [T]_{\sim\langle R_{\unstdI,\unstdJ} \rangle}$ and $\exists Y \in [S]_{\sim\langle R_{\unstdI,\unstdJ} \rangle}$ such that $X \cap Y \neq \emptyset$. Therefore $[T]_{\sim\langle R_{\unstdI,\unstdJ} \rangle} =[S]_{\sim\langle R_{\unstdI,\unstdJ} \rangle}$, which means
\begin{align*}
\bigcup_{X \in [T]_{\sim\langle R_{\unstdI,\unstdJ} \rangle}} X = \bigcup_{X \in [S]_{\sim\langle R_{\unstdI,\unstdJ} \rangle}} X.
\end{align*}
\end{proof}

This operation is associative.
\begin{lemma}
\label{lemma:gluing_associative}
Let $\unstdI,\unstdI^{\prime},\unstdI^{\prime\prime} \in \SetIntervalPartitions$. Then
\begin{align*}
(\unstdI \gluepartitions \unstdI^{\prime})\gluepartitions \unstdI^{\prime \prime} = \unstdI \gluepartitions (\unstdI^{\prime}\gluepartitions \unstdI^{\prime \prime}).
\end{align*}
\end{lemma}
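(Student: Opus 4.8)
The plan is to replace the somewhat unwieldy equivalence-class definition of $\gluepartitions$ by an encoding under which gluing becomes a manifestly associative operation, namely union of sets. I would encode each $\unstdI \in \SetIntervalPartitions$ by its ground set $\punion\unstdI$ together with its set of \emph{bonds}
$$\beta(\unstdI) := \{ j \in \N_{\ge 1} \mid j \text{ and } j+1 \text{ lie in a common block of } \unstdI \}.$$
The first thing to record is that $\unstdI$ is recovered from the pair $(\punion\unstdI, \beta(\unstdI))$: since every block is an interval, two elements $x \le y$ of $\punion\unstdI$ lie in a common block if and only if $x, x+1, \dots, y \in \punion\unstdI$ and $x, x+1, \dots, y-1 \in \beta(\unstdI)$. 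Hence $\unstdI \mapsto (\punion\unstdI, \beta(\unstdI))$ is injective, and to prove associativity it suffices to compare the two triple gluings on the level of ground sets and of bond sets.

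The heart of the matter is the identity, for all $\unstdI, \unstdJ$,
$$\punion(\unstdI \gluepartitions \unstdJ) = \punion\unstdI \cup \punion\unstdJ, \qquad \beta(\unstdI \gluepartitions \unstdJ) = \beta(\unstdI) \cup \beta(\unstdJ).$$
The ground-set equality is exactly \Cref{lemma:gluing_welldefined}. For the bonds, the inclusion $\beta(\unstdI) \cup \beta(\unstdJ) \subseteq \beta(\unstdI \gluepartitions \unstdJ)$ is immediate: if $j, j+1$ lie in a common block $X \in \unstdI \cup \unstdJ$, then $X$ is contained in a single glued block, so $j$ is a bond of $\unstdI \gluepartitions \unstdJ$.

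The reverse inclusion is the step I expect to be the main obstacle. Suppose $j \in \beta(\unstdI \gluepartitions \unstdJ)$, so $j$ and $j+1$ lie in the same glued block, i.e.\ $j \in X$ and $j+1 \in X'$ for blocks $X, X' \in \unstdI \cup \unstdJ$ belonging to the same class of $\sim\langle R_{\unstdI,\unstdJ}\rangle$. By definition of the generated equivalence relation there is a chain $X = Y_1, \dots, Y_m = X'$ in $\unstdI \cup \unstdJ$ with $Y_i \cap Y_{i+1} \neq \emptyset$. I would then show that some single $Y_i$ contains both $j$ and $j+1$, which forces $j \in \beta(\unstdI)$ or $j \in \beta(\unstdJ)$ according as that $Y_i$ is a block of $\unstdI$ or of $\unstdJ$. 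The decisive sub-claim is that no interval in the chain can cross the gap between $j$ and $j+1$ without containing both: any interval meeting $\{x : x \le j\}$ and also containing a point $\ge j+1$ must, being an interval, contain all of $\{j, j+1\}$. Assuming for contradiction that no $Y_i$ contains both $j$ and $j+1$, an induction along the chain starting from $Y_1 \ni j$ shows that every $Y_i$ stays within $\{x : x \le j\}$, so the point $j+1 \in Y_m$ is never reached — a contradiction.

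Finally I would conclude. Iterating the ground-set identity shows that both $(\unstdI \gluepartitions \unstdI') \gluepartitions \unstdI''$ and $\unstdI \gluepartitions (\unstdI' \gluepartitions \unstdI'')$ have ground set $\punion\unstdI \cup \punion\unstdI' \cup \punion\unstdI''$, and iterating the bond identity shows that both have bond set $\beta(\unstdI) \cup \beta(\unstdI') \cup \beta(\unstdI'')$, where associativity of $\cup$ makes the order of association irrelevant. By the injectivity of $\unstdI \mapsto (\punion\unstdI, \beta(\unstdI))$ the two partitions coincide, which is the claim.
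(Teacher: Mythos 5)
Your proof is correct, and it takes a genuinely different route from the paper's. The paper proves \Cref{lemma:gluing_associative} in its appendix by abstracting the gluing operation: it introduces a set $\Omega$ with a compatibility map $Q:\Omega\times\Omega\to\{0,1\}$ and a merge map $m:2^{\Omega}\setminus\{\emptyset\}\to\Omega$ subject to four axioms, defines a compression operator $A\mapsto\{m([x]_{\sim_A})\mid x\in A\}$, proves the exchange identity $\mathsf{compr}(A\cup B)=\mathsf{compr}(\mathsf{compr}(A)\cup B)$, and deduces associativity of $(A,B)\mapsto\mathsf{compr}(A\cup B)$ formally; gluing is then the special case $\Omega=2^{\N_{\ge1}}\setminus\{\emptyset\}$ with $Q$ given by nonempty intersection and $m$ given by union. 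That argument never uses that blocks are intervals, so it proves associativity for gluing of arbitrary families of nonempty sets, and the appendix exhibits further instances of the axioms. Your argument, by contrast, exploits the interval structure twice --- in the injectivity of the encoding $\unstdI\mapsto(\punion\unstdI,\beta(\unstdI))$ and in the chain-crossing argument for $\beta(\unstdI\gluepartitions\unstdJ)\subseteq\beta(\unstdI)\cup\beta(\unstdJ)$ --- and is therefore specific to interval partitions; in exchange it is shorter, elementary, and self-contained, reducing associativity to associativity of set union, and it yields as a byproduct the explicit description $\beta(\unstdI\gluepartitions\unstdJ)=\beta(\unstdI)\cup\beta(\unstdJ)$ of the glued partition, which the paper's proof does not provide. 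Note that both proofs lean on \Cref{lemma:gluing_welldefined}: you need it both for the ground-set identity and to know that the glued objects are again interval partitions (so that the encoding and its injectivity apply to them), while the paper needs it to restrict its abstract associative operation back to $\SetIntervalPartitions$.
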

For a proof of this result, see \Cref{lemma: associativity gluing}, which can be seen as an instance of a more general phenomenon, we refer to \Cref{app:sec:gluing_associative}. 
\begin{lemma}

\label{lemma:gluing_finess}
Let $A, A^{\prime} \subset \N_{\ge 1}$ be finite subsets. Let $\unstdI,\unstdJ \in \SetIntervalPartitions(A)$ and $\unstdI^{\prime},\unstdJ^{\prime} \in \SetIntervalPartitions(A^{\prime})$. If
\begin{align*}
\unstdI \le \unstdJ \;\text{and}\; \unstdI^{\prime} \le  \unstdJ^{\prime},
\end{align*}
then
\begin{align*}
\unstdI \gluepartitions \unstdI^{\prime} \le \unstdJ \gluepartitions \unstdJ^{\prime}.
\end{align*}
\end{lemma}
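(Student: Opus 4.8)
The plan is to unwind the definition of $\le$: to prove $\unstdI \gluepartitions \unstdI' \le \unstdJ \gluepartitions \unstdJ'$ I must check that the two glued partitions have the same ground set and that every block of $\unstdI \gluepartitions \unstdI'$ is contained in a block of $\unstdJ \gluepartitions \unstdJ'$. The ground sets agree for free: by \Cref{lemma:gluing_welldefined}, $\punion(\unstdI \gluepartitions \unstdI') = \punion\unstdI \cup \punion\unstdI' = A \cup A' = \punion\unstdJ \cup \punion\unstdJ' = \punion(\unstdJ \gluepartitions \unstdJ')$. The conceptual reason the refinement holds is that gluing replaces the two partitions by the equivalence relation on $A \cup A'$ generated by their two ``lie-in-a-common-block'' relations, and this passage is monotone; since $\unstdI \le \unstdJ$ and $\unstdI' \le \unstdJ'$ say exactly that each block of $\unstdI$ (resp.\ $\unstdI'$) sits inside a block of $\unstdJ$ (resp.\ $\unstdJ'$), the generated relation on the left is contained in the one on the right, which is refinement of the glued partitions.

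To carry this out at the level of blocks, fix a block $\hat{D} \in \unstdI \gluepartitions \unstdI'$. By \Cref{def:gluing_partitions} it is the union $\hat{D} = \bigcup_{a} X_a$ of one $\sim\langle R_{\unstdI,\unstdI'} \rangle$-class $\{X_a\}$ of blocks of $\unstdI \cup \unstdI'$, inside which any two blocks are linked by a chain of pairwise intersecting members. For each $a$ I would enlarge $X_a$ to a block $Y_a$ containing it: if $X_a \in \unstdI$ the hypothesis $\unstdI \le \unstdJ$ yields $Y_a \in \unstdJ$ with $X_a \subset Y_a$, and if $X_a \in \unstdI'$ then $\unstdI' \le \unstdJ'$ yields $Y_a \in \unstdJ'$ with $X_a \subset Y_a$. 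The key point is that enlargement preserves intersections, $Y_a \cap Y_b \supseteq X_a \cap X_b$, so every chain of intersecting $X$'s gives a chain of intersecting $Y$'s; hence the $Y_a$ all lie in a single $\sim\langle R_{\unstdJ,\unstdJ'} \rangle$-class, whose union is a single block $\hat{C} \in \unstdJ \gluepartitions \unstdJ'$. Then $\hat{D} = \bigcup_a X_a \subset \bigcup_a Y_a \subset \hat{C}$, and since $\hat{D}$ was arbitrary this establishes the required refinement.

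I expect the only delicate step to be the middle one --- that passing from each small block $X_a$ to a containing block $Y_a$ cannot break the connectivity that defines a single glued block, so that one connected component ``upstairs'' is sent into one connected component ``downstairs''. This is precisely where both hypotheses are used, through the fact that each block of the finer partition lies in a block of the coarser one; the inclusion $Y_a \cap Y_b \supseteq X_a \cap X_b$ is exactly what makes the argument go through. Everything else (equality of ground sets, and the bookkeeping that a class and its union behave as claimed) is immediate from \Cref{lemma:gluing_welldefined} and \Cref{def:gluing_partitions}.
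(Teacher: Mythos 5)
Your proof is correct and follows essentially the same route as the paper's: take a block of $\unstdI \gluepartitions \unstdI'$, write it as a union of blocks of $\unstdI \cup \unstdI'$ connected by intersections, enlarge each block to a containing block of $\unstdJ \cup \unstdJ'$ using the hypotheses, and use $Y_a \cap Y_b \supseteq X_a \cap X_b$ to conclude the enlarged blocks stay in one $\sim\langle R_{\unstdJ,\unstdJ'}\rangle$-class. If anything, your write-up is slightly more careful than the paper's (you verify the ground sets agree and phrase connectivity as chains between arbitrary pairs of blocks rather than one global chain), but the argument is the same.
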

\begin{proof}
Let $z \in \unstdI \gluepartitions \unstdI^{\prime}$. Then
\begin{align*}
   z = \bigcup_{i=1}^{m}X_{i},
\end{align*}
where $\forall i\in\{1,...,m-1\}: X_{i} \cap X_{i+1} \neq \emptyset$ and $X_{i} \in \unstdI$ or $X_{i} \in \unstdI^{\prime}$. From the hypothesis, we know that 
\begin{align*}
    \forall  i\in\{1,...,m\}: \exists Y_{i} \in \unstdJ \cup \unstdJ^{\prime}: X_{i} \subset Y_{i}.
\end{align*}
Now since
\begin{align*}
    \forall  i\in\{1,...,m-1\}: Y_{i} \cap Y_{i+1} \supset X_i \cap X_{i+1} \neq \emptyset,
\end{align*}
We have that $\exists w  \in \unstdJ \gluepartitions \unstdJ^{\prime}$ such that 
$z \subset \bigcup_{i=1}^{m} Y_{i} \subset  w$ and we are done. 
\end{proof}

\begin{corollary}
\label{corollary:gluing_finess}
Let $\unstdI \in \SetIntervalPartitions$, $A, A^{\prime} \subset \N_{\ge 1}, |A|,|A^{\prime}| < \infty$. Then
\begin{align*}
\unstdI(A) \gluepartitions \unstdI(A^{\prime}) \le \unstdI(A \cup A^{\prime}).
\end{align*}
\end{corollary}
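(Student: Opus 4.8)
The plan is to verify the two requirements of the order $\le$ separately: equality of ground-sets, and that every block of the left-hand side sits inside a block of the right-hand side. For the ground-sets, I would note that $\unstdI(A)$ is an interval partition of $A\cap\punion\unstdI$ and $\unstdI(A')$ one of $A'\cap\punion\unstdI$, so by \Cref{lemma:gluing_welldefined} the gluing $\unstdI(A)\gluepartitions\unstdI(A')$ is an interval partition of $(A\cap\punion\unstdI)\cup(A'\cap\punion\unstdI)=(A\cup A')\cap\punion\unstdI$, which is exactly the ground-set of $\unstdI(A\cup A')$. Thus the ground-sets coincide and it remains to check the refinement condition.

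The key step is the following containment claim: every block of $\unstdI(A)$ is contained in a single block of $\unstdI(A\cup A')$, and likewise for $\unstdI(A')$. To prove it, recall that a block of $\unstdI(A)$ has the form $\unstdI_j\cap c$ with $c\in\cliques(A)$ and $\unstdI_j\cap c\neq\emptyset$. Since $c$ is a contiguous interval and $c\subseteq A\subseteq A\cup A'$, all elements of $c$ lie in one equivalence class of $\langle\Succ_{A\cup A'}\rangle$, i.e. $c$ is contained in a unique clique $\tilde c\in\cliques(A\cup A')$. Hence $\unstdI_j\cap c\subseteq\unstdI_j\cap\tilde c$, and $\unstdI_j\cap\tilde c$ is a block of $\unstdI(A\cup A')$, being nonempty because it contains $\unstdI_j\cap c$.

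With the claim in hand, I would take an arbitrary block $z$ of $\unstdI(A)\gluepartitions\unstdI(A')$ and, exactly as in the proof of \Cref{lemma:gluing_finess}, write $z=\bigcup_{i=1}^{m}X_i$ where each $X_i\in\unstdI(A)\cup\unstdI(A')$ and $X_i\cap X_{i+1}\neq\emptyset$ for $i<m$. By the claim each $X_i$ is contained in some block $W_i$ of $\unstdI(A\cup A')$. From $W_i\cap W_{i+1}\supseteq X_i\cap X_{i+1}\neq\emptyset$ and the fact that $\unstdI(A\cup A')$ is a partition, consecutive $W_i$ coincide, so all the $W_i$ equal a single block $W$; therefore $z=\bigcup_{i=1}^{m}X_i\subseteq W$. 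This establishes the refinement condition, and together with the ground-set equality it yields $\unstdI(A)\gluepartitions\unstdI(A')\le\unstdI(A\cup A')$.

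I expect the only real content to be the containment claim of the second paragraph; everything else is ground-set bookkeeping and the standard ``intersecting blocks of a partition coincide'' argument already used for \Cref{lemma:gluing_finess}. A clean alternative, avoiding the explicit gluing decomposition, would be to exhibit interval partitions $\unstdJ,\unstdJ'$ of $A\cap\punion\unstdI$ and $A'\cap\punion\unstdI$ with $\unstdI(A)\le\unstdJ$, $\unstdI(A')\le\unstdJ'$ and $\unstdJ\gluepartitions\unstdJ'=\unstdI(A\cup A')$, and then invoke \Cref{lemma:gluing_finess} as a black box; but pinning down such a factorization of $\unstdI(A\cup A')$ appears more cumbersome than the direct argument above, so I would favor the direct route.
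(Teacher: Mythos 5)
Your proof is correct, and its mathematical heart --- the observation that each clique $c \in \cliques(A)$ sits inside a unique clique of $A \cup A'$, so that every block $\unstdI_j \cap c$ of $\unstdI(A)$ is contained in a block $\unstdI_j \cap \tilde c$ of $\unstdI(A\cup A')$ --- is exactly the paper's key step. Where you diverge is in the final assembly: the paper invokes \Cref{lemma:gluing_finess} as a black box, using precisely the ``factorization'' you dismissed as cumbersome, which turns out to be trivial: one takes $\unstdJ = \unstdJ' = \unstdI(A\cup A')$ itself, since gluing a partition with itself returns that partition, so $\unstdJ \gluepartitions \unstdJ' = \unstdI(A \cup A')$ with no work. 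You instead inline the proof of \Cref{lemma:gluing_finess}, writing a block of the glued partition as a chain of pairwise-intersecting blocks and pushing each into a block of $\unstdI(A\cup A')$; since these all come from a single partition, consecutive ones coincide and the chain lands in one block. Each route buys something: the paper's citation is shorter, but its intermediate claims $\unstdI(A) \le \unstdI(A\cup A')$ strictly speaking violate the ground-set-equality clause in the paper's definition of $\le$ (the ground sets $A \cap \punion\unstdI$ and $(A\cup A') \cap \punion\unstdI$ need not agree), an abuse that is harmless only because the proof of \Cref{lemma:gluing_finess} never uses that clause. Your direct argument uses only block containment and checks ground sets once, at the level of the final statement, so it is the more scrupulous of the two with respect to the stated definitions.
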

\begin{proof}
Let $\unstdW^{\prime}=\unstdZ^{\prime}=\unstdI(A\cup A')$, $\unstdW=\unstdI(A)$ and $\unstdZ =\unstdJ(A)$. We just need to show that $\unstdW \le \unstdW^{\prime}$ and $\unstdZ \le \unstdZ^{\prime}$ hold. We can then apply \Cref{lemma:gluing_finess}: it follows immediately from the definition of $\gluepartitions$ that $\unstdW^{\prime} \gluepartitions \unstdZ^{\prime} =\unstdI(A\cup A') \gluepartitions \unstdI(A\cup A') = \unstdI(A\cup A')$. First, for any $c  \in \cliques(A)$
    and any $x \in c$
    \begin{align*}
        c = [x]_{\sim \langle \Succ_A \rangle}
        \subset [x]_{\sim \langle \Succ_{A\cup A'} \rangle} \in \cliques(A \cup A').
    \end{align*}

    Let $X \in \unstdI(A)$.
    Claim: there is a $Y \in \unstdI(A\cup A')$
    with
    \begin{align*}
       X \subset Y. 
    \end{align*}
    Indeed, we can write $X = c \cap \unstdI_{j}$ for some $c \in \cliques(A), \unstdI_j \in \unstdI$.
    Then, for any $x \in c$,
    \begin{align*}
        X = (c \cap \unstdI_{j})
        \subset
        ([x]_{\sim \langle \Succ_{A \cup A^{\prime}}\rangle} \cap \unstdI_{j}) =: Y
        \in \unstdI(A \cup A^{\prime}).
    \end{align*}
    Analogously, for every $X \in \unstdI(A')$
    there is a $Y \in \unstdI(A\cup A')$ with $X \subset Y$.
\end{proof}

\begin{lemma}
\label{lemma:gluing_finess3}
Let $\unstdI,\unstdI^{\prime} \in \SetIntervalPartitions$. Consider
\begin{align*}
\unstdJ = \unstdI \gluepartitions \unstdI^{\prime}.
\end{align*}
Then
\begin{align*}
\unstdI \le \unstdJ(\punion\unstdI)\;\text{and}\;\unstdI^{\prime} \le \unstdJ(\punion\unstdI^{\prime}).
\end{align*}
\end{lemma}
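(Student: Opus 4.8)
The plan is to prove the first inclusion $\unstdI \le \unstdJ(\punion\unstdI)$; the second, $\unstdI^{\prime} \le \unstdJ(\punion\unstdI^{\prime})$, then follows by the identical argument after exchanging the roles of $\unstdI$ and $\unstdI^{\prime}$, since both the set $\unstdI \cup \unstdI^{\prime}$ and the relation $R_{\unstdI,\unstdI^{\prime}}$ defining $\gluepartitions$ are symmetric in the two partitions. Unwinding the definition of $\le$, I must verify that $\unstdI$ and $\unstdJ(\punion\unstdI)$ share a ground set and that their blocks satisfy the required containments. For the ground set, \Cref{lemma:gluing_welldefined} gives $\punion\unstdJ = \punion\unstdI \cup \punion\unstdI^{\prime} \supseteq \punion\unstdI$, and the remark following \Cref{def:cliques_A_through_I} yields $\punion\unstdJ(\punion\unstdI) = \punion\unstdI \cap \punion\unstdJ = \punion\unstdI$, as needed.

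The heart of the argument is to show that each block $X \in \unstdI$ sits inside a single block of $\unstdJ(\punion\unstdI)$, realized as an intersection of two larger sets. On one hand, $X$ is an \emph{interval} contained in $\punion\unstdI$, so it lies within a single clique $c_X \in \cliques(\punion\unstdI)$, because any interval inside $\punion\unstdI$ is confined to one maximal run of consecutive integers (this is recorded as $\unstdI \le \cliques(\punion\unstdI)$ in \Cref{rem:IfinercliquesA}). On the other hand, $X \in \unstdI \subset \unstdI \cup \unstdI^{\prime}$, so by the definition of $\gluepartitions$ the block $X$ is a subset of the glued block $\unstdJ_X := \bigcup_{Y \in [X]_{\sim\langle R_{\unstdI,\unstdI^{\prime}}\rangle}} Y \in \unstdJ$ attached to its own equivalence class (as $X$ belongs to that class). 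Hence $X \subset \unstdJ_X \cap c_X$, and since $\unstdJ_X \cap c_X$ has the form $\unstdJ_k \cap c$ with $c \in \cliques(\punion\unstdI)$ and is nonempty (it contains $X$), it is a block of $\unstdJ(\punion\unstdI)$ by \Cref{def:cliques_A_through_I}.

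This already shows that every block of $\unstdI$ is contained in a block of $\unstdJ(\punion\unstdI)$. To obtain $\le$ exactly as defined, I would additionally check the converse containment: given a block $\unstdJ_k \cap c$ of $\unstdJ(\punion\unstdI)$ and a point $p$ in it, the block $X \in \unstdI$ with $p \in X$ satisfies $X \subset \unstdJ_k$ (the glued block of $X$ meets $\unstdJ_k$ in $p$, so the two glued blocks coincide) and $X \subset c$ (as $X$ is an interval through $p$ inside the maximal run $c$), whence $X \subset \unstdJ_k \cap c$. Combined with the ground-set equality, this gives $\unstdI \le \unstdJ(\punion\unstdI)$, and the symmetric computation gives $\unstdI^{\prime} \le \unstdJ(\punion\unstdI^{\prime})$.

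The argument is essentially a careful simultaneous unwinding of the three definitions $R_{\unstdI,\unstdI^{\prime}}$, $\cliques(\cdot)$, and $\unstdJ(\cdot)$, so I do not expect a serious obstacle. The one point that needs care is the interaction of the two competing operations: $\gluepartitions$ may merge blocks across $\unstdI$ and $\unstdI^{\prime}$, while passing to $\unstdJ(\punion\unstdI)$ cuts those glued blocks back down by intersecting with the cliques of $\punion\unstdI$. The observation that resolves this is that an individual block $X$ of $\unstdI$ is an interval lying entirely in $\punion\unstdI$; hence it is split neither by the clique decomposition of $\punion\unstdI$ nor by the glued block containing it, and therefore survives intact inside the single block $\unstdJ_X \cap c_X$.
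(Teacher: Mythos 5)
Your proposal is correct and follows essentially the same route as the paper's proof: each block $w \in \unstdI$ is contained in a glued block $z \in \unstdJ$ (by the definition of $\gluepartitions$) and in a clique $c \in \cliques(\punion\unstdI)$ (by \Cref{rem:IfinercliquesA}), hence in the nonempty intersection $z \cap c$, which is a block of $\unstdJ(\punion\unstdI)$. The only difference is that you additionally verify the ground-set equality and the converse containment demanded by the paper's literal definition of $\le$ (for every block of the coarser partition, some block of the finer one lies inside it), details the paper's proof leaves implicit.
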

\begin{proof}
Let $z \in \unstdJ$, then 
\begin{align*}
    z = \bigcup_{i=1}^{n}X_{i}
\end{align*}
where $\forall i \in \{1,...,n-1\}$, $X_{i} \cap X_{i+1} \neq \emptyset$ and $X_{i} \in \unstdI$ or $X_{i} \in \unstdI^{\prime}$.
Now recall that 
\begin{align*}
   \unstdJ(\punion\unstdI) &= \{c \cap \unstdJ_{i}| c \in \cliques(\punion\unstdI), i \in I\}\setminus\{\emptyset\},\\
    \unstdJ(\punion\unstdI^{\prime})) &= \{c \cap \unstdJ_{i}| c \in \cliques(\punion\unstdI^{\prime}), i \in I\}\setminus\{\emptyset\}.
\end{align*}
Let $w \in \unstdI$, then obviously there exists a $z \in \unstdJ$ such that $w \subset z$. Also by definition of $\cliques(\punion\unstdI)$, we know that $\unstdI \le \cliques(\punion\unstdI)$ and therefore $\exists c \in \cliques(\punion\unstdI)$ such that $w \subset c$. Therefore $w \subset z \cap c$ which means that $\unstdI \le \unstdJ(\punion\unstdI)$. Analogously one shows that $\unstdI^{\prime} \le \unstdJ(\punion\unstdI^{\prime})$.
\end{proof}

\subsection{Algebraic operations}

We now define algebraic operations on $\SetStandardizedIntervalPartitions$. 
We first define the free $\Q$-vector space. It can be graded according to cardinalities or the number of blocks. 

\begin{definition}[$\Q$-vector space over interval partitions] 
\label{def:free_vector_space_interval partitions}
\begin{align}
\label{eq:n_grading_partitions}
    \FreeIntPart&:= \bigoplus_{n \in \N} \Q[\SetStandardizedIntervalPartitions_{n}]\\
\label{eq:blocks_grading_partitions}
    &= \bigoplus_{\ell \in \N} \Q[\NumBlocks_{\ell}].
\end{align}
\end{definition}

\subsubsection{Products, coproducts}

We now define a (non-commutative) product on interval partitions.
\begin{definition}
\label{def:conc_int_part}
Let $\stds:=\{\stds_{1},...,\stds_{m}\},\stdt:=\{\stdt_{1},...,\stdt_{n}\} \in \SetStandardizedIntervalPartitions$. Then define
\begin{align*}
&\conc: \FreeIntPart \otimes \FreeIntPart\to \FreeIntPart\\
    & \stds \conc \stdt := \{\stds_{1},...,\stds_{m},\stdt^{\prime}_{1},...,\stdt^{\prime}_{n}\}
\end{align*}
where for $i=1,...,n$, $\stdt^{\prime}_{i}:=|\punion\stds|+\stdt_{i}$. And extend linearly.  
\end{definition}

\begin{example}
\begin{align*}
        \{\{1,2\}\} \conc  \{\{1\},\{2\}\} =   \{\{1,2\},\{3\},\{4\}\}
\end{align*}
To denote elements of $\SetStandardizedIntervalPartitions$, we will also simply write
\begin{align*}
    \adjacentSquares{2} \conc \adjacentSquares{1} \: \adjacentSquares{1}:= \{\{1,2\}\} \conc  \{\{1\},\{2\}\}  =  \{\{1,2\},\{3\},\{4\}\} =\adjacentSquares{2}\; \adjacentSquares{1}{}\;\adjacentSquares{1}
\end{align*}
and $\e:=\emptyset$.
\end{example}
\begin{proposition}
The $\conc$ product is associative.
\end{proposition}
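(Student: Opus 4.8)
The plan is to reduce associativity to the additivity of the index shift that defines the product, together with the elementary fact that the ground set of a product is the concatenation interval. Throughout, for $k \in \N$ and a block $X \subset \N_{\ge 1}$ write $k + X := \{k + x \mid x \in X\}$, and for a family of blocks $\stdt = \{\stdt_1,\dots,\stdt_n\}$ write $k + \stdt := \{k + \stdt_1,\dots,k+\stdt_n\}$. With this notation \Cref{def:conc_int_part} reads compactly as
\begin{align*}
\stds \conc \stdt = \stds \cup \bigl(|\punion\stds| + \stdt\bigr).
\end{align*}
The two facts I will use are, first, that shifting is additive, $k + (l + X) = (k+l) + X$, which is immediate from associativity of addition on $\Z$; and second, that $|\punion(\stds \conc \stdt)| = |\punion\stds| + |\punion\stdt|$.

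First I would establish the ground-set identity. Since $\stds \in \SetStandardizedIntervalPartitions$ is a standardized partition, $\punion\stds = [a]$ with $a := |\punion\stds|$, and similarly $\punion\stdt = [b]$ with $b := |\punion\stdt|$. The shifted blocks $a + \stdt_i$ cover exactly $a + [b] = \{a+1,\dots,a+b\}$, which is disjoint from $[a]$, so $\punion(\stds\conc\stdt) = [a] \uplus \{a+1,\dots,a+b\} = [a+b]$ and hence $|\punion(\stds\conc\stdt)| = a+b$. (This also records that $\conc$ indeed lands in $\SetStandardizedIntervalPartitions$, the blocks remaining disjoint intervals.)

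Next I would simply expand both triple products, writing $a := |\punion\stds|$, $b := |\punion\stdt|$, and using the ground-set identity to evaluate the shift in the outer product. On one side,
\begin{align*}
(\stds \conc \stdt) \conc \stdu
 = (\stds\conc\stdt) \cup \bigl((a+b) + \stdu\bigr)
 = \stds \cup (a + \stdt) \cup \bigl((a+b) + \stdu\bigr).
\end{align*}
On the other side, using additivity of the shift on the inner family $\stdt\conc\stdu = \stdt \cup (b + \stdu)$,
\begin{align*}
\stds \conc (\stdt \conc \stdu)
 = \stds \cup \bigl(a + (\stdt \cup (b+\stdu))\bigr)
 = \stds \cup (a+\stdt) \cup \bigl(a + (b+\stdu)\bigr)
 = \stds \cup (a+\stdt) \cup \bigl((a+b)+\stdu\bigr).
\end{align*}
The two right-hand sides coincide, so the two products are equal; bilinearity then extends the identity from basis elements to all of $\FreeIntPart$.

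There is no genuine obstacle here: the only point requiring care is the bookkeeping of the shift constant in the outer product, which is controlled precisely by the ground-set identity $|\punion(\stds\conc\stdt)| = a+b$ of the first step, after which associativity of $\conc$ is nothing more than associativity of integer addition $a + (b + \stdu) = (a+b) + \stdu$ applied block-wise.
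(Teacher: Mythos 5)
Your proof is correct, and it is precisely the argument the authors have in mind: the paper itself states only that ``the proof is immediate and is left to the reader,'' and your writeup supplies exactly the natural bookkeeping---the ground-set identity $|\punion(\stds\conc\stdt)| = |\punion\stds| + |\punion\stdt|$ and the reduction of associativity of $\conc$ to associativity of integer addition applied block-wise, extended bilinearly to $\FreeIntPart$.
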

\begin{proof}
The proof is immediate and is left to the reader.
\end{proof}

We now define a coproduct which is based on the gluing operation.
\begin{definition}
\label{def:coproduct_partitions}
Let $ \stds\in \SetStandardizedIntervalPartitions$.

We define a (cocommutative) coproduct as follows
\begin{align*}
\coqspart(\stds)&:=\sum_{A \cup A^{\prime} = \punion\stds}\;\sum_{\substack{\unstdI \in \SetIntervalPartitions(A),\;\unstdI^{\prime} \in \SetIntervalPartitions(A^{\prime})  \\\\ \unstdI \gluepartitions \unstdI^{\prime} = \stds}}\std(\unstdI) \otimes \std(\unstdI^{\prime})
\end{align*}
and extend linearly. Recall that from \Cref{lemma:gluing_finess3}, it follows that $\unstdI \gluepartitions \unstdI^{\prime} = \stds$ implies that $\unstdI\le \stds(A)$ and $\unstdI \le \stds(A^{\prime})$.
\end{definition}

\begin{example}
\begin{align*}
\coqspart(\{\{1,2\}\})&=\coqspart(\;\adjacentSquares{2}\;)
= \e \otimes  \adjacentSquares{2} + 2  \ \adjacentSquares{1} \otimes  \adjacentSquares{2} + \adjacentSquares{2} \otimes \e \\&+ 2\ \adjacentSquares{2} \otimes \adjacentSquares{1} +  \adjacentSquares{2} \otimes  \adjacentSquares{2} + \adjacentSquares{1} \ \adjacentSquares{1} \otimes  \adjacentSquares{2} +  \adjacentSquares{2} \otimes \adjacentSquares{1}\;\adjacentSquares{1}
\end{align*}
\begin{align*}
\coqspart(\ \{\{1\},\{2\}\}\ )&=\coqspart(\; \adjacentSquares{1} \ \adjacentSquares{1} \; )= \e \otimes \adjacentSquares{1}\ \adjacentSquares{1} + 2\ \adjacentSquares{1} \otimes \adjacentSquares{1} + 2\ \adjacentSquares{1} \otimes \adjacentSquares{1} \ \adjacentSquares{1} + \adjacentSquares{1} \ \adjacentSquares{1} \otimes  \e\\&+ 2\ \adjacentSquares{1} \ \adjacentSquares{1} \otimes \adjacentSquares{1} +  \adjacentSquares{1} \ \adjacentSquares{1} \otimes\adjacentSquares{1} \ \adjacentSquares{1}
\end{align*}
\end{example}

\begin{remark}[Shuffle coproduct]
Notice that $\coqspart$ has a \quotationmarks{co-shuffle part}. Indeed
\begin{align*}
\Delta_{\shuffle}(\stds)&:=\sum_{A \uplus A^{\prime} = \punion\stds}\;\sum_{\substack{\unstdI \in \SetIntervalPartitions(A),\;\unstdI^{\prime} \in \SetIntervalPartitions(A^{\prime})  \\\\ \unstdI \gluepartitions \unstdI^{\prime} = \stds}}\std(\unstdI) \otimes \std(\unstdI^{\prime}).
\end{align*}
is a shuffle coproduct. As an example
\begin{align*}
\Delta_{\shuffle}(\ \adjacentSquares{1} \ \adjacentSquares{1} \ )&= \e \otimes \adjacentSquares{1}\ \adjacentSquares{1} +  \adjacentSquares{1} \ \adjacentSquares{1} \otimes  \e + 2\ \adjacentSquares{1} \otimes \adjacentSquares{1}\,.  
\end{align*}
More precisely, $(\FreeIntPart, \Delta_{\shuffle})$ is isomorphic to a coalgebra on words, where letters are positive integers, see \Cref{rem:words_intergers}. For example
\begin{align*}
\Delta_{\shuffle,\,\textbf{words}}(1 \conctens 1) = \e \otimes 1 \conctens 1 +  1 \conctens 1 \otimes \e + \textcolor{red}{2}\, 1 \otimes 1,
\end{align*}
where $\conctens$ denotes the concatenation of words and $\Delta_{\shuffle,\textbf{words}}$ is the usual shuffle coproduct on words.
\end{remark}
\begin{proposition}
\label{prop:coprod_partitions_coass}
The coproduct $\coqspart$ is coassociative.
\end{proposition}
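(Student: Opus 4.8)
The plan is to reduce coassociativity to the associativity of the gluing operation established in \Cref{lemma:gluing_associative}, by exhibiting one symmetric ``three-fold deconcatenation'' that both iterates of $\coqspart$ compute. I would first define the map $\coqspart^{(2)}\colon \FreeIntPart \to \FreeIntPart \otimes \FreeIntPart \otimes \FreeIntPart$ on a basis element $\stds \in \SetStandardizedIntervalPartitions$ by
\begin{align*}
\coqspart^{(2)}(\stds) := \sum_{A \cup A' \cup A'' = \punion\stds}\ \sum_{\substack{\unstdI \in \SetIntervalPartitions(A),\ \unstdI' \in \SetIntervalPartitions(A'),\ \unstdI'' \in \SetIntervalPartitions(A'') \\ \unstdI \gluepartitions \unstdI' \gluepartitions \unstdI'' = \stds}} \std(\unstdI) \otimes \std(\unstdI') \otimes \std(\unstdI''),
\end{align*}
which is well defined exactly because $\gluepartitions$ is associative, so the triple gluing $\unstdI \gluepartitions \unstdI' \gluepartitions \unstdI''$ needs no bracketing. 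It then suffices to prove $(\coqspart \otimes \id)\circ \coqspart = \coqspart^{(2)} = (\id \otimes \coqspart)\circ \coqspart$; the two outer equalities are mirror images of each other, so I would only carry out the first in detail.

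Expanding the left-hand side, the outer application of $\coqspart$ splits $\stds$ as $\unstdI_{12} \gluepartitions \unstdI_{3} = \stds$ with $\unstdI_{12} \in \SetIntervalPartitions(A_{12})$ and $\unstdI_3 \in \SetIntervalPartitions(A_3)$, $A_{12} \cup A_3 = \punion\stds$, and then records a further $\coqspart$ applied to $\std(\unstdI_{12})$ in the first two tensor slots. The decisive point is therefore to understand $\coqspart(\std(\unstdI_{12}))$, i.e. the gluing-decompositions $\unstdJ \gluepartitions \unstdJ' = \std(\unstdI_{12})$ of the standardized partition $\std(\unstdI_{12}) \in \SetStandardizedIntervalPartitions_{|A_{12}|}$, and to translate them back into decompositions of the unstandardized $\unstdI_{12}$ itself.

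The main obstacle, and the only genuinely non-formal step, is the compatibility of $\std$ with gluing-decomposition. Let $\theta\colon A_{12} \to [|A_{12}|]$ be the order isomorphism underlying $\std$, so that $\std(\unstdI_{12}) = \theta(\unstdI_{12})$ (relabel every block by $\theta$). I would prove that relabeling by $\theta^{-1}$ gives a bijection
\begin{align*}
\{(\unstdJ,\unstdJ') : \unstdJ \gluepartitions \unstdJ' = \std(\unstdI_{12})\} \ \longleftrightarrow\ \{(\unstdI_1,\unstdI_2) : \unstdI_1 \gluepartitions \unstdI_2 = \unstdI_{12}\}
\end{align*}
under which $\std(\unstdJ) = \std(\unstdI_1)$ and $\std(\unstdJ') = \std(\unstdI_2)$. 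Three observations make this work. First, by \Cref{lemma:gluing_finess3} every block of $\unstdJ$ is contained in a block of $\std(\unstdI_{12})$, and since each block of $\std(\unstdI_{12})$ is the $\theta$-image of an $\N_{\ge 1}$-interval block of $\unstdI_{12}$ on which $\theta$ restricts to a contiguous order isomorphism, $\theta^{-1}$ carries interval sub-blocks to interval sub-blocks; hence $\theta^{-1}(\unstdJ)$ is again an interval partition of a subset of $A_{12}$. Second, the gluing relation $R$ depends only on which blocks intersect, a property preserved by the block-bijection induced by $\theta^{-1}$, so $\theta^{-1}(\unstdJ) \gluepartitions \theta^{-1}(\unstdJ') = \theta^{-1}(\std(\unstdI_{12})) = \unstdI_{12}$. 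Third, $\std$ is invariant under order-preserving relabeling of the ground set, which yields the claimed equalities of standardizations.

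Granting this bijection, the left-hand side becomes a sum over triples $(\unstdI_1,\unstdI_2,\unstdI_3)$ with $A_1 \cup A_2 = A_{12}$ and $A_{12} \cup A_3 = \punion\stds$, satisfying $(\unstdI_1 \gluepartitions \unstdI_2)\gluepartitions \unstdI_3 = \stds$ and producing $\std(\unstdI_1)\otimes \std(\unstdI_2)\otimes \std(\unstdI_3)$. By \Cref{lemma:gluing_associative} the constraint $(\unstdI_1 \gluepartitions \unstdI_2)\gluepartitions \unstdI_3 = \stds$ coincides with $\unstdI_1 \gluepartitions \unstdI_2 \gluepartitions \unstdI_3 = \stds$, and $A_1 \cup A_2 \cup A_3 = \punion\stds$, so the left-hand side equals $\coqspart^{(2)}(\stds)$. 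Running the same argument on the third tensor slot shows $(\id \otimes \coqspart)\circ\coqspart$ leads instead to the constraint $\unstdI_1 \gluepartitions (\unstdI_2 \gluepartitions \unstdI_3) = \stds$, which the same associativity identifies with $\coqspart^{(2)}$; coassociativity follows.
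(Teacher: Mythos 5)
Your proof is correct and follows essentially the same route as the paper: you introduce the bracketing-free three-fold sum (well defined by \Cref{lemma:gluing_associative}) and reduce both iterated coproducts to it via a standardization-compatibility statement. Your relabeling bijection through the order isomorphism $\theta$ is exactly the content (and the proof technique, via the map $f$) of the paper's \Cref{lemma:coassociativity_intpart}.
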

We first need the following lemma.
\begin{lemma}
\label{lemma:coassociativity_intpart}
Let $\unstdW,\unstdZ\in \SetIntervalPartitions$ such that $\std(\unstdW)=\std(\unstdZ)$. Then
\begin{align*}
&=\bigcup_{A \cup A^{\prime} = \punion\unstdW}\{(\std(\unstdI),\std(\unstdI^{\prime}))|\;\unstdI \in \SetIntervalPartitions(A),\;\unstdI^{\prime} \in \SetIntervalPartitions(A^{\prime}),\;\unstdI \gluepartitions \unstdI^{\prime} = \unstdW\}\\
    &\bigcup_{B \cup B^{\prime} = \punion\unstdZ}\{(\std(\unstdJ),\std(\unstdJ^{\prime}))|\;\unstdJ \in \SetIntervalPartitions(B),\;\unstdJ^{\prime} \in \SetIntervalPartitions(B^{\prime}),\;\unstdJ \gluepartitions \unstdJ^{\prime} = \unstdZ\}.
\end{align*}
In particular, $\forall \unstdW\in \SetIntervalPartitions$
\begin{align*}
    &\bigcup_{A \cup A^{\prime} = \punion\unstdW}\{(\std(\unstdI),\std(\unstdI^{\prime}))|\;\unstdI \in \SetIntervalPartitions(A),\;\unstdI^{\prime} \in \SetIntervalPartitions(A^{\prime}),\;\unstdI \gluepartitions \unstdI^{\prime} = \unstdW\} \\
    &=\bigcup_{B \cup B^{\prime} = \punion\std(\unstdW)_{j}}\{(\std(\unstdJ),\std(\unstdJ^{\prime}))|\;\unstdJ \in \SetIntervalPartitions(B),\;\unstdJ^{\prime} \in \SetIntervalPartitions(B^{\prime}),\;\unstdJ \gluepartitions \unstdJ^{\prime} = \std(\unstdW)\}.
\end{align*}

\end{lemma}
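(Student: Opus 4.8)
The plan is to use that $\std(\unstdW)=\std(\unstdZ)$ means $\unstdW$ and $\unstdZ$ are the same interval partition up to an order-preserving relabeling of the ground set, and to transport decompositions of one into decompositions of the other along that relabeling. Concretely, writing $\stds:=\std(\unstdW)=\std(\unstdZ)$ and $n:=|\stds|$, I would take the order-preserving bijections $\psi_\unstdW:\punion\unstdW\to[n]$ and $\psi_\unstdZ:\punion\unstdZ\to[n]$ that realize the two standardizations (those sending the $k$-th smallest ground element to $k$, so that applied blockwise they send $\unstdW$ and $\unstdZ$ to $\stds$), and set $\phi:=\psi_\unstdZ^{-1}\circ\psi_\unstdW:\punion\unstdW\to\punion\unstdZ$. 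This $\phi$ is an order-preserving bijection carrying each block of $\unstdW$ onto a block of $\unstdZ$, i.e.\ $\phi(\unstdW)=\unstdZ$ when applied elementwise. It then suffices to show that $(\unstdI,\unstdI')\mapsto(\phi(\unstdI),\phi(\unstdI'))$ is a bijection between the decompositions indexing the left-hand union and those indexing the right-hand one, under which the pair $(\std(\unstdI),\std(\unstdI'))$ is unchanged.

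Next I would verify that this map lands in the correct index family. Given $A\cup A'=\punion\unstdW$ and $\unstdI\in\SetIntervalPartitions(A)$, $\unstdI'\in\SetIntervalPartitions(A')$ with $\unstdI\gluepartitions\unstdI'=\unstdW$, put $B:=\phi(A)$, $B':=\phi(A')$, $\unstdJ:=\phi(\unstdI)$ and $\unstdJ':=\phi(\unstdI')$. Then $B\cup B'=\punion\unstdZ$ since $\phi$ is bijective; and because $\phi$ is injective it both preserves and reflects nonempty intersections of blocks, so it commutes with the gluing operation (which by \Cref{def:gluing_partitions} is built purely from the ``blocks intersect'' relation, its generated equivalence, and unions). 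Hence $\unstdJ\gluepartitions\unstdJ'=\phi(\unstdI\gluepartitions\unstdI')=\phi(\unstdW)=\unstdZ$.

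The step I expect to be the main obstacle is checking that $\unstdJ$ and $\unstdJ'$ are genuinely \emph{interval} partitions, since $\phi$ is merely order-preserving and need not send intervals to intervals in general. Here I would invoke \Cref{lemma:gluing_finess3}: from $\unstdI\gluepartitions\unstdI'=\unstdW$ it follows that $\unstdI\le\unstdW(A)$, so every block of $\unstdI$ lies inside a single block $\unstdW_j$ of $\unstdW$. On that block $\phi$ restricts to an order-preserving bijection of the integer interval $\unstdW_j$ onto the integer interval $\phi(\unstdW_j)$ of the same length, which is forced to be a translation $x\mapsto x+c$ and therefore preserves consecutiveness. Consequently $\phi$ carries the interval blocks of $\unstdI$ to intervals, giving $\unstdJ\in\SetIntervalPartitions(B)$, and symmetrically $\unstdJ'\in\SetIntervalPartitions(B')$.

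Finally I would argue that standardization is preserved. Since $\std(\cdot)$ depends only on the number of ground elements together with the data of which elements, listed in increasing order, share a block, and since $\phi$ is order-preserving and maps the blocks of $\unstdI$ bijectively onto those of $\unstdJ$, the partitioned ordered sets $(A,\unstdI)$ and $(B,\unstdJ)$ carry identical such data; hence $\std(\unstdJ)=\std(\unstdI)$, and likewise $\std(\unstdJ')=\std(\unstdI')$. Running the same construction with $\phi^{-1}$ produces the inverse map, so we obtain a bijection of the two indexing families that fixes $(\std(\unstdI),\std(\unstdI'))$, whence the two unions describe the same set of pairs, as claimed. The ``in particular'' statement is then the special case $\unstdZ=\std(\unstdW)\in\SetStandardizedIntervalPartitions\subset\SetIntervalPartitions$, which is admissible because $\std$ fixes already-standardized partitions, so $\std(\std(\unstdW))=\std(\unstdW)$.
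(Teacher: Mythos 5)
Your proposal is correct and takes essentially the same approach as the paper: your map $\phi=\psi_{\unstdZ}^{-1}\circ\psi_{\unstdW}$ is exactly the paper's relabeling $f(w_{j})=z_{j}$, and both arguments transport a decomposition $(A,A',\unstdI,\unstdI')$ of $\unstdW$ along this order-preserving bijection, checking that covers, the gluing relation (which depends only on nonempty intersections of blocks), and standardization are preserved. Your extra verification that $\phi$ carries interval blocks to interval blocks (because each block of $\unstdI$ sits inside a block of $\unstdW$, on which $\phi$ is a translation) makes explicit a point the paper's proof leaves implicit.
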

\begin{proof}
We recall the notation used in \Cref{def:std_partition}
\begin{align*}
\unstdW&=\{\{w_{1},...,w_{\ell_{1}}\},\{w_{\ell_{1}+1},...,w_{\ell_{2}}\},...,\{w_{\ell_{k-1}+1},...,w_{\ell_{k}}\}\}\\
     \unstdZ&=\{\{z_{1},...,z_{\ell_{1}}\},\{z_{\ell_{1}+1},...,z_{\ell_{2}}\},...,\{z_{\ell_{k-1}+1},...,z_{\ell_{k}}\}\}\\ 
     \std(\unstdW)&=\std(\unstdZ)=  \{\{1,...,\ell_{1}\},\{\ell_{1}+1,...,\ell_{2} \},...,\{\ell_{k-1}+1,...,\ell_{k}\}\}.
\end{align*}
Consider the map
\begin{align*}
    f:\punion\unstdW &\to \punion\unstdZ\\
    w_{j} &\mapsto z_{j}
\end{align*}

Now, let $A \cup A^{\prime}=\punion\unstdW$, such that $\exists \unstdI\in \SetIntervalPartitions(A),\unstdI^{\prime}\in \SetIntervalPartitions(A^{\prime}): \unstdI \gluepartitions \unstdI^{\prime} = \unstdW$. We can write
\begin{align*}
    &\unstdI = \{\{w_{h_{1}},\dots,w_{h_{2}}\},\{w_{h_{2}+1},\dots,w_{h_{3}}\},\dots,\{w_{h_{l-1}+1},\dots,w_{h_{l}}\}\}\\
     &\unstdI^{\prime} =\{\{w_{s_{1}},\dots,w_{s_{2}}\},\{w_{s_{2}+1},\dots,w_{s_{3}}\},\dots,\{w_{s_{m-1}+1},\dots,w_{s_{m}}\}\}.
\end{align*}
where $(h_{i})$ and $(s_{i})$ are subsequences of $(\ell_{i})$.
Then
\begin{align*}
    &f(\unstdI):=\{\{f(w_{h_{1}}),\dots,f(w_{h_{2}})\},\{f(w_{h_{2}+1}),\dots,f(w_{h_{3}})\},\dots,\{f(w_{h_{l-1}+1}),\dots,f(w_{h_{l}})\}\}\\&=\{\{z_{h_{1}},\dots,z_{h_{2}}\},\{z_{h_{2}+1},\dots,z_{h_{3}}\},\dots,\{z_{h_{l-1}+1},\dots,z_{h_{l}}\}\},\\
    &f(\unstdI^{\prime}):=\{\{f(w_{s_{1}}),\dots,f(w_{s_{2}})\},\{f(w_{s_{2}+1}),\dots,f(w_{s_{3}})\},\dots,\{f(w_{s_{m-1}+1}),\dots,f(w_{s_{m}})\}\}\\&=\{\{z_{s_{1}},\dots,z_{s_{2}}\},\{z_{s_{2}+1},\dots,z_{s_{3}}\},\dots,\{z_{s_{m-1}+1},\dots,z_{s_{m}}\}\}.
\end{align*}
Now, we clearly have
\begin{align*}
   A\cup A^{\prime} = \punion\unstdW \iff  f(A)\cup f(A^{\prime}) = \punion\unstdZ
\end{align*}
and also
\begin{align*}
      &\unstdI \gluepartitions \unstdI^{\prime} = \unstdW
      \iff  f(\unstdI) \gluepartitions f(\unstdI^{\prime}) = \unstdZ.
\end{align*}
since
\begin{align*}
\{w_{h_{i-1}+1},\dots,w_{h_{i}}\} \cap \{w_{s_{j-1}+1},\dots,w_{s_{j}}\} \neq \emptyset \iff \{z_{h_{i-1}+1},\dots,z_{h_{i}}\} \cap \{z_{s_{j-1}+1},\dots,z_{s_{j}}\} \neq \emptyset.
\end{align*}
Moreover
\begin{align*}
&\std(\unstdI)=\std(\{\{w_{h_{1}},\dots,w_{h_{2}}\},\{w_{h_{2}+1},\dots,w_{h_{3}}\},\dots,\{w_{h_{l-1}+1},\dots,w_{h_{l}}\}\})\\
&=\std(\{\{z_{h_{1}},\dots,z_{h_{2}}\},\{z_{h_{2}+1},\dots,z_{h_{3}}\},\dots,\{z_{h_{l-1}+1},\dots,z_{h_{l}}\}\})=\std(f(\unstdI)).
\end{align*}

\end{proof}
\begin{proof}[Proof of \Cref{prop:coprod_partitions_coass}]
Let $\stds \in \SetStandardizedIntervalPartitions$. Since the gluing operation is associative, see \Cref{lemma:gluing_associative}, the expression
\begin{align*}
&\sum_{A \cup A^{\prime} \cup A^{\prime\prime} = \punion\stds }\;\sum_{\substack{\unstdI \in\SetIntervalPartitions(A),\unstdI^{\prime} \in\SetIntervalPartitions(A^{\prime}),\unstdI^{\prime\prime}\in\SetIntervalPartitions(A^{\prime\prime})\\\\\ \textcolor{blue}{\unstdI \gluepartitions \unstdI^{\prime} \gluepartitions \unstdI^{\prime\prime}}  = \stds}}\std(\unstdI)\otimes \std(\unstdI^{\prime})\otimes \std(\unstdI^{\prime\prime})
\end{align*}
is well-defined. 
We have
\begin{align*}
&\sum_{A \cup A^{\prime} \cup A^{\prime\prime} = \punion\stds }\;\sum_{\substack{\unstdI \in\SetIntervalPartitions(A),\unstdI^{\prime} \in\SetIntervalPartitions(A^{\prime}),\unstdI^{\prime\prime}\in\SetIntervalPartitions(A^{\prime\prime})\\\\\ \unstdI \gluepartitions \unstdI^{\prime} \gluepartitions \unstdI^{\prime\prime} = \stds}}\std(\unstdI)\otimes \std(\unstdI^{\prime})\otimes \std(\unstdI^{\prime\prime})\\&=\sum_{B \cup A^{\prime\prime} = \punion\stds }\sum_{\substack{\unstdJ \in\SetIntervalPartitions(B),\unstdI^{\prime\prime} \in\SetIntervalPartitions(A^{\prime\prime})\\\\\ \unstdJ \gluepartitions \unstdI^{\prime\prime} = \stds}}\Bigg(\sum_{A \cup A^{\prime} = B }\;\sum_{\substack{\unstdI \in\SetIntervalPartitions(A),\unstdI^{\prime} \in\SetIntervalPartitions(A^{\prime})\\\\\ \unstdI \gluepartitions \unstdI^{\prime} = \unstdJ}}\std(\unstdI)\otimes \std(\unstdI^{\prime})\Bigg)\otimes \std(\unstdI^{\prime\prime})\\&=^{\hspace{-0.7cm}\tiny \raisebox{1em}{\text{\Cref{lemma:coassociativity_intpart}}}}\sum_{B \cup A^{\prime\prime} = \punion\stds }\sum_{\substack{\unstdJ \in\SetIntervalPartitions(B),\unstdI^{\prime\prime} \in\SetIntervalPartitions(A^{\prime\prime})\\\\\ \unstdJ \gluepartitions \unstdI^{\prime\prime} = \stds}}\\&\Bigg(\sum_{X \cup Y = \cup_{i}\std(\unstdJ)_{i} }\;\sum_{\substack{\unstdI \in\SetIntervalPartitions(X),\unstdI^{\prime} \in\SetIntervalPartitions(Y)\\\\\ \unstdI \gluepartitions \unstdI^{\prime} = \std(\unstdJ)}}\std(\unstdI) \otimes \std(\unstdI^{\prime})\Bigg)\otimes \std(\unstdI^{\prime\prime})\\
&= (\coqspart \otimes \id) \circ \coqspart(\stds).
\end{align*}
Similarly, we have
\begin{align*}
&\sum_{A \cup A^{\prime} \cup A^{\prime\prime} = \punion\stds }\;\sum_{\substack{\unstdI \in\SetIntervalPartitions(A),\unstdI^{\prime} \in\SetIntervalPartitions(A^{\prime}),\unstdI^{\prime\prime}\in\SetIntervalPartitions(A^{\prime\prime})\\\\\ \unstdI \gluepartitions \unstdI^{\prime} \gluepartitions \unstdI^{\prime\prime} = \stds}}\std(\unstdI)\otimes \std(\unstdI^{\prime})\otimes \std(\unstdI^{\prime\prime})\\&=(\id \otimes \coqspart) \circ \coqspart(\stds).
\end{align*}
\end{proof}
\begin{remark}
The dual associative product is given by
\begin{align*}
    \stds\;\qspart\;\stdt &:=\sum_{\mathfrak{g} \in \SetStandardizedIntervalPartitions}\langle \stds \otimes \stdt , \coqspart(\stdg) \rangle \, \stdg\\&= \sum_{\mathfrak{g} \in \SetStandardizedIntervalPartitions}\sum_{A \cup A^{\prime} = \punion \stdg_{i}}\sum_{\substack{\unstdI \in \SetIntervalPartitions(A),\unstdI^{\prime} \in \SetIntervalPartitions(A^{\prime})\\\\ \std(\unstdI)=\stds,\; \std(\unstdI^{\prime})=\stdt  \\\\\unstdI \gluepartitions \unstdI^{\prime} = \stdg }} \stdg\\&= \sum_{n \in \N}\sum_{A \cup A^{\prime} = [n]}\sum_{\substack{\unstdI \in \SetIntervalPartitions(A),\unstdI^{\prime} \in \SetIntervalPartitions(A^{\prime})\\\\ \std(\unstdI)=\stds,\; \std(\unstdI^{\prime})=\stdt  }}\unstdI \gluepartitions \unstdI^{\prime}.
\end{align*}
\end{remark}
\begin{example}
\label{ex:product_partition}
\begin{align*}
  \adjacentSquares{2} \; \qspart \; \adjacentSquares{2} &= 2\;\adjacentSquares{2}\;\adjacentSquares{2} + 2\;\adjacentSquares{3} +\adjacentSquares{2}
\end{align*}
\end{example}
\begin{remark}[Section coefficients]
\label{rem:section_coefficients}
We can write
\begin{align*}
    \langle \stds \otimes \stdt ,\coqspart(\stdg) \rangle&=\#\{A,A^{\prime} \subset \punion \stdg_{j}\;|A\cup A^{\prime} = \punion \stdg,\; \exists!\, \unstdI \in  \SetIntervalPartitions(A), \exists! \,\unstdI^{\prime} \in  \SetIntervalPartitions(A^{\prime}),\\
    &\qquad\qquad\qquad\qquad\qquad\std(\unstdI)=\stds,\std(\unstdI^{\prime})=\stdt,\unstdI \gluepartitions \unstdI^{\prime} = \stdg
    \}.
\end{align*}
\end{remark}

\subsubsection{Bialgebras on interval partitions}
We now show that the algebraic operations are compatible. The unit and the counit maps are given by \begin{align*}
    &u: \Q \to \FreeIntPart\\
    &u: 1 \mapsto \e \\ 
&\varepsilon : \FreeIntPart \to \Q\\
    &\varepsilon(x) = \begin{cases}
    x\;\text{if}\;x \in \Q[\SetStandardizedIntervalPartitions_{0}]\\
    0\;\text{else}\end{cases}
\end{align*}
\begin{theorem}[Bialgebras on interval partitions]
\label{thm:Quasi-shuffle_deconc_HA_part}

The following holds:
\begin{itemize}
    \item[--] $(\FreeIntPart,\conc, \coqspart,u,\varepsilon)$, is a bialgebra.
    \item[--]  $(\FreeIntPart,\qspart,\deconc,u,\varepsilon)$, is a connected filtered Hopf algebra, according to both \Cref{eq:n_grading_partitions} and \Cref{eq:blocks_grading_partitions}.
\end{itemize}
\end{theorem}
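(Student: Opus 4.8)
The plan is to prove the two bullet points in tandem, exploiting that the second triple is the graded dual of the first. For the first bullet both the algebra axioms (associativity of $\conc$, established above, together with the unit $u$) and the coalgebra axioms (coassociativity of $\coqspart$ from \Cref{prop:coprod_partitions_coass}, with counit $\varepsilon$) are already in hand, so the only substantive content is the bialgebra compatibility: that $\coqspart\colon \FreeIntPart \to \FreeIntPart \otimes \FreeIntPart$ is an algebra morphism, i.e. $\coqspart(\stds \conc \stdt) = \coqspart(\stds) \conc \coqspart(\stdt)$, where the right-hand product is componentwise concatenation on the tensor square. The remaining unit/counit compatibilities ($\coqspart(\e) = \e \otimes \e$, $\varepsilon(\stds \conc \stdt) = \varepsilon(\stds)\varepsilon(\stdt)$, and $\varepsilon \circ u = \id_{\Q}$) are immediate from the definitions, so I would dispatch them in a sentence.

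The heart of the argument is a factorization of the coproduct across the concatenation boundary. Write $m := |\punion\stds|$, so that $\punion(\stds \conc \stdt) = [m] \uplus (m + [|\stdt|])$ and, by \Cref{def:conc_int_part}, every block of $\stds \conc \stdt$ lies entirely in $[m]$ or entirely in $m + [|\stdt|]$; in particular no block straddles the cut between $m$ and $m+1$. Fix a term of $\coqspart(\stds \conc \stdt)$, that is $A \cup A' = \punion(\stds\conc\stdt)$ and $\unstdI \in \SetIntervalPartitions(A)$, $\unstdI' \in \SetIntervalPartitions(A')$ with $\unstdI \gluepartitions \unstdI' = \stds \conc \stdt$. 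By \Cref{lemma:gluing_finess3} every block of $\unstdI$ and of $\unstdI'$ is contained in a block of $\stds\conc\stdt$, hence each such block also lies on a single side of the cut; consequently $\unstdI = \unstdI_L \uplus \unstdI_R$ and $\unstdI' = \unstdI'_L \uplus \unstdI'_R$ split into their left ($\subset [m]$) and right ($\subset m+[|\stdt|]$) parts. Since a left block and a right block never intersect, the equivalence relation defining $\gluepartitions$ decouples, yielding $\unstdI_L \gluepartitions \unstdI'_L = \stds$ and, after the shift built into $\std$, $\unstdI_R \gluepartitions \unstdI'_R = \stdt$, with $A = A_L \uplus A_R$ splitting accordingly. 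This sets up a bijection between the indexing sets of $\coqspart(\stds \conc \stdt)$ and of $\coqspart(\stds)\conc\coqspart(\stdt)$ that is compatible with standardization; summing over it gives the identity, establishing the first bullet.

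For the second bullet I would first note that each graded piece $\Q[\SetStandardizedIntervalPartitions_n]$ is finite-dimensional (there are finitely many interval partitions of $[n]$), so the self-indexing basis $\SetStandardizedIntervalPartitions$ identifies $\FreeIntPart$ with its graded dual. Under this identification the dual of $\conc$ is exactly the deconcatenation coproduct $\deconc$ (splitting the ordered list of blocks into an initial and a final segment), while the dual of $\coqspart$ is exactly $\qspart$ by the very definition of the latter. Hence $(\FreeIntPart, \qspart, \deconc, u, \varepsilon)$ is the graded dual of the bialgebra of the first bullet and is therefore itself a bialgebra; in particular $\qspart$ is associative and $\deconc$ coassociative. (Alternatively one may check $\deconc(\stds \qspart \stdt) = \deconc(\stds) \qspart \deconc(\stdt)$ directly, by the mirror of the argument above.) Connectedness is then immediate, since $\SetStandardizedIntervalPartitions_0 = \{\emptyset\} = \{\e\}$ and likewise $\NumBlocks_0 = \{\e\}$, so the degree-zero component is $\Q\e$, spanned by the unit.

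It remains to check the filtration for both \Cref{eq:n_grading_partitions} and \Cref{eq:blocks_grading_partitions}: a basis partition $\stdg$ occurs in $\stds \qspart \stdt$ only when $\stdg = \unstdI \gluepartitions \unstdI'$ with $\std(\unstdI)=\stds$, $\std(\unstdI')=\stdt$ and $\punion\unstdI \cup \punion\unstdI' = \punion\stdg$, whence $|\stdg| \le |\stds| + |\stdt|$ (with equality iff the ground sets are disjoint) and, since gluing only merges blocks, $\stdg$ has at most as many blocks as $\stds$ and $\stdt$ combined; thus $\qspart$ is filtered for both gradings, while $\deconc$ preserves both total cardinality and total block number and so is graded. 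A connected filtered bialgebra automatically admits an antipode, built by the standard recursion on the filtration degree, so the triple is a Hopf algebra as claimed. I expect the main obstacle to be the factorization step of the second paragraph: one must argue carefully that a gluing cannot link the two sides of the concatenation cut and that the induced bijection of indexing sets is compatible with the standardization maps $\std$ inside $\coqspart$; the duality passage and the connected-filtered-implies-Hopf conclusion are then formal.
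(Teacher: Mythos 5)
Your proposal is correct and follows essentially the same route as the paper: the heart in both cases is the factorization of $\coqspart(\stds \conc \stdt)$ across the concatenation cut (the paper packages this as \Cref{lemma:bialgebra}, proved via the cliques decomposition $\unstdI = \unstdI(A\cap \punion\stds) \uplus \unstdI(A\cap \punion\stdt^{\prime})$, whereas you argue the same splitting via block containment and decoupling of the gluing relation), followed by dualization to obtain the second bullet and the standard fact that a connected filtered bialgebra is a Hopf algebra. Your treatment of the duality, connectedness and filtration checks is in fact more explicit than the paper's, which disposes of them with ``dually, it follows'' and a citation.
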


\begin{lemma}
\label{lemma:bialgebra}
Let $A,B \in \N_{\ge 1},\;|A|,|B|< \infty$, $\stds,\stdt \in \SetStandardizedIntervalPartitions$, and $\unstdI \in \SetIntervalPartitions(A), \unstdI^{\prime} \in \SetIntervalPartitions(B)$. Assume
\begin{align*}
    \unstdI \gluepartitions \unstdI^{\prime} &= \stds \conc \stdt.
\end{align*}
Recall that
\begin{align*}
& \stds \conc \stdt := \{\stds_{1},...,\stds_{m},\stdt^{\prime}_{1},...,\stdt^{\prime}_{n}\}
\end{align*}
where for $i=1,...,n$, $\stdt^{\prime}_{i}:=|\punion\stds|+\stdt_{i}$.
Then
\begin{enumerate}
  \item 
\begin{align*}
\unstdI &= \unstdI(A\cap \punion\stds) \uplus \unstdI(A\cap \bigcup\stdt^{\prime}_{j}),\\
\unstdI^{\prime} &= \unstdI^{\prime}(B\cap \punion\stds) \uplus \unstdI^{\prime}(B\cap \bigcup\stdt^{\prime}_{j}),
\end{align*}

\item
\begin{align*}
    \unstdI(A\cap \punion\stds) \gluepartitions \unstdI^{\prime}(B\cap \bigcup\stds_{j})&=\stds,\\ \unstdI(A\cap \bigcup\stdt^{\prime}_{j}) \gluepartitions \unstdI^{\prime}(B\cap \bigcup\stdt^{\prime}_{j})&=\stdt^{\prime}. 
\end{align*}

\item
\begin{align*}
    \std(\unstdI)&= \std(\unstdI(A\cap \punion\stds))\conc \std(\unstdI(A\cap \punion\stdt^{\prime})),
    \\\std(\unstdI^{\prime})&= \std(\unstdI^{\prime}(B\cap \punion\stds))\conc \std(\unstdI^{\prime}(B\cap \punion\stdt^{\prime})).
\end{align*}
\end{enumerate}
\end{lemma}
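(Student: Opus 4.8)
The plan is to exploit the single structural fact that $\stds\conc\stdt$ \emph{splits cleanly} at the boundary between $\punion\stds$ and $\bigcup_j\stdt^{\prime}_j$. Set $L:=\punion\stds=\{1,\dots,|\punion\stds|\}$ and $R:=\bigcup_j\stdt^{\prime}_j=\{|\punion\stds|+1,\dots,|\punion\stds|+|\punion\stdt|\}$, so that $L\uplus R=\punion(\stds\conc\stdt)$ and every block of $\stds\conc\stdt$ is either some $\stds_i\subseteq L$ or some $\stdt^{\prime}_j\subseteq R$; in particular no block of $\stds\conc\stdt$ meets both $L$ and $R$. The first step is to show that this cleanness is inherited by the factors: since $\unstdI\gluepartitions\unstdI^{\prime}$ is built by taking unions of equivalence classes of blocks, every block of $\unstdI$ (and of $\unstdI^{\prime}$) is contained in a single block of $\unstdI\gluepartitions\unstdI^{\prime}=\stds\conc\stdt$, which one may also read off from \Cref{lemma:gluing_finess3}. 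Consequently each block of $\unstdI$ and of $\unstdI^{\prime}$ lies entirely in $L$ or entirely in $R$.

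For claim (1), I would verify that $\unstdI(A\cap L)=\{\unstdI_j:\unstdI_j\subseteq L\}$ and $\unstdI(A\cap R)=\{\unstdI_j:\unstdI_j\subseteq R\}$; by the previous step these two families are disjoint and together exhaust $\unstdI$, which is exactly the asserted $\uplus$-decomposition. The point to check is that intersecting with the clique structure of $A\cap L$ does not cut any block: if $\unstdI_j\subseteq L$ then $\unstdI_j\subseteq A\cap L$ is an interval, hence lies inside one clique $c\in\cliques(A\cap L)$, so $\unstdI_j\cap c=\unstdI_j$ while $\unstdI_j$ misses the other cliques; conversely a nonempty $\unstdI_j\cap c$ forces $\unstdI_j$ to meet $L$, whence $\unstdI_j\subseteq L$ and again $\unstdI_j\cap c=\unstdI_j$. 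The same argument on $R$ gives $\unstdI(A\cap R)$, and symmetrically for $\unstdI^{\prime}$.

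For claim (2), I would use that $L$ and $R$ are disjoint to conclude that the relation $R_{\unstdI,\unstdI^{\prime}}$ never links an $L$-block to an $R$-block, so the equivalence classes, and hence the gluing itself, decouple into an $L$-part and an $R$-part. Restricting $\unstdI\gluepartitions\unstdI^{\prime}=\stds\conc\stdt$ to the blocks living in $L$ then yields $\unstdI(A\cap L)\gluepartitions\unstdI^{\prime}(B\cap L)=\{\stds_1,\dots,\stds_m\}=\stds$, where claim (1) is what identifies $\unstdI(A\cap L)$ and $\unstdI^{\prime}(B\cap L)$ with precisely the $L$-blocks; the $R$-part gives $\unstdI(A\cap R)\gluepartitions\unstdI^{\prime}(B\cap R)=\stdt^{\prime}$ in the same way.

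For claim (3), I would observe that every element of $A\cap L$ is strictly smaller than every element of $A\cap R$, so in the linear order underlying \Cref{def:std_partition} the $L$-blocks occupy the initial segment and the $R$-blocks the final segment. Standardizing $\unstdI$ therefore relabels its $L$-blocks to $\{1,\dots,|A\cap L|\}$, which is $\std(\unstdI(A\cap L))$, and its $R$-blocks to the same labels as $\std(\unstdI(A\cap R))$ but shifted up by $|A\cap L|=|\punion\std(\unstdI(A\cap L))|$; by \Cref{def:conc_int_part} this is exactly $\std(\unstdI(A\cap L))\conc\std(\unstdI(A\cap R))$, and likewise for $\unstdI^{\prime}$. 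I expect the main obstacle to be bookkeeping rather than conceptual: carefully confirming that the clique-intersection defining $\unstdI(\cdot)$ leaves whole blocks intact (the core of claim (1)) and keeping the relabelling indices aligned when matching $\std(\unstdI)$ with the concatenation in claim (3). The one genuinely conceptual input, that everything decouples along $L\uplus R$, is delivered entirely by the clean split of $\stds\conc\stdt$ together with the fact that gluing only ever merges blocks.
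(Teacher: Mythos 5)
Your proof is correct and takes essentially the same approach as the paper's: decompose everything along the boundary between $\punion\stds$ and $\bigcup_{j}\stdt^{\prime}_{j}$, establish claim (1), deduce claim (2) from the decoupling of the gluing relation across that boundary, and obtain claim (3) from the order-preserving shift in the standardization. If anything, you are more explicit than the paper on the one essential point---that the hypothesis $\unstdI \gluepartitions \unstdI^{\prime} = \stds \conc \stdt$ forces every block of $\unstdI$ and $\unstdI^{\prime}$ to lie entirely on one side of the boundary---which is precisely what legitimizes the paper's implicit splitting of $\cliques$ in its proof of claim (1) and its unproved ``if and only if'' in claim (2).
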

\begin{proof}

First, we have
\begin{align*}
    \unstdI &= \unstdI(A)=\unstdI(A\cap \punion\stds \uplus A\cap \bigcup\stdt^{\prime}_{j})\\&=\{\unstdI_{j} \cap c|j \in J, c \in \cliques(A\cap \punion\stds \uplus A\cap \bigcup\stdt^{\prime}_{j})\}\setminus \{\emptyset\}\\&=\{\unstdI_{j} \cap c|j \in J, c \in \cliques(A\cap \punion\stds) \uplus \cliques(A\cap \bigcup\stdt^{\prime}_{j})\}\setminus \{\emptyset\}\\&=\{\unstdI_{j} \cap c|j \in J, c \in \cliques(A\cap \punion\stds) \}\setminus \{\emptyset\} \\&\uplus \{\unstdI_{j} \cap c|j \in J, c \in \cliques(A\cap \punion\stdt^{\prime}) \}\setminus \{\emptyset\}\\&=\unstdI(A\cap \punion\stds) \uplus \unstdI(A\cap \bigcup\stdt^{\prime}_{j}).
\end{align*}
Analogously one shows that $\unstdI^{\prime} = \unstdI^{\prime}(B\cap \punion\stds) \uplus \unstdI^{\prime}(B\cap \bigcup\stdt^{\prime}_{j})$.
Therefore
\begin{align*}
\unstdI(A\cap \punion\stds) \uplus \unstdI(A\cap \punion\stdt^{\prime})
 \gluepartitions (\unstdI(B\cap \punion\stds) \uplus \unstdI(B\cap \punion\stdt^{\prime}) = \stds \conc \stdt.
\end{align*}
which holds if and only if
\begin{align*}
\unstdI(A\cap \punion\stds) 
 \gluepartitions \unstdI^{\prime}(B\cap \punion\stds) =\stds,
\end{align*}
and
\begin{align*}
\unstdI(A\cap \punion\stdt^{\prime}) 
 \gluepartitions \unstdI^{\prime}(B\cap \punion\stdt^{\prime}) =  \stdt^{\prime},
\end{align*}
Finally, since we can write
\begin{align*}
\unstdI&= \{I_{s,1},I_{s,m},I_{t^{\prime},1},...,I_{t^{\prime},n}\}
\end{align*}
with
\begin{align*}
    &\unstdI(A\cap \punion\stds) = \{I_{s,1},I_{s,m}\},
  \\&\unstdI(A\cap \punion\stdt^{\prime}) = \{I_{t^{\prime},1},...,I_{t^{\prime},n}\}
\end{align*}
we have
\begin{align*}
  \std(\unstdI)&= \std(\{I_{s,1},...,I_{s,m}\}) \uplus \left(|\bigcup_{j=1}^{m}I_{s,j}|+\std(\{I_{t,1},...,I_{t,n}\})\right)\\&\std(\unstdI(A\cap \punion\stds))\conc \std(\unstdI(A\cap \punion\stdt^{\prime})).
\end{align*}
And analogously one shows that $\std(\unstdI^{\prime}(B\cap \punion\stds))\conc \std(\unstdI^{\prime}(B\cap \punion\stdt^{\prime}))$
\end{proof}

\begin{proof}
We show that $\coqspart(\stds \conc \stdt) =   \coqspart(\stds)  \conc   \coqspart(\stdt)$.
We have
\begin{align*}
    \coqspart(\stds \conc \stdt)&=\sum_{A \cup A^{\prime} = \punion\left( \stds \conc \stdt \right)}\;\sum_{\substack{\unstdI \in \SetIntervalPartitions(A),\unstdI \in \SetIntervalPartitions(A^{\prime})\\\\\unstdI \gluepartitions  \unstdI^{\prime}= \stds \conc \stdt}}\std(\unstdI) \otimes \std(\unstdI^{\prime}).
\end{align*}
Thanks to \Cref{lemma:bialgebra}, we can write
\begin{align*}
&\coqspart(\stds \conc \stdt)\\
&=\sum_{\substack{(A \cap \punion\stds)\cup (A^{\prime} \cap \punion\stds) =\punion\stds\\\\(A \cap \punion\stdt^{\prime})\cup (A^{\prime} \cap \punion\stdt^{\prime}) =\punion\stdt^{\prime}}}\sum_{\substack{\unstdI \in \SetIntervalPartitions(A),\,\unstdI^{\prime} \in \SetIntervalPartitions(A^{\prime})\\\\ \unstdI(A \cap \punion\stds) \gluepartitions \unstdI^{\prime}(A^{\prime} \cap \punion\stds) =\stds\\\\ \unstdI(A \cap \punion\stdt^{\prime})  \gluepartitions  \unstdI(A^{\prime} \cap \punion\stdt^{\prime}) =\stdt^{\prime}}}\std(\unstdI(A \cap \punion\stds)) \conc \std(\unstdI(A \cap \punion\stdt^{\prime})) \\\\
&
\qquad\qquad\qquad\qquad\qquad\qquad
\qquad\qquad\qquad\qquad\qquad\qquad\quad
\otimes \std(\unstdI^{\prime}(A^{\prime} \cap \punion\stds) \conc \std(\unstdI^{\prime}(A^{\prime} \cap \punion\stdt^{\prime})) 
   \\\\
&=
  \sum_{\substack{A_{1} \cup A_{1}^{\prime} = \punion \stds\\\\ A_{2} \cup A_{2}^{\prime} = \punion \stdt^{\prime} }}\;
  \sum_{\substack{\unstdI_{1} \in\SetIntervalPartitions(A_{1}), \unstdI_{1}^{\prime} \in\SetIntervalPartitions(A_{1}^{\prime})\\\\\unstdI_{2} \in\SetIntervalPartitions(A_{2}), \unstdI_{2}^{\prime} \in\SetIntervalPartitions(A_{2}^{\prime})\\\\ \unstdI_{1}\gluepartitions  \unstdI_{1}^{\prime}= \stds,\;\unstdI_{2}\gluepartitions  \unstdI_{2}^{\prime}= \stdt^{\prime}}}\std(\unstdI_{1}) \conc \std(\unstdI_{2}) \otimes \std(\unstdI_{1}^{\prime}) \conc \std(\unstdI_{2}^{\prime})\\
&=\sum_{A_{1} \cup A_{1}^{\prime} = \punion \stds}\;
\sum_{\substack{\unstdI_{1} \in\SetIntervalPartitions(A_{1}) \\ \unstdI_{1}^{\prime} \in\SetIntervalPartitions(A_{1}^{\prime})\\ \unstdI_{1}\gluepartitions  \unstdI_{1}^{\prime}= \stds}}\std(\unstdI_{1}) \otimes \std(\unstdI_{1}^{\prime})
\conc 
\sum_{A_{2} \cup A_{2}^{\prime} = \punion \stdt^{\prime}}\;
\sum_{\substack{\unstdI_{2} \in\SetIntervalPartitions(A_{2})\\ \unstdI_{2}^{\prime} \in\SetIntervalPartitions(A_{2}^{\prime})\\ \unstdI_{2}\gluepartitions  \unstdI_{2}^{\prime}= \stdt^{\prime}}}\std(\unstdI_{2}) \otimes \std(\unstdI_{2}^{\prime}) 
\end{align*}
Using \Cref{lemma:coassociativity_intpart}, we have
\begin{align*}
    &\sum_{A_{2} \cup A_{2}^{\prime} = \punion \stdt^{\prime}}\;\sum_{\substack{\unstdI_{2} \in\SetIntervalPartitions(A_{2}), \unstdI_{2}^{\prime} \in\SetIntervalPartitions(A_{2}^{\prime})\\\\ \unstdI_{2}\gluepartitions  \unstdI_{2}^{\prime}= \stdt^{\prime}}}\std(\unstdI_{2}) \otimes \std(\unstdI_{2}^{\prime})\\&=\sum_{B \cup B^{\prime} = \punion \stdt}\;\sum_{\substack{\unstdJ \in\SetIntervalPartitions(B), \unstdJ^{\prime} \in\SetIntervalPartitions(B^{\prime})\\\\ \unstdJ\gluepartitions  \unstdJ^{\prime}= \stdt}}\std(\unstdJ)\otimes \std(\unstdJ^{\prime}). 
\end{align*}

Therefore
\begin{align*}
&\coqspart(\stds \conc \stdt)\\
&=\coqspart(\stds) \conc \coqspart(\stdt)\\
&=\sum_{A \cup A^{\prime} = \punion \stds}\;\sum_{\substack{\unstdI \in\SetIntervalPartitions(A), \unstdI^{\prime} \in\SetIntervalPartitions(A^{\prime})\\\\ \unstdI\gluepartitions  \unstdI^{\prime}= \stds}}\std(\unstdI)\otimes \std(\unstdI^{\prime})\\
&\qquad \conc\sum_{B \cup B^{\prime} = \punion \stdt}\;\sum_{\substack{\unstdJ \in\SetIntervalPartitions(B), \unstdJ^{\prime} \in\SetIntervalPartitions(B^{\prime})\\\\ \unstdJ\gluepartitions  \unstdJ^{\prime}= \stdt}}\std(\unstdJ)\otimes \std(\unstdJ^{\prime}).
\end{align*}
Dually, it follows  that $$(\FreeIntPart,\qspart,\deconc,u,\varepsilon)$$ is also a bialgebra. We recall the well-known fact that any connected filtered bialgebra is automatically a Hopf algebra, see \cite{manchon2008hopf}[Corollary 5].
\end{proof}

The following remark is about how, under certain assumptions, one can learn about the structure of an algebra by endowing it with a Hopf algebra structure.

\begin{remark}[Polynomial algebra]
\label{rem:polynomial_algebra}
We recall the well-known result that any filtered Hopf algebra is isomorphic, as an algebra, to a polynomial algebra. See \cite{cartier2021classical}[Theorem 4.4.1]. Therefore, $(\FreeIntPart,\qspart)$ is a free commutative algebra.
\end{remark}

\begin{remark}
\label{rem:words_intergers}
We provide another description of the Hopf algebra from \Cref{thm:Quasi-shuffle_deconc_HA_part}, by showing that it is isomorphic to a Hopf algebra on words, where the letters are positive integers.
Let $V := \Q[\N_{\ge 1}]$, then
    \begin{align*}
        \Phi: \bigoplus_{n\in \N} V^{\otimes n} \to (\FreeIntPart, \conc),
    \end{align*}
    defined as the unique extension of
    \begin{align*}
        \Phi(n) := \{\{1,2,...,n\}\}
    \end{align*}
    to a map of associative algebras, is a linear isomorphism.
We denote with $\conctens$ the concatenation of tensors. If we restrict $\Phi$ to words on positive integers, which form a linear basis, we obtain an injection onto a basis for the codomain, the set $\SetStandardizedIntervalPartitions$. Indeed, we have:
  \begin{align*}
        \Phi(n_{1} \conctens n_{2} \conctens \;\cdots \conctens \;n_{k}) &= \Phi(n_{1}) \conc \Phi(n_{2}) \conc \cdots \conc  \Phi(n_{k}) \\&=
        \left\{\{1,2,...,n_{1}\},\{n_{1}+1,\dots,n_{1}+n_{2}\},\dots,\{\sum_{j=1}^{k-1}n_{j}+1,...,\sum_{j=1}^{k}n_{j}\} \right\}.
    \end{align*}
As an example
\begin{align*}
 \Phi(1 \conctens 3 \conctens 2) = \left\{\{1\},\{2,3,4\},\{5,6\} \right\}.
\end{align*}
We can use $\Phi$ to define a product on words, which is just a relabeling of $\qspart$. We define on basis elements
    \begin{align*}
       m_{1}\;\conctens\;m_{2}\;\conctens\;\cdots \;\conctens\; m_{\ell}\;&\qswrd\; n_{1}\;\conctens\; n_{2} \;\conctens\; \cdots \;\conctens\; n_{k} \\
                                                                          &:=  \Phi^{-1}\left(\Phi( m_{1}\;\conctens\; m_{2}\;\conctens\;\cdots \;\conctens\; m_{\ell})\; \qspart \;\Phi(n_{1}\;\conctens\; n_{2} \;\conctens\; \cdots \;\conctens\; n_{k})\right).
\end{align*}
 
For example
 \begin{align*}
       2\;\qswrd\;  2&:=  \Phi^{-1}(\Phi(2)\;\qspart \;\Phi(2))\\
       &=  \Phi^{-1}( \ \adjacentSquares{2} \ \qspart \ \adjacentSquares{2} \ )\\
          &=  \Phi^{-1}(\ \textcolor{red}{2}\ \adjacentSquares{2} \ \adjacentSquares{2} + \textcolor{red}{2}\ \adjacentSquares{3} + \adjacentSquares{2} \ )\\
          &=  \textcolor{red}{2} \, 2 \conctens 2 + \textcolor{red}{2}\,3 +2,\\
1 \conctens  2 \; \qswrd \;  1 \conctens 1 &=  \textcolor{red}{3} \, 1 \conctens 2 \conctens 1 +\textcolor{red}{3}\,  1\conctens  2 + 1\conctens 2 \conctens 1\conctens 1\\&+\textcolor{red}{6} \, 1\conctens  1 \conctens 2 +\textcolor{red}{3}\, 1 \conctens 1 \conctens 1 \conctens 2 +\textcolor{red}{2} 1 \conctens 1 \conctens 2 \conctens 1.
    \end{align*}
Then, in analogy to \Cref{thm:Quasi-shuffle_deconc_HA_part}
 \begin{itemize}
     \item[--]$(\bigoplus_{n\in \N} V^{\otimes n},\conctens,\Delta_{\qswrd})$  is a bialgebra,
     \item[--] $(\bigoplus_{n\in \N} V^{\otimes n},\qswrd,\Delta_{\conctens})$ is a connected filtered (by the length of the words, see \Cref{eq:blocks_grading_partitions}) Hopf algebra.
\end{itemize}

\end{remark}

\subsection{Signature of an interval partition}
 We define a family of linear maps on the dual of $\FreeIntPart$, indexed by $\stdL \in \SetStandardizedIntervalPartitions$. These maps, encode \quotationmarks{occurrences} of other standardized interval partitions in $\stdL$. 
\begin{definition}[Signature of an interval partition]
  \label{def:signature_interval_partition}
  For $\stdL \in \SetStandardizedIntervalPartitions$, define the linear map
  $\IPC(\stdL): \FreeIntPart \to \Q$ via
  \begin{align*}
       \Big\langle  \IPC(\stdL), \stds \Big\rangle := \# \{A \subset \punion\stdL|\; \std(\stdL(A)) \ge \stds\}, \quad \stds \in \SetStandardizedIntervalPartitions.
  \end{align*}
\end{definition}
\begin{remark}
The condition $\std(\stdL(A)) \ge \stds$ tells us that we count $\stds$, each time we find a pattern that is equal to $\stds$ or coarser. Let $\stdL = \{\{1\},\{2\},\{3,4\}\}$, and $\stds = \{\{1\},\{2\}\}$. 
\begin{align*}
   \text{For}\;A = \{3,4\},\;\std(\stdL(A)) &=\{\{1,2\}\} \ge \{\{1\},\{2\}\}.
\end{align*}
So that if we look for a fine pattern (arbitrary gaps) we also count coarse pattern (consecutive values), while if we look for a coarse pattern we discard arbitrary gaps and only keep consecutive values. Let $\stdL = \{\{1\},\{2\},\{3,4\}\}$, and $\stds = \{\{1,2\},\{3\}\}$.
\begin{align*}
   \text{For}\;A = \{2,3,4\},\;\std(\stdL(A)) &= \{\{1\},\{2,3\}\} \perp \{\{1,2\},\{3\}\},
\end{align*}
 i.e., the structure of the gaps needs to be respected. 
\end{remark}

\begin{remark}
\label{rem:explicit}
   When we count the occurrences of an interval partition, $\stds= \singleblockn_{1} \conc \cdots \conc \singleblockn_{k}  $ (conveniently written taking the product $\conc$ of single blocks partitions, $\singleblockn_{j} :=\{[n_{j}]\},n_{j}\in \N_{\ge 1}$) in a single block partition, i.e. $\stdL=\singleblockN:=\{[N]\},N\in \N_{\ge 1}$, we have
\begin{align}
\label{eq:single_block_count}
     \Big\langle  \IPC(\stdL), \stds \Big\rangle = \binom{N- (n_{1}+...+n_{k})+k}{k}.
\end{align}
which are the weak $k+1$ compositions of $N- (n_{1}+...+n_{k})$. For example, if we have $N=9, n_{1}=3, n_{2}=5$, the three possible constellations
\begin{center}
    \includegraphics[scale=0.2]{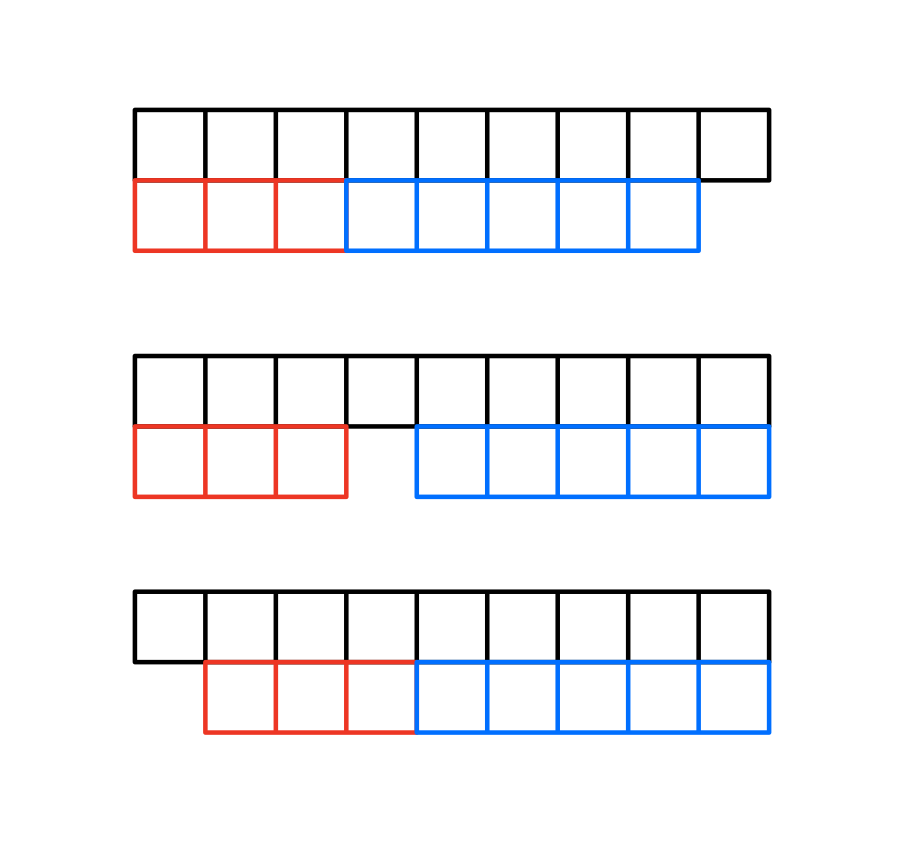}
\end{center}
correspond to $(1,0,0),(0,1,0),(0,0,1)$ respectively: i.e. the weak $3$ compositions of $1$. 
\end{remark}

\subsubsection{Character property and Chen's identity}
We call these maps \textit{signatures} for interval partitions, motivated by the following result.

\begin{theorem}[Character property]~
\label{thm:qsI_intpart}
 The maps $(\IPC(\stdL))_{\stdL}$ are characters under the product $\qspart$. Let $\stdL \in \SetStandardizedIntervalPartitions_N$, with $N \in \N$. For all $\stds,\stdt \in \SetStandardizedIntervalPartitions$, we have
    \begin{align}
    \label{eq:qs_id_part}
       \Big\langle  \IPC(\stdL), \stds \Big\rangle
       \cdot
       \Big\langle  \IPC(\stdL), \stdt \Big\rangle
       =
       \Big\langle  \IPC(\stdL),\, \stds \; \qspart \; \stdt \Big\rangle.
    \end{align}
\end{theorem}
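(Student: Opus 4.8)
The plan is to prove \Cref{eq:qs_id_part} by grouping the relevant configurations according to a common ground set and then exhibiting a bijection that forgets a pair of auxiliary interval partitions. First I would expand the left-hand side: being a product of two non-negative integers, it is the cardinality of a Cartesian product,
\[
\Big\langle \IPC(\stdL), \stds \Big\rangle \cdot \Big\langle \IPC(\stdL), \stdt \Big\rangle
= \#\big\{(A,B) : A,B \subset \punion\stdL,\ \std(\stdL(A)) \ge \stds,\ \std(\stdL(B)) \ge \stdt \big\},
\]
and I would organize this count according to the union $C := A \cup B \subset \punion\stdL$.

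For the right-hand side, I would insert the explicit product formula for $\qspart$ obtained after \Cref{prop:coprod_partitions_coass}, use linearity of $\IPC(\stdL)$ together with \Cref{def:signature_interval_partition}, and note that $\unstdI \gluepartitions \unstdI'$ is already standardized (a partition of $[n]$):
\[
\Big\langle \IPC(\stdL), \stds \qspart \stdt \Big\rangle
= \sum_{n}\ \sum_{A_1 \cup A_2 = [n]}\ \sum_{\substack{\unstdI \in \SetIntervalPartitions(A_1),\ \unstdI' \in \SetIntervalPartitions(A_2)\\ \std(\unstdI)=\stds,\ \std(\unstdI')=\stdt}} \#\big\{C \subset \punion\stdL : \std(\stdL(C)) \ge \unstdI \gluepartitions \unstdI' \big\}.
\]
Since $\std(\stdL(C))$ is a partition of $[|C|]$, the inequality forces $n = |C|$, so exchanging the order of summation regroups the right-hand side by $C$ as well. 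Writing $\stdh := \std(\stdL(C))$ and $M := |C|$, it then suffices to prove, for each fixed $C$, that
\[
\#\big\{(A_1,A_2,\unstdI,\unstdI') : A_1 \cup A_2 = [M],\ \std(\unstdI)=\stds,\ \std(\unstdI')=\stdt,\ \stdh \ge \unstdI \gluepartitions \unstdI' \big\}
\]
equals the number of pairs $(A,B)$ with $A \cup B = C$ meeting the two pattern constraints; call this identity $(\dagger)$.

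The bridge between the two sides is a compatibility between restriction and standardization, which I would isolate as a lemma: for $A \subset C$ one has $\std(\stdL(A)) = \std(\stdh(\phi(A)))$, where $\phi : C \to [M]$ is the order-preserving bijection underlying $\stdh$. Its proof compares, for elements adjacent in $A$, whether they share a block: two such elements stay together in $\stdL(A)$ exactly when they are adjacent in $\N$ and lie in a common block of $\stdL$, and this happens precisely when their $\phi$-images are adjacent in $[M]$ and lie in a common block of $\stdh$ — the point being that the gaps of $C$ are recorded by the block-separations of $\stdh$. Via this lemma the right-hand count in $(\dagger)$ becomes $\#\{(\bar A,\bar B) : \bar A \cup \bar B = [M],\ \std(\stdh(\bar A)) \ge \stds,\ \std(\stdh(\bar B)) \ge \stdt\}$.

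Finally I would establish $(\dagger)$ through the map $(A_1,A_2,\unstdI,\unstdI') \mapsto (A_1,A_2)$ that forgets the two partitions. It lands in the target set: from $\stdh \ge \unstdI \gluepartitions \unstdI'$, \Cref{lemma:gluing_finess3} and the (immediate) monotonicity of $\stdg \mapsto \stdg(A_1)$ under $\le$ give $\unstdI \le (\unstdI \gluepartitions \unstdI')(A_1) \le \stdh(A_1)$, whence $\stds = \std(\unstdI) \le \std(\stdh(A_1))$, and symmetrically for $A_2$. The map is a bijection because $(\unstdI,\unstdI')$ is uniquely recoverable: since $\std$ is injective on a fixed ground set, the condition $\std(\stdh(A_1)) \ge \stds$ determines the unique $\unstdI \in \SetIntervalPartitions(A_1)$ with $\std(\unstdI)=\stds$ (the pullback of $\stds$ along the rank map $A_1 \to [|A_1|]$, an interval partition since that map is order-preserving), and likewise $\unstdI'$; that this pair satisfies $\stdh \ge \unstdI \gluepartitions \unstdI'$ follows from \Cref{lemma:gluing_finess} and \Cref{corollary:gluing_finess} via $\unstdI \gluepartitions \unstdI' \le \stdh(A_1) \gluepartitions \stdh(A_2) \le \stdh(A_1 \cup A_2) = \stdh$. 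I expect the restriction–standardization lemma to be the main obstacle: tracking how the adjacency (clique) structure of $A$ inside $C$ is faithfully encoded by the block-separations of $\stdh$, so that restricting $\stdh$ reproduces $\std(\stdL(A))$, is the delicate bookkeeping underpinning the whole argument, whereas the monotonicity and uniqueness statements are comparatively routine given the gluing lemmas and the injectivity of $\std$ on a fixed ground set.
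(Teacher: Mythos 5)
Your proof is correct, but it is organized differently from the paper's argument, so the two are worth comparing. The paper decomposes the set of pairs $(A,B)$ on the left-hand side into a disjoint union of fibers indexed by pairs $(\stdg,C)$, where $C = A\cup B$ and $\stdg = \std\big(\stdL(A)_{\stds}\gluepartitions \stdL(B)_{\stdt}\big)$ (\Cref{lemma:character1}); it then shows that each fiber has cardinality equal to the section coefficient $\langle \stds\otimes\stdt, \coqspart(\stdg)\rangle$ by zooming into $\stdh = \std(\stdL(C))$ (\Cref{lemma:character2}), passing from $\stdh$ down to $\stdg$ (\Cref{lemma:character3}), and recovering the coefficient (\Cref{lemma:character4}); finally it resums over $(\stdg,C)$. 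You instead group both sides by $C$ alone: you expand $\stds \qspart \stdt$ through the explicit dual product formula, observe that comparability of partitions forces $n = |C|$, and settle the fixed-$C$ identity $(\dagger)$ with a single forgetful bijection $(A_1,A_2,\unstdI,\unstdI^{\prime}) \mapsto (A_1,A_2)$. The substance is the same: your restriction--standardization lemma is exactly the content behind the paper's one-word (``Immediate'') proof of \Cref{lemma:character2}, and your reconstruction step is \Cref{lemma:character0} combined with \Cref{lemma:gluing_finess}, \Cref{corollary:gluing_finess}, and \Cref{lemma:gluing_finess3}, the same toolkit the paper uses. What your packaging buys is economy: you never need the invariance step (\Cref{lemma:character3}), because you compare the quadruples against $\stdh$ directly rather than normalizing down to $\stdg$, and you make explicit the zooming argument the paper suppresses. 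What the paper's packaging buys is modularity: the section coefficients of $\coqspart$ appear as cardinalities of explicitly described fibers, which matches the dual definition of $\qspart$ and carries over essentially unchanged to the vincular analogue later in the paper.

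One small repair is needed in your reconstruction step. The parenthetical claim that the pullback of $\stds$ along the rank map $A_1 \to [|A_1|]$ is an interval partition ``since that map is order-preserving'' is not sufficient as stated: for $A_1 = \{1,3\}$ and $\stds = \{\{1,2\}\}$ the pullback is $\{\{1,3\}\}$, which is not an interval partition. What makes the construction work is the hypothesis $\std(\stdh(A_1)) \ge \stds$, which you do have in hand: it implies the pullback refines $\stdh(A_1)$, whose blocks are intervals, and a run of consecutive elements of $A_1$ inside an interval block is itself an interval of $\N$. Equivalently, you can simply invoke \Cref{lemma:character0} with $\unstdI = \stdh(A_1)$. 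With that justification supplied, the bijection, and hence the proof, is complete.
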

\begin{remark}[Single blocks]
Let $\singleblockN:=\{[N]\}$, $\singleblockm :=\{[m]\}$, $\singleblockn:=\{[n]\}$. Then we have 
\begin{align*} 
\Big\langle  \IPC(\singleblockN), \singleblockm \Big\rangle
       \cdot
       \Big\langle  \IPC(\singleblockN), \singleblockn \Big\rangle&=(N-m+1)(N-n+1)
\end{align*}
thanks to \Cref{rem:explicit}. Since
\begin{align*}
&\singleblockm \; \qspart \; \singleblockn  \\&=\singleblockm \conc \singleblockn+ \singleblockn \conc \singleblockm+2\sum_{i=1}^{\min(m,n)-1}\{[m+n-i]\}\\&+ (\max(m,n)-\min(m,n)+1)\{[\max(m,n)]\}
\end{align*}
using again \Cref{rem:explicit}, we obtain

\begin{align*}
&\Big\langle  \IPC(\singleblockN),\, \singleblockm \; \qspart \; \singleblockn \Big\rangle  \\& =2\binom{N-(m+n)+2}{2} + 2\sum_{i=1}^{\min(m,n)-1}(N-(m+n-i)+1)\\&+ (\max(m,n)-\min(m,n)+1)(N-\max(m,n)+1) 
\end{align*}
It is easy to show that 
\begin{align*}
   \Big\langle  \IPC(\singleblockN), \singleblockm \Big\rangle
       \cdot
       \Big\langle  \IPC(\singleblockN), \singleblockn \Big\rangle &= \Big\langle  \IPC(\singleblockN),\, \singleblockm \; \qspart \; \singleblockn \Big\rangle 
\end{align*}
\end{remark}
We now state several lemmas needed for the proof of \Cref{thm:qsI_intpart}.

\begin{lemma}[$\stds$-refinement of $\unstdI$]
\label{lemma:character0}
Let $\unstdI \in \SetIntervalPartitions$ and  $\stds\in \SetStandardizedIntervalPartitions$. If
\begin{align*}
    \std(\unstdI) \ge \stds,
\end{align*}
then there exists a unique refinement $\mathcal I_\stds$ of $\mathcal I$ such that
\begin{align*}
      \std(\unstdI_{\stds}) = \stds.
\end{align*}
\end{lemma}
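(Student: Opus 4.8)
The plan is to realize $\std$ as the blockwise image under the canonical order isomorphism and then pull $\stds$ back along it. Write $A := \punion\unstdI$ and let $\phi\colon A \to [\,|A|\,]$ be the unique monotone bijection. Unwinding \Cref{def:std_partition}, standardization records only the ordered list of block sizes, which is exactly the effect of applying $\phi$ blockwise; hence $\std(\unstdI) = \{\phi(B) : B \in \unstdI\}$, and the same formula with $\phi$ describes $\std$ on any interval partition of $A$. Note that $\phi$ acts as a translation on each block $B$ of $\unstdI$ (a set of consecutive integers lying inside a single clique of $A$, since $\unstdI \le \cliques(A)$), so both $\phi$ and $\phi^{-1}$ carry intervals to intervals as long as one stays within a single block of $\unstdI$, respectively of $\std(\unstdI)$.

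For existence I would set $\unstdI_\stds := \{\phi^{-1}(C) : C \in \stds\}$. The hypothesis $\std(\unstdI) \ge \stds$ says that $\stds$ refines $\std(\unstdI)$, so every block $C \in \stds$ is contained in a single block $\phi(B)$ of $\std(\unstdI)$; consequently $\phi^{-1}(C) \subseteq B$ and, by the translation remark above, $\phi^{-1}(C)$ is again an interval of $\N_{\ge 1}$. Thus $\unstdI_\stds$ is a genuine interval partition of $A$ each of whose blocks sits inside a block of $\unstdI$, i.e.\ a refinement of $\unstdI$, and $\std(\unstdI_\stds) = \{\phi(\phi^{-1}(C)) : C \in \stds\} = \stds$. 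This is the candidate refinement.

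Uniqueness is then essentially injectivity of $\std$ on interval partitions with a fixed ground set. If $\unstdJ$ is any refinement of $\unstdI$ with $\std(\unstdJ) = \stds$, then $\punion\unstdJ = A$, so the monotone bijection entering $\std(\unstdJ)$ is again $\phi$ and $\stds = \{\phi(D) : D \in \unstdJ\}$. Applying the bijection $\phi^{-1}$ blockwise forces $\unstdJ = \{\phi^{-1}(C) : C \in \stds\} = \unstdI_\stds$, so the refinement is unique.

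The one step that genuinely requires care — and the only place the hypothesis enters — is verifying that $\phi^{-1}(C)$ is an interval of $\N_{\ge 1}$: a block of $\stds$ straddling a block boundary of $\std(\unstdI)$ would pull back across a gap of $A$ and destroy the interval property. The refinement assumption $\std(\unstdI) \ge \stds$ is exactly what rules this out, since $\unstdI \le \cliques(A)$ forces the gaps of $A$ to be block boundaries of $\std(\unstdI)$, hence of $\stds$ as well. Everything else (that $\phi$ implements $\std$, and that it translates within blocks) is routine once this is in place.
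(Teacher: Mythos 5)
Your proof is correct and takes essentially the same route as the paper's: the paper also transfers the block structure of $\stds$ back to $\unstdI$ along the order correspondence $i_j \leftrightarrow j$, and its ``cuts'' picture is exactly your blockwise pullback under the monotone bijection $\phi$. Your write-up merely makes explicit the two points the paper compresses into ``clearly only \dots does the job'' --- the translation argument showing $\phi^{-1}(C)$ remains an interval, and injectivity of $\std$ on interval partitions of a fixed ground set for uniqueness.
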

\begin{example}
Let $\unstdI =\{\{5\},\{6\},\{10,11,12\}\}$ and $\stds =\{\{1\},\{2\},\{3,4\},\{5\}\}$. Then:
\begin{align*}
    \std(\unstdI) \ge \stds
\end{align*}
and 
\begin{align*}
      \unstdI_{\stds}= \{\{5\},\{6\},\{10,11\},\{12\}\}
\end{align*}
\end{example}

\begin{proof}
Let
\begin{align*}
    \std(\unstdI) \ge \stds.
\end{align*} 

We recall the notation used in \Cref{def:std_partition}. Let $i_{1},\dots,i_{\ell_{1}},\dots,i_{\ell_{2}},\dots,i_{\ell_{k}} \in \N_{\ge 1}$ such that 
\begin{align*}
    i_{1} \prec \cdots \prec i_{\ell_{1}} < i_{\ell_{1}+1}\prec \cdots \prec i_{\ell_{2}} < \cdots < i_{\ell_{k-1}+1} \prec \cdots \prec i_{\ell_{k}},
\end{align*}
where $i \prec j \iff j-i = 1$.
Then
\begin{align*}
 &\std(\unstdI):=\std(\{\{i_{1},\dots,i_{\ell_{1}}\},\{i_{\ell_{1}+1},\dots,i_{\ell_{2}}\},\dots,\{i_{\ell_{k-1}+1},\dots,i_{\ell_{k}}\}\})\\ &:=  \{\{1,\dots,\ell_{1}\},\{\ell_{1}+1,\dots,\ell_{2} \},\dots,\{\ell_{k-1}+1,\dots,\ell_{k}\}\}\\&\ge  \{\{1 \textcolor{blue}{\vert},\dots,\textcolor{blue}{\vert}\ell_{1}\},\{\ell_{1}+1\textcolor{blue}{\vert},\dots,\textcolor{blue}{\vert}\ell_{2}\},\dots,\{\ell_{k-1}+1\textcolor{blue}{\vert},\dots,\textcolor{blue}{\vert} \ell_{k}\}\} = \stds.
\end{align*}  
where
\begin{align*}
1 \prec \cdots \prec \ell_{1} \prec  \ell_{1}+1 \prec \cdots \prec \ell_{2} < \cdots < \ell_{k-1}+1 \prec \cdots \prec \ell_{k}
\end{align*}
and the $\textcolor{blue}{\vert}$ indicate potential \quotationmarks{cuts}, which lead to a different $\stds \in \SetStandardizedIntervalPartitions$. Clearly only
\begin{align*}
    \{\{i_{1} \textcolor{blue}{\vert},\dots,\textcolor{blue}{\vert}i_{\ell_{1}}\},\{i_{\ell_{1}+1}\textcolor{blue}{\vert},\dots,\textcolor{blue}{\vert}i_{\ell_{2}}\},\dots,\{i_{\ell_{k-1}+1}\textcolor{blue}{\vert},\dots,\textcolor{blue}{\vert} i_{\ell_{k}}\}\}
\end{align*}
does the job.
\end{proof}

\begin{lemma}
\label{lemma:character1}
Let $\stds,\stdt,\stdL \in \SetStandardizedIntervalPartitions$. Then the following holds
\begin{align*}
    &\Big\{A,B \subset \punion \stdL\;| \std(\stdL(A)) \ge \stds, \std(\stdL(B)) \ge \stdt\Big\} \\
    % &\hspace{-1.5cm}=
    &\quad
    =\biguplus_{\substack{(\stdg,C) \in \SetStandardizedIntervalPartitions \times \punion \stdL \\\\ \std(\stdL(C))\ge \stdg}}
    \Big\{A,B \subset \punion \stdL\;|A\cup B = C, \std(\stdL(A)) \ge \stds, \std(\stdL(B)) \ge \stdt,\\
    &\qquad\qquad\qquad\qquad\qquad \std(\stdL(A)_{\stds}\gluepartitions \stdL(B)_{\stdt}) = \stdg \Big\}.
\end{align*}
where the union is taken over disjoint sets (possibly empty). 
\end{lemma}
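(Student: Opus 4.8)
The plan is to prove the two sides are equal as sets, with the right-hand side visibly a disjoint union. The inclusion ``$\supseteq$'' is immediate: any pair $(A,B)$ appearing in one of the summands on the right already satisfies $\std(\stdL(A)) \ge \stds$ and $\std(\stdL(B)) \ge \stdt$ by definition, hence lies in the left-hand set. The real content is therefore the reverse inclusion together with disjointness, and I would organize everything around the observation that each pair $(A,B)$ on the left \emph{canonically determines} the index $(\stdg,C)$ of the summand to which it should belong.

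Given $(A,B)$ on the left, I would first invoke \Cref{lemma:character0}: since $\std(\stdL(A)) \ge \stds$ and $\std(\stdL(B)) \ge \stdt$, the refinements $\stdL(A)_{\stds}$ and $\stdL(B)_{\stdt}$ exist and are unique, with $\std(\stdL(A)_{\stds}) = \stds$, $\std(\stdL(B)_{\stdt}) = \stdt$, and ground sets $A$ and $B$ respectively. I would then set $C := A \cup B$ and $\stdg := \std(\stdL(A)_{\stds} \gluepartitions \stdL(B)_{\stdt})$, noting that by \Cref{lemma:gluing_welldefined} the partition $\stdL(A)_{\stds} \gluepartitions \stdL(B)_{\stdt}$ has ground set $A \cup B = C$, matching that of $\stdL(C)$. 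With these choices $(A,B)$ lies in the summand indexed by $(\stdg,C)$, provided that index is admissible, i.e.\ provided $\std(\stdL(C)) \ge \stdg$.

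The main step is precisely this admissibility. Here I would chain the two monotonicity results already available. Since $\stdL(A)_{\stds} \le \stdL(A)$ and $\stdL(B)_{\stdt} \le \stdL(B)$ with matching ground sets, \Cref{lemma:gluing_finess} gives $\stdL(A)_{\stds} \gluepartitions \stdL(B)_{\stdt} \le \stdL(A) \gluepartitions \stdL(B)$, and \Cref{corollary:gluing_finess} gives $\stdL(A) \gluepartitions \stdL(B) \le \stdL(A \cup B) = \stdL(C)$. Composing yields $\stdL(A)_{\stds} \gluepartitions \stdL(B)_{\stdt} \le \stdL(C)$. As both partitions in this inequality share the ground set $C$, applying $\std$, which is order-preserving because it merely relabels by the unique order-isomorphism $C \to [\,|C|\,]$, gives $\stdg \le \std(\stdL(C))$, exactly the admissibility constraint.

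Finally, disjointness of the right-hand summands is automatic: if $(A,B)$ belongs to the summand indexed by $(\stdg,C)$, then necessarily $C = A \cup B$ and $\stdg = \std(\stdL(A)_{\stds} \gluepartitions \stdL(B)_{\stdt})$, so the index is uniquely recovered from $(A,B)$ and no pair can lie in two distinct summands. I expect the only place demanding genuine care is the admissibility step: one must check that the ground-set hypotheses of \Cref{lemma:gluing_finess} and \Cref{corollary:gluing_finess} are met so that the two inequalities can be composed, and that $\std$ respects $\le$ on partitions of a fixed ground set; the rest is bookkeeping around the canonical assignment $(A,B) \mapsto (\stdg,C)$.
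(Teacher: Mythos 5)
Your proof is correct and follows essentially the same route as the paper's: both assign to each pair $(A,B)$ the canonical index $C := A \cup B$, $\stdg := \std(\stdL(A)_{\stds} \gluepartitions \stdL(B)_{\stdt})$, and establish admissibility by chaining \Cref{corollary:gluing_finess} with \Cref{lemma:gluing_finess} to get $\std(\stdL(C)) \ge \std(\stdL(A) \gluepartitions \stdL(B)) \ge \stdg$. Your write-up is somewhat more explicit than the paper's (spelling out the role of \Cref{lemma:character0}, the order-preservation of $\std$, and the disjointness argument), but the substance is identical.
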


\begin{proof}
    Let $(A,B)$ be an element of the LHS. Then there exists a unique pair $(\stdg,C)$ such that it belongs to one of the sets on the RHS. We have $C:=A \cup B$ and $\stdg := \std(\stdL(A)_{\stds} \gluepartitions \stdL(B)_{\stdt}).$ The inequality $\std(\stdL(C)) \ge \stdg$ holds since
    \begin{align*}
      \std(\stdL(C)) \ge \std(\stdL(A) \gluepartitions \stdL(B))  \ge \std(\stdL(A)_{\stds} \gluepartitions \stdL(B)_{\stdt}) = \stdg.
    \end{align*}
From left to right: the first inequality is due to \Cref{corollary:gluing_finess}, while the second is due to \Cref{lemma:gluing_finess}.

The other inclusion is trivial.
\end{proof}
\begin{lemma}[Zooming in]
\label{lemma:character2}
Let $\stds,\stdt,\stdL \in \SetStandardizedIntervalPartitions$, $(\stdg,C) \in \SetStandardizedIntervalPartitions \times \punion \stdL$. Assume that $\std(\stdL(C)) =: \stdh \ge \stdg$. Then
\begin{gather*}
    \#\left\{\left.A,A^{\prime} \subset \punion \stdL\;\right|A\cup A^{\prime} = C, \std(\stdL(A)) \ge \stds, \std(\stdL(A^{\prime})) \ge \stdt, \std(\stdL(A)_{\stds}\gluepartitions \stdL(A^{\prime})_{\stdt}) = \stdg \right\} \\
    = \\
    \#\left\{\left. A,A^{\prime} \subset \punion \stdh\;\right|A\cup A^{\prime} = \punion \stdh,\; \std(\stdh(A)) \ge \stds, \std(\stdh(A^{\prime})) \ge \stdt, \stdh(A)_{\stds}\gluepartitions \stdh(A^{\prime})_{\stdt} = \stdg \right\}.
\end{gather*}
\end{lemma}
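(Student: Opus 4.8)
The plan is to realise both sides of the claimed equality as the cardinality of one and the same index set, transported by the order isomorphism underlying the standardization $\stdh := \std(\stdL(C))$. Write $\phi \colon C \to \punion\stdh$ for the unique strictly increasing bijection; note $\punion\stdh = [\,|C|\,]$, since $C \subseteq \punion\stdL$ forces $\punion\stdL(C) = C$. By \Cref{def:std_partition}, standardization is precisely the blockwise application of $\phi$, that is $\stdh = \phi(\stdL(C))$, and the very same $\phi$ standardizes \emph{every} interval partition whose ground set is $C$. As $\phi$ is a bijection, the assignment $(A,A') \mapsto (\phi(A), \phi(A'))$ is a bijection between pairs of subsets of $C$ with $A \cup A' = C$ and pairs of subsets of $\punion\stdh$ with union $\punion\stdh$. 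It therefore suffices to verify that this map identifies the two defining lists of conditions.

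The technical core is a single compatibility statement: for any interval partition $\unstdI$ with ground set $S$, order isomorphism $\psi \colon S \to [\,|S|\,]$, and any $A \subseteq S$,
\[
  \std(\unstdI)(\psi(A)) = \psi(\unstdI(A)).
\]
I would prove this by comparing, for $x,y \in A$, when $\psi(x),\psi(y)$ lie in a common block on each side. Both sides place them together exactly when $x,y$ share an $\unstdI$-block, so the whole content is that they share a clique of $A$ iff $\psi(x),\psi(y)$ share a clique of $\psi(A)$. The forward direction is immediate because $\psi$ preserves runs of consecutive integers inside $S$. The reverse direction is the main obstacle, and it is exactly where the interval structure is essential: if $x,y$ lie in a common $\unstdI$-block then, that block being an interval, every integer between $x$ and $y$ lies in $S$, so these are precisely the $S$-elements of $[x,y]$, and $\psi$ sends them to the full integer run from $\psi(x)$ to $\psi(y)$; this run lying inside $\psi(A)$ forces all of $x,\dots,y$ into $A$, i.e. a common clique of $A$.

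Granting this, I apply it with $\unstdI = \stdL(C)$ and $\psi = \phi$, and combine it with the transitivity of restriction $\stdL(A) = \stdL(C)(A)$ for $A \subseteq C$ (immediate from \Cref{def:cliques_A_through_I}, since each clique of $A$ sits inside a single clique of $C$). This gives $\stdh(\phi(A)) = \phi(\stdL(A))$ for every $A \subseteq C$. Being strictly increasing, $\phi$ preserves the ordered sequence of block sizes, whence $\std$ is unchanged under $\phi$ and $\std(\stdh(\phi(A))) = \std(\stdL(A))$; this matches $\std(\stdL(A)) \ge \stds$ and $\std(\stdL(A')) \ge \stdt$ with their primed counterparts, and in particular guarantees the refinements below are defined on both sides simultaneously.

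Finally I transport the refinement and the gluing through $\phi$. By the uniqueness in \Cref{lemma:character0}, $\phi(\stdL(A)_{\stds})$ is the unique refinement of $\stdh(\phi(A))$ standardizing to $\stds$, so $\phi(\stdL(A)_{\stds}) = \stdh(\phi(A))_{\stds}$, and likewise with $\stdt$. Since $\phi$ is injective it preserves nonempty intersections and unions of blocks, hence commutes with the gluing of \Cref{def:gluing_partitions}, giving
\[
  \phi\bigl(\stdL(A)_{\stds} \gluepartitions \stdL(A')_{\stdt}\bigr)
  = \stdh(\phi(A))_{\stds} \gluepartitions \stdh(\phi(A'))_{\stdt}.
\]
The partition on the left has ground set $A \cup A' = C$, so applying $\phi$ is exactly its standardization, while the partition on the right already has ground set $\punion\stdh$ and is standardized. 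Thus $\std(\stdL(A)_{\stds}\gluepartitions \stdL(A')_{\stdt}) = \stdg$ iff $\stdh(\phi(A))_{\stds}\gluepartitions \stdh(\phi(A'))_{\stdt} = \stdg$, which is the last pair of conditions. All four conditions then correspond under $(A,A') \mapsto (\phi(A),\phi(A'))$, so the two sets are in bijection and have equal cardinality.
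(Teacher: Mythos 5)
Your proposal is correct, and it takes the only natural route here — which is evidently what the authors had in mind, since the paper's entire proof of this lemma is the single word ``Immediate'': all four defining conditions are invariant under transport along the increasing bijection $\phi\colon C \to \punion\stdh$ realizing the standardization. Your write-up usefully pins down the one genuinely non-trivial point hidden behind that ``Immediate'' (that cliques are preserved in \emph{both} directions only because the elements being compared lie in a common interval block, so the gap structure of $C$ cannot interfere), and the rest (transport of $\std$, of the unique $\stds$-refinement via \Cref{lemma:character0}, and of $\gluepartitions$ via injectivity of $\phi$) is carried out correctly.
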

\begin{proof}
Immediate.
\end{proof}
\begin{lemma}[Invariance]
\label{lemma:character3}
Let $\stdg,\stdh\in \SetStandardizedIntervalPartitions$ such that $\stdh \ge \stdg$. Then
\begin{gather*}
    \Big\{A,A^{\prime} \subset \punion \stdh\;|A\cup A^{\prime} = \punion \stdh,\; \std(\stdh(A)) \ge \stds, \std(\stdh(A^{\prime})) \ge \stdt, \stdh(A)_{\stds}\gluepartitions \stdh(A^{\prime})_{\stdt} = \stdg \Big\}\\
        = \\
    \{A,A^{\prime} \subset \punion \stdg\;|A\cup A^{\prime} = \punion \stdg,\; \std(\stdg(A)) \ge \stds, \std(\stdg(A^{\prime})) \ge \stdt, \stdg(A)_{\stds}\gluepartitions \stdg(A^{\prime})_{\stdt} =\stdg\}.
\end{gather*}
\end{lemma}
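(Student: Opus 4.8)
The plan is to fix a pair $(A,A')$ and show that the four defining conditions hold for $\stdh$ exactly when they hold for $\stdg$. Since $\stdh \ge \stdg$ forces $\punion\stdh = \punion\stdg =: [n]$, both sets range over the same pairs of subsets of $[n]$ with $A \cup A' = [n]$, so it suffices to compare membership conditions pointwise. I would phrase everything in terms of \emph{cut positions}: a standardized interval partition of $[n]$ is determined by the set of $p \in \{1,\dots,n-1\}$ where it separates $p$ from $p+1$, and by \Cref{def:std_partition} the standardization of an interval partition merely records its ordered sequence of block sizes. Writing $H,G$ for the cut sets of $\stdh,\stdg$, the hypothesis $\stdh \ge \stdg$ reads $H \subseteq G$.

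The first step I would isolate is that the $\stds$-refinement of \Cref{lemma:character0} is blind to the ambient partition: whenever $\std(\stdh(A)) \ge \stds$, the refinement $\stdh(A)_\stds$ is the unique interval partition of $A = \{a_1 < \dots < a_{|A|}\}$ of standardization $\stds$, namely the one cutting at exactly those $r$ in the cut set of $\stds$; this object depends only on $A$ and $\stds$. Hence, \emph{whenever both are defined}, $\stdh(A)_\stds = \stdg(A)_\stds$ and $\stdh(A')_\stdt = \stdg(A')_\stdt$, so the gluing equation ``$\,\cdots = \stdg\,$'' is literally the same constraint in both sets. This reduces the lemma to the equivalence of the existence conditions $\std(\stdh(A)) \ge \stds,\ \std(\stdh(A')) \ge \stdt$ versus $\std(\stdg(A)) \ge \stds,\ \std(\stdg(A')) \ge \stdt$.

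The second ingredient is a characterization of gluing: for interval partitions $\mathcal E$ of $A$ and $\mathcal F$ of $A'$ with $A \cup A' = [n]$, a position $p$ is \emph{not} a cut of $\mathcal E \gluepartitions \mathcal F$ iff a single block of $\mathcal E$ or of $\mathcal F$ contains both $p$ and $p+1$. I would prove this by an interval dichotomy: if no block contains $\{p,p+1\}$, then every block, being an interval of $[n]$, lies entirely in $\{1,\dots,p\}$ or entirely in $\{p+1,\dots,n\}$, so the two families never meet under $R_{\mathcal E,\mathcal F}$ and the gluing cannot bridge the gap at $p$; the converse is immediate. Since $\stdg(A)$ places a cut at $a_r$ precisely when there is a gap in $A$ there or $a_r \in G$, this lets me control the cut set of $\stdg(A)$ exactly.

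The hard part, and the reason the gluing hypothesis is indispensable, is the inclusion of the $\stdh$-set into the $\stdg$-set: because $G \supseteq H$, $\stdg(A)$ is \emph{finer} than $\stdh(A)$, so $\std(\stdh(A)) \ge \stds$ does not by itself give $\std(\stdg(A)) \ge \stds$. To recover it I would take a cut $r$ of $\stdg(A)$ and show $r$ is a cut of $\stds$: if $r$ comes from a gap in $A$ it is already a cut of $\std(\stdh(A))$, hence of the finer $\stds$; if $r$ comes from $a_r \in G$ with $a_{r+1} = a_r + 1$, then $p := a_r \in G$ is a cut of $\stdg = \stdh(A)_\stds \gluepartitions \stdh(A')_\stdt$, and as $p, p+1 \in A$ the block-characterization forbids any block of $\stdh(A)_\stds$ from containing $\{p,p+1\}$, which is exactly $r \in$ cut set of $\stds$. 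Thus $\std(\stdg(A)) \ge \stds$ (and symmetrically for $A',\stdt$), and by the first step the gluing equation transfers verbatim. The reverse inclusion is then routine, since $H \subseteq G$ makes $\std(\stdh(A)) \ge \std(\stdg(A)) \ge \stds$ automatic and the refinements again coincide.
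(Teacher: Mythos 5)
Your proof is correct, and it shares the two-inclusion skeleton of the paper's proof, but the engine of the hard direction is genuinely different. The paper disposes of the inclusion of the $\stdh$-set into the $\stdg$-set by citing \Cref{lemma:gluing_finess3}: from $\stdg = \stdh(A)_{\stds}\gluepartitions \stdh(A^{\prime})_{\stdt}$ it reads off $\stdh(A)_{\stds} \le \stdg(A)$ and $\stdh(A^{\prime})_{\stdt} \le \stdg(A^{\prime})$, hence $\std(\stdg(A)) \ge \stds$ and $\std(\stdg(A^{\prime})) \ge \stdt$ at once; you instead re-derive exactly this containment by hand, through your cut-set dictionary (a standardized interval partition of $[n]$ is its cut set, $\ge$ is reverse inclusion of cut sets, $\stdh \ge \stdg$ is $H \subseteq G$) together with the bridging characterization of gluing (a position $p$ fails to be a cut of $\mathcal{E} \gluepartitions \mathcal{F}$ iff a single block of $\mathcal{E}$ or $\mathcal{F}$ contains both $p$ and $p+1$), followed by the gap-cut versus $G$-cut case analysis. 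The easy direction is essentially identical in both proofs (monotonicity of $A \mapsto \stdg(A)$ in the ambient partition, plus coincidence of the refinements). What your route buys: it makes fully explicit a point the paper uses only implicitly via the uniqueness statement of \Cref{lemma:character0}, namely that the $\stds$-refinement $\unstdI_{\stds}$ depends only on $\punion\unstdI$ and $\stds$ and not on $\unstdI$ itself --- this is precisely what licenses the substitution $\stdh(A)_{\stds} = \stdg(A)_{\stds}$ on which both proofs rely --- and the cut-position language keeps the whole argument elementary and self-contained. What it costs: length, and the re-proving, in the special case needed here, of material the paper already has available as \Cref{lemma:gluing_finess3}.
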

\begin{proof}
  Denote the set on the left-hand side with $\textup{Set}_{0}$
  and the one on the right-hand side with $\textup{Set}_{1}$.
Let $(A,A^{\prime})\in \textup{Set}_{1}$. Since $\stdh \ge \stdg$ means that $\forall i \in I: \exists j \in J: \stdg_{i} \subset \stdh_{j}$ and since
\begin{align*}
    \stdh(A):=\{c \cap \stdh_{j}|c \in \cliques(A),j \in J\}\setminus\{\emptyset\}\\
    \stdg(A):=\{c \cap \stdg_{j}|c \in \cliques(A),j \in J\}\setminus\{\emptyset\}
\end{align*}
we have $\stdh(A) \ge \stdg(A)$ and, analogously, $\stdh(A^{\prime})\ge \stdg(A^{\prime})$. This means that $ \std(\stdh(A)) \ge \std(\stdg(A)) \ge \stds$ and  $ \std(\stdh(A^{\prime})) \ge \std(\stdg(A^{\prime})) \ge \stdt$. Therefore $\stdh(A)$ and $\stdh(A^{\prime})$ can be refined so that $\stdh(A)_{\stds}\gluepartitions \stdh(A^{\prime})_{\stdt} = \stdg(A)_{\stds}\gluepartitions \stdg(A^{\prime})_{\stdt} = \stdg$, see \Cref{lemma:character0}. So, $(A,A^{\prime})\in \textup{Set}_{0}$. For the other direction, let $(A,A^{\prime})\in \textup{Set}_{0}$. Since $\stdh(A)_{\stds}\gluepartitions \stdh(A^{\prime})_{\stdt} = \stdg$, using \Cref{lemma:gluing_finess3}, we have\begin{align*}
    &\stdg(A) \ge \stdh(A)_{\stds} \\
     &\stdg(A^{\prime})\ge \stdh(A^{\prime})_{\stdt}.
\end{align*}
Then it is immediate to verify that $(A,A^{\prime})\in \textup{Set}_{1}$.
\end{proof}

\begin{lemma}[Recovering section coefficients]
\label{lemma:character4}
Let $\stds,\stdt,\stdg \in \SetStandardizedIntervalPartitions$. Then:
\begin{gather*}
   \left\{ A,A^{\prime} \subset \punion \stdg\;\Big|A\cup A^{\prime} = \punion \stdg,\; \std(\stdg(A)) \ge \stds, \std(\stdg(A^{\prime})) \ge \stdt, \stdg(A)_{\stds}\gluepartitions \stdg(A^{\prime})_{\stdt} =\stdg \right\} \\
    = \\
\left\{ A,A^{\prime} \subset \punion \stdg\;\Big|A\cup A^{\prime} = \punion \stdg,\; \exists!\, \unstdI \in  \SetIntervalPartitions(A), \exists! \,\unstdI^{\prime} \in  \SetIntervalPartitions(A^{\prime}), \std(\unstdI)=\stds,\std(\unstdI^{\prime})=\stdt,\unstdI \gluepartitions \unstdI^{\prime} = \stdg \right\}.
\end{gather*}
In particular, see \Cref{rem:section_coefficients}
\begin{align*}
   &\#\{A,A^{\prime} \subset \punion \stdg\;|A\cup A^{\prime} = \punion \stdg,\; \std(\stdg(A)) \ge \stds, \std(\stdg(A^{\prime})) \ge \stdt, \stdg(A)_{\stds}\gluepartitions \stdg(A^{\prime})_{\stdt} =\stdg\} \\&=
   \langle \stds \otimes \stdt, \coqspart(\stdg) \rangle.
\end{align*}
\end{lemma}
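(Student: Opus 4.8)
The plan is to prove the asserted set equality directly, since the ``in particular'' statement then follows at once by comparing the right-hand set with the description of the section coefficient $\langle \stds \otimes \stdt, \coqspart(\stdg)\rangle$ recorded in \Cref{rem:section_coefficients}. The guiding observation is that, whenever they exist, the unique witnesses $\unstdI, \unstdI^{\prime}$ appearing in the right-hand set must be precisely the canonical refinements $\stdg(A)_{\stds}$ and $\stdg(A^{\prime})_{\stdt}$ furnished by \Cref{lemma:character0}. Thus the two descriptions parametrize the same pairs $(A, A^{\prime})$, phrased either through the inequalities $\std(\stdg(A)) \ge \stds$, $\std(\stdg(A^{\prime})) \ge \stdt$ together with the gluing relation, or through the existence of refinements with prescribed standardization.

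For the inclusion from left to right, fix a pair $(A, A^{\prime})$ in the left-hand set. Since $A, A^{\prime} \subset \punion\stdg$ we have $\stdg(A) \in \SetIntervalPartitions(A)$ and $\stdg(A^{\prime}) \in \SetIntervalPartitions(A^{\prime})$, and the hypotheses $\std(\stdg(A)) \ge \stds$, $\std(\stdg(A^{\prime})) \ge \stdt$ let me invoke \Cref{lemma:character0} to set $\unstdI := \stdg(A)_{\stds}$ and $\unstdI^{\prime} := \stdg(A^{\prime})_{\stdt}$. These satisfy $\std(\unstdI) = \stds$, $\std(\unstdI^{\prime}) = \stdt$, and $\unstdI \gluepartitions \unstdI^{\prime} = \stdg$ is exactly the gluing condition carried over from the left-hand set. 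For uniqueness, suppose $\unstdI \in \SetIntervalPartitions(A)$ with $\std(\unstdI) = \stds$ satisfies $\unstdI \gluepartitions \unstdI^{\prime} = \stdg$ for some $\unstdI^{\prime}$. Then \Cref{lemma:gluing_finess3} forces $\unstdI \le \stdg(A)$, so $\unstdI$ is a refinement of $\stdg(A)$ with standardization $\stds$; by the uniqueness clause of \Cref{lemma:character0} we get $\unstdI = \stdg(A)_{\stds}$, and symmetrically $\unstdI^{\prime} = \stdg(A^{\prime})_{\stdt}$. Hence the witnessing pair is unique and $(A, A^{\prime})$ lies in the right-hand set.

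Conversely, fix $(A, A^{\prime})$ in the right-hand set, with witnesses $\unstdI, \unstdI^{\prime}$. Applying \Cref{lemma:gluing_finess3} to $\unstdI \gluepartitions \unstdI^{\prime} = \stdg$ gives $\unstdI \le \stdg(A)$ and $\unstdI^{\prime} \le \stdg(A^{\prime})$. Since standardization is order-preserving on interval partitions of a fixed ground set (it is the relabelling along the order isomorphism onto $[|A|]$, exactly as already used in the proof of \Cref{lemma:character3}), from $\unstdI \le \stdg(A)$ and $\std(\unstdI) = \stds$ I obtain $\std(\stdg(A)) \ge \stds$, and likewise $\std(\stdg(A^{\prime})) \ge \stdt$. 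These inequalities make $\stdg(A)_{\stds}$ and $\stdg(A^{\prime})_{\stdt}$ well defined, and the refinement-uniqueness argument from the first inclusion identifies $\unstdI = \stdg(A)_{\stds}$ and $\unstdI^{\prime} = \stdg(A^{\prime})_{\stdt}$. Substituting into $\unstdI \gluepartitions \unstdI^{\prime} = \stdg$ yields $\stdg(A)_{\stds} \gluepartitions \stdg(A^{\prime})_{\stdt} = \stdg$, so $(A, A^{\prime})$ belongs to the left-hand set. I expect the main obstacle to be exactly this bridge between the two formulations: turning the relation $\unstdI \gluepartitions \unstdI^{\prime} = \stdg$ into the refinement relation $\unstdI \le \stdg(A)$ via \Cref{lemma:gluing_finess3} and then pinning $\unstdI$ down uniquely through \Cref{lemma:character0}. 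Once this is in place, the equality of cardinalities with $\langle \stds \otimes \stdt, \coqspart(\stdg)\rangle$ is immediate from \Cref{rem:section_coefficients}.
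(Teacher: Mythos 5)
Your proof is correct and follows essentially the same route as the paper's: both directions hinge on \Cref{lemma:gluing_finess3} to force $\unstdI \le \stdg(A)$, $\unstdI^{\prime} \le \stdg(A^{\prime})$ and on the uniqueness clause of \Cref{lemma:character0} to pin the witnesses down as $\stdg(A)_{\stds}$ and $\stdg(A^{\prime})_{\stdt}$. If anything, you are more explicit than the paper in verifying the $\exists!$ clause of the right-hand set, which the paper's proof leaves largely implicit.
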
  
\begin{proof}
 Denote the set on the left-hand side with $\textup{Set}_{0}$
  and the one on the right-hand side with $\textup{Set}_{1}$.
Let $(A,A^{\prime}) \in \text{Set}_{0}$, then $A\cup A^{\prime} = \punion \stdg$, $\stdg(A)_{\stds} \in \SetIntervalPartitions(A)$ and $\stdg(A^{\prime})_{\stdt} \in \SetIntervalPartitions(A^{\prime})$, $\std(\stdg(A)_{\stds}) =\stds$, $\std(\stdg(A^{\prime})_{\stdt}) =\stdt$, $\stdg(A)_{\stds} \gluepartitions \stdg(A^{\prime})_{\stdt} = \stdg$. Let $(A,A^{\prime}) \in \text{Set}_{1}$, then $A\cup A^{\prime} = \punion \stdg$. Since $\unstdI \gluepartitions \unstdI^{\prime} = \stdg$, with $\unstdI \in \SetIntervalPartitions(A)$ and $\unstdI^{\prime} \in \SetIntervalPartitions(A^{\prime})$, we have $\unstdI \le \stdg(A)$, $\unstdI^{\prime} \le \stdg(A^{\prime})$, and $\stds=\std(\unstdI) \le \std(\stdg(A))$, $\stdt= \std(\unstdI^{\prime}) \le \std(\stdg(A^{\prime}))$ as well. Therefore, using \Cref{lemma:character0}, we have $\unstdI = \stdg(A)_{\stds}$ and $\unstdI^{\prime} = \stdg(A)_{\stdt}$.

\end{proof}
\begin{proof}[Proof of \Cref{thm:qsI_intpart}]

\begin{align*}
&\Big\langle  \IPC(\stdL), \stds\Big\rangle \Big\langle  \IPC(\stdL), \stdt \Big\rangle
\\&=|\{A,B \subset \punion \stdL\;| \std(\stdL(A)) \ge \stds, \std(\stdL(B)) \ge \stdt \}|\\
    &=| \biguplus_{\substack{(\stdg,C) \in \SetStandardizedIntervalPartitions \times \punion \stdL \\\\ \std(\stdL(C))\ge \stdg}}\{A,B \subset \punion \stdL\;|A\cup B = C, \std(\stdL(A)) \ge \stds, \std(\stdL(B)) \ge \stdt,\\& \std(\stdL(A)_{\stds}\gluepartitions \stdL(B)_{\stdt}) = \stdg \}|\\ &= \sum_{\substack{(\stdg,C) \in \SetStandardizedIntervalPartitions \times \punion \stdL \\\\ \std(\stdL(C))\ge \stdg}}\langle \qspart(\stds \otimes \stdt) , \stdg \rangle= \sum_{\stdg \in \SetStandardizedIntervalPartitions} \sum_{ \substack{ C   \in \punion \stdL \\ \std(\stdL(C))\ge \stdg}}\langle \qspart(\stds \otimes \stdt) , \stdg \rangle\\&= \sum_{\stdg \in \SetStandardizedIntervalPartitions} \langle \qspart(\stds \otimes \stdt) , \stdg \rangle \sum_{ \substack{ C   \in \punion \stdL \\ \std(\stdL(C))\ge \stdg}}1= \sum_{\stdg \in \SetStandardizedIntervalPartitions} \langle \qspart(\stds \otimes \stdt) , \stdg \rangle  \Big\langle  \IPC(\stdL), \stdg\Big\rangle \\&=  \Big\langle  \IPC(\stdL), \sum_{\stdg \in \SetStandardizedIntervalPartitions} \langle \qspart(\stds \otimes \stdt) , \stdg \rangle \stdg \Big\rangle   = \Big\langle  \IPC(\stdL), \stds \, \qspart \,\stdt \Big\rangle.
\end{align*}
We used \Cref{lemma:character1} for the second equality and \Cref{lemma:character2,lemma:character3,lemma:character4} for the third one.
\end{proof}

\newcommand{\sameheight}[1]{\vphantom{d}#1}

\begin{example}
  In the following expression, we write $\tikzmarknode[rectangle,black]{}{\sameheight{a}} \tikzmarknode[rectangle,black]{}{\sameheight{b}}\tikzmarknode[rectangle,black]{}{\sameheight{c}} \tikzmarknode[rectangle,black]{}{\sameheight{d}}$ for $\{\{1,2,3,4\}\}$ to illustrate how the mechanism works.
\begin{align*}
   &\Big \langle\IPC\left(\;\tikzmarknode[rectangle,black]{}{\sameheight{a}} \tikzmarknode[rectangle,black]{}{\sameheight{b}}\tikzmarknode[rectangle,black]{}{\sameheight{c}} \tikzmarknode[rectangle,black]{}{\sameheight{d}}\;\right),\;\; \tikzmarknode[rectangle,red]{}{\sameheight{a}} \tikzmarknode[rectangle,red]{}{\sameheight{b}} \; \Big \rangle \cdot \Big \langle\IPC\left(\;\tikzmarknode[rectangle,black]{}{\sameheight{a}} \tikzmarknode[rectangle,black]{}{\sameheight{b}}\tikzmarknode[rectangle,black]{}{\sameheight{c}} \tikzmarknode[rectangle,black]{}{\sameheight{d}}\;\right),\;\; \tikzmarknode[rectangle,blue]{}{\sameheight{a}}\;\;\tikzmarknode[rectangle,blue]{}{\sameheight{b}} \; \Big \rangle\\\\&= \Big \vert\Big\{\;\tikzmarknode[rectangle,red]{}{\sameheight{a}} \tikzmarknode[rectangle,red]{}{\sameheight{b}}\tikzmarknode[rectangle,black]{}{\sameheight{c}} \tikzmarknode[rectangle,black]{}{\sameheight{d}},\;\;\tikzmarknode[rectangle,black]{}{\sameheight{a}} \tikzmarknode[rectangle,red]{}{\sameheight{b}}\tikzmarknode[rectangle,red]{}{\sameheight{c}} \tikzmarknode[rectangle,black]{}{\sameheight{d}},\;\;\tikzmarknode[rectangle,black]{}{\sameheight{a}} \tikzmarknode[rectangle,black]{}{\sameheight{b}}\tikzmarknode[rectangle,red]{}{\sameheight{c}} \tikzmarknode[rectangle,red]{}{\sameheight{d}}\;\Big\} \times \Big\{\;\tikzmarknode[rectangle,blue]{}{\sameheight{a}} \tikzmarknode[rectangle,blue]{}{\sameheight{b}}\tikzmarknode[rectangle,black]{}{\sameheight{c}} \tikzmarknode[rectangle,black]{}{\sameheight{d}},\;\;\tikzmarknode[rectangle,blue]{}{\sameheight{a}} \tikzmarknode[rectangle,black]{}{\sameheight{b}}\tikzmarknode[rectangle,blue]{}{\sameheight{c}} \tikzmarknode[rectangle,black]{}{\sameheight{d}},\\&\tikzmarknode[rectangle,blue]{}{\sameheight{a}} \tikzmarknode[rectangle,black]{}{\sameheight{b}}\tikzmarknode[rectangle,black]{}{\sameheight{c}} \tikzmarknode[rectangle,blue]{}{\sameheight{d}},\;\;\tikzmarknode[rectangle,black]{}{\sameheight{a}} \tikzmarknode[rectangle,blue]{}{\sameheight{b}}\tikzmarknode[rectangle,blue]{}{\sameheight{c}} \tikzmarknode[rectangle,black]{}{\sameheight{d}},\;\;\tikzmarknode[rectangle,black]{}{\sameheight{a}} \tikzmarknode[rectangle,blue]{}{\sameheight{b}}\tikzmarknode[rectangle,black]{}{\sameheight{c}}  \tikzmarknode[rectangle,blue]{}{\sameheight{d}},\;\;\tikzmarknode[rectangle,black]{}{\sameheight{a}} \tikzmarknode[rectangle,black]{}{\sameheight{b}}\tikzmarknode[rectangle,blue]{}{\sameheight{c}}  \tikzmarknode[rectangle,blue]{}{\sameheight{d}}\;\Big\}\Big \vert\\\\&= \Big \vert \Big \{\overbrace{\underbrace{\tikzmarknode[rectangle,violet]{}{\sameheight{a}} \tikzmarknode[rectangle,violet]{}{\sameheight{b}}\tikzmarknode[rectangle,black]{}{\sameheight{c}} \tikzmarknode[rectangle,black]{}{\sameheight{d}}}_{(\{\sameheight{a},\sameheight{b}\},\{\sameheight{a},\sameheight{b}\})}}^{   
 \{\{\sameheight{a},\sameheight{b}\}\} \gluepartitions \{\{\sameheight{a}\},\{\sameheight{b}\}\} = \{\{\sameheight{a},\sameheight{b}\}\}},\;\;\underbrace{\tikzmarknode[rectangle,black]{}{\sameheight{a}} \tikzmarknode[rectangle,violet]{}{\sameheight{b}}\tikzmarknode[rectangle,violet]{}{\sameheight{c}}\tikzmarknode[rectangle,black]{}{\sameheight{d}}}_{(\{\sameheight{b},\sameheight{c}\},\{\sameheight{b},\sameheight{c}\})},\;\;\underbrace{\tikzmarknode[rectangle,black]{}{\sameheight{a}} \tikzmarknode[rectangle,black]{}{\sameheight{b}}\tikzmarknode[rectangle,violet]{}{\sameheight{c}}\tikzmarknode[rectangle,violet]{}{\sameheight{d}}}_{(\{\sameheight{c},\sameheight{d}\},\{\sameheight{c},\sameheight{d}\})},\\\\&\overbrace{\underbrace{\tikzmarknode[rectangle,violet]{}{\sameheight{a}} \tikzmarknode[rectangle,red]{}{\sameheight{b}}\tikzmarknode[rectangle,blue]{}{\sameheight{c}}\tikzmarknode[rectangle,black]{}{\sameheight{d}}}_{(\{\sameheight{a},\sameheight{b}\},\{\sameheight{a},\sameheight{c}\})}}^{\{\{\sameheight{a},\sameheight{b}\}\} \gluepartitions \{\{\sameheight{a}\},\{\sameheight{c}\}\} = \{\{\sameheight{a},\sameheight{b}\},\{\sameheight{c}\}\}},\;\;\overbrace{\underbrace{\tikzmarknode[rectangle,red]{}{\sameheight{a}} \tikzmarknode[rectangle,violet]{}{\sameheight{b}}\tikzmarknode[rectangle,blue]{}{\sameheight{c}} \tikzmarknode[rectangle,black]{}{\sameheight{d}}}_{(\{\sameheight{a},\sameheight{b}\},\{\sameheight{b},\sameheight{c}\})}}^{\{\{\sameheight{a},\sameheight{b}\}\} \gluepartitions \{\{\sameheight{b}\},\{\sameheight{c}\}\} = \{\{\sameheight{a},\sameheight{b}\},\{\sameheight{c}\}\}},\\\\&\;\;\overbrace{\underbrace{\tikzmarknode[rectangle,violet]{}{\sameheight{a}} \tikzmarknode[rectangle,red]{}{\sameheight{b}}\tikzmarknode[rectangle,black]{}{\sameheight{c}} \tikzmarknode[rectangle,blue]{}{\sameheight{d}}}_{(\{\sameheight{a},\sameheight{b}\},\{\sameheight{a},\sameheight{d}\})}}^{\{\{\sameheight{a},\sameheight{b}\}\} \gluepartitions \{\{\sameheight{a}\},\{\sameheight{d}\}\} = \{\{\sameheight{a},\sameheight{b}\},\{\sameheight{d}\}\}},\;\;\overbrace{\underbrace{\tikzmarknode[rectangle,red]{}{\sameheight{a}} \tikzmarknode[rectangle,violet]{}{\sameheight{b}}\tikzmarknode[rectangle,black]{}{\sameheight{c}} \tikzmarknode[rectangle,blue]{}{\sameheight{d}}}_{(\{\sameheight{a},\sameheight{b}\},\{\sameheight{b},\sameheight{d}\})}}^{\{\{\sameheight{a},\sameheight{b}\}\} \gluepartitions \{\{\sameheight{b}\},\{\sameheight{d}\}\} = \{\{\sameheight{a},\sameheight{b}\},\{\sameheight{d}\}\}},\\\\& \underbrace{\tikzmarknode[rectangle,black]{}{\sameheight{a}} \tikzmarknode[rectangle,violet]{}{\sameheight{b}}\tikzmarknode[rectangle,red]{}{\sameheight{c}} \tikzmarknode[rectangle,blue]{}{\sameheight{d}}}_{(\{\sameheight{b},\sameheight{c}\},\{\sameheight{b},\sameheight{d}\})},\;\;\underbrace{\tikzmarknode[rectangle,black]{}{\sameheight{a}} \tikzmarknode[rectangle,red]{}{\sameheight{b}}\tikzmarknode[rectangle,violet]{}{\sameheight{c}} \tikzmarknode[rectangle,blue]{}{\sameheight{d}}}_{(\{\sameheight{b},\sameheight{c}\},\{\sameheight{c},\sameheight{d}\})}, \\\\& \overbrace{\underbrace{\tikzmarknode[rectangle,blue]{}{\sameheight{a}} \tikzmarknode[rectangle,violet]{}{\sameheight{b}}\tikzmarknode[rectangle,red]{}{\sameheight{c}} \tikzmarknode[rectangle,black]{}{\sameheight{d}}}_{(\{\sameheight{b},\sameheight{c}\},\{\sameheight{a},\sameheight{b}\})}}^{\{\{\sameheight{b},\sameheight{c}\}\} \gluepartitions \{\{\sameheight{a}\},\{\sameheight{b}\}\} = \{\{\sameheight{a}\},\{\sameheight{b},\sameheight{c}\}\}},\;\;\overbrace{\underbrace{\tikzmarknode[rectangle,blue]{}{\sameheight{a}} \tikzmarknode[rectangle,red]{}{\sameheight{b}}\tikzmarknode[rectangle,violet]{}{\sameheight{c}} \tikzmarknode[rectangle,black]{}{\sameheight{d}}}_{(\{\sameheight{b},\sameheight{c}\},\{\sameheight{a},\sameheight{c}\})}}^{\{\{\sameheight{b},\sameheight{c}\}\} \gluepartitions \{\{\sameheight{a}\},\{\sameheight{c}\}\} = \{\{\sameheight{a}\},\{\sameheight{b},\sameheight{c}\}\}},\\\\& \underbrace{\tikzmarknode[rectangle,blue]{}{\sameheight{a}} \tikzmarknode[rectangle,black]{}{\sameheight{b}}\tikzmarknode[rectangle,violet]{}{\sameheight{c}} \tikzmarknode[rectangle,red]{}{\sameheight{d}}}_{(\{\sameheight{c},\sameheight{d}\},\{\sameheight{a},\sameheight{c}\})},\;\;\underbrace{\tikzmarknode[rectangle,blue]{}{\sameheight{a}} \tikzmarknode[rectangle,black]{}{\sameheight{b}}\tikzmarknode[rectangle,red]{}{\sameheight{c}} \tikzmarknode[rectangle,violet]{}{\sameheight{d}}}_{(\{\sameheight{c},\sameheight{d}\},\{\sameheight{a},\sameheight{d}\})},\;\;\underbrace{\tikzmarknode[rectangle,black]{}{\sameheight{a}} \tikzmarknode[rectangle,blue]{}{\sameheight{b}}\tikzmarknode[rectangle,violet]{}{\sameheight{c}} \tikzmarknode[rectangle,red]{}{\sameheight{d}}}_{(\{\sameheight{c},\sameheight{d}\},\{\sameheight{b},\sameheight{c}\})},\;\;\underbrace{\tikzmarknode[rectangle,black]{}{\sameheight{a}} \tikzmarknode[rectangle,blue]{}{\sameheight{b}}\tikzmarknode[rectangle,red]{}{\sameheight{c}} \tikzmarknode[rectangle,violet]{}{\sameheight{d}}}_{(\{\sameheight{c},\sameheight{d}\},\{\sameheight{b},\sameheight{d}\})},\\\\& \overbrace{\underbrace{\tikzmarknode[rectangle,red]{}{\sameheight{a}} \tikzmarknode[rectangle,red]{}{\sameheight{b}}\tikzmarknode[rectangle,blue]{}{\sameheight{c}} \tikzmarknode[rectangle,blue]{}{\sameheight{d}}}_{(\{\sameheight{a},\sameheight{b}\},\{\sameheight{c},\sameheight{d}\})}}^{\{\{\sameheight{a},\sameheight{b}\}\} \gluepartitions \{\{\sameheight{c}\},\{\sameheight{d}\}\} = \{\{\sameheight{a},\sameheight{b}\},\{\sameheight{c}\},\{\sameheight{d}\}\}},\\\\& \overbrace{\underbrace{\tikzmarknode[rectangle,blue]{}{\sameheight{a}} \tikzmarknode[rectangle,red]{}{\sameheight{b}}\tikzmarknode[rectangle,red]{}{\sameheight{c}}\tikzmarknode[rectangle,blue]{}{\sameheight{d}}}_{(\{\sameheight{b},\sameheight{c}\},\{\sameheight{a},\sameheight{d}\})}}^{\{\{\sameheight{b},\sameheight{c}\}\} \gluepartitions \{\{\sameheight{a}\},\{\sameheight{d}\}\} = \{\{\sameheight{a}\},\{\sameheight{b},\sameheight{c}\},\{\sameheight{d}\}\}},\\\\&\overbrace{\underbrace{\tikzmarknode[rectangle,blue]{}{\sameheight{a}} \tikzmarknode[rectangle,blue]{}{\sameheight{b}}\tikzmarknode[rectangle,red]{}{\sameheight{c}}\tikzmarknode[rectangle,red]{}{\sameheight{d}}}_{(\{\sameheight{c},\sameheight{d}\},\{\sameheight{a},\sameheight{b}\})}}^{\{\{\sameheight{c},\sameheight{d}\}\} \gluepartitions \{\{\sameheight{a}\},\{\sameheight{b}\}\} = \{\{\sameheight{a}\},\{\sameheight{b}\},\{\sameheight{c},\sameheight{d}\}\}} \Big \} \Big \vert.
\end{align*}
As an example, notice that
\begin{align*}
   \tikzmarknode[rectangle,blue]{}{\sameheight{a}} \tikzmarknode[rectangle,blue]{}{\sameheight{b}}\tikzmarknode[rectangle,red]{}{\sameheight{c}}\tikzmarknode[rectangle,red]{}{\sameheight{d}}
\end{align*}
is counted as $\tikzmarknode[rectangle,black]{}{\sameheight{a}} \;\;\tikzmarknode[rectangle,black]{}{\sameheight{b}}\;\;\tikzmarknode[rectangle,black]{}{\sameheight{c}}\tikzmarknode[rectangle,black]{}{\sameheight{d}}$, since
\begin{align*}
\Big \langle \; \tikzmarknode[rectangle,black]{}{\sameheight{a}}\;\;\tikzmarknode[rectangle,black]{}{\sameheight{b}} \; \otimes \;\tikzmarknode[rectangle,black]{}{\sameheight{a}} \tikzmarknode[rectangle,black]{}{\sameheight{b}}  \;,\;\;\coqspart\left(\;\tikzmarknode[rectangle,black]{}{\sameheight{a}}\;\;\tikzmarknode[rectangle,black]{}{\sameheight{b}}\;\;\tikzmarknode[rectangle,black]{}{\sameheight{c}}\tikzmarknode[rectangle,black]{}{\sameheight{d}}\;\;\right)\Big \rangle = 1,
\end{align*}
and not as $\tikzmarknode[rectangle,black]{}{\sameheight{a}} \tikzmarknode[rectangle,black]{}{\sameheight{b}}\tikzmarknode[rectangle,black]{}{\sameheight{c}}\tikzmarknode[rectangle,black]{}{\sameheight{d}}$, since
\begin{align*}
  \Big \langle \; \tikzmarknode[rectangle,black]{}{\sameheight{a}}\;\;\tikzmarknode[rectangle,black]{}{\sameheight{b}} \; \otimes \;\tikzmarknode[rectangle,black]{}{\sameheight{a}} \tikzmarknode[rectangle,black]{}{\sameheight{b}}  \;,\;\;\coqspart\left(\;\tikzmarknode[rectangle,black]{}{\sameheight{a}} \tikzmarknode[rectangle,black]{}{\sameheight{b}}\tikzmarknode[rectangle,black]{}{\sameheight{c}}\tikzmarknode[rectangle,black]{}{\sameheight{d}}\;\right)\Big \rangle = 0.
\end{align*}
Since $\stdL = \adjacentSquares{4}$, $\stds= \adjacentSquares{1} \ \adjacentSquares{1}$ and $\stdt = \adjacentSquares{2}$, we have $\Big\langle  \IPC(\stdL), \stds\Big\rangle = 6$, $\Big\langle  \IPC(\stdL), \stdt \Big\rangle = 3$, and using \Cref{rem:explicit} we obtain
\begin{align*}
&\Big\langle  \IPC(\stdL), \adjacentSquares{2}\Big\rangle  = 6,\\
&\Big\langle  \IPC(\stdL), \adjacentSquares{1}\;\adjacentSquares{2}\Big\rangle = \Big\langle  \IPC(\stdL), \adjacentSquares{2}\;\adjacentSquares{1}\Big\rangle = 3,\\
&\Big\langle  \IPC(\stdL), \adjacentSquares{1}\;\adjacentSquares{2}\;\adjacentSquares{1}\Big\rangle =\Big\langle  \IPC(\stdL),  \adjacentSquares{2}\;\adjacentSquares{1}\;\adjacentSquares{1}\Big\rangle =\Big\langle  \IPC(\stdL), \adjacentSquares{1}\;\adjacentSquares{1}\;\adjacentSquares{2}\Big\rangle = 1,
\end{align*}
and computing
\begin{align*}
\adjacentSquares{2} \ \qspart \  \adjacentSquares{1}\;\adjacentSquares{1} &= \adjacentSquares{2} + 2\;\;\adjacentSquares{2}\;\adjacentSquares{1} + 2\;\;\adjacentSquares{1}\;\adjacentSquares{2}+\adjacentSquares{1}\;\adjacentSquares{1}\;\adjacentSquares{2}\\&+\adjacentSquares{1}\;\adjacentSquares{2}\;\adjacentSquares{1}+\adjacentSquares{2}\;\adjacentSquares{1}\;\adjacentSquares{1},
\end{align*}
we have
\begin{align*}
\Big\langle  \IPC(\stdL),\adjacentSquares{2} \Big\rangle \Big\langle  \IPC(\stdL),\adjacentSquares{1}\;\adjacentSquares{1} \Big\rangle &= \Big\langle  \IPC(\stdL),\adjacentSquares{2}\Big\rangle + 2\;\;\Big\langle \IPC(\stdL), \adjacentSquares{2}\;\adjacentSquares{1}\Big\rangle + 2\;\;\Big\langle  \IPC(\stdL),\adjacentSquares{1}\;\adjacentSquares{2}\Big \rangle\\&+\Big\langle  \IPC(\stdL),\adjacentSquares{1}\;\adjacentSquares{1}\;\adjacentSquares{2}\Big \rangle+\Big\langle  \IPC(\stdL),\adjacentSquares{1}\;\adjacentSquares{2}\;\adjacentSquares{1}\Big \rangle+\Big\langle  \IPC(\stdL),\adjacentSquares{2}\;\adjacentSquares{1}\;\adjacentSquares{1}\Big\rangle,
\end{align*}
which verifies \Cref{eq:qs_id_part}.
\end{example}

We also have a form of Chen's identity.

\begin{theorem}[Chen's identity]
\label{thm:ChenI_intpart}
 The maps $(\IPC(\stdL))_{\stdL}$ satisfy a form of Chen's identity, i.e., for all $\stds,\stdL,\stdM \in \SetStandardizedIntervalPartitions$:
    \begin{align}
    \label{eq:Chen_id_part}
       \Big\langle  \IPC(\stdL \conc \stdM), \stds \Big\rangle
       =
       \Big\langle  \IPC(\stdL) \otimes \IPC(\stdM), \deconc(\stds) \Big\rangle
    \end{align}
\end{theorem}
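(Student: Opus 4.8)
The plan is to evaluate the left-hand side directly from \Cref{def:signature_interval_partition}, i.e.\ to count the set $\{A \subseteq \punion(\stdL \conc \stdM) \mid \std((\stdL \conc \stdM)(A)) \ge \stds\}$, by splitting the ground set at the concatenation boundary and showing that the count factorizes exactly along the deconcatenation coproduct $\deconc$. Write $\stdL \in \SetStandardizedIntervalPartitions_{N_1}$ and $\stdM \in \SetStandardizedIntervalPartitions_{N_2}$, so $\punion(\stdL \conc \stdM) = [N_1+N_2]$, and for $A \subseteq [N_1+N_2]$ put $A_1 := A \cap [N_1]$ and $A_2 := A \cap \{N_1+1,\dots,N_1+N_2\}$, with $A_2' := \{a-N_1 \mid a \in A_2\} \subseteq [N_2]$.

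First I would prove a restriction step. By \Cref{def:conc_int_part} every block of $\stdL \conc \stdM$ lies entirely in $[N_1]$ (the blocks of $\stdL$) or entirely in $\{N_1+1,\dots,N_1+N_2\}$ (the shifted blocks of $\stdM$), so no block crosses the boundary. Hence, although a clique of $A$ (a maximal run, see \Cref{def:cliques}) may straddle the boundary, intersecting it with a block confined to one side agrees with intersecting with a clique of $A_1$ or of $A_2$, since truncating a maximal run of $A$ at $N_1$ leaves a maximal run of $A_1$; concretely $\cliques(A_1) = \{c \cap [N_1] \mid c \in \cliques(A)\} \setminus \{\emptyset\}$, and symmetrically on the right. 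Plugging this into \Cref{def:cliques_A_through_I} gives $(\stdL \conc \stdM)(A) = \stdL(A_1) \uplus (N_1 + \stdM(A_2'))$. Because \Cref{def:std_partition} shows that $\std$ depends only on the ordered sequence of block sizes, and the blocks of $\stdL(A_1)$ precede those of the shift $N_1 + \stdM(A_2')$, this upgrades to $\std((\stdL \conc \stdM)(A)) = \std(\stdL(A_1)) \conc \std(\stdM(A_2'))$.

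The main step, and the main obstacle, is to analyse the coarsening condition $\std(\stdL(A_1)) \conc \std(\stdM(A_2')) \ge \stds$. Set $P := \std(\stdL(A_1))$ on $[p]$ with $p = |A_1|$ and $Q := \std(\stdM(A_2'))$ on $[q]$ with $q = |A_2|$, so that $P \conc Q$ has a forced block boundary between positions $p$ and $p+1$. Since $\ge$ means $\stds$ refines $P \conc Q$, every block of $\stds$ must sit inside a single block of $P \conc Q$, so $\stds$ itself is forced to have a block boundary at $p \mid p+1$; equivalently $p$ is a partial sum of the block sizes of $\stds$, and $\stds = \stds^{(1)} \conc \stds^{(2)}$ is exactly one of the terms of $\deconc(\stds)$ with $|\punion\stds^{(1)}| = p$. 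Conversely, once $\stds$ splits at $p$, refinement factorizes across the boundary: $P \conc Q \ge \stds$ holds if and only if $P \ge \stds^{(1)}$ and $Q \ge \stds^{(2)}$. The delicate point is precisely this rigidity of the concatenation boundary under $\ge$, which forces the split of $\stds$ and is what singles out the deconcatenation coproduct, rather than a coarser coproduct, as the correct object.

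Finally I would assemble the count. For a fixed term $\stds^{(1)} \otimes \stds^{(2)}$ of $\deconc(\stds)$, the constraint $P \ge \stds^{(1)}$ already forces $|A_1| = |\punion\stds^{(1)}|$ (equal ground sets are built into $\ge$), and the conditions on $A_1$ and on $A_2'$ become independent, yielding $\#\{A_1 \mid \std(\stdL(A_1)) \ge \stds^{(1)}\} \cdot \#\{A_2' \mid \std(\stdM(A_2')) \ge \stds^{(2)}\} = \langle \IPC(\stdL), \stds^{(1)}\rangle \cdot \langle \IPC(\stdM), \stds^{(2)}\rangle$ by \Cref{def:signature_interval_partition}. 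Distinct terms of $\deconc(\stds)$ have distinct left-sizes $|\punion\stds^{(1)}|$, hence correspond to disjoint families of $A$ indexed by $p = |A_1|$, and every admissible $A$ lands in exactly one of them. Summing over all terms of $\deconc(\stds)$ therefore reconstitutes the left-hand count and equals $\langle \IPC(\stdL) \otimes \IPC(\stdM), \deconc(\stds)\rangle$, which is the claim.
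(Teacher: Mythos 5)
Your proposal is correct and takes essentially the same route as the paper: split $A$ at the concatenation boundary, show that $\std((\stdL \conc \stdM)(A)) \ge \stds$ forces a unique splitting of $\stds$ into a term of $\deconc(\stds)$, and factorize the count accordingly. Your boundary-rigidity step is precisely the content of \Cref{lemma:Chen1,lemma:Chen2}, and your shift-and-assemble step is \Cref{lemma:Chen3}; the only difference is organizational, in that you isolate the identity $\std((\stdL \conc \stdM)(A)) = \std(\stdL(A_1)) \conc \std(\stdM(A_2'))$ as a standalone fact, which the paper establishes implicitly inside the proof of \Cref{lemma:Chen2}.
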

\begin{remark}
As a consequence of Chen's identity, we can sum over numbers of the form of \Cref{eq:single_block_count} to calculate explicitly any occurrence. Let $\stdL =\singleblockN_{1} \conc \cdots \conc \singleblockN_{m}$ and  $\stds = \singleblockn_{1}\conc \cdots \conc  \singleblockn_{k}$, where $m,k \in \N$ and $\forall i\in [m],\forall j \in [k], N_{i} \in \N_{\ge 1},n_{j} \in \N_{\ge 1}, \singleblockN_{i}:=\{[N_{i}]\},\singleblockn_{j}:=\{[n_{j}]\}$. Then

\begin{align*}
&\Big\langle \IPC(\stdL), \stds \Big\rangle = \Big\langle \IPC(\singleblockN_{1})\otimes \cdots \otimes \IPC(\singleblockN_{m}), \deconc^{m}(\singleblockn_{1}\conc \cdots \conc  \singleblockn_{k})\Big\rangle\\ &= \sum_{0 \le i_{1} \le \cdots \le i_{m-1} \le k}\Big\langle \IPC(\singleblockN_{1}), \singleblockn_{1} \conc \cdots \conc \singleblockn_{i_{1}} \Big\rangle\Big\langle \IPC(\singleblockN_{2} ), \singleblockn_{i_{1}+1} \conc \cdots \conc \singleblockn_{i_{2}} \Big\rangle \cdots \\&\qquad\cdots\Big\langle \IPC(\singleblockN_{m}), \singleblockn_{i_{m-1}+1}  \conc \cdots \conc \singleblockn_{k} \Big\rangle\\ &= \sum_{0 \le i_{1} \le \cdots \le i_{m-1} \le k} \mybinom{N_{1}- \sum\limits_{j=1}^{i_{1}}n_{j}+i_{1}}{i_{1}}\mybinom{N_{2}- \sum\limits_{j=i_{1}+1}^{i_{2}}n_{j}+ i_{2}-i_{1}}{i_{2}-i_{1}} \cdots\\&\qquad \cdots \mybinom{N_{m}- \sum\limits_{j=i_{m-1}+1}^{k}n_{j}+k-i_{m-1}}{k-i_{m-1}}
\end{align*}
\end{remark}

To prove \Cref{thm:ChenI_intpart}, we need three lemmas.

\begin{lemma}
\label{lemma:Chen1}
Let $\stdt \in \SetStandardizedIntervalPartitions$
\begin{align*}
\stdt &:= \{[n_{1}]\} \conc \{[n_{2}]\}  \conc \cdots \conc \{[n_{k}]\}  \\&= \left\{\{1,\dots,n_{1}\},\{n_{1}+1,\dots,n_{1}+n_{2}\},\dots,\{\sum_{i=1}^{k-1}n_{i}+1,\dots,\sum_{i=1}^{k}n_{i}\}\right\}.
\end{align*}
where $n_{1},\dots,n_{k} \in \N_{\ge 1}$ and $k \in \N$.
Let $\stds \in \SetStandardizedIntervalPartitions$ such that $\stdt \le \stds$. Then $\stds$ can be written as
\begin{align*}
\stds &:= \{[\sum_{i=1}^{j_{1}}n_{i}]\} \conc \{[\sum_{i=j_{1}+1}^{j_{2}}n_{i}]\} \conc \cdots \conc \{[\sum_{i=j_{N-1}+1}^{j_{N}}n_{i}]\}\\&= \{\{1,\dots,\sum_{i=1}^{j_{1}}n_{i}\},\{\sum_{i=1}^{j_{1}}n_{i}+1,\dots,\sum_{i=1}^{j_{2}}n_{i}\},\dots,\{\sum_{i=1}^{j_{N-1}}n_{i}+1,\dots,\sum_{i=1}^{j_{N}}n_{i}\}\}.
\end{align*}
where 
\begin{align*}
j_{1},\dots,j_{N} \in \N,1 \le j_{1} < \dots < j_{N} = k, 1\le N \le k.
\end{align*}
 In particular, for $t \in \{0\}\cup {N}$, setting $j_{t}:=0$, we have 
\begin{align*}
\{[n_{1}]\} \conc \cdots \conc \{[n_{j_{t}}]\}
&\le \{[\sum_{i=1}^{j_{1}}n_{i}]\} \conc \cdots \conc \{[\sum_{i=j_{t-1}+1}^{j_{t}}n_{i}]\},
\end{align*}
and 
\begin{align*}
\{[n_{j_{t}+1}]\} \conc \cdots \conc \{[n_{j_{N}}]\}
&\le \{[\sum_{i=j_{t}+1}^{j_{t+1}}n_{i}]\} \conc \cdots \conc \{[\sum_{i=j_{N-1}+1}^{j_{N}}n_{i}]\},
\end{align*}
where $\{[n_{1}]\} \conc \cdots \conc \{[n_{0}]\}:= \e$ and
$\{[n_{j_{N+1}}]\} \conc \cdots \conc \{[n_{j_{N}}]\}:= \e$.
\end{lemma}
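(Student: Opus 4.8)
The plan is to use that $\stdt$ and $\stds$ are interval partitions of the \emph{same} ground set $[M]$, where $M := |\punion\stdt| = \sum_{i=1}^{k} n_i$ (the equality $\punion\stds = \punion\stdt$ being forced by $\stdt \le \stds$), so that $\stds \in \SetStandardizedIntervalPartitions_M$. First I would record the structural description of an interval partition of a full initial segment: since its blocks are pairwise disjoint intervals whose union is the contiguous set $[M]$, they tile $[M]$ without gaps and, listed by increasing minimal element, take the form $S_1, \dots, S_N$ with $S_r = \{a_{r-1}+1, \dots, a_r\}$ for some $0 = a_0 < a_1 < \cdots < a_N = M$. By construction the blocks of $\stdt$ are exactly the consecutive intervals $B_1, \dots, B_k$ with $B_i = \{\sum_{\ell < i} n_\ell + 1, \dots, \sum_{\ell \le i} n_\ell\}$.

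The key step is to show that each block of $\stds$ is a union of \emph{consecutive} blocks of $\stdt$. By definition of $\le$ (in the sense used in the proof of \Cref{lemma:character3}, namely every block of the finer partition lies inside a block of the coarser one), each $B_i$ is contained in a single block $S_{r(i)}$ of $\stds$; since distinct blocks of $\stds$ are disjoint, $S_r \cap B_i$ equals $B_i$ when $r(i) = r$ and $\emptyset$ otherwise, and hence, as the $B_i$ cover $[M]$, we get $S_r = \bigcup_{i : r(i) = r} B_i$. Because $S_r$ is an interval and the $B_i$ are consecutive intervals, the index set $\{i : r(i) = r\}$ must be a contiguous run; writing these runs as $\{j_{r-1}+1, \dots, j_r\}$ for suitable $0 = j_0 < j_1 < \cdots < j_N = k$ yields $S_r = B_{j_{r-1}+1} \cup \cdots \cup B_{j_r}$, and therefore $|S_r| = \sum_{i=j_{r-1}+1}^{j_r} n_i$. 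This is exactly the claimed $\conc$-product representation of $\stds$.

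For the two refinement statements I would fix $t \in \{0,1,\dots,N\}$ (with the convention $j_0 := 0$) and split $\stdt$ and $\stds$ at the common boundary $a_t = \sum_{i=1}^{j_t} n_i$. The first $t$ blocks $S_1, \dots, S_t$ of $\stds$ cover precisely the blocks $B_1, \dots, B_{j_t}$ of $\stdt$, i.e. the initial segment $\{1, \dots, a_t\}$; standardization acts trivially on this segment since both partitions already start at $1$ and are contiguous, so restricting the inclusions $B_i \subseteq S_{r(i)}$ (with $r(i) \le t$ for $i \le j_t$) gives the first inequality. The second inequality follows symmetrically by restricting to the complementary segment $\{a_t + 1, \dots, M\}$ and standardizing; the degenerate conventions $\{[n_1]\} \conc \cdots \conc \{[n_0]\} := \e$ (case $t = 0$) and the empty right factor (case $t = N$) match the empty initial and final segments.

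The conceptual content, that coarsening an interval partition of a full segment can only merge consecutive blocks, is elementary once the tiling description of interval partitions of $[M]$ is in place, so I expect the main difficulty to be purely notational: keeping the three index conventions consistent (the sizes $n_i$, the cumulative boundaries $a_r$, and the grouping indices $j_r$) and making the ``contiguous run'' argument fully rigorous rather than merely visually evident.
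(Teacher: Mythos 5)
Your proposal is correct and takes essentially the paper's approach: the paper's entire proof is the single sentence that the claimed form is ``simply an explicit description of the interval partitions which are coarser than $\stdt$'', and your argument — the tiling description of interval partitions of $[M]$, the contiguous-run argument for the index sets $\{i \mid B_i \subset S_r\}$, and the splitting at the common boundary $a_t = \sum_{i=1}^{j_t} n_i$ — is precisely the rigorous filling-in of that assertion. You were also right to read $\le$ as in the proof of \Cref{lemma:character3} (every block of the finer partition lies inside a block of the coarser one): under the literal quantifier order in the Notation section's definition the lemma would in fact be false, e.g.\ one would have $\{\{1\},\{2,3\},\{4\}\} \le \{\{1,2\},\{3,4\}\}$ even though $\{\{1,2\},\{3,4\}\}$ is not a union of consecutive blocks of $\{\{1\},\{2,3\},\{4\}\}$.
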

\begin{proof}
The fact that $\stds$ is of the given form is simply an explicit description of the interval partitions which are coarser than $\stdt$.
\end{proof}

\begin{lemma}
\label{lemma:Chen2}
Let $\singleblockn_{1}\conc \cdots \conc  \singleblockn_{k} \in \SetStandardizedIntervalPartitions$, where $k \in \N$ and $\forall r \in [k]:  n_{r} \in \N_{\ge 1}, \singleblockn_{i}:=\{[n_{r}]\}$. Let $\stdL,\;\stdM \in \SetStandardizedIntervalPartitions$. Then we have
 \begin{align*}
 &\left\{A \subset \punion (\stdL \conc \stdM)|\; \std((\stdL \conc \stdM)(A)) \ge \singleblockn_{1}\conc \cdots \conc  \singleblockn_{k}\right\}\\
       &=\biguplus_{0\le r \le k}
   \Big\{A \subset \punion (\stdL \conc \stdM)|\; \std((\stdL \conc \stdM)(A \cap \punion\stdL)) \ge \singleblockn_{1}\conc \cdots \conc  \singleblockn_{r},\\&\qquad\qquad\quad \std((\stdL \conc \stdM)(A \cap \punion\stdM^{\prime}))\ge \singleblockn_{r+1}\conc \cdots \conc  \singleblockn_{k}\Big\}.
\end{align*}
\end{lemma}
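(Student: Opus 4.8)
The plan is to reduce the whole statement to a single structural identity for $\std$ under the concatenation $\conc$, and then read off both the claimed equality of sets and the disjointness of the union from it. Write $A_L := A \cap \punion\stdL$ and $A_M := A \cap \punion\stdM^{\prime}$, so that $A = A_L \uplus A_M$ with $A_L$ lying entirely to the left of $A_M$ inside $\punion(\stdL \conc \stdM) = \punion\stdL \uplus \punion\stdM^{\prime}$. The heart of the argument is the identity
\[
\std\bigl((\stdL\conc\stdM)(A)\bigr) = \std\bigl((\stdL\conc\stdM)(A_L)\bigr) \conc \std\bigl((\stdL\conc\stdM)(A_M)\bigr),
\]
which I would establish first; granting it, the lemma follows by comparing the left-hand partition with $\singleblockn_1\conc\cdots\conc\singleblockn_k$ and locating where the split at position $|A_L|$ falls.

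To prove the identity I would first observe that no block of $\stdL\conc\stdM$ crosses the boundary between $\punion\stdL$ and $\punion\stdM^{\prime}$, since by \Cref{def:conc_int_part} the blocks of $\stdL\conc\stdM$ are exactly the blocks of $\stdL$ together with the shifted blocks of $\stdM$. Consequently, intersecting a clique of $A$ with the blocks of $\stdL\conc\stdM$ (as in \Cref{def:cliques_A_through_I}) can never produce a block straddling the boundary, and one checks, exactly as in \Cref{lemma:bialgebra}, that $(\stdL\conc\stdM)(A) = (\stdL\conc\stdM)(A_L) \uplus (\stdL\conc\stdM)(A_M)$, the first family consisting of the blocks inside $\punion\stdL$ and the second of those inside $\punion\stdM^{\prime}$. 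Since a standardized interval partition is determined by the ordered sequence of its block sizes, and since standardization only collapses the gaps while preserving the order and sizes of blocks (\Cref{def:std_partition}), the ordered block-size sequence of $\std((\stdL\conc\stdM)(A))$ is the concatenation of those of the two pieces, which is precisely the ordered block-size sequence of the right-hand side; the empty cases $A_L=\emptyset$ or $A_M=\emptyset$ are covered by the convention $\std(\emptyset)=\e$.

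For the forward inclusion and uniqueness of $r$, suppose $\std((\stdL\conc\stdM)(A)) \ge \singleblockn_1\conc\cdots\conc\singleblockn_k$. Writing the left side as $P_L \conc P_M$ via the identity, the coarsening $\singleblockn_1\conc\cdots\conc\singleblockn_k \le P_L\conc P_M$ forces, by \Cref{lemma:Chen1}, every block of $P_L\conc P_M$ to be a union of consecutive blocks $\singleblockn_i$; since no block of $P_L\conc P_M$ crosses the position $|A_L|$, that position must be a block boundary $n_1+\cdots+n_r$ of $\singleblockn_1\conc\cdots\conc\singleblockn_k$, and as the $n_i\ge 1$ have strictly increasing partial sums, $r$ is uniquely determined by $|A_L|$. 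Restricting to the two sides of this boundary (the ``in particular'' clause of \Cref{lemma:Chen1}) yields $\singleblockn_1\conc\cdots\conc\singleblockn_r \le P_L$ and $\singleblockn_{r+1}\conc\cdots\conc\singleblockn_k \le P_M$, so $A$ lies in the $r$-th set on the right, and uniqueness of $r$ gives the disjointness of the union. For the reverse inclusion I would use that $\conc$ is monotone for $\le$ (if $\stdu \le \stdu^{\prime}$ and $\stdt \le \stdt^{\prime}$ then $\stdu\conc\stdt \le \stdu^{\prime}\conc\stdt^{\prime}$, immediate from the definition since blocks do not cross the concatenation point), so the two inequalities defining the $r$-th set reassemble, through the identity, into $\singleblockn_1\conc\cdots\conc\singleblockn_k \le \std((\stdL\conc\stdM)(A))$. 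The main obstacle is the structural identity of the second paragraph: one must argue carefully that standardization interacts with $\conc$ exactly as the gap-collapsing intuition suggests, and that the boundary between $\punion\stdL$ and $\punion\stdM^{\prime}$ is never swallowed into a single block even when $A$ contains both $|\punion\stdL|$ and $|\punion\stdL|+1$; everything else is bookkeeping on ordered block-size sequences.
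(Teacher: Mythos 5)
Your proposal is correct, and it rests on the same underlying combinatorial fact as the paper's proof --- that the boundary between $\punion\stdL$ and $\punion\stdM^{\prime}$ splits everything --- but it is organized along a genuinely different route, and in one respect it is more complete. The paper first uses \Cref{lemma:character0} to replace the inequality by the unique refinement $(\stdL\conc\stdM)(A)_{\singleblockn_{1}\conc\cdots\conc\singleblockn_{k}}$, then invokes \Cref{lemma:Chen1} and performs an explicit element-by-element bookkeeping (listing the $\ell$'s in $\punion\stdL$ and the $m^{\prime}$'s in $\punion\stdM^{\prime}$) to locate the unique split index; the factorization at the boundary is present only implicitly inside those listings. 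You instead isolate it as a stand-alone identity,
\[
\std\bigl((\stdL\conc\stdM)(A)\bigr) = \std\bigl((\stdL\conc\stdM)(A_{L})\bigr) \conc \std\bigl((\stdL\conc\stdM)(A_{M})\bigr),
\qquad A_{L}:=A\cap\punion\stdL,\; A_{M}:=A\cap\punion\stdM^{\prime},
\]
proved in the style of \Cref{lemma:bialgebra} (no block of $\stdL\conc\stdM$ straddles the boundary, and $\std$ concatenates ordered block-size sequences), and then deduce the forward inclusion from \Cref{lemma:Chen1}, disjointness from the strictly increasing partial sums $n_{1}+\cdots+n_{r}$, and the reverse inclusion from monotonicity of $\conc$ with respect to $\le$ --- never needing \Cref{lemma:character0} at all. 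What your route buys: the identity is reusable, disjointness becomes a one-line cardinality observation, and, notably, you prove the reverse inclusion (that every $A$ in one of the sets on the right satisfies the global condition on the left) explicitly, a direction the paper's proof leaves implicit, since it only verifies that each $A$ from the left-hand side lands in exactly one set on the right. What the paper's route buys is an explicit parametrized description of the two restricted partitions, which your more structural argument does not produce.
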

\begin{proof}
Let \begin{align*}
    A \subset \punion (\stdL \conc \stdM) 
\end{align*}such that \begin{align*}
    \std\left((\stdL \conc \stdM)(A)\right)&\ge \singleblockn_{1}\conc \cdots \conc  \singleblockn_{k}.
\end{align*}
We now show that $\exists! r \in \{0,..k\}$ such that 
\begin{align*}
  \std\left((\stdL \conc \stdM)(A \cap \punion\stdL)\right) &\ge \singleblockn_{1}\conc \cdots \conc  \singleblockn_{r},\\ 
  \std\left((\stdL \conc \stdM)(A \cap \punion\stdM^{\prime})\right)&\ge \singleblockn_{r+1}\conc \cdots \conc  \singleblockn_{k}.
\end{align*}
First notice that
\begin{align*}
    \std\left((\stdL \conc \stdM)(A)\right) \ge n_{1} \conc n_{2} \conc \cdots \conc n_{k}
\end{align*}
is equivalent to
\begin{align*}
   (\stdL \conc \stdM)(A) \ge (\stdL \conc \stdM)(A)_{n_{1} \conc n_{2} \conc \cdots \conc n_{k}}
\end{align*}
because of \Cref{lemma:character0}. From \Cref{lemma:Chen1} we know $\exists!t \in \{0\}\cup {N}$ and therefore $\exists! j_{t}$, where
\begin{align*}
&j_{0}:=0\\
&j_{1},\dots,j_{N} \in \N,1 \le j_{1} < \dots < j_{N} = k, 1\le N \le k.
\end{align*}
 such that, using also the definition of $\conc$ and the definition of $(\stdL \conc \stdM)(A)$, we can write
\begin{align*}
     (\stdL \conc \stdM)(A)_{n_{1} \conc n_{2} \conc \cdots \conc n_{k}} =&\{\{\ell_{1},...,\ell_{n_{1}}\},\{\ell_{n_{1}+1},...,\ell_{n_{1}+n_{2}}\},...,\{\ell_{\sum_{i=1}^{j_{t}-1}n_{i}+1},...,\ell_{\sum_{i=1}^{j_{t}}n_{i}}\},\\   &\{m^{\prime}_{\sum_{i=1}^{j_{t}}n_{i}+1},...,m^{\prime}_{\sum_{i=1}^{j_{t}+1}n_{i}}\},\{m^{\prime}_{\sum_{i=1}^{j_{t}+1}n_{i}+1},...,m^{\prime}_{\sum_{i=1}^{j_{t}+2}n_{i}}\},\\&...,\{m^{\prime}_{\sum_{i=1}^{k-1}n_{i}+1},...,m^{\prime}_{\sum_{i=1}^{k}n_{i}}\}\}.
\end{align*}
where 
\begin{align*}
     &\ell_{1} \prec\cdots \prec \ell_{n_{1}}<\ell_{n_{1}+1}\prec \cdots \prec \ell_{n_{1}+n_{2}} < \cdots <\ell_{\sum_{i=1}^{j_{t}-1}n_{i}+1}\prec \cdots \prec \ell_{\sum_{i=1}^{j_{t}}n_{i}} \\&<m^{\prime}_{\sum_{i=1}^{j_{t}}n_{i}+1}\prec \cdots \prec m^{\prime}_{\sum_{i=1}^{j_{t}+1}n_{i}}< m^{\prime}_{\sum_{i=1}^{j_{t}+1}n_{i}+1}\prec \cdots \prec m^{\prime}_{\sum_{i=1}^{j_{t}+2}n_{i}}<\cdots\\&\cdots < m^{\prime}_{\sum_{i=1}^{k-1}n_{i}+1}\prec \cdots \prec m^{\prime}_{\sum_{i=1}^{k}n_{i}}.
\end{align*}
and
\begin{align*}
     (\stdL \conc \stdM)(A) =&\{\{\ell_{1},\dots,\ell_{ \sum_{i=1}^{j_{1}}n_{i}}\},\{\ell_{\sum_{i=1}^{j_{1}}n_{i}+1},\dots,\ell_{\sum_{i=1}^{j_{2}}n_{i}}\},\dots,\{\ell_{\sum_{i=1}^{j_{t-1}}n_{i}+1},\dots,\ell_{\sum_{i=1}^{j_{t}}n_{i}}\},\\   &\{m^{\prime}_{\sum_{i=1}^{j_{t}}n_{i}+1},\dots,m^{\prime}_{\sum_{i=1}^{j_{t+1}}n_{i}}\},\{m^{\prime}_{\sum_{i=1}^{j_{t+1}}n_{i}+1},\dots,m^{\prime}_{\sum_{i=1}^{j_{t+2}}n_{i}}\},\\&\dots,\{m^{\prime}_{\sum_{i=1}^{j_{N-1}}n_{i}+1},\dots,m^{\prime}_{\sum_{i=1}^{j_{N}}n_{i}}\}\}
\end{align*}
where
\begin{align*}
     &\ell_{1} \prec \cdots \prec \ell_{ \sum_{i=1}^{j_{1}}n_{i}}< \ell_{\sum_{i=1}^{j_{1}}n_{i}+1}\prec \cdots\prec \ell_{\sum_{i=1}^{j_{2}}n_{i}}< \cdots <\ell_{\sum_{i=1}^{j_{t-1}}n_{i}+1}\prec \cdots\prec \ell_{\sum_{i=1}^{j_{t}}n_{i}}\\&< m^{\prime}_{\sum_{i=1}^{j_{t}}n_{i}+1} \prec \cdots\prec m^{\prime}_{\sum_{i=1}^{j_{t+1}}n_{i}} < m^{\prime}_{\sum_{i=1}^{j_{t+1}}n_{i}+1}  \prec \cdots\prec m^{\prime}_{\sum_{i=1}^{j_{t+2}}n_{i}} <\cdots\\&\cdots<m^{\prime}_{\sum_{i=1}^{j_{N-1}}n_{i}+1}\prec \cdots\prec m^{\prime}_{\sum_{i=1}^{j_{N}}n_{i}}
\end{align*}
 where the $\ell$'s are elements of $\punion\stdL$ and the $m^{\prime}$'s are elements of $\punion\stdM^{\prime}$. Recall that $a \prec b \iff b = a+1$.
 
 If we consider
\begin{align*}
     (\stdL \conc \stdM)(A \cap \punion\stdL) =&\{\{\ell_{1},\dots,\ell_{ \sum_{i=1}^{j_{1}}n_{i}}\},\{\ell_{\sum_{i=1}^{j_{1}}n_{i}+1},\dots,\ell_{\sum_{i=1}^{j_{2}}n_{i}}\},\\&\dots,\{\ell_{\sum_{i=1}^{j_{t-1}}n_{i}+1},\dots,\ell_{\sum_{i=1}^{j_{t}}n_{i}}\}\}
\end{align*}
and
\begin{align*}
     (\stdL \conc \stdM)(A \cap \punion\stdM^{\prime}) =&\{\{m^{\prime}_{\sum_{i=1}^{j_{t}}n_{i}+1},\dots,m^{\prime}_{\sum_{i=1}^{j_{t+1}}n_{i}}\},\{m^{\prime}_{\sum_{i=1}^{j_{t+1}}n_{i}+1},\dots,m^{\prime}_{\sum_{i=1}^{j_{t+2}}n_{i}}\},\\&\dots,\{m^{\prime}_{\sum_{i=1}^{j_{N-1}}n_{i}+1},\dots,m^{\prime}_{\sum_{i=1}^{j_{N}}n_{i}}\}\}.
\end{align*}
they satisfy
\begin{align*}
  \std\left((\stdL \conc \stdM)(A \cap \punion\stdL)\right) &\ge \{[n_{1}]\} \conc \cdots \conc \{[n_{j_{t}}]\}\\ 
  \std\left((\stdL \conc \stdM)(A \cap \punion\stdM^{\prime})\right) &\ge \{[n_{j_{t}+1}]\} \conc \cdots \conc \{[n_{k}]\}.
\end{align*}

\end{proof}
\begin{lemma}
\label{lemma:Chen3}
Let $\singleblockn_{1}\conc \cdots \conc  \singleblockn_{k} \in \SetStandardizedIntervalPartitions$, where $k \in \N$ and $\forall r \in [k]:  n_{i} \in \N_{\ge 1}, \singleblockn_{r}:=\{[n_{r}]\}$. Let $\stdL,\;\stdM \in \SetStandardizedIntervalPartitions$. Then we have
 \begin{align*}
 &\Big|\bigcup_{0\le r \le k}\Big\{A \subset \punion (\stdL \conc \stdM)\Big|\; \std\left((\stdL \conc \stdM)(A \cap \punion\stdL)\right) \ge \singleblockn_{1}\conc \cdots \conc  \singleblockn_{r},\\
 &\qquad\qquad
 \qquad\qquad
 \qquad\qquad
 \std\left((\stdL \conc \stdM)(A \cap \punion\stdM^{\prime})\right)\ge \singleblockn_{r+1}\conc \cdots \conc  \singleblockn_{k}\Big\}\Big|\\
 &=\Big|\bigcup_{0\le r \le k}\Big\{A \subset \punion \stdL|\; \std\left(\stdL(A)\right) \ge \singleblockn_{1}\conc \cdots \conc  \singleblockn_{r}\Big\}\\
 &\qquad\times \Big\{B \subset \punion \stdM|\; \std\left(\stdM(B)\right) \ge \singleblockn_{r+1}\conc \cdots \conc  \singleblockn_{k}\Big\}\Big|.
\end{align*}
\end{lemma}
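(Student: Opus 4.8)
The plan is to realize both sides as the cardinality of the image of a single global bijection. Write $N := |\punion\stdL|$, so that $\punion(\stdL \conc \stdM) = \punion\stdL \uplus \punion\stdM^{\prime}$, where $\stdM^{\prime} = N + \stdM$ is the shifted copy of $\stdM$ occurring in $\stdL \conc \stdM$ (so $\punion\stdM^{\prime} = \{N+1,\dots,N+|\punion\stdM|\}$). Define
\[
  \Psi : A \longmapsto \big( A \cap \punion\stdL,\ (A \cap \punion\stdM^{\prime}) - N \big),
\]
which is a bijection from the subsets of $\punion(\stdL \conc \stdM)$ onto the set of pairs $(A^{\prime},B)$ with $A^{\prime} \subset \punion\stdL$ and $B \subset \punion\stdM$; it is invertible because splitting a set along the disjoint pieces $\punion\stdL$ and $\punion\stdM^{\prime}$ and translating the second piece down by $N$ can be undone.

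The two facts at the heart of the argument are local compatibilities of the operation $\unstdI(\cdot)$ (\Cref{def:cliques_A_through_I}) with $\conc$. First I would verify
\[
  (\stdL \conc \stdM)(A \cap \punion\stdL) = \stdL(A \cap \punion\stdL),
\]
which holds because the shifted blocks $\stdM^{\prime}_i \subset \{N+1,\dots\}$ meet no clique of $A \cap \punion\stdL \subset [N]$, so only the blocks of $\stdL$ contribute. Second, and symmetrically,
\[
  (\stdL \conc \stdM)(A \cap \punion\stdM^{\prime}) = N + \stdM(B), \qquad B := (A \cap \punion\stdM^{\prime}) - N,
\]
using that $\cliques$ is translation invariant, $\cliques(A \cap \punion\stdM^{\prime}) = N + \cliques(B)$, and that the surviving blocks are exactly $\stdM^{\prime}_i = N + \stdM_i$. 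Since $\std$ depends only on the relative order of the entries (\Cref{def:std_partition}) and the shift by $N$ is order preserving, this gives $\std\big((\stdL \conc \stdM)(A \cap \punion\stdM^{\prime})\big) = \std(\stdM(B))$.

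With these identities the defining conditions transfer verbatim: writing $A^{\prime} = A \cap \punion\stdL$ and $B = (A \cap \punion\stdM^{\prime}) - N$, the condition $\std((\stdL \conc \stdM)(A \cap \punion\stdL)) \ge \singleblockn_{1} \conc \cdots \conc \singleblockn_{r}$ becomes $\std(\stdL(A^{\prime})) \ge \singleblockn_{1} \conc \cdots \conc \singleblockn_{r}$, i.e.\ $A^{\prime}$ lies in the left factor of the $r$-th product, and the condition on $A \cap \punion\stdM^{\prime}$ becomes $\std(\stdM(B)) \ge \singleblockn_{r+1} \conc \cdots \conc \singleblockn_{k}$, i.e.\ $B$ lies in the right factor. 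Hence, for each fixed $r \in \{0,\dots,k\}$, $\Psi$ restricts to a bijection from the $r$-th set on the left onto the $r$-th product set on the right.

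Finally, since $\Psi$ is one fixed bijection of the ambient power sets, it carries the union over $r$ of the left-hand sets bijectively onto the union over $r$ of the right-hand product sets, so the two cardinalities coincide; this step needs no disjointness (though the left-hand union is in fact disjoint, by \Cref{lemma:Chen2}). I expect the only real work to be the careful verification of the two local identities for $\unstdI(\cdot)$ under $\conc$ — tracking the shift by $N$ and invoking translation invariance of both $\cliques$ and $\std$ — after which the conclusion is pure bookkeeping.
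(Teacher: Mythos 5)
Your proof is correct and takes essentially the same route as the paper: the paper's proof exhibits exactly this bijection, stated in the opposite direction as $(A,B) \mapsto A \cup (B + |\punion\stdL|)$ with your $\Psi$ as its inverse, and relies on the same shift-invariance of $\cliques$ and $\std$ to transfer the defining conditions. Your observation that disjointness of the union over $r$ is not needed (one global bijection carries each $r$-indexed set onto its counterpart, hence the union onto the union) is a small clarification, not a different argument.
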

\begin{proof}
We explicitly give a bijection between the two sets.
For some $0\le r \le k$ let 
\begin{align*}
    &(A,B) \in \bigcup_{0\le r \le k}\{A \subset \punion \stdL|\; \std\left(\stdL(A)\right) \ge \singleblockn_{1}\conc \cdots \conc  \singleblockn_{r}\}\\&\times \{B \subset \punion \stdM|\; \std\left(\stdM(B)\right) \ge \singleblockn_{r+1}\conc \cdots \conc  \singleblockn_{k}\}
\end{align*}
and define the map
\begin{align*}
    (A,B) \mapsto A \cup (B+|\punion\stdL|).
\end{align*}
Therefore $(B+|\punion\stdL|)\subset \punion \stdM^{\prime}$ and if $B =\emptyset$, set $B+|\punion\stdL|:=\emptyset$.
We have
\begin{align*}
    \std\left((\stdL \conc \stdM)((A \cup (B+|\punion\stdL|)) \cap \punion\stdL)\right) = \std\left((\stdL \conc \stdM)(A  \cap \punion\stdL)\right) \ge \singleblockn_{1}\conc \cdots \conc  \singleblockn_{r}
\end{align*}
and
\begin{align*}
    &\std\left((\stdL \conc \stdM)((A \cup (B+|\punion\stdL|)) \cap \punion\stdM^{\prime})\right) = \std\left((\stdL \conc \stdM)( (B+|\punion\stdL|) \cap \punion\stdM^{\prime})\right)\\&= \std\left((\stdL \conc \stdM)( B \cap \punion\stdM^{\prime})\right) \ge \singleblockn_{r+1}\conc \cdots \conc  \singleblockn_{k}.
\end{align*}
Now, let 
\begin{align*}
   &A \in \bigcup_{0\le r \le k}\{A \subset \punion (\stdL \conc \stdM)|\; \std(\left(\stdL \conc \stdM\right)(A \cap \punion\stdL)) \ge \singleblockn_{1}\conc \cdots \conc  \singleblockn_{k},\\&\std\left((\stdL \conc \stdM)(A \cap \punion\stdM^{\prime})\right)\ge \singleblockn_{i+1}\conc \cdots \conc  \singleblockn_{k}\}.
\end{align*}
Define
\begin{align*}
    A \mapsto (A \cap (\punion\stdL),(A \cap (\punion\stdM^{\prime}))-|\punion\stdL|)
\end{align*}
and if $(A \cap (\punion\stdM^{\prime}))=\emptyset$, set $(A \cap (\punion\stdM^{\prime}))-|\punion\stdL|=\emptyset$, which is clearly the inverse of the other map.

\end{proof}

\begin{proof}[Proof of \Cref{thm:ChenI_intpart}]
We use \Cref{lemma:Chen1,lemma:Chen2,lemma:Chen3}.
\begin{align*}
 &\Big\langle  \IPC(\stdL \conc \stdM), \singleblockn_{1}\conc \cdots \conc  \singleblockn_{k} \Big\rangle\\
       &=\#\{A \subset \punion (\stdL \conc \stdM)|\; \std\left((\stdL \conc \stdM)(A)\right) \ge \singleblockn_{1}\conc \cdots \conc  \singleblockn_{k}\}\\
       &=\sum_{0\le i \le k}\{A \subset \punion (\stdL \conc \stdM)|\; \std\left((\stdL \conc \stdM)(A \cap \punion\stdL)\right) \ge \singleblockn_{1}\conc \cdots \conc  \singleblockn_{i},\\&\std\left((\stdL \conc \stdM)(A \cap \punion\stdM^{\prime})\right)\ge \singleblockn_{i+1}\conc \cdots \conc  \singleblockn_{k}\}\\
       &=\sum_{0\le i \le k}\#\{A \subset \punion \stdL|\; \std\left(\stdL(A)\right) \ge \singleblockn_{1}\conc \cdots \conc  \singleblockn_{i}\}\\&\times \{B \subset \punion \stdM|\; \std\left(\stdM(B)\right) \ge \singleblockn_{i+1}\conc \cdots \conc  \singleblockn_{k}\}\\
       &=\sum_{0\le i \le k}\#\{A|A \subset \punion\stdL,\stdL(A)) \ge \singleblockn_{1}\conc \cdots \conc  \singleblockn_{i}\}\\&\#\{B|B \subset \punion\stdM,\stdM(B)) \ge \singleblockn_{i+1}\conc \cdots \conc  \singleblockn_{k}\}\\&=
      \sum_{0\le i \le k} \Big\langle  \IPC(\stdL), \singleblockn_{1}\conc \cdots \conc  \singleblockn_{i} \Big\rangle \Big\langle  \IPC(\stdM), \singleblockn_{i+1}\conc \cdots \conc  \singleblockn_{k} \Big\rangle\\&=
       \Big\langle  \IPC(\stdL) \otimes \IPC(\stdM), \deconc(\stds) \Big\rangle.
\end{align*}
\end{proof}
\section{Vincular permutation patterns}
\label{sec:Vincular_permutation_patterns}
In this section, we define a Hopf algebra on pairs which consists of an interval partition of $[n]$ and a permutation of length $n$. A pair corresponds to a certain vincular pattern,  since the intervals express which values are consecutive in the permutation pattern. It incorporates both the Hopf algebra of 
\Cref{sec:Finite_interval_partitions} and the Hopf algebra appearing in \cite{vargas2014hopf}.
The underlying set is now given by
\begin{align*}
    \bigcup_{n \in \N}\SetStandardizedIntervalPartitions_{n} \times \bigsigma_{n},
\end{align*}
general elements of which we denote with $(\stds,\sigma)$.
We consider the free $\Q$-vector space over it.
\begin{definition}[$\Q$-vector space over vincular permutations] 
\begin{align}
  \label{eq:free_vector_space_gen_permutations}
  \FreeVincPer &:= \bigoplus_{n \in \N} \Q[\SetStandardizedIntervalPartitions_{n} \times \bigsigma_{n}  ].
\end{align}
graded by the size of the partitions (or alternatively, the length of permutations).
\end{definition}
We now introduce algebraic operations on $\FreeVincPer$. This can be seen as a \quotationmarks{combination} of our Hopf algebra on interval partitions, $\FreeIntPart$, and Vargas' superinfiltration Hopf algebra, $\FreePer$.

\subsection{Algebraic operations}

We recall the operations of the superinfiltration Hopf algebra introduced in \cite{vargas2014hopf}.   
\begin{definition}[Superinfiltration product, \cite{vargas2014hopf}]
\label{def:superinfiltration_coprod}
Let $\sigma,\tau \in \bigsigma$.
\begin{align*}
\sigma \ \superinfiltration \ \tau &:= \sum_{\gamma \in \bigsigma} \sum_{\substack{A \cup B = [|\gamma|]\\ \st(\gamma\evaluatedAt{A})=\sigma,\st(\gamma\evaluatedAt{B})=\tau}} \gamma.
\end{align*}
\end{definition}
\begin{definition}[\cite{vargas2014hopf}]
 \label{def:conc_vargas}
 Let $\sigma \in \bigsigma$.
\begin{align*}
\Delta_{\concvargas}(\sigma)&:=\sum_{\alpha \concvargas \beta}\alpha \otimes \beta,
\end{align*}
where for $\alpha \in \bigsigma_{m}$ and $\beta \in \bigsigma_{n}$ we have $\alpha \concvargas \beta:= \alpha_{1}\dots \alpha_{m}(\beta_{1}+m) \dots (\beta_{n}+m)$.
\end{definition}
Then, $(\FreePer, \superinfiltration, \Delta_{\,\concvargas})$, is a connected filtered Hopf algebra
\cite[Corollary 4.8]{vargas2014hopf}.

\begin{remark}[Malvenuto-Reutenauer Hopf algebra on permutations]
In \cite{vargas2014hopf}, the supershuffle coproduct
\begin{align*}
\Delta_{\,\scriptsize\underline{\shuffle}\;}(\sigma)&:=\sum_{\substack{A,B \subset [|\sigma|]:\\A \uplus B = [|\sigma|]}}\st\left(\sigma\evaluatedAt{A}\right) \otimes \st\left(\sigma\evaluatedAt{B}\right).
\end{align*}
also appears. Vargas has showed that $(\FreePer, \underline{\shuffle}, \Delta_{\concvargas})$ is a Hopf algebra. The supershuffle, $\underline{\shuffle}$, incorporates the algebraic operations of the Malvenuto-Reutenauer Hopf Algebra
on permutations, $(\FreePer, *^{\prime}, \Delta_{*})$, \cite{malvenuto1995duality}. If $\sigma \in \bigsigma_{m},\tau \in \bigsigma_{n}$
\begin{align*}
\sigma *^{\prime} \tau &:= \sigma\, \shuffle \,\tau_{m},\\
\Delta_{*}(\sigma)&:=\sum_{i=0}^{n} \st(\sigma_{1}\cdots \sigma_{i}) \otimes \st(\sigma_{i+1}\cdots \sigma_{n}).
\end{align*}
where $\shuffle$ is the usual shuffle product on words  and $\tau_{m}:=(\tau_{1}+m)\cdots (\tau_{n}+m)$. We present a few computations as examples. 
\begin{align*}
\Delta_{*}(1243)&= \e \otimes 1243 + 1 \otimes 132 + 12 \otimes 21 + 123 \otimes 1 + 1243 \otimes \e,\\
12 \, \underline{\shuffle} \, 21 &= 1243 + 1324 +\textcolor{red}{2}\; 1342 +\textcolor{red}{2}\; 1423 +\textcolor{red}{3}\; 1432 + 2134 +\textcolor{red}{2}\; 2314 \\&+\textcolor{red}{3}\; 2341 + 2413 +\textcolor{red}{2}\; 2431 +\textcolor{red}{2}\; 3124 \\&+ 3142 +\textcolor{red}{3}\; 3214 +\textcolor{red}{2}\; 3241 + 3421 \\&+\textcolor{red}{3}\; 4123 +\textcolor{red}{2}\; 4132 +\textcolor{red}{2}\; 4213 + 4231 + 4312,\\
     12 * 21 &= 1243 + 1342 + 1432 + 2341 + 2431 + 3421,\\
12 *^{\prime} 21 &= 1243 + 1423 + 4123 + 1432 + 4312 + 4132.  
\end{align*}
\end{remark}

\subsubsection{Products, coproducts}

We now endow $\FreeVincPer$ with a Hopf algebra structure.

The product combines the products $\conc$ (\Cref{def:conc_int_part}) and $\concvargas$ (\Cref{def:conc_vargas}).

\begin{definition} We define on basis elements, $(\stds,\sigma), (\stdt,\tau) \in \bigcup_{n \in \N} \SetStandardizedIntervalPartitions_{n} \times \bigsigma_{n}$
\begin{align*}
(\stds,\sigma) \genconc (\stdt,\tau):= (\stds \conc \stdt, \sigma \concvargas \tau).
\end{align*}
and then extend linearly.
 \end{definition}

\begin{proposition}[Associativity]
    The product $\genconc$ is associative.
\end{proposition}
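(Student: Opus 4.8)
The plan is to reduce associativity of $\genconc$ to the associativity of each of its two coordinates. Since $\genconc$ is defined on basis elements $(\stds,\sigma)\in\SetStandardizedIntervalPartitions_{n}\times\bigsigma_{n}$ and then extended bilinearly, it suffices to verify
\[
\big((\stds,\sigma)\genconc(\stdt,\tau)\big)\genconc(\stdu,\rho)
=
(\stds,\sigma)\genconc\big((\stdt,\tau)\genconc(\stdu,\rho)\big)
\]
on basis elements and afterwards invoke bilinearity.

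First I would unwind both bracketings using the definition of $\genconc$. The left-associated product equals $\big((\stds\conc\stdt)\conc\stdu,\,(\sigma\concvargas\tau)\concvargas\rho\big)$, while the right-associated product equals $\big(\stds\conc(\stdt\conc\stdu),\,\sigma\concvargas(\tau\concvargas\rho)\big)$. Hence the two sides coincide precisely when the partition coordinates agree and the permutation coordinates agree, so the problem splits into two independent associativity statements.

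The partition coordinates agree because $\conc$ is associative, which is the proposition recorded immediately after \Cref{def:conc_int_part}. For the permutation coordinates I would check directly that $\concvargas$ is associative: writing $m=|\sigma|$, $n=|\tau|$ and $p=|\rho|$, both $(\sigma\concvargas\tau)\concvargas\rho$ and $\sigma\concvargas(\tau\concvargas\rho)$ are the permutation in $\bigsigma_{m+n+p}$ whose one-line word is
\[
\sigma_{1}\cdots\sigma_{m}\,(\tau_{1}+m)\cdots(\tau_{n}+m)\,(\rho_{1}+m+n)\cdots(\rho_{p}+m+n),
\]
the key point being that shifting $\tau\concvargas\rho$ by $m$ carries the letter $\rho_{i}+n$ to $\rho_{i}+m+n$, which matches the shift of $\rho$ by $m+n$ in the other bracketing. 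Equivalently, associativity of $\concvargas$ is nothing but the coassociativity of $\Delta_{\concvargas}$ underlying the Hopf algebra $\FreePer$ of \cite{vargas2014hopf}.

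I expect no substantive obstacle here: the entire content is the coordinatewise factorization of $\genconc$, after which associativity follows from the two elementary facts above, and bilinearity transports the identity from basis elements to all of $\FreeVincPer$.
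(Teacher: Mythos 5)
Your proof is correct and follows the same route as the paper: the paper's own proof is the one-line observation that $\genconc$ inherits associativity coordinatewise from the associativity of $\conc$ and $\concvargas$. You simply spell out the details (including the explicit shift computation verifying associativity of $\concvargas$), which the paper leaves implicit.
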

\begin{proof}
    Since $\conc$ and $\concvargas$ are associative products, the result follows.
\end{proof}

\begin{example}[Product]
\begin{align*}
        \tikzmarknode[rectangle,black]{}{\textup{1}}\tikzmarknode[rectangle,black]{}{\textup{2}}
 \   \genconc \ \tikzmarknode[rectangle,black]{}{\textup{2}}\tikzmarknode[rectangle,black]{}{\textup{1}} &=        \tikzmarknode[rectangle,black]{}{\textup{1}}\tikzmarknode[rectangle,black]{}{\textup{2}}
 \; \tikzmarknode[rectangle,black]{}{\textup{4}}\tikzmarknode[rectangle,black]{}{\textup{3}}\\
 \end{align*}
 \end{example}

\begin{remark}
Its dual coproduct
is
\begin{align*}
\deconcgen((\stds,\sigma)) :=\sum_{\mathfrak{a} \conc \mathfrak{b} =\stds}\sum_{\substack{\alpha \concvargas \beta = \sigma\\ \alpha\in \bigsigma_{|\punion\mathfrak{a}|},\,\beta\in \bigsigma_{|\punion\mathfrak{b}|}}} (\stds,\sigma) \otimes (\stdt,\tau)
\end{align*}
For example
 \begin{align*}
 \deconcgen \left(\,\tikzmarknode[rectangle,black]{}{\textup{1}}\tikzmarknode[rectangle,black]{}{\textup{2}}\,\right) &=        \e \otimes \tikzmarknode[rectangle,black]{}{\textup{1}}\tikzmarknode[rectangle,black]{}{\textup{2}} + \tikzmarknode[rectangle,black]{}{\textup{1}}\tikzmarknode[rectangle,black]{}{\textup{2}} \otimes \e\\
       \deconcgen \left(\,\tikzmarknode[rectangle,black]{}{\textup{2}}\;\;\tikzmarknode[rectangle,black]{}{\textup{1}}\,\right) &=        \e \otimes \tikzmarknode[rectangle,black]{}{\textup{2}}\;\;\tikzmarknode[rectangle,black]{}{\textup{1}} + \tikzmarknode[rectangle,black]{}{\textup{2}}
       \;\;\tikzmarknode[rectangle,black]{}{\textup{1}} \otimes \e\\
       \deconcgen \left(\,\tikzmarknode[rectangle,black]{}{\textup{1}}\;\;\tikzmarknode[rectangle,black]{}{\textup{2}}\,\right) &=        \e \otimes \tikzmarknode[rectangle,black]{}{\textup{1}}\;\;\tikzmarknode[rectangle,black]{}{\textup{2}} + \tikzmarknode[rectangle,black]{}{\textup{1}}\;\;\tikzmarknode[rectangle,black]{}{\textup{2}}  \otimes \e + 2\,\,\tikzmarknode[rectangle,black]{}{\textup{1}} \otimes \tikzmarknode[rectangle,black]{}{\textup{1}}
\end{align*}
 \end{remark}

The following definition incorporates \Cref{def:coproduct_partitions}, and \Cref{def:superinfiltration_coprod}.
\begin{definition}
\label{def:coproduct_gen_patt}
 On basis elements,  $\bigcup_{n \in \N}\SetStandardizedIntervalPartitions_{n} \times \bigsigma_{n}$, we define 
\begin{align*}
\coqsgen\left((\stds,\sigma)\right)&:=\sum_{A \cup A^{\prime} = \punion\stds}\;\sum_{\substack{\unstdI \in \SetIntervalPartitions(A),\;\unstdI^{\prime} \in \SetIntervalPartitions(A^{\prime})  \\\\ \unstdI \gluepartitions \unstdI^{\prime} = \stds}}(\std\left(\unstdI\right),\st(\sigma\evaluatedAt{A})) \otimes (\std\left(\unstdI^{\prime}\right),\st(\sigma\evaluatedAt{A^{\prime}})) .
\end{align*}
and extend linearly. Its dual product is defined as
\begin{align*}
(\stds,\sigma) \qsgen (\stdt,\tau)&:=\sum_{\substack{(\stdg, \gamma) \in \bigcup_{n \in \N}\SetStandardizedIntervalPartitions_{n}\times \bigsigma_{n}\\\\A \cup A^{\prime} = \punion\stdg}}\;\sum_{\substack{\unstdI \in \SetIntervalPartitions(A),\;\unstdI^{\prime} \in \SetIntervalPartitions(A^{\prime})\\\\ \std\left(\unstdI\right) = \stds,\; \std\left(\unstdI^{\prime}\right) = \stdt \\\\ \unstdI \gluepartitions \unstdI^{\prime} = \stdg}}\sum_{\st(\gamma\evaluatedAt{A})=\sigma,\st(\gamma\evaluatedAt{A^{\prime}})=\tau}(\stdg,\gamma).
\end{align*}

\end{definition}

For readability, we will also just write:
\begin{align*}
      (12,\{\{1,2\}\})&=: \tikzmarknode[rectangle,black]{}{1}\tikzmarknode[rectangle,black]{}{2}\\
      (21,\{\{1\},\{2\}\})&=:  \tikzmarknode[rectangle,black]{}{2} \ \tikzmarknode[rectangle,black]{}{1}
\end{align*}
and so on.

\begin{proposition}[Coassociativity]
\label{thm:combined_quasicoprod_associative}
The coproduct $\coqsgen$ is coassociative.
\end{proposition}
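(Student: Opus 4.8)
The plan is to follow the blueprint of \Cref{prop:coprod_partitions_coass}: I will produce a single \quotationmarks{triple} expression and verify that both $(\coqsgen \otimes \id)\circ \coqsgen$ and $(\id \otimes \coqsgen)\circ \coqsgen$ collapse to it. For $(\stds,\sigma) \in \SetStandardizedIntervalPartitions_{n} \times \bigsigma_{n}$, the common value should be
\begin{align*}
\sum_{A \cup A^{\prime} \cup A^{\prime\prime} = \punion\stds}\;\sum_{\substack{\unstdI,\unstdI^{\prime},\unstdI^{\prime\prime}\\ \unstdI \gluepartitions \unstdI^{\prime} \gluepartitions \unstdI^{\prime\prime} = \stds}}(\std(\unstdI),\st(\sigma\evaluatedAt{A})) \otimes (\std(\unstdI^{\prime}),\st(\sigma\evaluatedAt{A^{\prime}})) \otimes (\std(\unstdI^{\prime\prime}),\st(\sigma\evaluatedAt{A^{\prime\prime}})),
\end{align*}
where $\unstdI \in \SetIntervalPartitions(A)$, $\unstdI^{\prime} \in \SetIntervalPartitions(A^{\prime})$ and $\unstdI^{\prime\prime} \in \SetIntervalPartitions(A^{\prime\prime})$. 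As in \Cref{prop:coprod_partitions_coass}, the associativity of the gluing operation (\Cref{lemma:gluing_associative}) makes the constraint $\unstdI \gluepartitions \unstdI^{\prime} \gluepartitions \unstdI^{\prime\prime} = \stds$ unambiguous, so this expression is well-defined.

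The reduction splits into its two components, which share the \emph{same} subset decomposition of $\punion\stds = [n]$. For the interval-partition slot, the computation is literally that of \Cref{prop:coprod_partitions_coass}: applying the outer coproduct re-standardizes one tensor factor, and \Cref{lemma:coassociativity_intpart} identifies decompositions of $\std(\unstdI)$ with decompositions of $\unstdI$ itself, turning the nested sum into the triple sum above. For the permutation slot, the relevant fact is the coassociativity of the superinfiltration coproduct of \cite{vargas2014hopf} (dual to the associative product $\superinfiltration$), whose combinatorial heart is the standardization identity
\begin{align*}
\st\big(\st(\sigma\evaluatedAt{A})\evaluatedAt{X}\big) = \st(\sigma\evaluatedAt{\phi_{A}^{-1}(X)}), \qquad X \subset [\,|A|\,],
\end{align*}
where $\phi_{A} : A \to [\,|A|\,]$ is the unique order-preserving bijection; iterating it yields $\sum_{A \cup A^{\prime} \cup A^{\prime\prime} = [n]} \st(\sigma\evaluatedAt{A}) \otimes \st(\sigma\evaluatedAt{A^{\prime}}) \otimes \st(\sigma\evaluatedAt{A^{\prime\prime}})$ in both bracketings.

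The step I expect to require the most care --- and the only genuinely new point beyond the two prior coassociativity results --- is to check that these two componentwise reductions are driven by one and the same relabeling, so that the partition factor and the permutation factor in each tensor slot continue to come from a common subset of positions. This holds because, for $\unstdI \in \SetIntervalPartitions(A)$, the standardization map of \Cref{def:std_partition} relabels $A$ onto $\punion\std(\unstdI) = [\,|A|\,]$ by exactly the order-preserving bijection $\phi_{A}$ that also underlies $\st(\sigma\evaluatedAt{A})$. Hence, when the outer coproduct splits the standardized first factor along $X \cup Y = [\,|A|\,]$, the preimages $\phi_{A}^{-1}(X), \phi_{A}^{-1}(Y) \subset A$ simultaneously govern the partition refinement (through \Cref{lemma:coassociativity_intpart}) and the permutation restriction (through the identity above). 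Consequently both nested coproducts re-index over the same triples $A \cup A^{\prime} \cup A^{\prime\prime} = \punion\stds$ with $\unstdI \gluepartitions \unstdI^{\prime} \gluepartitions \unstdI^{\prime\prime} = \stds$ and carry matching permutation restrictions, so both equal the displayed triple expression; coassociativity follows.
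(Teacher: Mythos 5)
Your proof is correct and follows exactly the route the paper intends: the paper omits this proof, saying only that it is analogous to \Cref{prop:coprod_partitions_coass}, and your argument carries out that analogy (well-definedness of the triple sum via \Cref{lemma:gluing_associative}, then collapsing both bracketings onto it). Your identification of the one genuinely new point --- that the relabeling in \Cref{lemma:coassociativity_intpart} governing the partition slot and the order-preserving bijection underlying $\st(\sigma\evaluatedAt{A})$ in the permutation slot are one and the same map, so both tensor factors keep coming from a common subset of positions --- is precisely the detail that makes the omitted \quotationmarks{analogous} argument go through.
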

\begin{proof}
The proof is analogous to the one of \Cref{prop:coprod_partitions_coass}
and therefore omitted.
\end{proof}

\begin{example}[Coproduct]
\begin{align*}
\coqsgen\left(\ \tikzmarknode[rectangle,black]{}{\textup{1}} \;\; \tikzmarknode[rectangle,black]{}{\textup{2}}\ \right) &= \e \otimes \ \tikzmarknode[rectangle,black]{}{\textup{1}} \;\; \tikzmarknode[rectangle,black]{}{\textup{2}}\ + \ \tikzmarknode[rectangle,black]{}{\textup{1}} \;\; \tikzmarknode[rectangle,black]{}{\textup{2}}\  \otimes \e + 2 \ \ \tikzmarknode[rectangle,black]{}{\textup{1}}  \otimes \ \tikzmarknode[rectangle,black]{}{\textup{1}} \\ &+ 2 \ \ \tikzmarknode[rectangle,black]{}{\textup{1}}  \otimes \ \tikzmarknode[rectangle,black]{}{\textup{1}} \;\; \tikzmarknode[rectangle,black]{}{\textup{2}} + 2 \ \tikzmarknode[rectangle,black]{}{\textup{1}} \;\; \tikzmarknode[rectangle,black]{}{\textup{2}} \  \otimes \ \tikzmarknode[rectangle,black]{}{\textup{1}} +  \tikzmarknode[rectangle,black]{}{\textup{1}} \;\; \tikzmarknode[rectangle,black]{}{\textup{2}} \  \otimes \ \tikzmarknode[rectangle,black]{}{\textup{1}} \;\; \tikzmarknode[rectangle,black]{}{\textup{2}}\,.
   \end{align*}
\end{example}

\begin{example}[Product]
\begin{align*}
        \tikzmarknode[rectangle,black]{}{\textup{1}} \  \qsgen \  \tikzmarknode[rectangle,black]{}{\textup{1}} &= 2 \   \tikzmarknode[rectangle,black]{}{\textup{1}} \;\; \tikzmarknode[rectangle,black]{}{\textup{2}} + 2 \ \tikzmarknode[rectangle,black]{}{\textup{2}} \;\; \tikzmarknode[rectangle,black]{}{\textup{1}} +  \tikzmarknode[rectangle,black]{}{\textup{1}}\\
   \tikzmarknode[rectangle,black]{}{\textup{2}}\tikzmarknode[rectangle,black]{}{\textup{1}} \ \qsgen  \ \tikzmarknode[rectangle,black]{}{\textup{1}} &= 2\ \tikzmarknode[rectangle,black]{}{\textup{2}}\tikzmarknode[rectangle,black]{}{\textup{1}}
   +
   \tikzmarknode[rectangle,black]{}{\textup{2}}\tikzmarknode[rectangle,black]{}{\textup{1}} \;\; \tikzmarknode[rectangle,black]{}{\textup{3}}
   +
   \tikzmarknode[rectangle,black]{}{\textup{3}}\tikzmarknode[rectangle,black]{}{\textup{1}} \;\; \tikzmarknode[rectangle,black]{}{\textup{2}}
   +
   \tikzmarknode[rectangle,black]{}{\textup{3}}\tikzmarknode[rectangle,black]{}{\textup{2}} \;\; \tikzmarknode[rectangle,black]{}{\textup{1}}
   +
   \tikzmarknode[rectangle,black]{}{\textup{1}}\;\; \tikzmarknode[rectangle,black]{}{\textup{3}} \tikzmarknode[rectangle,black]{}{\textup{2}}
   \\&+
   \tikzmarknode[rectangle,black]{}{\textup{2}}\;\; \tikzmarknode[rectangle,black]{}{\textup{3}}\tikzmarknode[rectangle,black]{}{\textup{1}}
   +
   \tikzmarknode[rectangle,black]{}{\textup{3}}\;\; \tikzmarknode[rectangle,black]{}{\textup{2}} \tikzmarknode[rectangle,black]{}{\textup{1}}\\\\
\tikzmarknode[rectangle,black]{}{\textup{1}}\tikzmarknode[rectangle,black]{}{\textup{2}}
 \   \qsgen  \ \tikzmarknode[rectangle,black]{}{\textup{1}}\tikzmarknode[rectangle,black]{}{\textup{2}}
  &= \tikzmarknode[rectangle,black]{}{\textup{1}}\tikzmarknode[rectangle,black]{}{\textup{2}} + 2\ \tikzmarknode[rectangle,black]{}{\textup{1}}\tikzmarknode[rectangle,black]{}{\textup{2}}\tikzmarknode[rectangle,black]{}{\textup{3}} +      2\ \tikzmarknode[rectangle,black]{}{\textup{1}}\tikzmarknode[rectangle,black]{}{\textup{2}}
 \;\; \tikzmarknode[rectangle,black]{}{\textup{3}}\tikzmarknode[rectangle,black]{}{\textup{4}}
 + 2\ \tikzmarknode[rectangle,black]{}{\textup{1}}\tikzmarknode[rectangle,black]{}{\textup{3}}
 \;\; \tikzmarknode[rectangle,black]{}{\textup{2}}\tikzmarknode[rectangle,black]{}{\textup{4}}\\&  + 2\ \tikzmarknode[rectangle,black]{}{\textup{1}}\tikzmarknode[rectangle,black]{}{\textup{4}}
 \;\; \tikzmarknode[rectangle,black]{}{\textup{2}}\tikzmarknode[rectangle,black]{}{\textup{3}} + 2\ \tikzmarknode[rectangle,black]{}{\textup{2}}\tikzmarknode[rectangle,black]{}{\textup{3}}
 \;\; \tikzmarknode[rectangle,black]{}{\textup{1}}\tikzmarknode[rectangle,black]{}{\textup{4}}
 + 2\ \tikzmarknode[rectangle,black]{}{\textup{2}}\tikzmarknode[rectangle,black]{}{\textup{4}}
 \;\; \tikzmarknode[rectangle,black]{}{\textup{1}}\tikzmarknode[rectangle,black]{}{\textup{3}}\\&  + 2\ \tikzmarknode[rectangle,black]{}{\textup{3}}\tikzmarknode[rectangle,black]{}{\textup{4}}
 \;\; \tikzmarknode[rectangle,black]{}{\textup{1}}\tikzmarknode[rectangle,black]{}{\textup{2}}\\\\      
\tikzmarknode[rectangle,black]{}{\textup{1}}\tikzmarknode[rectangle,black]{}{\textup{2}}
 \   \qsgen  \ \tikzmarknode[rectangle,black]{}{\textup{2}}\tikzmarknode[rectangle,black]{}{\textup{1}}&= \tikzmarknode[rectangle,black]{}{\textup{1}}\tikzmarknode[rectangle,black]{}{\textup{3}}\tikzmarknode[rectangle,black]{}{\textup{2}} + \tikzmarknode[rectangle,black]{}{\textup{2}}\tikzmarknode[rectangle,black]{}{\textup{1}}\tikzmarknode[rectangle,black]{}{\textup{3}} + \tikzmarknode[rectangle,black]{}{\textup{2}}\tikzmarknode[rectangle,black]{}{\textup{3}}\tikzmarknode[rectangle,black]{}{\textup{1}} + \tikzmarknode[rectangle,black]{}{\textup{3}}\tikzmarknode[rectangle,black]{}{\textup{1}}\tikzmarknode[rectangle,black]{}{\textup{2}}\\&+ \tikzmarknode[rectangle,black]{}{\textup{1}}\tikzmarknode[rectangle,black]{}{\textup{2}} \;\; \tikzmarknode[rectangle,black]{}{\textup{4}}\tikzmarknode[rectangle,black]{}{\textup{3}}+ \tikzmarknode[rectangle,black]{}{\textup{1}}\tikzmarknode[rectangle,black]{}{\textup{3}} \;\; \tikzmarknode[rectangle,black]{}{\textup{4}}\tikzmarknode[rectangle,black]{}{\textup{2}}+ \tikzmarknode[rectangle,black]{}{\textup{1}}\tikzmarknode[rectangle,black]{}{\textup{4}} \;\; \tikzmarknode[rectangle,black]{}{\textup{3}}\tikzmarknode[rectangle,black]{}{\textup{2}}+ \tikzmarknode[rectangle,black]{}{\textup{2}}\tikzmarknode[rectangle,black]{}{\textup{1}} \;\; \tikzmarknode[rectangle,black]{}{\textup{3}}\tikzmarknode[rectangle,black]{}{\textup{4}}\\&+ \tikzmarknode[rectangle,black]{}{\textup{2}}\tikzmarknode[rectangle,black]{}{\textup{3}} \;\; \tikzmarknode[rectangle,black]{}{\textup{4}}\tikzmarknode[rectangle,black]{}{\textup{1}}+ \tikzmarknode[rectangle,black]{}{\textup{2}}\tikzmarknode[rectangle,black]{}{\textup{4}} \;\; \tikzmarknode[rectangle,black]{}{\textup{3}}\tikzmarknode[rectangle,black]{}{\textup{1}}+ \tikzmarknode[rectangle,black]{}{\textup{3}}\tikzmarknode[rectangle,black]{}{\textup{1}} \;\; \tikzmarknode[rectangle,black]{}{\textup{2}}\tikzmarknode[rectangle,black]{}{\textup{4}}+ \tikzmarknode[rectangle,black]{}{\textup{3}}\tikzmarknode[rectangle,black]{}{\textup{2}} \;\; \tikzmarknode[rectangle,black]{}{\textup{1}}\tikzmarknode[rectangle,black]{}{\textup{4}}\\&+ \tikzmarknode[rectangle,black]{}{\textup{3}}\tikzmarknode[rectangle,black]{}{\textup{4}} \;\; \tikzmarknode[rectangle,black]{}{\textup{2}}\tikzmarknode[rectangle,black]{}{\textup{1}}+ \tikzmarknode[rectangle,black]{}{\textup{4}}\tikzmarknode[rectangle,black]{}{\textup{1}} \;\; \tikzmarknode[rectangle,black]{}{\textup{2}}\tikzmarknode[rectangle,black]{}{\textup{3}}+ \tikzmarknode[rectangle,black]{}{\textup{4}}\tikzmarknode[rectangle,black]{}{\textup{2}} \;\; \tikzmarknode[rectangle,black]{}{\textup{1}}\tikzmarknode[rectangle,black]{}{\textup{3}}+ \tikzmarknode[rectangle,black]{}{\textup{4}}\tikzmarknode[rectangle,black]{}{\textup{3}} \;\; \tikzmarknode[rectangle,black]{}{\textup{1}}\tikzmarknode[rectangle,black]{}{\textup{2}}
\end{align*}
\end{example}

\

\subsubsection{Bialgebras on vincular permutations}

We have an analogous result to \Cref{thm:Quasi-shuffle_deconc_HA_part}.

\begin{theorem}[Bialgebras on vincular permutations]
\label{thm:Quasi-shuffle_deconc_HA_gen_perm}
The following holds:
\begin{itemize}
    \item[--] $(\FreeVincPer,\genconc, \coqsgen, u^{\prime},\varepsilon^{\prime})$ is a bialgebra,
    \item[--] $(\FreeVincPer,\qsgen,\deconcgen,u^{\prime},\varepsilon^{\prime})$ is a connected filtered Hopf algebra (whith grading defined in \Cref{eq:free_vector_space_gen_permutations}),
\end{itemize}
\end{theorem}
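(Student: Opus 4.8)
The plan is to establish the single nontrivial bialgebra axiom — that $\coqsgen$ is an algebra morphism for $\genconc$ (equivalently, that $\genconc$ is a coalgebra morphism for $\coqsgen$) — and then to obtain everything else formally. Associativity of $\genconc$ and coassociativity of $\coqsgen$ are already available (\Cref{thm:combined_quasicoprod_associative}), and compatibility of the unit $u^{\prime}$ and counit $\varepsilon^{\prime}$ is routine, since both are concentrated in degree $0$ on the empty pair $\e$. The second bullet then follows by duality: $\qsgen$ is dual to $\coqsgen$ and $\deconcgen$ is dual to $\genconc$, so the dual of the first bialgebra is again a bialgebra; since $\FreeVincPer$ is connected (its degree-$0$ component is $\Q\,\e$) and filtered by the grading of \Cref{eq:free_vector_space_gen_permutations}, the standard fact that a connected filtered bialgebra is a Hopf algebra (\cite{manchon2008hopf}) upgrades it accordingly.

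The heart of the argument is to show $\coqsgen\big((\stds,\sigma)\genconc(\stdt,\tau)\big)=\coqsgen\big((\stds,\sigma)\big)\genconc\coqsgen\big((\stdt,\tau)\big)$, and the key point is that both the product and the coproduct factor componentwise into a partition part and a permutation part that are \emph{driven by the same subset splitting}. Writing $m=|\stds|=|\sigma|$ and $n=|\stdt|=|\tau|$, I would expand the left-hand side via \Cref{def:coproduct_gen_patt} as a sum over $A\cup A^{\prime}=[m+n]$ and over factorizations $\unstdI\gluepartitions\unstdI^{\prime}=\stds\conc\stdt$ of the tensors $(\std(\unstdI),\st((\sigma\concvargas\tau)\evaluatedAt{A}))\otimes(\std(\unstdI^{\prime}),\st((\sigma\concvargas\tau)\evaluatedAt{A^{\prime}}))$. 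For the partition component I invoke \Cref{lemma:bialgebra} verbatim: decomposing $A=A_{1}\uplus A_{2}$ with $A_{1}=A\cap[m]$ and $A_{2}=A\cap\{m+1,\dots,m+n\}$ (and likewise for $A^{\prime}$), the constraint $\unstdI\gluepartitions\unstdI^{\prime}=\stds\conc\stdt$ splits into $\unstdI(A_{1})\gluepartitions\unstdI^{\prime}(A_{1}^{\prime})=\stds$ and $\unstdI(A_{2})\gluepartitions\unstdI^{\prime}(A_{2}^{\prime})=\stdt^{\prime}$, while $\std(\unstdI)=\std(\unstdI(A_{1}))\conc\std(\unstdI(A_{2}))$. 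For the permutation component, the crucial observation — exactly the mechanism behind Vargas' multiplicativity of the superinfiltration coproduct \cite{vargas2014hopf} — is that $\sigma\concvargas\tau$ carries the $\sigma$-values on $[m]$ and the shifted $\tau$-values on $\{m+1,\dots,m+n\}$, so every position in $A_{1}$ carries a smaller value than every position in $A_{2}$; standardizing the restricted subword therefore yields $\st\big((\sigma\concvargas\tau)\evaluatedAt{A}\big)=\st(\sigma\evaluatedAt{A_{1}})\concvargas\st(\tau\evaluatedAt{A_{2}-m})$, and likewise on $A^{\prime}$.

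Combining the two components, each summand factors as $(\std(\unstdI(A_{1})),\st(\sigma\evaluatedAt{A_{1}}))\genconc(\std(\unstdI(A_{2})),\st(\tau\evaluatedAt{A_{2}-m}))$ on the first tensor leg, and analogously on the second, because the partition boundary of $\stds\conc\stdt$ and the value-range boundary of $\sigma\concvargas\tau$ both sit at $m$. The double sum then separates into an independent sum over splittings of $(\stds,\sigma)$ and one over splittings of $(\stdt^{\prime},\tau)$; applying \Cref{lemma:coassociativity_intpart} to replace $\stdt^{\prime}$ by $\stdt$, while the shift $A_{2}\mapsto A_{2}-m$ simultaneously relabels the permutation positions to $[n]$, recognizes the latter factor as $\coqsgen((\stdt,\tau))$, and the whole expression becomes $\coqsgen((\stds,\sigma))\genconc\coqsgen((\stdt,\tau))$, precisely as in the proof of \Cref{thm:Quasi-shuffle_deconc_HA_part}.

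I expect the main obstacle to be exactly the bookkeeping that keeps the partition and permutation factorizations synchronized: one must check that the \emph{single} choice of $A,A^{\prime}\subset[m+n]$ simultaneously governs both the gluing-factorization of $\stds\conc\stdt$ and the $\concvargas$-factorization of $\sigma\concvargas\tau$, which hinges on the coincidences $|\stds|=|\sigma|$ and $|\stdt|=|\tau|$ forcing the two boundaries to align at $m$. Once this alignment is verified, the partition side is handled by \Cref{lemma:bialgebra,lemma:coassociativity_intpart} exactly as before and the permutation side reduces to the standardization identity above, so no genuinely new estimate is required beyond merging the two established proofs.
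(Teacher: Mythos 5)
Your proposal is correct and follows essentially the same route as the paper's proof: expand $\coqsgen$ on the product, split the subsets at the boundary $m$, handle the partition part via \Cref{lemma:bialgebra} and \Cref{lemma:coassociativity_intpart} exactly as in \Cref{thm:Quasi-shuffle_deconc_HA_part}, and get the Hopf structure by duality plus the connected-filtered-bialgebra fact. The only difference is one of detail: the paper defers the factorization step to ``arguments similar to'' the partition case, whereas you make explicit the permutation-side identity $\st\big((\sigma\concvargas\tau)\evaluatedAt{A}\big)=\st(\sigma\evaluatedAt{A_{1}})\concvargas\st(\tau\evaluatedAt{A_{2}-m})$ that this deferral silently uses.
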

where $u^{\prime}$ denotes the unit map and $\varepsilon^{\prime}$ the counit map.
\begin{proof}
Let $(\stds,\sigma) \in \SetStandardizedIntervalPartitions_{m} \times \bigsigma_{m}$ and $(\stdt,\tau) \in \SetStandardizedIntervalPartitions_{m} \times \bigsigma_{m}$. Recall that
\begin{align*}
& \stds \conc \stdt := \{\stds_{1},...,\stds_{m},\stdt^{\prime}_{1},...,\stdt^{\prime}_{n}\}
\end{align*}
where for $i=1,...,n$, $\stdt^{\prime}_{i}:=|\punion\stds|+\stdt_{i}$. We also write $\tau_{m}:=(\tau_{1}+m)\cdots (\tau_{n}+m)$.
First, we have that
\begin{align*}
    &\coqsgen((\stds,\sigma) \genconc (\stdt,\tau))\\&=\sum_{A \cup A^{\prime} =[m+n]}
\sum_{\substack{\unstdI \in \SetIntervalPartitions(A),\; \unstdI^{\prime} \in \SetIntervalPartitions(A^{\prime})\\\\ \unstdI \gluepartitions \unstdI^{\prime}=\stds \conc \stdt }}(\std(\unstdI),\st((\sigma \concvargas \tau)|_{A}))\otimes (\std(\unstdI^{\prime}),\st((\sigma \concvargas \tau)|_{A^{\prime}})).
\end{align*}
is equal to
\begin{align*}
 &\sum_{\substack{A_{1} \cup A_{1}^{\prime} = \{1,...,m\}\\\\ A_{2} \cup A_{2}^{\prime} = \{m+1,...,m+n\} }}\;\sum_{\substack{\unstdI_{1} \in\SetIntervalPartitions(A_{1}), \unstdI_{1}^{\prime} \in\SetIntervalPartitions(A_{1}^{\prime})\\\\\unstdI_{2} \in\SetIntervalPartitions(A_{2}), \unstdI_{2}^{\prime} \in\SetIntervalPartitions(A_{2}^{\prime})\\\\ \unstdI_{1}\gluepartitions  \unstdI_{1}^{\prime}= \stds,\;\unstdI_{2}\gluepartitions  \unstdI_{2}^{\prime}= \stdt^{\prime}}}(\std(\unstdI_{1}),\st(\sigma\evaluatedAt{A_{1}})) \genconc (\std(\unstdI_{2}),\st(\tau_{m}\evaluatedAt{A_{2}}))\\\\&\otimes (\std(\unstdI_{1}^{\prime}),\st(\sigma\evaluatedAt{A_{1}^{\prime}})) \genconc (\std(\unstdI_{2}^{\prime}),\st(\tau_{m}\evaluatedAt{A_{2}^{\prime}}))
\end{align*}
 which, using arguments similar to the proof of \Cref{thm:Quasi-shuffle_deconc_HA_part}, can be shown to be equal to
\begin{align*}
&\coqsgen((\stds,\sigma))\; \genconc \; \coqsgen((\stdt,\tau)) \\&=\sum_{A \cup A^{\prime} =\punion\stds}
\sum_{\substack{\unstdI \in \SetIntervalPartitions(A),\; \unstdI^{\prime} \in \SetIntervalPartitions(A^{\prime})\\\\ \unstdI \gluepartitions \unstdI^{\prime}=\stds}}(\std(\unstdI),\st(\sigma |_{A}))\otimes (\std(\unstdI^{\prime}),\st(\sigma |_{A^{\prime}}))\\  &\genconc \sum_{B \cup B^{\prime} =\punion\stdt}
\sum_{\substack{\unstdJ \in \SetIntervalPartitions(B),\; \unstdJ^{\prime} \in \SetIntervalPartitions(B^{\prime})\\\\ \unstdJ \gluepartitions \unstdJ^{\prime}=\stdt}}(\std(\unstdJ),\st(\tau |_{B}))\otimes (\std(\unstdJ^{\prime}),\st(\tau |_{B^{\prime}})).
\end{align*}
\end{proof}

We can embed the Hopf algebra of the previous section.
\begin{proposition}[Embedding of partitions]
\label{prop:emb_part}
The map
\begin{align*}
    (\FreeIntPart,\qspart,\deconc) &\to (\FreeVincPer,\qsgen, \deconcgen)\\
      \psi: \stds &\mapsto \sum_{\sigma \in \bigsigma_{|\cup\mathfrak{s}|}}(\stds, \sigma).
\end{align*}
 is an injective Hopf homomorphism.
\end{proposition}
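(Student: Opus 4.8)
The plan is to verify that $\psi$ is a morphism of bialgebras; since both source and target are Hopf algebras (\Cref{thm:Quasi-shuffle_deconc_HA_part} and \Cref{thm:Quasi-shuffle_deconc_HA_gen_perm}) and any bialgebra morphism between Hopf algebras automatically intertwines the antipodes, this yields a Hopf algebra morphism. Linearity holds by construction. Injectivity is immediate: for distinct $\stds$ the vectors $\psi(\stds)$ are supported on disjoint sets of basis elements (namely those whose partition-component is $\stds$), hence are linearly independent. Compatibility with the units is clear, since $\psi(\e)$ is the empty pair, and compatibility with the counits follows because $\psi$ preserves the grading by size. It therefore remains to check multiplicativity and comultiplicativity.

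For multiplicativity I would expand $\psi(\stds)\qsgen\psi(\stdt)$ using \Cref{def:coproduct_gen_patt} and collect the coefficient of a fixed basis element $(\stdg,\gamma)$. Summing over all $\sigma\in\bigsigma_{|\punion\stds|}$ and $\tau\in\bigsigma_{|\punion\stdt|}$, the constraints $\st(\gamma\evaluatedAt{A})=\sigma$ and $\st(\gamma\evaluatedAt{A^{\prime}})=\tau$ merely pin down which pair $(\sigma,\tau)$ a given choice of data contributes to; hence, after summing, the permutation part drops out, and the coefficient of $(\stdg,\gamma)$ equals the number of quadruples $(A,A^{\prime},\unstdI,\unstdI^{\prime})$ with $A\cup A^{\prime}=\punion\stdg$, $\unstdI\in\SetIntervalPartitions(A)$, $\unstdI^{\prime}\in\SetIntervalPartitions(A^{\prime})$, $\std(\unstdI)=\stds$, $\std(\unstdI^{\prime})=\stdt$ and $\unstdI\gluepartitions\unstdI^{\prime}=\stdg$. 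A small auxiliary fact --- that for fixed $A$ the map $\std$ is injective on $\SetIntervalPartitions(A)$, so that $(A,A^{\prime})$ determines $(\unstdI,\unstdI^{\prime})$ uniquely --- identifies this count with the section coefficient $\langle\stds\otimes\stdt,\coqspart(\stdg)\rangle$ of \Cref{rem:section_coefficients}. As this is independent of $\gamma$, summing over $\gamma\in\bigsigma_{|\punion\stdg|}$ reconstitutes the target as $\psi(\stdg)$, giving $\psi(\stds)\qsgen\psi(\stdt)=\sum_{\stdg}\langle\stds\otimes\stdt,\coqspart(\stdg)\rangle\,\psi(\stdg)=\psi(\stds\qspart\stdt)$.

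For comultiplicativity I would compute $\deconcgen(\psi(\stds))=\sum_{\sigma}\deconcgen((\stds,\sigma))$ and compare with $(\psi\otimes\psi)(\deconc(\stds))$. The key observation is dual to the previous one: for a fixed splitting $\stda\conc\stdb=\stds$ with sizes $p=|\punion\stda|$ and $q=|\punion\stdb|$, a permutation decomposition $\sigma=\alpha\concvargas\beta$ with $\alpha\in\bigsigma_{p}$, $\beta\in\bigsigma_{q}$ determines $\sigma$ uniquely from $(\alpha,\beta)$, and conversely, as $\sigma$ ranges over all permutations admitting such a split at $p$, the resulting pair $(\alpha,\beta)=(\st(\sigma_{1}\cdots\sigma_{p}),\st(\sigma_{p+1}\cdots\sigma_{p+q}))$ ranges bijectively over $\bigsigma_{p}\times\bigsigma_{q}$. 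Hence summing over all $\sigma$ frees $\alpha$ and $\beta$ to range independently, so the contribution of this splitting is exactly $\psi(\stda)\otimes\psi(\stdb)$; summing over the splittings $\stda\conc\stdb=\stds$ yields $(\psi\otimes\psi)(\deconc(\stds))$.

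The main obstacle is the bookkeeping in the multiplicativity step: one must see that the permutation constraints built into $\qsgen$ contribute nothing extra once one sums over all source permutations, and that the remaining purely partition-theoretic count is precisely the section coefficient. This hinges on the uniqueness statement (injectivity of $\std$ on $\SetIntervalPartitions(A)$), matching the ``$\exists!$'' in \Cref{rem:section_coefficients}; establishing it reduces to observing that an interval partition of a fixed $A$ is determined by its ordered sequence of block sizes. Once these two uniqueness facts are in place, both compatibilities collapse onto the corresponding identities already established for $\FreeIntPart$.
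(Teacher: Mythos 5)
Your proposal is correct and takes essentially the same approach as the paper: your observation that the permutation constraints ``drop out'' once one sums over $(\sigma,\tau)$ is exactly the paper's \Cref{lemma:partition_embedding} (the sets $\{\gamma\in\bigsigma_{n} \mid \st(\gamma\evaluatedAt{A})=\sigma,\ \st(\gamma\evaluatedAt{A^{\prime}})=\tau\}$ partition $\bigsigma_{n}$), and your comultiplicativity step is the paper's use of the bijection $(\alpha,\beta)\mapsto\alpha\concvargas\beta$ onto the permutations that split at position $p$. The only difference is presentational: you extract the coefficient of a fixed $(\stdg,\gamma)$ and pass through the section coefficients of \Cref{rem:section_coefficients} (hence your appeal to the injectivity of $\std$ on $\SetIntervalPartitions(A)$), whereas the paper manipulates the sums directly via the explicit quadruple-sum formula for $\qspart$.
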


We will use the following simple observation.
\begin{lemma}
\label{lemma:partition_embedding}
Let $A, A^{\prime} \subset [n]$.
Then
\begin{align*}
    \bigsigma_{n}&=\biguplus_{\substack{\sigma \in \bigsigma_{|A|}\\\sigma' \in \bigsigma_{|A^{\prime}|}}}\{\gamma \in \bigsigma_{n}|\ \st(\gamma\evaluatedAt{A})=\sigma, \ \st(\gamma\evaluatedAt{A^{\prime}})=\sigma'\},
\end{align*}
where the union is over disjoint, but possibly empty, sets.
\end{lemma}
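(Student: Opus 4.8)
The plan is to recognize the right-hand side as the decomposition of $\bigsigma_n$ into the fibers of a single map, for which the partition property is automatic. First I would introduce the map
\[
f : \bigsigma_n \to \bigsigma_{|A|} \times \bigsigma_{|A'|}, \qquad \gamma \mapsto \bigl(\st(\gamma\evaluatedAt{A}),\, \st(\gamma\evaluatedAt{A'})\bigr).
\]
This is well-defined: for each fixed $\gamma \in \bigsigma_n$ the subwords $\gamma\evaluatedAt{A}$ and $\gamma\evaluatedAt{A'}$ are uniquely determined by reading off the values of $\gamma$ at the positions of $A$ (respectively $A'$) in increasing order, and, as recalled in the notation section, the standardization of a word yields a single, unambiguous permutation in $\bigsigma_{|A|}$ (respectively $\bigsigma_{|A'|}$).

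Next I would observe that, for each pair $(\sigma, \sigma') \in \bigsigma_{|A|} \times \bigsigma_{|A'|}$, the indexed set appearing on the right-hand side is precisely the fiber of $f$,
\[
\{\gamma \in \bigsigma_n \mid \st(\gamma\evaluatedAt{A}) = \sigma,\ \st(\gamma\evaluatedAt{A'}) = \sigma'\} = f^{-1}(\sigma, \sigma').
\]
The claimed identity then reduces to the statement that the fibers of $f$ partition its domain. This holds for any function whatsoever: every $\gamma \in \bigsigma_n$ lies in the fiber $f^{-1}(f(\gamma))$ and, since $f(\gamma)$ is a single well-defined pair, in no other; hence the fibers are pairwise disjoint and cover $\bigsigma_n$. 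The pairs $(\sigma, \sigma')$ that do not lie in the image of $f$ simply contribute empty sets, which is exactly the case the notation $\biguplus$ is meant to accommodate.

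There is no genuine obstacle in this argument. The only point requiring any care is the well-definedness of $f$ --- that each $\gamma$ determines its pair $(\st(\gamma\evaluatedAt{A}), \st(\gamma\evaluatedAt{A'}))$ without ambiguity --- and this is immediate from the definition of standardization. Thus the lemma follows from the elementary fact that the preimages of a map form a (possibly degenerate) partition of its domain.
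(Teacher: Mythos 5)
Your proof is correct and is essentially the paper's own argument: the paper likewise proves disjointness by noting that a given $\gamma$ determines the pair $(\st(\gamma\evaluatedAt{A}),\st(\gamma\evaluatedAt{A^{\prime}}))$ uniquely, and proves coverage by assigning to each $\gamma$ exactly that pair. Your packaging of this as ``the fibers of a well-defined map partition its domain'' is just a cleaner phrasing of the identical idea.
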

\begin{proof}
    If  $\sigma = \st(\gamma\evaluatedAt{A})=\hat\sigma$,
    $\sigma' = \st(\gamma\evaluatedAt{A'})=\hat\sigma'$,
    then $\sigma=\hat\sigma, \sigma'=\hat\sigma'$, which proves disjointness
    of the sets.
    Finally, taking for $\gamma \in \bigsigma_n$,
    $\sigma := \st(\gamma\evaluatedAt{A}),
    \sigma' := \st(\gamma\evaluatedAt{A'})$, proves
    the claimed identity.
\end{proof}

\begin{proof}[Proof of \Cref{prop:emb_part}]
Let $\stds\in \SetStandardizedIntervalPartitions_{k}$ and $\stdt\in \SetStandardizedIntervalPartitions_{k^{\prime}}$. We have
\begin{align*}
    \psi(\stds\ \qspart\ \stdt) = \sum_{n \in \N}\sum_{A \cup A^{\prime}=[n]}\sum_{\substack{\unstdI \in \SetIntervalPartitions(A),\unstdI \in \SetIntervalPartitions(A^{\prime})\\\std(\unstdI)=\stds, \std(\unstdI^{\prime})=\stdt}}\sum_{\gamma \in \bigsigma_{n}}(\unstdI \gluepartitions \unstdI^{\prime},\gamma)
\end{align*}
and
\begin{align*}
    \psi(\stds) \qsgen \psi(\stdt) &= \sum_{\substack{\sigma \in \bigsigma_{k}\\\tau \in \bigsigma_{k^{\prime}}}}\sum_{n \in \N}\sum_{\substack{A \cup A^{\prime}=[n]\\|A|=k,|A^{\prime}|=k^{\prime}}}\;\sum_{\substack{\unstdI \in \SetIntervalPartitions(A),\unstdI \in \SetIntervalPartitions(A^{\prime})\\ \std(\unstdI)=\stds, \std(\unstdI^{\prime})=\stdt}}\;\sum_{\substack{\gamma \in \bigsigma_{n}\\\st(\gamma\evaluatedAt{A})=\sigma,\st(\gamma\evaluatedAt{A^{\prime}})=\tau}}(\unstdI \gluepartitions \unstdI^{\prime},\gamma)\\&= \sum_{n \in \N}\sum_{\substack{A \cup A^{\prime}=[n]\\|A|=k,|A^{\prime}|=k^{\prime}}}\sum_{\substack{\unstdI \in \SetIntervalPartitions(A),\unstdI \in \SetIntervalPartitions(A^{\prime})\\ \std(\unstdI)=\stds, \std(\unstdI^{\prime})=\stdt}}\; \sum_{\gamma \in \bigsigma_{n}}\;\sum_{\substack{\sigma \in \bigsigma_{k}, \tau \in \bigsigma_{k^{\prime}}\\\st(\gamma\evaluatedAt{A})=\sigma,\st(\gamma\evaluatedAt{A^{\prime}})=\tau}}(\unstdI \gluepartitions \unstdI^{\prime},\gamma)\\&=^{\hspace{-0.7cm}\tiny\raisebox{1em}{\text{\Cref{lemma:partition_embedding}}}}\psi(\stds \qspart \stdt)
    \end{align*}

While for the coalgebra part
\begin{align*}
    &(\psi \otimes \psi) \circ(\deconc(\stds)) = (\psi \otimes \psi) \circ(\sum_{\mathfrak{a} \conc \mathfrak{b} = \stds}\stda \otimes \stdb)\\&=\sum_{\mathfrak{a} \conc \mathfrak{b} = \stds} \sum_{\alpha \in \bigsigma_{|\cup_{j}\mathfrak{a}_{j}|}}(\stda, \alpha) \otimes \sum_{\beta \in \bigsigma_{|\cup_{j}\mathfrak{b}_{j}|}}(\stdb, \beta)\\&=\sum_{\mathfrak{a} \conc \mathfrak{b} = \stds} \sum_{\alpha \in \bigsigma_{|\cup_{j}\mathfrak{a}_{j}|}} \sum_{\beta \in \bigsigma_{|\cup_{j}\mathfrak{b}_{j}|}}(\stda, \alpha) \otimes (\stdb, \beta)\\&=\sum_{\sigma \in \bigsigma_{|\cup_{j}\mathfrak{s}_{j}|}} \sum_{\mathfrak{a} \conc \mathfrak{b} = \stds} \sum_{\alpha \in \bigsigma_{|\cup_{j}\mathfrak{a}_{j}|}} \sum_{\beta \in \bigsigma_{|\cup_{j}\mathfrak{b}_{j}|}}\sum_{\alpha \; \concvargas \; \beta = \sigma}(\stda, \alpha) \otimes (\stdb, \beta).
\end{align*}
and
\begin{align*}
    \deconcgen \circ \psi(\stds) &=  \sum_{\sigma \in \bigsigma_{|\cup_{j}\mathfrak{s}_{j}|}}\deconcgen((\stds, \sigma))\\&=  \sum_{\sigma \in \bigsigma_{|\cup_{j}\mathfrak{s}_{j}|}} \sum_{\substack{\mathfrak{a} \conc \mathfrak{b} = \stds \\ \alpha \concvargas \beta = \sigma\\ \alpha \in \bigsigma_{|\cup_{j}\mathfrak{a}_{j}|}, \beta \in \bigsigma_{|\cup_{j}\mathfrak{b}_{j}|}}}(\stda, \alpha) \otimes (\stdb, \beta).
\end{align*}
\end{proof}

\begin{proposition}[Embedding of permutations]
\label{prop:emb_perm}
    The Hopf algebra  $(\FreePer,\superinfiltration,\Delta_{\scriptsize \concvargas})$ which appears in \cite{vargas2014hopf} can be embedded in $(\FreeVincPer,\qsgen,\deconcgen)$. Indeed the map
    \begin{align*}
    \phi: \sigma &\mapsto (\{\{i\}|1 \le i \le |\sigma|\}\},\sigma),
    \end{align*}
where $\{\{i\}|1 \le i \le |\sigma|\}\}$ is the singletons' partition, is an injective Hopf homomorphism.
\end{proposition}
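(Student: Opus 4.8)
The plan is to exploit the fact that under $\phi$ the partition coordinate is always the \emph{finest} interval partition, and that this finest partition is ``inert'' under both restriction/standardization and gluing. Write $\delta_n := \{\{1\},\dots,\{n\}\} \in \SetStandardizedIntervalPartitions_n$ for the singletons' partition of $[n]$, so that $\phi(\sigma) = (\delta_{|\sigma|},\sigma)$. Injectivity is immediate, since $\sigma$ is recovered from the second coordinate; compatibility with the units and counits is equally clear, as $\phi$ sends the empty permutation to the unit of $\FreeVincPer$ and $\varepsilon' \circ \phi = \varepsilon$ because both vanish off degree $0$. It therefore remains to check that $\phi$ is a morphism of algebras and of coalgebras; being a bialgebra morphism between connected filtered Hopf algebras, it then automatically commutes with the antipodes, yielding a Hopf algebra homomorphism.

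First I would record three ``inertness'' observations about $\delta_n$, all immediate from the relevant definitions. (i) For any $A \subseteq [n]$ one has $\delta_n(A) = \{\{a\}\mid a\in A\}$, and hence $\std(\delta_n(A)) = \delta_{|A|}$, since every block of $\delta_n$ is a singleton. (ii) If $\unstdI, \unstdI'$ are interval partitions all of whose blocks are singletons, then two blocks $\{x\},\{y\}$ meet iff $x=y$, so by \Cref{def:gluing_partitions} the gluing $\unstdI \gluepartitions \unstdI'$ is exactly the singletons' partition of $\punion\unstdI \cup \punion\unstdI'$. (iii) Conversely, since $\std$ relabels blocks without changing their cardinalities, $\std(\unstdI) = \delta_m$ forces every block of $\unstdI$ to be a singleton and $|\punion\unstdI| = m$.

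For the algebra property I would expand $\phi(\sigma)\qsgen\phi(\tau) = (\delta_m,\sigma)\qsgen(\delta_n,\tau)$ using \Cref{def:coproduct_gen_patt}. In that sum the constraints $\std(\unstdI)=\delta_m$ and $\std(\unstdI')=\delta_n$ force, by (iii), both $\unstdI$ and $\unstdI'$ to consist of singletons (with $|A|=m$, $|A'|=n$); then by (ii) $\unstdI\gluepartitions\unstdI'$ is the singletons' partition of $A\cup A'=\punion\stdg$, so that $\stdg = \delta_N$ with $N = |A\cup A'|$ and the gluing condition $\unstdI\gluepartitions\unstdI'=\stdg$ becomes automatic. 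Consequently the whole sum collapses to
\begin{align*}
\phi(\sigma)\qsgen\phi(\tau) = \sum_{N\in\N}\sum_{\gamma\in\bigsigma_N}\;\sum_{\substack{A\cup A'=[N]\\ \st(\gamma\evaluatedAt{A})=\sigma,\ \st(\gamma\evaluatedAt{A'})=\tau}}(\delta_N,\gamma),
\end{align*}
which, after applying $\phi$ to \Cref{def:superinfiltration_coprod}, is exactly $\phi(\sigma\superinfiltration\tau)$ (the conditions $\st(\gamma\evaluatedAt{A})=\sigma$, $\st(\gamma\evaluatedAt{A'})=\tau$ already enforce $|A|=m$, $|A'|=n$).

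For the coalgebra property I would expand $\deconcgen(\phi(\sigma)) = \deconcgen((\delta_m,\sigma))$. Here the outer sum runs over splittings $\mathfrak{a}\conc\mathfrak{b}=\delta_m$; since $\conc$ cannot merge singleton blocks into larger ones, such a splitting forces $\mathfrak{a}=\delta_i$ and $\mathfrak{b}=\delta_{m-i}$ for the unique $i=|\punion\mathfrak{a}|\in\{0,\dots,m\}$. Hence
\begin{align*}
\deconcgen((\delta_m,\sigma)) = \sum_{\alpha\concvargas\beta=\sigma}(\delta_{|\alpha|},\alpha)\otimes(\delta_{|\beta|},\beta) = \sum_{\alpha\concvargas\beta=\sigma}\phi(\alpha)\otimes\phi(\beta) = (\phi\otimes\phi)\bigl(\Delta_{\concvargas}(\sigma)\bigr),
\end{align*}
using \Cref{def:conc_vargas}. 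I expect the only real obstacle to be the bookkeeping in the algebra step --- specifically, verifying that observations (ii) and (iii) genuinely make the gluing constraint vacuous, so that the partition data drops out and one is left precisely with the superinfiltration product; the coalgebra step and injectivity are routine once the inertness of $\delta_n$ is in place.
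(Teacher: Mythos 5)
Your proof is correct and takes essentially the same approach as the paper: your inertness observations (ii) and (iii) are exactly the paper's two auxiliary lemmas (gluing of singleton partitions gives the singleton partition of the union, and the uniqueness that makes the gluing constraint vacuous), and both the collapse of the $\qsgen$-sum onto $\phi(\sigma\,\superinfiltration\,\tau)$ and the forced splittings $\mathfrak{a}=\delta_i$, $\mathfrak{b}=\delta_{m-i}$ in the coproduct step mirror the paper's computation. Your explicit treatment of injectivity, units/counits, and the antipode is extra bookkeeping the paper leaves implicit, but it changes nothing essential.
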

For the proof, we need two lemmas.
\begin{lemma}
\label{lemma:emb_perm1}
Let $A,A^{\prime}\in \N_{\ge 1},|A|,|A^{\prime}| < \infty$ and let $\unstdI \in \SetIntervalPartitions(A),\unstdI^{\prime}\in \SetIntervalPartitions(A^{\prime})$ be the singletons' partitions of $A$ and $A^{\prime}$ respectively. Then
\begin{align*}
    \unstdI \gluepartitions \unstdI^{\prime} \in  \SetIntervalPartitions(A\cup A^{\prime})
\end{align*}
is the singletons' partition of $A \cup A^{\prime}$.
\end{lemma}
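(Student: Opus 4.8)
The plan is to unwind the definition of the gluing operation (\Cref{def:gluing_partitions}) in the special case of singletons' partitions, where the connecting relation $R_{\unstdI,\unstdI^{\prime}}$ degenerates to the diagonal. First I would write the two partitions explicitly as $\unstdI = \{\{a\} \mid a \in A\}$ and $\unstdI^{\prime} = \{\{a^{\prime}\} \mid a^{\prime} \in A^{\prime}\}$, and observe that, as a set of blocks, their union is
\begin{align*}
\unstdI \cup \unstdI^{\prime} = \{\{x\} \mid x \in A \cup A^{\prime}\},
\end{align*}
since a singleton $\{x\}$ with $x \in A \cap A^{\prime}$ is the \emph{same} block in both partitions and hence appears only once in this set union.

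Next I would compute $R_{\unstdI,\unstdI^{\prime}}$. By definition $(E,F) \in R_{\unstdI,\unstdI^{\prime}}$ iff $E \cap F \neq \emptyset$; but since every block here is a singleton, $\{x\} \cap \{y\} \neq \emptyset$ holds precisely when $x = y$, i.e.\ precisely when $E = F$. Thus $R_{\unstdI,\unstdI^{\prime}}$ is exactly the diagonal relation on $\unstdI \cup \unstdI^{\prime}$, and the equivalence relation $\langle R_{\unstdI,\unstdI^{\prime}}\rangle$ it generates is again the identity. Consequently every equivalence class is a single block, $[\{x\}]_{\sim\langle R_{\unstdI,\unstdI^{\prime}}\rangle} = \{\{x\}\}$.

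Finally I would substitute this back into the formula defining $\gluepartitions$: for each $x \in A \cup A^{\prime}$ the union over its class is just $\{x\}$, so that
\begin{align*}
\unstdI \gluepartitions \unstdI^{\prime} = \{\{x\} \mid x \in A \cup A^{\prime}\},
\end{align*}
which is precisely the singletons' partition of $A \cup A^{\prime}$. That the result lies in $\SetIntervalPartitions(A \cup A^{\prime})$ is already guaranteed by \Cref{lemma:gluing_welldefined}. There is no genuine obstacle in this argument; the only point requiring care is the elementary observation that distinct singletons never intersect, so the gluing performs no non-trivial merging and the diagonal relation is all one obtains.
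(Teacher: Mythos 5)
Your proof is correct and is essentially the paper's own argument: the paper simply states that the claim ``follows immediately from the definition of $\gluepartitions$,'' and your write-up is exactly that immediate verification spelled out (distinct singletons never intersect, so $R_{\unstdI,\unstdI^{\prime}}$ is the diagonal, the generated equivalence relation is equality, and the gluing merges nothing). No gaps; the care you take with blocks $\{x\}$ for $x \in A \cap A^{\prime}$ appearing only once in the set union is the right observation.
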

\begin{proof}
    It follows immediately from the definition of $\gluepartitions$.
\end{proof}
\begin{lemma}
\label{lemma:emb_perm2}
Let $n \in \N$. Then
\begin{align*}
&\Big|\biguplus_{A \cup A^{\prime} = [n]}\{\unstdI \in \SetIntervalPartitions(A),\unstdI^{\prime} \in \SetIntervalPartitions(A^{\prime})\mid\std(\unstdI)=\{\{i\}|1 \le i \le |A|\}, \std(\unstdI^{\prime})=\{\{i\}|1 \le i \le |A^{\prime}|\},\\\\
&\qquad\qquad\qquad\qquad\qquad\qquad\qquad\qquad\qquad \unstdI \gluepartitions \unstdI^{\prime} = \{\{1\},...,\{n\}\}\}\Big|\\\\
&= \Big|\{A,A^{\prime} \subset [n]| A \cup A^{\prime}=[n]\}\Big|.
\end{align*}
\end{lemma}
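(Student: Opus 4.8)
The plan is to show that the disjoint union on the left-hand side is, for each pair $(A,A^{\prime})$ with $A \cup A^{\prime} = [n]$, a set containing exactly one element, so that the total cardinality equals the number of such pairs $(A,A^{\prime})$, which is precisely the right-hand side.

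First I would observe that the standardization map preserves the cardinalities of the blocks of a partition as well as their left-to-right order; this is immediate from \Cref{def:std_partition}, where the $j$-th block is sent to an integer block of the same size. Consequently, $\std(\unstdI) = \{\{i\} \mid 1 \le i \le |A|\}$ holds if and only if $\unstdI$ consists of $|A|$ blocks each of size one, i.e.\ if and only if $\unstdI$ is the singletons' partition of $A$. Thus, once $A$ is fixed, the partition $\unstdI$ with $\std(\unstdI) = \{\{i\} \mid 1 \le i \le |A|\}$ is \emph{uniquely} determined as the singletons' partition of $A$, and likewise $\unstdI^{\prime}$ is uniquely determined as the singletons' partition of $A^{\prime}$.

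Next I would invoke \Cref{lemma:emb_perm1}: since $\unstdI$ and $\unstdI^{\prime}$ are the singletons' partitions of $A$ and $A^{\prime}$, their gluing $\unstdI \gluepartitions \unstdI^{\prime}$ is automatically the singletons' partition of $A \cup A^{\prime} = [n]$, namely $\{\{1\},\dots,\{n\}\}$. Hence the constraint $\unstdI \gluepartitions \unstdI^{\prime} = \{\{1\},\dots,\{n\}\}$ imposes no additional restriction; it is satisfied by the (unique) pair already singled out. Therefore, for every pair $(A,A^{\prime})$ with $A \cup A^{\prime} = [n]$, the corresponding summand set in the disjoint union is a singleton.

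Finally, since the union is disjoint and each summand contributes exactly one element, the left-hand cardinality equals the number of pairs $(A,A^{\prime})$ with $A \cup A^{\prime} = [n]$, which is the right-hand side; equivalently, the assignment $(A,A^{\prime}) \mapsto (\unstdI,\unstdI^{\prime})$ sending each pair to the pair of its singletons' partitions is the desired bijection. I do not expect any genuine obstacle here: the only substantive ingredients are the elementary remark that a singleton standardization forces a singleton partition (so that $\unstdI,\unstdI^{\prime}$ are forced), and the already-established \Cref{lemma:emb_perm1}, which guarantees that the gluing condition holds for free.
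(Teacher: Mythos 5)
Your proof is correct and takes essentially the same approach as the paper: the paper's (very terse) proof simply asserts that for each pair $(A,A^{\prime})$ with $A \cup A^{\prime} = [n]$ the corresponding set has cardinality one, which is exactly what you establish. Your write-up just makes explicit the two ingredients---that a singleton standardization forces the singletons' partition, and that \Cref{lemma:emb_perm1} makes the gluing condition automatic---which the paper leaves implicit.
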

\begin{proof}
It follows immediately, since for $A \cup A^{\prime}= [n]$
\begin{align*}
    &|\{\unstdI \in \SetIntervalPartitions(A),\unstdI^{\prime} \in \SetIntervalPartitions(A^{\prime})|\std(\unstdI)=\{\{i\}|1 \le i \le |A|\}, \std(\unstdI^{\prime})=\{\{i\}|1 \le i \le |A^{\prime}|\},\\\\&\unstdI \gluepartitions \unstdI^{\prime} = \{\{1\},...,\{n\}\}\}|\\\\&=1.
\end{align*}
\end{proof}
\begin{proof}[Proof of \Cref{prop:emb_perm}]
Let $\sigma,\tau \in \bigsigma$. 
\begin{align*}
&\phi(\sigma) \qsgen \phi(\tau)\\&=\sum_{(\stdg, \gamma)}\sum_{A \cup A^{\prime} = \punion\stdg}\;\sum_{\substack{\unstdI \in \SetIntervalPartitions(A),\;\unstdI^{\prime} \in \SetIntervalPartitions(A^{\prime})\\\\ \std(\unstdI) = \{\{i\}|1 \le i \le |\sigma|\}\\\\\std(\unstdI^{\prime}) = \{\{i\}|1 \le i \le |\tau|\} \\\\ \unstdI \gluepartitions \unstdI^{\prime} = \stdg}}\sum_{\st(\gamma\evaluatedAt{A})=\sigma,\st(\gamma\evaluatedAt{A^{\prime}})=\tau}(\stdg,\gamma)\\&=^{\hspace{-0.7cm}\tiny\raisebox{1em}{\text{\Cref{lemma:emb_perm2}}}} \sum_{n \in \N}\sum_{\gamma\in \bigsigma_{n}}\sum_{A \cup A^{\prime} = [n]}\sum_{\substack{\unstdI \in \SetIntervalPartitions(A),\;\unstdI^{\prime} \in \SetIntervalPartitions(A^{\prime})\\\\ \std(\unstdI) = \{\{i\}|1 \le i \le |\sigma|\}\\\\\std(\unstdI^{\prime}) = \{\{i\}|1 \le i \le |\tau|\} \\\\ \unstdI \gluepartitions \unstdI^{\prime} = \{\{i\}|1 \le i \le n\}}}\sum_{\st(\gamma|_{A})=\sigma,\; \st(\gamma|_{A^{\prime}})=\tau}(\{\{i\}|1 \le i \le n\},\gamma)\\&= \sum_{n \in \N}\sum_{\gamma \in \bigsigma_{n}}\sum_{\substack{A \cup A^{\prime} = [n]\\\\ \st(\gamma|_{A})=\sigma,\; \st(\gamma|_{A^{\prime}})=\tau}}(\{\{i\}|1 \le i \le n\},\gamma)\\&= \phi(\sigma\; \superinfiltration \; \tau).
\end{align*}
While for the coalgebra part
\begin{align*}
&\deconcgen(\{\{i\}|1 \le i \le |\sigma|\},\sigma)) = \sum_{\substack{|\alpha|+|\beta| = |\sigma| \\ \alpha \square \beta = \sigma}}(\{\{i\}|1 \le i \le |\alpha|\},\alpha) \otimes (\{\{i\}|1 \le i \le |\beta|\},\beta)\\&=\phi \otimes \phi \circ \Delta_{\square}(\sigma).
\end{align*}

\end{proof}
\subsection{Signature for vincular permutation patterns}

In an analogous fashion to \Cref{def:signature_interval_partition}, define, for $(\stdL,\Lambda),(\stds,\sigma)\in \bigcup_{n \in \N} \SetStandardizedIntervalPartitions_{n} \times \bigsigma_{n}$
\begin{align*}
\Big\langle  \GPC((\stdL,\Lambda)), (\stds,\sigma) \Big\rangle &:=\#\{ A \subset [N]|\;\std(\stdL(A))\ge \stds, \st(\Lambda\evaluatedAt{A}) = \sigma\}.
\end{align*}
and extend linearly to $\FreeVincPer$. These patterns are known in the literature as \textit{vincular patterns}. They appear in \cite{babson2000generalized}.
\subsubsection{Character property and Chen's identity}

Again, we have analogous results to \Cref{thm:qsI_intpart} and \Cref{thm:ChenI_intpart}.
\begin{theorem}[Character property]
\label{thm: qsI_gen_per}
Let $(\stdL,\Lambda) \in \SetStandardizedIntervalPartitions_{N} \times \bigsigma_{N}$, with $N \in \N$. Then $\forall (\stds,\sigma),(\stdt,\tau)\in \bigcup_{n \in \N} \SetStandardizedIntervalPartitions_{n} \times \bigsigma_{n}$,
\begin{align*}
 \Big\langle  \GPC((\stdL,\Lambda)), (\stds,\sigma) \Big\rangle \Big\langle  \GPC((\stdL,\Lambda)), (\stdt,\tau) \Big\rangle
   &=    \Big\langle  \GPC((\stdL,\Lambda)), (\stds,\sigma) \qsgen (\stdt,\tau) \Big\rangle. 
\end{align*}
\end{theorem}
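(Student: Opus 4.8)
The plan is to mirror the proof of \Cref{thm:qsI_intpart}, carrying the permutation datum alongside the interval-partition datum at every step. The partition half of the argument will be verbatim that of \Cref{thm:qsI_intpart}, while the permutation half is exactly Vargas' proof of the superinfiltration character property \cite{vargas2014hopf}; the whole point is to interleave the two so that the bijections underlying the partition lemmas also transport the permutation conditions.

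First I would expand the left-hand side as a single count of pairs of subsets,
\[
\Big\langle \GPC((\stdL,\Lambda)),(\stds,\sigma)\Big\rangle\,\Big\langle \GPC((\stdL,\Lambda)),(\stdt,\tau)\Big\rangle=\#\big\{(A,B): A,B\subset[N],\ \std(\stdL(A))\ge\stds,\ \st(\Lambda\evaluatedAt{A})=\sigma,\ \std(\stdL(B))\ge\stdt,\ \st(\Lambda\evaluatedAt{B})=\tau\big\}.
\]
I would then group these pairs according to the triple $(\stdg,\gamma,C)$, where $C:=A\cup B$, $\gamma:=\st(\Lambda\evaluatedAt{C})$, and $\stdg:=\std(\stdL(A)_{\stds}\gluepartitions \stdL(B)_{\stdt})$. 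This is the analogue of \Cref{lemma:character1} augmented by the extra datum $\gamma$; the containment $\std(\stdL(C))\ge\stdg$ still follows from \Cref{corollary:gluing_finess} and \Cref{lemma:gluing_finess} exactly as before, and the equality $\st(\Lambda\evaluatedAt{C})=\gamma$ is automatic.

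Next, for a fixed admissible triple with $\stdh:=\std(\stdL(C))\ge\stdg$, I would reduce the number of admissible pairs $(A,B)$ with $A\cup B=C$ to a count over $\punion\stdh=[|C|]$. Relabelling $C$ by its order isomorphism $c_{i}\mapsto i$ turns the partition conditions into conditions on $\stdh$, which is precisely the zooming-in step \Cref{lemma:character2}; the new point is that it simultaneously turns the permutation conditions into $\st(\gamma\evaluatedAt{\bar A})=\sigma$ and $\st(\gamma\evaluatedAt{\bar B})=\tau$ for the relative-position sets $\bar A,\bar B$ of $A,B$ inside $C$. This uses the transitivity of standardization: because $\gamma=\st(\Lambda\evaluatedAt{C})$, one has $\st(\Lambda\evaluatedAt{A})=\st(\gamma\evaluatedAt{\bar A})$, and likewise for $B$. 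Applying the invariance step \Cref{lemma:character3} then replaces $\stdh$ by $\stdg$; since that bijection is the identity on the index pair and $\punion\stdh=\punion\stdg=[|C|]$, the permutation conditions ride along unchanged, so appending them is harmless. Finally \Cref{lemma:character4}, with the same permutation conditions appended, identifies the resulting count with the section coefficient $\langle (\stds,\sigma)\otimes(\stdt,\tau),\coqsgen((\stdg,\gamma))\rangle$ read off from \Cref{def:coproduct_gen_patt}; crucially this count no longer depends on $C$.

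To conclude, I would sum over all $C$ with $\std(\stdL(C))\ge\stdg$ and $\st(\Lambda\evaluatedAt{C})=\gamma$, which multiplies each section coefficient by $\big\langle \GPC((\stdL,\Lambda)),(\stdg,\gamma)\big\rangle$, and then over all $(\stdg,\gamma)$, reassembling $\big\langle \GPC((\stdL,\Lambda)),(\stds,\sigma)\qsgen(\stdt,\tau)\big\rangle$ through the definition of $\qsgen$. The main obstacle is bookkeeping rather than conceptual: one must check that each bijection used in \Cref{lemma:character2,lemma:character3,lemma:character4} acts on the subset indices compatibly with the permutation conditions, namely by order-relabelling in the zooming-in step and by the identity in the invariance and section-coefficient steps, so that the conditions $\st(\Lambda\evaluatedAt{A})=\sigma$ and $\st(\Lambda\evaluatedAt{B})=\tau$ transport without distortion. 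Once the standardization-transitivity identity is established, the partition data and the permutation data decouple along exactly these bijections, and the proof of \Cref{thm:qsI_intpart} carries over line by line.
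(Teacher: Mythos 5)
Your proposal is correct and takes essentially the same route as the paper: the paper's proof likewise runs the argument of \Cref{thm:qsI_intpart} with the permutation datum carried along, decomposing the count of pairs $(A,B)$ over triples $(\stdg,\gamma,C)$, identifying each cell's cardinality with the section coefficient $\big\langle \qsgen((\stds,\sigma)\otimes(\stdt,\tau)),(\stdg,\gamma)\big\rangle$, and resumming to obtain $\big\langle \GPC((\stdL,\Lambda)),(\stds,\sigma)\qsgen(\stdt,\tau)\big\rangle$. The only difference is one of detail: you spell out why the permutation conditions transport through the zooming-in, invariance, and section-coefficient steps (via transitivity of standardization), which the paper compresses into ``using arguments analogous to the proof of \Cref{thm:qsI_intpart}.''
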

\begin{proof}
Using arguments analogous to the proof of \Cref{thm:qsI_intpart}, we have
\begin{align*}
    &|\{ A,B \subset [N]|\;\std(\stdL(A))\ge \stds, \std(\stdL(B))\ge \stdt, \st(\Lambda\evaluatedAt{A}) = \sigma, \st(\Lambda\evaluatedAt{B}) = \tau \}|\\
    &=| \biguplus_{\substack{(\stdg,\gamma,C) \in \bigcup_{n \in \N}\SetStandardizedIntervalPartitions_{n} \times \bigsigma_{n} \times 2^{N} \\\\ \std(\stdL(C))\ge \stdg\\\\ \st(\Lambda\evaluatedAt{C})=\gamma}}\{A,B \subset [N]\;|A\cup B = C, \std(\stdL(A)) \ge \stds, \std(\stdL(B)) \ge \stdt,\\\\& \std(\stdL(A)_{\stds}\gluepartitions \stdL(B)_{\stdt}) = \stdg, \st(\Lambda\evaluatedAt{A}) = \sigma, \st(\Lambda\evaluatedAt{B}) = \tau, \st(\Lambda \evaluatedAt{C}) = \gamma \}|\\\\ &= \sum_{\substack{(\stdg,\gamma,C) \in  \bigcup_{n \in \N}\SetStandardizedIntervalPartitions_{n} \times \bigsigma_{n} \times 2^{N} \\\\ \std(\stdL(C))\ge \stdg\\\\ \st(\Lambda\evaluatedAt{C})=\gamma}}\langle \qsgen((\stds,\sigma) \otimes (\stdt,\tau)) , (\stdg,\gamma) \rangle\\&= \sum_{(\stdg,\gamma) \in  \bigcup_{n \in \N}\SetStandardizedIntervalPartitions_{n} \times \bigsigma_{n}} \sum_{ \substack{ C   \in 2^{N} \\\\ \std(\stdL(C))\ge \stdg\\\\\st(\Lambda \evaluatedAt{C}) = \gamma }}\langle \qsgen((\stds,\sigma) \otimes (\stdt,\tau)) , (\stdg,\gamma) \rangle \\&= \sum_{(\stdg,\gamma) \in  \bigcup_{n \in \N}\SetStandardizedIntervalPartitions_{n} \times \bigsigma_{n}} \langle \qsgen((\stds,\sigma) \otimes (\stdt,\tau)) , (\stdg,\gamma) \rangle \sum_{ \substack{ C   \in 2^{N}\\\\ \std(\stdL(C))\ge \stdg\\\\\st(\Lambda \evaluatedAt{C}) = \gamma }} 1\\&= \sum_{(\stdg,\gamma) \in  \bigcup_{n \in \N}\SetStandardizedIntervalPartitions_{n} \times \bigsigma_{n}} \langle \qsgen((\stds,\sigma) \otimes (\stdt,\tau)) , (\stdg,\gamma) \rangle  \Big\langle  \GPC((\stdL,\Lambda)), (\stdg,\gamma) \Big\rangle\\&=  \Big\langle  \GPC((\stdL,\Lambda)), \sum_{(\stdg,\gamma) \in  \bigcup_{n \in \N}\SetStandardizedIntervalPartitions_{n} \times \bigsigma_{n}} \langle \qsgen((\stds,\sigma) \otimes (\stdt,\tau)) , (\stdg,\gamma) \rangle  (\stdg,\gamma) \Big\rangle\\&=  \Big\langle  \GPC((\stdL,\Lambda)), (\stds,\sigma) \qsgen (\stdt,\tau) \Big\rangle.
\end{align*}

\end{proof}

\begin{theorem}[Chen's identity]
\label{thm:Chen_gen_per}
Let $N_{1},N_{2}\in \N$ and let $(\stdL,\Lambda)\in \SetStandardizedIntervalPartitions_{N_{1}} \times \bigsigma_{N_{1}}$ and $(\stdM,M) \in \SetStandardizedIntervalPartitions_{N_{2}} \times \bigsigma_{N_{2}}$, then $\forall (\stds,\sigma) \in \bigcup_{n \in \N}\SetStandardizedIntervalPartitions_{n} \times \bigsigma_{n}$
\begin{align*}
 \Big\langle  \GPC\Big((\stdL,\Lambda) \genconc (\stdM,M)\Big), (\stds,\sigma) \Big\rangle 
   &=    \Big\langle  \GPC\Big((\stdL,\Lambda)\Big) \otimes  \GPC\Big((\stdM,M)\Big), \deconcgen((\stds,\sigma)) \Big\rangle 
\end{align*}
\end{theorem}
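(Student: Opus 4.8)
The plan is to mirror the proof of \Cref{thm:ChenI_intpart}, carrying the permutation bookkeeping alongside the partition data at every step, and to feed in the permutation half from the structure of the $\concvargas$ product. Write $N := N_1 + N_2$, so that $(\stdL \conc \stdM, \Lambda \concvargas M) \in \SetStandardizedIntervalPartitions_N \times \bigsigma_N$. As in that proof it suffices to take $\stds = \singleblockn_1 \conc \cdots \conc \singleblockn_k$ written as a product of single blocks, since every standardized interval partition decomposes uniquely in this way and $\deconc$ cuts only at block boundaries. Unfolding the definition of $\GPC$,
\[
  \Big\langle \GPC(\stdL \conc \stdM, \Lambda \concvargas M), (\stds,\sigma) \Big\rangle = \#\{A \subset [N] \mid \std((\stdL \conc \stdM)(A)) \ge \stds,\ \st((\Lambda \concvargas M)\evaluatedAt{A}) = \sigma\},
\]
and we aim to encode each admissible $A$ by the pair $A_L := A \cap \punion\stdL$ and $A_M := A \cap \punion\stdM^{\prime}$.

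The first key step dispenses with the permutation constraint. Since $\Lambda \concvargas M$ carries the values $\{1,\dots,N_1\}$ on the positions $\punion\stdL = [N_1]$ and the values $\{N_1+1,\dots,N\}$ on $\punion\stdM^{\prime}$, every value of $(\Lambda \concvargas M)\evaluatedAt{A_L}$ lies strictly below every value of $(\Lambda \concvargas M)\evaluatedAt{A_M}$; hence $\st((\Lambda \concvargas M)\evaluatedAt{A}) = \st(\Lambda\evaluatedAt{A_L}) \concvargas \st(M\evaluatedAt{A_M - N_1})$. Thus $\st((\Lambda \concvargas M)\evaluatedAt{A}) = \sigma$ holds iff $\sigma$ admits the $\concvargas$-decomposition $\sigma = \alpha \concvargas \beta$ at position $|A_L|$, with $\alpha = \st(\Lambda\evaluatedAt{A_L})$ and $\beta = \st(M\evaluatedAt{A_M - N_1})$; this is exactly the dual of Vargas' Chen identity \Cref{eq:perm_chen}, and is the same mechanism used in the proof of \Cref{prop:emb_perm}. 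The second key step dispenses with the partition constraint by reusing \Cref{lemma:Chen2}: there is a unique $r \in \{0,\dots,k\}$ with $\std((\stdL \conc \stdM)(A_L)) \ge \singleblockn_1 \conc \cdots \conc \singleblockn_r$ and $\std((\stdL \conc \stdM)(A_M)) \ge \singleblockn_{r+1} \conc \cdots \conc \singleblockn_k$, and since the relation $\ge$ forces equal ground-set sizes, this index obeys $|A_L| = n_1 + \cdots + n_r$, which is precisely $|\punion\stda|$ for the prefix $\stda := \singleblockn_1 \conc \cdots \conc \singleblockn_r$.

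The decisive point, and the main obstacle, is that these two splits must coincide and must be exactly what $\deconcgen$ records: a priori the partition wants to cut $\stds$ between blocks while the permutation wants to cut $\sigma$ at a position $j$ with $\{\sigma_1,\dots,\sigma_j\} = \{1,\dots,j\}$. The resolution is that both cuts are governed by the single quantity $|A_L|$, so that the partition split $\stds = \stda \conc \stdb$ with $\stda = \singleblockn_1 \conc \cdots \conc \singleblockn_r$ and the permutation split $\sigma = \alpha \concvargas \beta$ with $\alpha \in \bigsigma_{|\punion\stda|}$ are locked together, which is exactly the joint pairing summed in $\deconcgen((\stds,\sigma))$. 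Finally, applying the size-preserving bijection $A \leftrightarrow (A_L, A_M - N_1)$ of \Cref{lemma:Chen3} (which respects the now-separated permutation conditions, since $\Lambda\evaluatedAt{A_L}$ and $M\evaluatedAt{A_M - N_1}$ depend on the two halves independently) and using $(\stdL \conc \stdM)(A_L) = \stdL(A_L)$ factors the count as
\[
  \sum_{\stda \conc \stdb = \stds}\ \sum_{\substack{\alpha \concvargas \beta = \sigma\\ \alpha \in \bigsigma_{|\punion\stda|}}} \Big\langle \GPC(\stdL,\Lambda), (\stda,\alpha) \Big\rangle \Big\langle \GPC(\stdM,M), (\stdb,\beta) \Big\rangle = \Big\langle \GPC(\stdL,\Lambda) \otimes \GPC(\stdM,M), \deconcgen((\stds,\sigma)) \Big\rangle,
\]
which is the right-hand side. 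Everything outside the coordination of the two cuts is bookkeeping imported verbatim from the proofs of \Cref{thm:ChenI_intpart} and the permutation Chen identity.
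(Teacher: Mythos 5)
Your proposal is correct and takes essentially the same route as the paper: the paper's (terse) proof also decomposes the admissible subsets $A\subset[N_1+N_2]$ according to the unique joint split $(\stds,\sigma)=(\stda,\alpha)\genconc(\stdb,\beta)$ determined by $A\cap\punion\stdL$, and then factors the count by the arguments of \Cref{thm:ChenI_intpart} (\Cref{lemma:Chen2,lemma:Chen3}). You have merely made explicit what the paper leaves implicit — in particular the value-ordering observation that $\st((\Lambda\concvargas M)\evaluatedAt{A})=\st(\Lambda\evaluatedAt{A_L})\concvargas\st(M\evaluatedAt{A_M-N_1})$ and the fact that both cuts are pinned by $|A_L|$ — which is a faithful elaboration, not a different proof.
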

\begin{proof}
Recall that $\stdL \conc \stdM = \{\stdL,...,\stdL_{p},\stdM^{\prime}_{1},...,\stdM_{q}^{\prime}\}$, where $\stdM^{\prime}_{i} := \stdM_{i} + |\punion\stdL|$ , where $\stdM^{\prime}_{i} := \stdM_{i} + |\punion\stdL|$, and $\sigma \concvargas \tau = \sigma_{1} \cdots \sigma_{m} (\tau_{1}+m)\cdots (\tau_{n}+m)$. We have 
\begin{align*}
       &|\{ A \subset [N_{1}+N_{2}]|\;\std\left((\stdL \conc \stdM)(A)\right)\ge \stds, (\Lambda \concvargas M)\evaluatedAt{A} = \sigma\}|\\&=|\biguplus_{\substack{\mathfrak{a} \conc \mathfrak{b} = \stds\\\\ \alpha \square \beta = \sigma\\\\ \alpha \in \bigsigma_{|\punion\mathfrak{a}|},\,\beta \in \bigsigma_{|\punion\mathfrak{b}|} }}\{ A \subset [N_{1}+N_{2}]|\; \std\left((\stdL \conc \stdM)(A \cap \punion\stdL)\right)\ge \stda, \st((\Lambda \concvargas M) \evaluatedAt{A \cap \punion\stdL})=\alpha,\\\\&  \std\left((\stdL \conc \stdM)(A \cap \punion\stdM^{\prime})\right)\ge \stdb, \st((\Lambda \concvargas M) \evaluatedAt{A \cap \punion\stdM^{\prime}})=\beta\}|.
\end{align*}
using arguments similar to the proof of \Cref{thm:ChenI_intpart}: indeed there exists unique $\stda,\stdb \in \SetIntervalPartitions$ and $\alpha,\beta$ which \quotationmarks{split} $\stds$ and $\sigma$, respectively.
\end{proof}

\begin{example}
\begin{align*}
   &\Big \langle \GPC\Big(\;\tikzmarknode[rectangle,black]{}{\textup{1}}\;\;\tikzmarknode[rectangle,black]{}{\textup{3}}\tikzmarknode[rectangle,black]{}{\textup{2}} \;\; \tikzmarknode[rectangle,black]{}{\textup{5}}\tikzmarknode[rectangle,black]{}{\textup{4}}\;\Big),\;\; \tikzmarknode[rectangle,black]{}{\textup{2}}\;\;\tikzmarknode[rectangle,black]{}{\textup{1}}\; \Big \rangle \\&=\Big \langle \GPC\Big(\;\tikzmarknode[rectangle,black]{}{\textup{1}}\;\Big)\otimes \GPC\Big(\;\tikzmarknode[rectangle,black]{}{\textup{2}}\tikzmarknode[rectangle,black]{}{\textup{1}}\;\Big) \otimes \GPC\Big(\;\tikzmarknode[rectangle,black]{}{\textup{2}}\tikzmarknode[rectangle,black]{}{\textup{1}}\;\Big),\\&\;\; \e \otimes \e \otimes \tikzmarknode[rectangle,black]{}{\textup{2}}\;\;\tikzmarknode[rectangle,black]{}{\textup{1}}+ \e  \otimes \tikzmarknode[rectangle,black]{}{\textup{2}}\;\;\tikzmarknode[rectangle,black]{}{\textup{1}}\otimes \e+ \tikzmarknode[rectangle,black]{}{\textup{2}}\;\;\tikzmarknode[rectangle,black]{}{\textup{1}}\otimes \e \otimes \e\; \Big \rangle \\&=\Big \langle \GPC\Big(\;\tikzmarknode[rectangle,black]{}{\textup{1}}\;\Big),\;\; \tikzmarknode[rectangle,black]{}{\textup{2}}\;\;\tikzmarknode[rectangle,black]{}{\textup{1}}\; \Big \rangle \\&+\Big \langle \GPC\Big(\;\tikzmarknode[rectangle,black]{}{\textup{2}}\tikzmarknode[rectangle,black]{}{\textup{1}}\Big),\;\; \tikzmarknode[rectangle,black]{}{\textup{2}}\;\;\tikzmarknode[rectangle,black]{}{\textup{1}}\; \Big \rangle \\&+\Big \langle \GPC\Big(\;\tikzmarknode[rectangle,black]{}{\textup{2}}\tikzmarknode[rectangle,black]{}{\textup{1}}\Big),\;\; \tikzmarknode[rectangle,black]{}{\textup{2}}\;\;\tikzmarknode[rectangle,black]{}{\textup{1}}\; \Big \rangle = 2.
\end{align*}

\begin{align*}
   &\Big \langle \GPC\Big(\;\tikzmarknode[rectangle,black]{}{\textup{4}}\;\;\tikzmarknode[rectangle,black]{}{\textup{5}}\tikzmarknode[rectangle,black]{}{\textup{3}} \;\; \tikzmarknode[rectangle,black]{}{\textup{2}}\tikzmarknode[rectangle,black]{}{\textup{1}}\;\Big),\;\; \tikzmarknode[rectangle,black]{}{\textup{2}}\;\;\tikzmarknode[rectangle,black]{}{\textup{1}}\; \Big \rangle  = 9.
\end{align*}
\end{example}

\begin{remark}[Arbitrary delays]
  Since we count patterns, where two values can be either consecutive or have an arbitrary gap, a natural question might be whether it is possible to count patterns so that there is a specific time gap between values. This is indeed possible.

  Consider, for example,
  the task of counting the pattern $21$ with \emph{exactly} 
  one gap between the two timepoints of the occurence.
  pictorially, we can represent such a pattern as
  \begin{align}
    \label{eq:gap}
    \tikzmarknode[rectangle,black]{}{\textup{2}}\tikzmarknode[rectangle,black]{}{\textcolor{white}{2}}\tikzmarknode[rectangle,black]{}{\textup{1}}.
  \end{align}
  As the 'middle slot' is arbitrary, we equivalently need to count occurrences of
  patterns
  $(\stds,\sigma) \in \{\{1,2,3\}\} \times \bigsigma_{3}$, such that $\st(\sigma\evaluatedAt{\{1,3\}})=21$,
  i.e. the count of \eqref{eq:gap} in $(\stdL,\Lambda)$ is given by
  \begin{align*}
     \Big\langle \GPC((\stdL,\Lambda)), \tikzmarknode[rectangle,black]{}{\textup{2}}\tikzmarknode[rectangle,black]{}{\textup{3}}\tikzmarknode[rectangle,black]{}{\textup{1}}+ \tikzmarknode[rectangle,black]{}{\textup{3}}\tikzmarknode[rectangle,black]{}{\textup{1}}\tikzmarknode[rectangle,black]{}{\textup{2}}+ \tikzmarknode[rectangle,black]{}{\textup{3}}\tikzmarknode[rectangle,black]{}{\textup{2}}\tikzmarknode[rectangle,black]{}{\textup{1}} \Big\rangle.
  \end{align*}

  In \cite{zunino2010permutation}[Eq. 4], they consider patterns where delays, \quotationmarks{$D$}, between values are fixed. For $D=2$, and the permutation pattern $213$ one searches for
\begin{align*}
 \tikzmarknode[rectangle,black]{}{\textup{2}}\tikzmarknode[rectangle,black]{}{\textcolor{white}{2}}\tikzmarknode[rectangle,black]{}{\textcolor{white}{2}}\tikzmarknode[rectangle,black]{}{\textup{1}}\tikzmarknode[rectangle,black]{}{\textcolor{white}{2}}\tikzmarknode[rectangle,black]{}{\textcolor{white}{2}}\tikzmarknode[rectangle,black]{}{\textup{3}}.
\end{align*}
\end{remark}

\section{Summary and outlook}

In this work, we introduced two new Hopf algebras.
The Hopf algebra on interval partitions in \Cref{sec:Finite_interval_partitions} encodes the \textit{vincular} part of a permutation pattern.\\
This Hopf algebra can be equivalently seen as a Hopf algebra on words of positive integers where the filtered product $\qspart$ possesses a shuffle part and the coproduct is deconcatenation, see \Cref{rem:words_intergers}. 
We define a family of linear functionals parametrized by interval partitions and show that they are characters and satisfy a Chen's type identity.\\ The underlying combinatorics here is quite simple since the number of \quotationmarks{occurrences} of one word into another is a closed-form expression that depends on the letters of the two words, see \Cref{rem:explicit}.\\
 Finally, in \Cref{sec:Vincular_permutation_patterns}, we introduce the Hopf algebra on vincular patterns which is built upon our Hopf algebra on interval partitions and the superinfiltration Hopf algebra introduced in \cite{vargas2014hopf}. We also extend the definition of the functionals from \Cref{sec:Finite_interval_partitions} to store the number of occurrences of vincular patterns. These maps are shown again to behave like \textit{signatures}, satisfying identities which are reminiscent of the shuffle and the Chen's identities for paths.

\subsection{Open question}
We are interested in the following open questions.
\begin{itemize}
\item[--]  Is there a recursive definition for the product $\qsgen$?

\item[--] Both Hopf algebras, $(\FreeIntPart,\qspart,\deconc)$ and $(\FreeVincPer,\qsgen,\deconcgen)$ are free commutative as algebras, see \cite{cartier2021classical}[Theorem 4.4.1]. Are there \quotationmarks{interesting} sets of free commutative generators? An analogy we have in mind: the shuffle Hopf algebra is connected and graded and therefore automatically isomorphic, as an algebra, to a polynomial algebra. Though, one can also explicitly show that the shuffle algebra is a polynomial algebra over the set of Lyndon words.

\item[--] Is the algebra on interval partitions  $(\FreeIntPart,\qspart)$, or equivalently $(\bigoplus_{n\in \N} \Q[\N_{\ge 1}]^{\otimes n},\qswrd)$, isomorphic to the shuffle algebra $(\bigoplus_{n\in \N} \Q[\N_{\ge 1}]^{\otimes n},\shuffle)$?

\item[--] Since the Hopf algebras in this work are connected and filtered, one can compute the antipode using the well-known Takeuchi's formula, see \cite{takeuchi1971free}[lemma 14], but cancellations can occur. Indeed, this happens in our case. If we compute the antipode in $\adjacentSquares{2}\;\adjacentSquares{2}$ on $\FreeIntPart$ we obtain
\begin{align*}
    &\sum_{k \ge 0}(-1)^{k}\qspart^{k-1} \circ (\id - \eta \circ \varepsilon)\circ \deconc^{k-1}(\adjacentSquares{2}\;\adjacentSquares{2})\\=&\quad0-\adjacentSquares{2}\;\adjacentSquares{2}+2\;\adjacentSquares{2}\;\adjacentSquares{2}+2\;\adjacentSquares{3}+2\; \adjacentSquares{2}
    \\=&\quad\adjacentSquares{2}\;\adjacentSquares{2}+2\;\adjacentSquares{3}+2\; \adjacentSquares{2}
\end{align*}
Are there cancellation-free formulas for the antipodes of our Hopf algebras?
In \cite{penaguiao2022antipode}, the authors provide a cancellation-free formula for the antipode of $(\FreePer, \superinfiltration,\Delta_{\square})$.

\item[--] The Hopf algebra $(\FreeVincPer,\qsgen,\deconcgen)$ \quotationmarks{originates} from $(\FreeIntPart,\qspart,\deconc)$ and $(\FreePer,\superinfiltration,\Delta_{\concvargas})$. Is this an instance of a more general construction?
\end{itemize}

\section*{Acknowledgements}
The authors would like to thank Anders Claesson for pointing out, at
the occasion of a talk by one of the authors at the ACPMS \url{https://www.math.ntnu.no/acpms/},
that the patterns used here are known in the literature as \quotationmarks{vincular permutation patterns}.

\appendix

\section{Gluing partitions is associative}
\label{app:sec:gluing_associative}

The aim of this section is to show that the binary operation from \Cref{def:gluing_partitions} is associative.
We found it easier to formulate the operation
more abstractly first.
We will obtain the desired statement as
a special case, \Cref{lemma: associativity gluing}.

\subsection{Assumptions}
\label{subsec:assumptions}
Let $\Omega$ be a non-empty set
and consider maps
\begin{align*}
   & Q:\Omega \times \Omega \to \{0,1\}\\
   & m: 2^{\Omega} \setminus \{\emptyset\} \to \Omega.
\end{align*}
We assume
\begin{enumerate}
\item[P1.] $\forall x \in \Omega: Q(x,x) = 1$,
\item[P2.] $\forall x,y \in \Omega: Q(x,y)=1 \implies Q(y,x)=1$,
\item[P3.] $\forall A \in 2^{\Omega} \setminus \{\emptyset\}: \forall y \in \Omega:  Q(m(A),y)=1 \iff \exists x \in A: Q(x,y)=1$,
\item[P4.] $\forall A,B \in 2^{\Omega} \setminus \{\emptyset\}:\: m(A \cup B) = m(\{m(A)\}\cup B)$.
\end{enumerate}
\begin{remark}
\label{lemma:prop_m}
Let $A_{1},...,A_{n},B \in 2^{\Omega} \setminus \{\emptyset\}$. As a consequence of \textup{4.}, one has 
\begin{align*}
m(\bigcup_{i=1}^{n}A_{i} \cup B) = m(\bigcup_{i=1}^{n}\{m(A_{i})\} \cup B) 
 \end{align*}
\end{remark}

\subsection{Compression}

\newcommand\compr{\mathsf{compr}}

\begin{definition}[Compression of $A$]
Let $\compr:2^{\Omega} \setminus \{\emptyset\} \to 2^{\Omega} \setminus \{\emptyset\}$ where
\begin{align*}
   \compr(A) = \{m([x]_{\sim_{A}})|x \in A\}
\end{align*}
and $\sim_{A}$ is the smallest equivalence relation on $A$ which contains the relation $R \subset A \times A$ defined as:
\begin{align*}
    (x,y) \in R \iff Q(x,y) = 1.
\end{align*}
\end{definition}

Before stating \Cref{proposition:compression1}, we need three lemmas.
\begin{lemma}
\label{lemma:compression1}
Let $A \in 2^{\Omega} \setminus \{\emptyset\}$. Then 
\begin{enumerate}
\item[i.] $\forall x,y \in A: Q(x,y)=1 \implies [x]_{\sim_{A}} = [y]_{\sim_{A}}$
  \item[ii.] $\forall x,y \in A: Q(m([x]_{\sim_{A}}),m([y]_{\sim_{A}}))=1 \iff [x]_{\sim_{A}} = [y]_{\sim_{A}}$
  \item[iii.]  $\forall x,y \in A: Q(x,m([y]_{\sim_{A}}))=1 \iff [x]_{\sim_{A}} = [y]_{\sim_{A}}$
    \item[iv.]  $\forall x,y \in A: m([x]_{\sim_{A}}) = m([y]_{\sim_{A}}) \iff [x]_{\sim_{A}} = [y]_{\sim_{A}}$
    \item[v.]    $\forall x,y \in A: m([x]_{\sim_{A}}) \sim_{\compr(A)} m([y]_{\sim_{A}}) \iff m([x]_{\sim_{A}}) = m([y]_{\sim_{A}})$
\end{enumerate}
\end{lemma}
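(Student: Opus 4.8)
The plan is to prove the five claims in order, since each later part feeds on the earlier ones, using properties P1--P4 (above all the ``reachability through the merge'' property P3) as the only tools. Claim~i is immediate from the definition: $\sim_A$ is by construction the smallest equivalence relation containing all pairs $(x,y)$ with $Q(x,y)=1$, so any such pair already lies in $\sim_A$, whence $[x]_{\sim_A}=[y]_{\sim_A}$.

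Claim~ii is the technical heart, and I would split it into its two implications. The direction $\Leftarrow$ is trivial: if the classes coincide then $m([x]_{\sim_A})$ and $m([y]_{\sim_A})$ are literally the same element, and $Q$ of any element with itself equals $1$ by P1. For $\Rightarrow$, starting from $Q(m([x]_{\sim_A}),m([y]_{\sim_A}))=1$, I would apply P3 with the set $[x]_{\sim_A}$ to extract a witness $z\in[x]_{\sim_A}$ with $Q(z,m([y]_{\sim_A}))=1$, then use symmetry P2 and apply P3 a second time with the set $[y]_{\sim_A}$ to extract $w\in[y]_{\sim_A}$ with $Q(w,z)=1$. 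Claim~i then forces $[w]_{\sim_A}=[z]_{\sim_A}$, and since $z\in[x]_{\sim_A}$ and $w\in[y]_{\sim_A}$, this chains to $[x]_{\sim_A}=[y]_{\sim_A}$.

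The remaining parts are short corollaries. Claim~iii is the one-sided analogue of~ii, proved by a single application of P3 (together with P2 and claim~i) in each direction. Claim~iv follows from~ii: if $m([x]_{\sim_A})=m([y]_{\sim_A})$, then $Q$ of this common value with itself is $1$ by P1, so~ii applies, while the converse merely uses that $m$ is a function. For claim~v the direction $\Leftarrow$ is again reflexivity, and for $\Rightarrow$ I would observe that the elements of $\compr(A)$ are exactly the representatives $m([x]_{\sim_A})$, and that claim~ii says any two $Q$-related representatives come from the same $\sim_A$-class and are hence equal. Therefore the relation generating $\sim_{\compr(A)}$ (namely $Q$ restricted to $\compr(A)$) is contained in the diagonal, so the equivalence relation it generates is plain equality; thus $m([x]_{\sim_A})\sim_{\compr(A)}m([y]_{\sim_A})$ forces $m([x]_{\sim_A})=m([y]_{\sim_A})$.

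I expect the main obstacle to be the forward direction of claim~ii: one must apply P3 in the correct order, insert the symmetry step P2 at the right place, and keep careful track of which class each witness belongs to so that claim~i can collapse everything. Once~ii is secured, the triviality of $\sim_{\compr(A)}$ underlying claim~v, as well as claims~iii and~iv, follow with little extra effort.
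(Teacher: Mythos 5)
Your proof is correct and follows the paper's argument essentially step for step: part i from the definition of $\sim_A$, part ii by two applications of P3 interleaved with P2 and collapsed via part i, parts iii and iv as short corollaries, and part v reduced to part ii. The only cosmetic difference is in part v, where the paper unwinds an explicit chain $m([w_1]_{\sim_A}),\dots,m([w_n]_{\sim_A})$ of $Q$-related representatives, while you observe directly that parts ii and iv force $Q$ restricted to $\compr(A)$ to lie inside the diagonal, so the equivalence relation it generates is plain equality; the two formulations are interchangeable.
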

\begin{proof}~

\textit{i}. follows from the definition of $\sim_{A}$.

\textit{ii}. We have
\begin{align*}
     [x]_{\sim_{A}} = [y]_{\sim_{A}} \implies Q(m([x]_{\sim_{A}}),m([y]_{\sim_{A}})) = Q(m([x]_{\sim_{A}}),m([x]_{\sim_{A}})) =1
\end{align*}
by P1.
On the other hand
\begin{align*}
    Q(m([x]_{\sim_{A}}),m([y]_{\sim_{A}})) = 1
\end{align*}
implies, P3, $\exists w \in [x]_{\sim_{A}}$ such that $ Q(w,m([y]_{\sim_{A}})) = 1$ which implies, P2 and P3, $\exists z \in [y]_{\sim_{A}}$ such that $Q(w,z) = 1$. Therefore $x \sim_{A} w \sim_{A} z \sim_{A} y$, i.e. $[x]_{\sim_{A}} = [y]_{\sim_{A}}$.\\

\textit{iii}.

If  $Q(x,m([y]_{\sim_{A}}))=1$
then, P3, there is $w \in [y]_{\sim_{A}}$ such that $Q(x,w)=1$. Then $x\sim_{A}w \sim_{A}y$, i.e. $[x]_{\sim_{A}}=[y]_{\sim_{A}}$.

If $[x]_{\sim_{A}} = [y]_{\sim_{A}}$, then $Q(x,m([y]_{\sim_{A}}))=Q(x,m([x]_{\sim_{A}}))=1$,
by P3, since $x \in [x]_{\sim_A}$.

\textit{iv}.
If $m([x]_{\sim_{A}}) = m([y]_{\sim_{A}})$, then $Q(m([x]_{\sim_{A}}),m([y]_{\sim_{A}}))=1$ and therefore, by ii., $[x]_{\sim_{A}}=[y]_{\sim_{A}}$.
The other implication is immediate.

\textit{v}. Let $m([x]_{\sim_{A}}) \sim_{\compr(A)} m([y]_{\sim_{A}})$, then $\exists w_{1},...,w_{n} \in A$ such that 
\begin{align*}
    \forall i \in \{1,...,n-1\}: Q(m([w_{i}]_{\sim_{A}}),m([w_{i+1}]_{\sim_{A}}))=1
\end{align*}
and $Q(m([x]_{\sim_{A}}),w_{1})=1$ and $Q(w_{n},m([y]_{\sim_{A}}))=1$. But then we have
\begin{align*}
    &m([x]_{\sim_{A}}) =  m([w_{1}]_{\sim_{A}}) = \cdots = m([w_{n}]_{\sim_{A}}) = m([y]_{\sim_{A}}).
\end{align*}

\end{proof}

\begin{lemma}
\label{lemma:compression2}
Let $A \in 2^{\Omega} \setminus \{\emptyset\}$. Then 
\begin{enumerate}
   \item[i.] $\forall x,y \in A:x \sim_{A} y \iff x \sim_{\compr(A)\cup A} y$
  \item[ii.] $\forall x,y \in A:x \sim_{A} y \iff m([x]_{\sim_{A}}) \sim_{\compr(A)\cup A} y$
\item[iii.] $\forall x,y \in A:x \sim_{A} y \iff x \sim_{\compr(A)\cup A} m([y]_{\sim_{A}})$
   \item[iv.] $\forall x,y \in A:x \sim_{A} y \iff m([x]_{\sim_{A}}) \sim_{\compr(A)\cup A} m([y]_{\sim_{A}})$
    \item[v.] $\forall x,y \in A:x \sim_{A} y \iff m([x]_{\sim_{A}}) = m([y]_{\sim_{A}}) $
 \end{enumerate}
\end{lemma}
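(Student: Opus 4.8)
The plan is to prove all five equivalences at once by showing that passing from the ground set $A$ to the enlarged set $\compr(A)\cup A$ does not alter which elements of $A$ get identified. The single device is a projection onto $\sim_A$-classes. I would define
\[
\pi:\compr(A)\cup A \longrightarrow \{[z]_{\sim_A}\mid z\in A\},\qquad \pi(w):=[w]_{\sim_A}\ \text{if } w\in A,\quad \pi(m([z]_{\sim_A})):=[z]_{\sim_A}.
\]
The first task is to see that $\pi$ is well defined. If $m([z]_{\sim_A})=m([z']_{\sim_A})$ then $[z]_{\sim_A}=[z']_{\sim_A}$ by \Cref{lemma:compression1}(iv), so distinct classes yield distinct compression points; and if some $w\in A$ happens to coincide with a compression point $m([z]_{\sim_A})$, then $Q(w,m([z]_{\sim_A}))=Q(w,w)=1$ by P1, whence $[w]_{\sim_A}=[z]_{\sim_A}$ by \Cref{lemma:compression1}(iii), so the two prescriptions agree.

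Next I would check that $\pi$ collapses every $Q$-edge of the enlarged set, i.e.\ that $Q(u,v)=1$ with $u,v\in\compr(A)\cup A$ forces $\pi(u)=\pi(v)$. There are three cases — both endpoints in $A$; one in $A$ and one a compression point; both compression points — and each is exactly the relevant implication of \Cref{lemma:compression1}(i), (iii), (ii) respectively. Since $\pi$-equality is an equivalence relation containing all these edges and $\sim_{\compr(A)\cup A}$ is the smallest such, we obtain
\[
\forall u,v\in\compr(A)\cup A:\quad u\sim_{\compr(A)\cup A} v \implies \pi(u)=\pi(v).
\]
I would also record the reverse ingredient: each $x\in A$ is linked to its own compression point, because $Q(x,m([x]_{\sim_A}))=1$ by \Cref{lemma:compression1}(iii) and both lie in $\compr(A)\cup A$, so $x\sim_{\compr(A)\cup A} m([x]_{\sim_A})$; and any $\sim_A$-edge survives trivially in $\sim_{\compr(A)\cup A}$ by monotonicity, giving $x\sim_A y\Rightarrow x\sim_{\compr(A)\cup A} y$.

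With these two facts the five statements fall out mechanically. For the ``$\Leftarrow$'' direction of (i)--(iv) I apply $\pi$ to the assumed $\sim_{\compr(A)\cup A}$-relation and read off $[x]_{\sim_A}=[y]_{\sim_A}$, that is $x\sim_A y$, using $\pi(m([x]_{\sim_A}))=[x]_{\sim_A}$ where a compression point occurs. For the ``$\Rightarrow$'' direction I chain the links $m([x]_{\sim_A})\sim_{\compr(A)\cup A}x\sim_{\compr(A)\cup A}y\sim_{\compr(A)\cup A}m([y]_{\sim_A})$, retaining whichever endpoints each case requires. Finally (v) is immediate from \Cref{lemma:compression1}(iv) together with the tautology $x\sim_A y\iff[x]_{\sim_A}=[y]_{\sim_A}$. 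I expect the only genuinely delicate point to be the well-definedness and edge-collapsing of $\pi$; once that bookkeeping is in place every statement is a two-line deduction, so the real content is organising the case analysis so that each case invokes the correct part of \Cref{lemma:compression1}.
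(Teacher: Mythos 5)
Your proof is correct, and it is considerably more explicit than what the paper offers: the paper's entire proof of this lemma is the sentence ``These are consequences of properties P1, P2 and P3 and of the sets being equivalence classes and of the previous lemma,'' so you have supplied the argument the authors leave to the reader. Your organizing device --- the projection $\pi$ from $\mathsf{compr}(A)\cup A$ onto the $\sim_A$-classes, checked to be well defined (via \Cref{lemma:compression1} (iii)--(iv) and P1 on the overlap $A\cap\mathsf{compr}(A)$) and to collapse every $Q$-edge of the enlarged set (the three cases matching \Cref{lemma:compression1} (i)--(iii)), followed by the minimality of the generated equivalence relation $\sim_{\mathsf{compr}(A)\cup A}$ --- is exactly the clean way to obtain all the ``$\Leftarrow$'' directions at once, and it avoids the explicit chain manipulations ($\exists\, w_1,\dots,w_n$ with consecutive $Q$-links) that the paper uses in the comparable spot, the proof of \Cref{lemma:compression1} (v). The ingredients you invoke are precisely the ones the paper names, so your route is compatible with the authors' intent while actually closing the gap; the only thing the universal-property argument buys over chain induction is that well-definedness of $\pi$ becomes the single point of care, which you identified and handled correctly.
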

\begin{proof}
These are consequences of properties P1, P2 and P3
and of the sets being equivalence classes and of the previous lemma.
\end{proof}

\begin{lemma}
\label{lemma:compression3}
For $A \in 2^{\Omega} \setminus \{\emptyset\}$, $x,y \in A$,
\begin{align*}
    m([x]_{\sim_{A}})\in [y]_{\sim_{A}}
    \Rightarrow
    [x]_{\sim_{A}} = [y]_{\sim_{A}}
\end{align*}
\end{lemma}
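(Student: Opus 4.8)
The plan is to reduce everything to part \textit{iii} of \Cref{lemma:compression1}, which characterizes when $Q(\cdot,m([\cdot]_{\sim_A}))=1$ in terms of equality of equivalence classes. The key observation is that the hypothesis $m([x]_{\sim_A}) \in [y]_{\sim_A}$ places the element $m([x]_{\sim_A})$ inside an equivalence class of $\sim_A$, and since every such class is a subset of $A$, this automatically tells us that $m([x]_{\sim_A})$ is itself an element of $A$. This is the only mildly delicate point, since the statements \textit{i}--\textit{v} of \Cref{lemma:compression1} are all phrased for elements of $A$, and we must first secure membership in $A$ before we are allowed to invoke them.

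Concretely, I would set $z := m([x]_{\sim_A})$. By hypothesis $z \in [y]_{\sim_A} \subseteq A$, so $z \in A$ and, being a member of the class $[y]_{\sim_A}$, it satisfies $[z]_{\sim_A} = [y]_{\sim_A}$. Now I apply \Cref{lemma:compression1}.\textit{iii} with its first argument taken to be $z$ and its second argument taken to be $x$ (both legitimate, as $z,x \in A$): this yields
\begin{align*}
    Q\big(z,\, m([x]_{\sim_A})\big) = 1 \iff [z]_{\sim_A} = [x]_{\sim_A}.
\end{align*}

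The left-hand side holds for free: since $z = m([x]_{\sim_A})$, the quantity $Q(z,m([x]_{\sim_A}))$ is just $Q(z,z)$, which equals $1$ by assumption P1. Hence the right-hand side holds, giving $[z]_{\sim_A} = [x]_{\sim_A}$. Chaining this with $[z]_{\sim_A} = [y]_{\sim_A}$ from the previous paragraph produces $[x]_{\sim_A} = [z]_{\sim_A} = [y]_{\sim_A}$, which is exactly the desired conclusion. I do not anticipate any real obstacle here; the entire argument is a two-line application of P1 and \Cref{lemma:compression1}.\textit{iii}, and the only thing worth spelling out carefully is the step $z \in [y]_{\sim_A} \subseteq A$ that legitimizes the use of the lemma.
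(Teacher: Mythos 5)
Your proof is correct and takes essentially the same route as the paper: both arguments reduce the claim to \Cref{lemma:compression1} together with P1, exploiting the hypothesis that $m([x]_{\sim_{A}})$ lies in $[y]_{\sim_{A}}$. The only difference is cosmetic — the paper derives $Q(m([x]_{\sim_{A}}),m([y]_{\sim_{A}}))=1$ and invokes clause \textit{ii}, while you treat $z:=m([x]_{\sim_{A}})$ as an element of $A$ and invoke clause \textit{iii}; if anything, your version makes explicit the membership step $z\in[y]_{\sim_{A}}\subset A$ that the paper's one-line proof leaves implicit.
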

\begin{proof}
    Since $Q(m([x]_{\sim_{A}}),m([y]_{\sim_{A}}))=1$, then 
    by \Cref{lemma:compression1} ii., $[x]_{\sim_{A}} = [y]_{\sim_{A}}$.
\end{proof}

The map $\compr$ satisfies a form of idempotence.
\begin{proposition}
\label{proposition:compression1}
For $A \in 2^{\Omega} \setminus \{\emptyset\}$,
\begin{align*}
   \compr(A) =  \compr( \compr(A) \cup A).
\end{align*}
\end{proposition}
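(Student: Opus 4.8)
The plan is to set $B := \compr(A) \cup A$ and prove the two inclusions $\compr(B) \subseteq \compr(A)$ and $\compr(A) \subseteq \compr(B)$ separately, by pinning down exactly how the $\sim_B$-classes relate to the $\sim_A$-classes. Since the entry of $\compr$ attached to a point is always $m$ of its $\sim$-class, everything reduces to two questions: which $\sim_B$-classes occur, and what value $m$ assigns to each. Before touching the classes I would record the one consequence of \textup{P4} that drives the whole argument: taking $A = B = C$ in \textup{P4} gives
\begin{align*}
m(C) = m(C \cup C) = m(\{m(C)\} \cup C) = m(C \cup \{m(C)\}),
\end{align*}
so adjoining the representative $m(C)$ to the set $C$ does not change its $m$-value.

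Next I would compute the $\sim_B$-classes. Fix $y \in A$. Splitting $B$ into its two pieces and invoking \Cref{lemma:compression2} (part i.\ for the points of $A$, part ii.\ for the adjoined representatives $m([x]_{\sim_A})$), I expect
\begin{align*}
[y]_{\sim_B} = [y]_{\sim_A} \cup \{m([y]_{\sim_A})\},
\end{align*}
the point being that the only adjoined representative $\sim_B$-equivalent to $y$ is $m([y]_{\sim_A})$ itself, since all $x \sim_A y$ share a single representative by \Cref{lemma:compression1} iv. Applying the \textup{P4}-identity above with $C = [y]_{\sim_A}$ then yields $m([y]_{\sim_B}) = m([y]_{\sim_A})$. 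For a point of the form $y = m([x]_{\sim_A}) \in \compr(A)$, part ii.\ of \Cref{lemma:compression2} with $x'=y'=x$ shows $y \sim_B x$, hence $[y]_{\sim_B} = [x]_{\sim_B}$, and the previous case gives $m([y]_{\sim_B}) = m([x]_{\sim_A}) = y$.

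With these two computations both inclusions fall out at once. Every $y \in B$ satisfies $m([y]_{\sim_B}) \in \compr(A)$ (the first case for $y \in A$, the second for $y \in \compr(A)$), which gives $\compr(B) \subseteq \compr(A)$; conversely, for each $x \in A$ the first case gives $m([x]_{\sim_A}) = m([x]_{\sim_B}) \in \compr(B)$, which gives $\compr(A) \subseteq \compr(B)$. I expect the main obstacle to be the bookkeeping in identifying $[y]_{\sim_B}$ exactly: one must check that the adjoined representatives neither create spurious new classes nor merge distinct ones, and that the single representative of each class is not accidentally pulled into a neighbouring class. This is precisely what the equivalences collected in \Cref{lemma:compression1,lemma:compression2} are designed to rule out, so once they are invoked carefully the \textup{P4}-collapse handles the rest.
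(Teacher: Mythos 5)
Your proof is correct and takes essentially the same route as the paper's: your identification of the classes $[y]_{\sim_{\compr(A)\cup A}} = [y]_{\sim_{A}} \cup \{m([y]_{\sim_{A}})\}$ via \Cref{lemma:compression1,lemma:compression2} is precisely the content of the paper's surjectivity argument for its map $g$, and your \textup{P4}-collapse $m(C \cup \{m(C)\}) = m(C)$ is the paper's concluding step. The only difference is presentational, namely that the paper packages the class computation as surjectivity of a map between the two sets of equivalence classes, whereas you phrase it as two inclusions of the compressed sets.
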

\begin{proof}
We define the map
\begin{align*}
    g: \{[x]_{\sim_{A}}|x \in A\} &\to \{[z]_{\sim_{\compr(A)\cup A}}|z \in \compr(A)\cup A\}\\ [x]_{\sim_{A}}& \mapsto [x]_{\sim_{A}} \cup \{m([x]_{\sim_{A}})\}
\end{align*}
and show that it is surjective (one can show that it is also injective, with \Cref{lemma:compression3} but we do not need this here). We need to show that
\begin{align*}
  \{[x]_{\sim_{A}} \cup \{m([x]_{\sim_{A}}\})|x \in A\}  =  \{[z]_{\sim_{\compr(A)\cup A}}|z \in \compr(A)\cup A\}
\end{align*}
Let $x \in A$, then
\begin{align*}
    [x]_{\sim_{A}} \cup \{m([x]_{\sim_{A}})\} \subset  [x]_{\sim_{\compr(A) \cup A}}.
\end{align*}
Indeed, this follows from \Cref{lemma:compression2}
\begin{align*}
y \sim_{A} x \implies y \sim_{\compr(A) \cup A} x
\end{align*}
and, since $Q(m([x]_{\sim_{A}}),x)=1$, from \Cref{lemma:compression1}
\begin{align*}
 m([x]_{\sim_{A}}) \sim_{\compr(A) \cup A} x.
\end{align*}
For the other direction, let $w \in  [x]_{\sim_{\compr(A) \cup A}}$ and $ w \in A$. Then, from \Cref{lemma:compression2}, we know that $w \sim_{\compr(A) \cup A} x$ implies $w \sim_{A} x$, and $w \in [x]_{\sim_{A}}$. If $w \in  [x]_{\sim_{\compr(A) \cup A}}$ and $ w \in \compr(A)$, we can write $w = m([u]_{\sim_{A}})$ for some $u \in A$. Then, from the previous \Cref{lemma:compression2}, we know that $m([u]_{\sim_{A}}) \sim_{\compr(A) \cup A} x$, implies $[u]_{\sim_{A}} = [x]_{\sim_{A}}$, and $w = m([x]_{\sim_{A}})$. Therefore
\begin{align*}
 [x]_{\sim_{A}} \cup \{m([x]_{\sim_{A}})\} = [x]_{\sim_{\compr(A) \cup A}} \in \{[z]_{\sim_{\compr(A)\cup A}}|z \in \compr(A)\cup A\}.
\end{align*}
Now let $z \in A$, we have
\begin{align*}
    [z]_{\sim_{\compr(A) \cup A}} = [z]_{\sim_{A}} \cup \{m([z]_{\sim_{A}})\} 
\end{align*}
and in case $z \in \compr(A)$, which means $z = m([u]_{\sim_{A}})$ for some $u \in A$,
\begin{align*}
   [z]_{\sim_{\compr(A) \cup A}} = [u]_{\sim_{A}} \cup \{m([u]_{\sim_{A}})\} 
\end{align*}
using analogous arguments used to show the previous inclusion. This shows
\begin{align*}
   g( \{[x]_{\sim_{A}}|x \in A\} ) =  \{[x]_{\sim_{A}} \cup \{m([x]_{\sim_{A}})\}|x \in A\} = \{[z]_{\sim_{\compr(A)\cup A}}|z \in \compr(A)\cup A\}
\end{align*}
If we use property 4 of $m$, we get the desired result
\begin{align*}
  \{m([x]_{\sim_{A}}) |x \in  A\} &= \{m([x]_{\sim_{A}} \cup [x]_{\sim_{A}})|x \in  A\}\\& = \{m\left([x]_{\sim_{A}} \cup \{m([x]_{\sim_{A}})\}\right)|x \in A\} \\&=  \{m([z]_{\sim_{\compr(A)\cup A}})|z \in \compr(A)\cup A\}.
\end{align*}
\end{proof}

We now state a lemma used in the proof of the upcoming \Cref{proposition:compression2}.
\begin{lemma}
\label{lemma:compression4}
Let $A,B \in 2^{\Omega} \setminus \{\emptyset\}$. Then
\begin{align*}
&\forall x,y \in A: x \sim_{A \cup B} y \iff m([x]_{\sim_{A}}) \sim_{\compr(A) \cup B} m([y]_{\sim_{A}})\\
&\forall x \in A: \forall y \in B: x \sim_{A \cup B} y \iff m([x]_{\sim_{A}}) \sim_{\compr(A) \cup B} y\\
&\forall x,y \in B: x \sim_{A \cup B} y \iff x\sim_{\compr(A) \cup B} y
\end{align*}
\end{lemma}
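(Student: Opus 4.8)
The plan is to unwind both generated equivalence relations into $Q$-chains and to transport such chains between $A \cup B$ and $\compr(A)\cup B$ through the ``representative'' map $r$ that sends each $x \in A$ to $m([x]_{\sim_A}) \in \compr(A)$ and fixes every element of $B$. On the overlap $A \cap B$ either choice is admissible: by part~(iii) of \Cref{lemma:compression1} one has $Q(x, m([x]_{\sim_A})) = 1$, so $x$ and its representative always lie in the same $\sim_{\compr(A) \cup B}$-class, and the ambiguity is immaterial. Recall that $x \sim_{A \cup B} y$ means there is a chain $x = w_0, w_1, \dots, w_n = y$ in $A \cup B$ with $Q(w_i, w_{i+1}) = 1$ for all $i$, and similarly for $\sim_{\compr(A) \cup B}$; I would prove all three displayed equivalences at once, reading them off at the end according to whether each endpoint is chosen in $A$ or in $B$.

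For the forward implications I would apply $r$ termwise to a chain in $A \cup B$ and check that each link survives. If $w_i, w_{i+1} \in A$ with $Q(w_i, w_{i+1}) = 1$, then $[w_i]_{\sim_A} = [w_{i+1}]_{\sim_A}$ by part~(i) of \Cref{lemma:compression1}, so $r(w_i) = r(w_{i+1})$ and the images coincide. If one endpoint lies in $A$ and the other in $B$, say $w_i \in A$ and $w_{i+1} \in B$, then from $w_i \in [w_i]_{\sim_A}$ and $Q(w_i, w_{i+1}) = 1$ property P3 gives $Q(m([w_i]_{\sim_A}), w_{i+1}) = 1$, that is $Q(r(w_i), r(w_{i+1})) = 1$. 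A $B$--$B$ link is preserved verbatim. Hence $r$ maps the chain to a chain in $\compr(A) \cup B$.

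For the backward implications I would lift a chain $v_0, \dots, v_n$ in $\compr(A) \cup B$: replace each $v_i \in \compr(A)$, written $v_i = m([u_i]_{\sim_A})$ with $u_i \in A$, by the element $u_i$, and keep each $v_i \in B$. A link between two compression atoms, $Q(m([u_i]_{\sim_A}), m([u_{i+1}]_{\sim_A})) = 1$, forces $[u_i]_{\sim_A} = [u_{i+1}]_{\sim_A}$ by part~(ii) of \Cref{lemma:compression1}, so $u_i \sim_A u_{i+1}$, a chain inside $A \subseteq A \cup B$. A mixed link $Q(m([u_i]_{\sim_A}), b) = 1$ with $b \in B$ yields, by P3, some $w \in [u_i]_{\sim_A}$ with $Q(w,b) = 1$, whence $u_i \sim_A w$ and $Q(w,b)=1$ give $u_i \sim_{A \cup B} b$; a $B$--$B$ link is unchanged. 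Concatenating, the lifted representatives are $\sim_{A \cup B}$-equivalent, and since the lift of the endpoint $m([x]_{\sim_A})$ is some $u_0$ with $[u_0]_{\sim_A} = [x]_{\sim_A}$ (part~(iv) of \Cref{lemma:compression1}), hence $u_0 \sim_{A\cup B} x$, one recovers $x \sim_{A \cup B} y$ in each of the three cases.

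The routine core is this link-by-link verification, which is exactly the content of P1--P3 together with \Cref{lemma:compression1}; indeed these are the same ingredients already invoked for \Cref{lemma:compression2}, which is the special case $B = A$. The only genuine subtlety, and the step I would be most careful about, is the overlap $A \cap B$ together with the well-definedness of the representative map: one must ensure that replacing an element of $A \cap B$ by $m([\cdot]_{\sim_A})$ never changes the relevant $\sim_{\compr(A)\cup B}$-class, which is guaranteed by the single $Q$-link of part~(iii) of \Cref{lemma:compression1}. Everything else reduces to transitivity of the generated equivalence relations.
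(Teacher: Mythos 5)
Your proposal is correct and is essentially the argument the paper intends: the paper's own proof of \Cref{lemma:compression4} is just a one-line pointer (``similar to \Cref{lemma:compression1} and \Cref{lemma:compression2}''), and the similar proof in question is precisely your link-by-link transport of $Q$-chains between $A \cup B$ and $\compr(A)\cup B$ using P1--P3 and \Cref{lemma:compression1}. Your write-up simply fills in the details the paper leaves implicit, including the overlap subtleties on $A \cap B$ and $\compr(A)\cap B$ and the endpoint lifting via part (iv) of \Cref{lemma:compression1}.
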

\begin{proof}
These statements are similar to the ones of \Cref{lemma:compression1} and \Cref{lemma:compression2}, and the proof is also similar. 
\end{proof}
We now show the main result.

\begin{proposition}
\label{proposition:compression2}
Let $A,B \in 2^{\Omega} \setminus \{\emptyset\}$. Then 
\begin{align*}
   \compr(A \cup B) =  \compr( \compr(A) \cup B)
\end{align*}
\end{proposition}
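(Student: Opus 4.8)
The plan is to show that the two sets
\[
\compr(A\cup B)=\{m([z]_{\sim_{A\cup B}})\mid z\in A\cup B\}
\quad\text{and}\quad
\compr(\compr(A)\cup B)=\{m([w]_{\sim_{\compr(A)\cup B}})\mid w\in\compr(A)\cup B\}
\]
coincide, following the same strategy as in the proof of \Cref{proposition:compression1}. First I would set up a map $g$ sending each $\sim_{A\cup B}$-class on $A\cup B$ to a $\sim_{\compr(A)\cup B}$-class on $\compr(A)\cup B$, namely
\[
g\big([z]_{\sim_{A\cup B}}\big):=\{m([x]_{\sim_{A}})\mid x\in [z]_{\sim_{A\cup B}}\cap A\}\cup\big([z]_{\sim_{A\cup B}}\cap B\big),
\]
and show that $g$ is a well-defined bijection between the equivalence classes on the two sides. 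The entire translation between $\sim_{A\cup B}$ and $\sim_{\compr(A)\cup B}$ is provided by \Cref{lemma:compression4}: its three clauses handle precisely the $A$--$A$, $A$--$B$, and $B$--$B$ pairs, so they guarantee both that $g$ lands in a single class and that distinct classes have distinct images. Surjectivity is immediate, since every element of $\compr(A)$ is $m([x]_{\sim_A})$ for some $x\in A\subseteq A\cup B$ and every element of $B$ already lies in $A\cup B$, so each lies in some $[z]_{\sim_{A\cup B}}$ whose image under $g$ is its class; this is the analogue of the surjectivity argument for the map $g$ in \Cref{proposition:compression1}.

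Having fixed the class correspondence, the second step is to verify $m(C)=m(g(C))$ for every $\sim_{A\cup B}$-class $C$. Writing $C\cap A$ as a disjoint union of $\sim_A$-classes $[x_1]_{\sim_A},\dots,[x_p]_{\sim_A}$ and setting $D:=C\cap B$, one has $g(C)=\{m([x_1]_{\sim_A}),\dots,m([x_p]_{\sim_A})\}\cup D$, so the claim is
\[
m\Big(\textstyle\bigcup_{i=1}^{p}[x_i]_{\sim_A}\cup D\Big)=m\Big(\textstyle\bigcup_{i=1}^{p}\{m([x_i]_{\sim_A})\}\cup D\Big).
\]
When $D\neq\emptyset$ this is exactly the iterated form of P4 recorded in \Cref{lemma:prop_m}; the case $C\subseteq B$ (i.e.\ $p=0$) is trivial since then $g(C)=C$. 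Assembling the two steps then gives the desired equality of the two sets, because a bijection of classes that preserves the $m$-value produces the same image set on both sides.

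The subtle point — which I expect to be the main obstacle — is the boundary case $p=1,\ D=\emptyset$, i.e.\ a class $C=[x]_{\sim_A}$ lying entirely inside $A$ and consisting of a single $\sim_A$-class. Here \Cref{lemma:prop_m} does not apply verbatim, since it requires a nonempty ``$B$'' slot, and the required identity collapses to the idempotence of $m$ on singleton images, $m(\{m([x]_{\sim_A})\})=m([x]_{\sim_A})$. The plan is to derive this from P4 alone: applying P4 with both arguments equal to a set $S$ yields $m(S)=m(\{m(S)\}\cup S)$, while applying P4 with arguments $S$ and $\{m(S)\}$ yields $m(\{m(S)\}\cup S)=m(\{m(S)\})$, whence $m(\{m(S)\})=m(S)$. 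With this small lemma in hand the boundary case is absorbed (and, more generally, one may peel off the last $\sim_A$-class in the display above even when $D=\emptyset$), so the computation $m(C)=m(g(C))$ holds uniformly and the proof closes.
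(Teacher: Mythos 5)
Your proof is correct and takes essentially the same route as the paper's: both hinge on \Cref{lemma:compression4} to translate between $\sim_{A\cup B}$ and $\sim_{\compr(A)\cup B}$ and on P4 (via \Cref{lemma:prop_m}) to match $m$-values, your bijection $g$ being an explicit repackaging of the paper's element-wise identities $m([x]_{\sim_{A\cup B}})=m([m([x]_{\sim_{A}})]_{\sim_{\compr(A)\cup B}})$ for $x\in A$ and $m([x]_{\sim_{A\cup B}})=m([x]_{\sim_{\compr(A)\cup B}})$ for $x\in B$. If anything, you are more careful than the paper on the boundary case of a class consisting of a single $\sim_{A}$-class and meeting no element of $B$, where \Cref{lemma:prop_m} does not literally apply (its second slot must be nonempty); your derivation of $m(\{m(S)\})=m(S)$ from P4 closes that small gap, which the paper's proof glosses over.
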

\begin{proof}
Let $x \in A$. We can write
\begin{align*}
&[m([x]_{\sim_{A}})]_{\sim_{\compr(A) \cup B}}\\&= \{u \in \compr(A)| u \sim_{\compr(A) \cup B} m([x]_{\sim_{A}})\} \cup \{b \in B| b \sim_{\compr(A) \cup B} m([x]_{\sim_{A}})\}. 
\end{align*}
We now show that
\begin{align*}
\{u \in \compr(A)| u \sim_{\compr(A) \cup B}\} = \{m([a]_{\sim_{A}})|a \in A, a \sim_{A \cup B} x\}.
\end{align*}
Thanks to \Cref{lemma:compression4}, $a \in A$ such that $a \sim_{A \cup B} x$, implies $m([a]_{\sim_{A}}) \sim_{\compr(A) \cup B} m([x]_{\sim_{A}})$. Therefore we have
\begin{align*}
\{m([a]_{\sim_{A}})|a \in A, a \sim_{A \cup B} x\} &\subset \{u \in \compr(A)| u \sim_{\compr(A) \cup B} m([x]_{\sim_{A}})\}.
\end{align*}
Now let $u \in \{u \in \compr(A)| u \sim_{\compr(A) \cup B} m([x]_{\sim_{A}})\}$, which means $u = m([z]_{\sim_{A}})$ for some $z \in A$. Then we have
\begin{align*}
    m([z]_{\sim_{A}}) \sim_{\compr(A)\cup B} m([x]_{\sim_{A}}) \implies z \sim_{A \cup B} x,
\end{align*}
which means
\begin{align*}
   m([z]_{\sim_{A}})\in \{m([a]_{\sim_{A}})|a \in A, a \sim_{A \cup B} x\}.
\end{align*}
Therefore
\begin{align*}
\{m([a]_{\sim_{A}})|a \in A, a \sim_{A \cup B} x\} &\supset \{u \in \compr(A)| u \sim_{\compr(A) \cup B} m([x]_{\sim_{A}})\}.
\end{align*}
Similarly, we can also show that
\begin{align*}
    \{b \in B| b \sim_{\compr(A) \cup B}m([x]_{\sim_{A}}) \} &=\{b \in B| b \sim_{A \cup B} x\}.
\end{align*}
We can now finally write
\begin{align}
\label{eq:comprcompr}
m([m([x]_{\sim_{A}})]_{\sim_{\compr(A) \cup B}}) &= m\left(\{m([a]_{\sim_{A}})|a \in A, a \sim_{A \cup B} x\}\cup \{b \in B| b \sim_{A \cup B}\} \right)
\end{align}
We also obviously have
\begin{align}
\label{eq:compr}
m([x]_{\sim_{A \cup B}}) &= m\left(\{a \in A| a \sim_{A \cup B} x\} \cup \{b \in B| b \sim_{A \cup B} x\} \right)  
\end{align}
We now show that \Cref{eq:comprcompr} and \Cref{eq:compr} are equal. Notice that it is quite straightforward to verify that
\begin{align*}
    \{a \in A| a \sim_{A \cup B} x\} = \bigcup_{\substack{a \in A\\a \sim_{A \cup B} x}} [a]_{\sim_{A}}
\end{align*}
and using property 4 of $m$ yields:
\begin{align*}
    m\left(\bigcup_{\substack{a \in A\\a \sim_{A \cup B} x}} [a]_{\sim_{A}}\right) = m\left(\bigcup_{\substack{a \in A\\a \sim_{A \cup B} x}} \{m([a]_{\sim_{A}})\}\right) = m\left(\{m([a]_{\sim_{A}})|a \in A, a \sim_{A \cup B} x\}\right)
\end{align*}
and therefore 
\begin{align}
\label{eq:first_eq}
m([x]_{\sim_{A \cup B}}) = m([m([x]_{\sim_{A}})]_{\sim_{\compr(A) \cup B}}).
\end{align}

Now let $x \in B$. We can write
\begin{align*}
    [x]_{\sim_{A \cup B}} = \{a \in A| a \sim_{A \cup B} x\} \cup \{b \in B| b \sim_{A \cup B} x\} 
\end{align*}
and
\begin{align*}
    [x]_{\sim_{\compr(A) \cup B}} = \{y \in \compr(A)| y \sim_{\compr(A) \cup B} x\} \cup \{b \in B| b \sim_{\compr(A) \cup B} x\} 
\end{align*}
With steps similar to the ones used to show \Cref{eq:first_eq}, one obtains
\begin{align*}
 m([x]_{\sim_{A \cup B}}) = m([x]_{\sim_{\compr(A) \cup B}}).
\end{align*}
Since
\begin{align*}
\forall x \in A: m([x]_{\sim_{A \cup B}}) &= m([m([x]_{\sim_{A}})]_{\sim_{\compr(A) \cup B}}),\\
 \forall x \in B: m([x]_{\sim_{A \cup B}}) &= m([x]_{\sim_{\compr(A) \cup B}}),
\end{align*}
we have shown that any element in $\compr(A\cup B)$ can be written as an element of $\compr(\compr(A) \cup B)$ and vice-versa, which finishes the proof.
\end{proof}

\subsection{Associativity}

Define $M: 2^{\Omega} \setminus \{\emptyset\} \times  2^{\Omega} \setminus \{\emptyset\} \to  2^{\Omega} \setminus \{\emptyset\}$ as 
\begin{align*}
    M(A,B) := \compr(A\cup B).
\end{align*}

$(2^{\Omega} \setminus \{\emptyset\},M)$ is a semigroup as illustrated in the following proposition.

\begin{proposition}[Associativity of M]
\label{prop:associativityM}

\begin{align*}
\forall A,B,C \in 2^{\Omega} \setminus \{\emptyset\}:   M(A,M(B,C)) = M(M(A,B),C)
\end{align*}
\end{proposition}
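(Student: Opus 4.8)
The plan is to reduce both sides to the single expression $\compr(A\cup B\cup C)$, using the absorption identity of \Cref{proposition:compression2} exactly once on each side. Recall that identity reads $\compr(A\cup B)=\compr(\compr(A)\cup B)$, and the useful direction here is the right-to-left one: it lets me delete a $\compr$ that has been applied to one piece of a union.

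First I would handle the right-hand side. Unfolding $M$ twice gives
\begin{align*}
M(M(A,B),C)=\compr\big(\compr(A\cup B)\cup C\big).
\end{align*}
Applying \Cref{proposition:compression2} with its two arguments taken to be $A\cup B$ and $C$ (all three of $A,B,C$ are nonempty, so every union is nonempty and $\compr$, hence $M$, is defined throughout) collapses the inner compression:
\begin{align*}
\compr\big(\compr(A\cup B)\cup C\big)=\compr\big((A\cup B)\cup C\big)=\compr(A\cup B\cup C).
\end{align*}

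Next I would treat the left-hand side symmetrically. Unfolding gives $M(A,M(B,C))=\compr\big(A\cup\compr(B\cup C)\big)$, and after commuting the union inside $\compr$ this is $\compr\big(\compr(B\cup C)\cup A\big)$. Applying \Cref{proposition:compression2} this time with arguments $B\cup C$ and $A$ yields
\begin{align*}
\compr\big(\compr(B\cup C)\cup A\big)=\compr\big((B\cup C)\cup A\big)=\compr(A\cup B\cup C).
\end{align*}
Since both sides equal $\compr(A\cup B\cup C)$, associativity follows.

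I do not expect any genuine obstacle at this stage: the entire content has been pushed into \Cref{proposition:compression2} (which in turn rests on the idempotence \Cref{proposition:compression1} and the structural lemmas on $\compr$). The only points requiring care are applying \Cref{proposition:compression2} in the correct direction to \emph{remove} rather than introduce an inner $\compr$, and noting that nonemptiness of all the sets involved is automatic so that $M$ is well defined at each step. Once the desired associativity \Cref{prop:associativityM} is established, the concrete gluing operation $\gluepartitions$ is recovered as the special case announced in \Cref{lemma: associativity gluing}, which is precisely the statement of \Cref{lemma:gluing_associative}.
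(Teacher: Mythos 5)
Your proof is correct and matches the paper's argument essentially verbatim: both reduce the two sides to $\compr(A\cup B\cup C)$ by a single application of \Cref{proposition:compression2} on each side (the paper states this as one chain of equalities, implicitly using commutativity of union where you make it explicit). No gaps; your added remarks on nonemptiness and on the direction in which \Cref{proposition:compression2} is applied are just careful bookkeeping of what the paper leaves tacit.
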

\begin{proof}
As a consequence of \Cref{proposition:compression2} one has
\begin{align*}
    \compr(A \cup \compr(B\cup C)) =  \compr(A \cup B\cup C)  =  \compr(\compr(A\cup B)\cup C)
\end{align*}
\end{proof}

We are now ready to show that the operation $\gluepartitions$ (see \Cref{def:gluing_partitions}) is associative. 

\begin{lemma}
\label{lemma: associativity gluing}
Let $\unstdI,\unstdI^{\prime},\unstdI^{\prime\prime} \in \SetIntervalPartitions$. Then:
\begin{align*}
(\unstdI \gluepartitions \unstdI^{\prime} )\gluepartitions \unstdI^{\prime\prime} =  \unstdI \gluepartitions (\unstdI^{\prime}\gluepartitions \unstdI^{\prime\prime})
\end{align*}
\end{lemma}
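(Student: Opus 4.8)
The plan is to recognize $\gluepartitions$ as a concrete instance of the abstract operation $M(A,B) = \compr(A \cup B)$ studied in \Cref{subsec:assumptions}, so that the desired associativity becomes a corollary of \Cref{prop:associativityM}. Concretely, I would instantiate the triple $(\Omega, Q, m)$ by taking $\Omega$ to be the set of all finite nonempty subsets of $\N_{\ge 1}$, setting $Q(E,F) := 1$ iff $E \cap F \neq \emptyset$, and defining $m(A) := \bigcup_{X \in A} X$ for nonempty $A \in 2^{\Omega} \setminus \{\emptyset\}$. Taking all finite subsets (not merely intervals) is what makes $m$ a total map into $\Omega$, since a finite union of finite nonempty sets is again one.

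The first task would then be to check axioms P1--P4 for this data, which is routine: P1 and P2 are reflexivity and symmetry of ``meets''; P3 is the tautology $(\bigcup_{x\in A}x)\cap y\neq\emptyset \iff \exists x\in A:\ x\cap y\neq\emptyset$; and P4 is $\bigcup_{x\in A\cup B}x = (\bigcup_{x\in A}x)\cup\bigcup_{x\in B}x$, i.e.\ $m(A\cup B)=m(\{m(A)\}\cup B)$.

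The central identification is that, for interval partitions $\unstdI,\unstdJ\in\SetIntervalPartitions$ regarded as their (finite) sets of blocks, $M(\unstdI,\unstdJ) = \compr(\unstdI\cup\unstdJ) = \unstdI\gluepartitions\unstdJ$. This holds because the relation generating $\sim_{\unstdI\cup\unstdJ}$ in the definition of $\compr$ is exactly $R_{\unstdI,\unstdJ}$ of \Cref{def:gluing_partitions}, so the equivalence classes agree, and $m$ of a class is precisely the union $\bigcup_{X\in[T]}X$ appearing in $\gluepartitions$. Since $\unstdI\gluepartitions\unstdJ$ is again an interval partition by \Cref{lemma:gluing_welldefined}, this same identity can be reapplied to iterated gluings, yielding $(\unstdI\gluepartitions\unstdI')\gluepartitions\unstdI'' = M(M(\unstdI,\unstdI'),\unstdI'')$ and $\unstdI\gluepartitions(\unstdI'\gluepartitions\unstdI'') = M(\unstdI,M(\unstdI',\unstdI''))$; \Cref{prop:associativityM} then equates the two.

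The genuine mathematical content is carried entirely by the idempotence identity $\compr(A\cup B)=\compr(\compr(A)\cup B)$ of \Cref{proposition:compression2}, on which \Cref{prop:associativityM} rests, so no new hard step appears here. The only points of care on the translation side are bookkeeping ones: reading $\unstdI\cup\unstdJ$ as a set union of blocks (a block shared by both partitions is not double counted), which matches $M$ automatically; and the degenerate case in which one of the three partitions is the empty partition $\emptyset$, which lies outside $2^{\Omega}\setminus\{\emptyset\}$. The latter I would dispatch directly, noting that $\emptyset$ is a two-sided identity for $\gluepartitions$, so associativity is trivial whenever an empty factor occurs, and otherwise all inputs are nonempty and the abstract machinery applies verbatim.
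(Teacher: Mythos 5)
Your proposal is correct and is essentially the paper's own proof: the same instantiation $Q(x,y)=1 \iff x \cap y \neq \emptyset$ and $m(A)=\bigcup_{x \in A} x$, the same routine verification of P1--P4, the same identification of $\unstdI \gluepartitions \unstdJ$ with $M(\unstdI,\unstdJ)$ via \Cref{lemma:gluing_welldefined}, the same appeal to \Cref{prop:associativityM}, and the same dispatch of the empty partition via $\unstdI \gluepartitions \emptyset = \unstdI$. The one small wrinkle is your choice of $\Omega$ as the \emph{finite} nonempty subsets of $\N_{\ge 1}$: then $m$ is not total on $2^{\Omega}\setminus\{\emptyset\}$ (an infinite collection of finite sets can have infinite union), which is why the paper instead takes $\Omega := 2^{\N_{\ge 1}}\setminus\{\emptyset\}$ --- an inessential, one-word repair of your argument.
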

\begin{proof}

$\Omega := 2^{\N_{\ge1}}\setminus \{\emptyset\}$,
   and \begin{align*}
   \forall x,y \in \Omega:\, Q(x,y)&:=\begin{cases}
			1, & \text{if}\; x \cap y \neq \emptyset\\
            0, & \text{else}
		 \end{cases}\\
  \forall A \in 2^{\Omega} \setminus \{\emptyset\}:\, m(A)&:=\bigcup_{x\in A}x
\end{align*}
satisfy assumptions 1,2,3 and 4 from \Cref{subsec:assumptions}. Indeed,
2. follows from the commutativity of $\cap$,
1. follows from the idempotence of $\cap$,
3. follows by definition of union,
and regarding 4., for $A,B \in 2^\Omega \setminus \{\emptyset\}$
\begin{align*}
m\left( \{m(A)\}\cup B\right)
            &= m(\{\bigcup_{x\in A}x\}\cup B)\\
            &= m(\{\bigcup_{x\in A}x\} \cup \{y|y\in  B\})\\
            %&= m(\{\bigcup_{x\in A}x, y|y\in  B\})\\
                  &= \bigcup_{x\in A}x \cup \bigcup_{y\in B}y\\
                  &= \bigcup_{z\in A\cup B}z \\&= m(A \cup B),
\end{align*}
as claimed.

We hence get from \Cref{prop:associativityM} that
\begin{align*}
   M: 2^\Omega \setminus \{\emptyset\} \times 2^\Omega \setminus \{\emptyset\} \to 2^\Omega \setminus \{\emptyset\} 
\end{align*}
is associative. Now from \Cref{lemma:gluing_welldefined}, it follows that, in particular, $\forall \unstdI,\unstdJ \in \SetIntervalPartitions \setminus \{\emptyset\}$
\begin{align*}
M(\unstdI,\unstdJ)=\unstdI \gluepartitions \unstdJ  \in \SetIntervalPartitions \setminus \{\emptyset\}.
\end{align*}
Therefore we can restrict and corestrict $M$
\begin{align*}
   M: \SetIntervalPartitions \setminus \{\emptyset\} \times \SetIntervalPartitions \setminus \{\emptyset\} \to \SetIntervalPartitions \setminus \{\emptyset\},
\end{align*}
which is automatically associative. From \Cref{def:gluing_partitions} it follows immediately that $\forall \unstdI \in \SetIntervalPartitions$
\begin{align*}
   \unstdI \gluepartitions \emptyset = \unstdI.
\end{align*}
Therefore 
$\forall \unstdI,\unstdI^{\prime},\unstdI^{\prime\prime} \in \SetIntervalPartitions$
\begin{align*}
(\unstdI \gluepartitions \unstdI^{\prime} )\gluepartitions \unstdI^{\prime\prime} =  \unstdI \gluepartitions (\unstdI^{\prime}\gluepartitions \unstdI^{\prime\prime}).
\end{align*}
\end{proof}

\begin{remark}
    Another example of $m$ and $Q$ satisfying the properties P1-P4 is as follows.
    Let $\Omega := \N_{\ge 1}$ and 
    set 
\begin{align*}
     \forall x,y \in \Omega,\; Q(x,y)&:=1\\
     \forall A \in 2^{\Omega}\setminus \{\emptyset\},\;  m(A) &:= \max(A).
\end{align*}
Properties P1-P3 are trivially satisfied since $Q$ is always 1 and P4 obviously holds since
\begin{align*}
    \max(A \cup B)&=
    \max(\{\max(A)\} \cup B).
  \end{align*}

 Now let $\Omega := \N^{k} \setminus \{(0,\dots,0)\}$ where $k > 1$. Set
  \begin{align*}
     Q((x_{1},\dots,x_{k}),(y_{1},\dots,y_{k}))=\begin{cases}
              1, & \text{if}\;\;\exists i: x_{i}\neq 0 \land y_{i}\neq 0\\
              0, & \text{else}.
           \end{cases}
  \end{align*}
  and 
   \begin{align*}
     m(A) &:= ({\max}^{(1)}(A),\dots,{\max}^{(k)}(A))
  \end{align*}
  where $\max^{(i)}(A)$ is the maximum element of the i-th component of all the elements in $A$. One can show that P1-P4 hold also for this example.
\end{remark}

\printbibliography

\end{document}